\definecolor{mygray}{gray}{.9}
\numberwithin{equation}{section}              
\newtheorem{theorem}{Theorem}[section]
\newtheorem*{theorem*}{Theorem}
\newtheorem{lemma}{Lemma}[section]
\newtheorem*{lemma*}{Lemma}
\newtheorem{proposition}{Proposition}[section]
\newtheorem*{proposition*}{Proposition}
\newtheorem*{corollary*}{Corollary}
\newtheorem*{definitions*}{Definitions}
\newtheorem*{conjecture*}{\bf Conjecture}
\newtheorem{example}{\bf Example}[section]
\newtheorem*{example*}{\bf Example}
\theoremstyle{remark}
\newtheorem{remark}{\bf Remark}[section]
\title{\bf{Turing-Hopf bifurcation and spatiotemporal patterns in a ratio-dependent diffusive Holling-Tanner system with time delay}
\footnotetext{The authors are supported by the National Natural Science Foundation of China (No.11371112).}
}
\author[a]{Qi An}
\author[a]{Weihua Jiang\thanks{Corresponding author.}}
\affil[a]{Department of Mathematics, Harbin Institute of Technology\\Harbin, 150001, P.R. China.}
\date{}
\begin{document}	
\maketitle
\begin{abstract}
\noindent
The Turing-Hopf type spatiotemporal patterns in a diffusive Holling-Tanner model with discrete time delay is considered. A global Turing bifurcation theorem for $\tau=0$ and a local Turing bifurcation theorem for $\tau>0$ are given by the method of eigenvalue analysis and prior estimation. Further considering the degenerated situation, the existence of Bogdanov-Takens bifurcation and Turing-Hopf bifurcation are obtained. The normal form method is used to study the explicit dynamics near the Turing-Hopf singularity, and we establish the existence of various self-organized spatiotemporal patterns, such as two non-constant steady states (stripe patterns) coexist and two spatially inhomogeneous periodic solutions (spot patterns) coexist.
Moreover, the Turing-Turing-Hopf type spatiotemporal patterns, that is  a subharmonic phenomenon with two  spatial wave numbers and one  temporal frequency, are also found and theoretically explained, when there is another Turing bifurcation curve which is relatively closed to the studied one.

\vskip 0.2cm
\noindent
{{\bf \textit{Keywords:}} Reaction-diffusion equations, Turing-Hopf bifurcation, Spatiotemporal patterns, Normal form, Holling-Tanner system,  Delay }

\vskip 0.2cm
\noindent
{{\bf\textit{AMS subject classifications:}} 35B32, 35B35, 35B36}
\end{abstract}

\section{Introduction}

The term \textit{pattern} usually refers to a discernible regularity. Such as the spiral, tree, stripe, spot and the oscillations with spatial structure that have been observed in nature, chemical reaction and ecology could all be view as one types of spatiotemporal patterns, see \cite{Maini1997Spatial,Castets1990Experimental,ouyang1991transition,gunaratne1994pattern,pearson1993complex}.
One of the seminal works to study the pattern formation was given by Turing \cite{Turing1952} in 1952, he demonstrated that a simple reaction-diffusion-activation-inhibition mechanism in early embryo could generate complex spatial patterns of the epidermis of animals. These patterns usually have the structure of strip or spot, and also commonly known as Turing patters. After that,  various reaction diffusion systems, including chemical reaction models, predator-prey models and ecological models,  are widely used to explain the formation of patterns, see \cite{Lengyel1991Modeling,Murray2003II,Segel1972Dissipative,Yuan2014Pattern,Yan2012Pattern}.

Study on spatiotemporal patterns of predator-prey system is conducive to understand the reasons of the spatial and temporal oscillation of species, see \cite{Shi2015Spatial,Zhang2014Spatio,yi2009bifurcation,Chen2017Stability,Ni2005Turing}.
In this work, we revisit the classical Holling-Tanner models which was first proposed by  May \cite{May1973Stability}. It incorporated the self-regulation of prey and a Holling type \uppercase\expandafter{\romannumeral2} functional response function of predator, and it is used to describe the real ecological systems such as mite and spider mite, lynx and hare, sparrow and sparrow hawk, etc., (see Tanner \cite{Tanner1975The}
and Wollkind \textit{et al.} \cite{Wollkind1988Metastability}). Taking spatial dispersal into account within a fixed bounded domain $\Omega\in \mathbb{R}^n$, after a scaling as in \cite{An2017J}, this system is rewritten as follows:
\begin{equation}\label{eqA3}
\left\{
\begin{aligned}
&\frac{\mathrm{d}}{\mathrm{d}t}u-d_{1}\Delta u=u(1-u)-\frac{auv}{u+b},&&x\in\Omega,~t>0,&\\
&\frac{\mathrm{d}}{\mathrm{d}t}v-d_{2}\Delta v=rv(1-\frac{v}{u}),& &x\in\Omega,~t>0,&\\
&\partial_{\eta}u=\partial_{\eta}v=0,& &x\in\partial\Omega,~t>0,&\\
&u(x,0)=u_0(x), v(x,0)=v_0(x),& &x\in\Omega.&\\
\end{aligned}\right.
\end{equation}
Here $u,v$ respectively, represent the species densities of the prey and predator, and all the parameters appearing in \eqref{eqA3} are assumed to be positive. For more detailed biological implications of the model, please refer to \cite{Holling1966The,May1973Stability,An2017J}. This model has been investigated widely at the pattern formation, such as the Hopf bifurcation and Turing instability in \cite{li2011hopf}, the Turing
and non-Turing patterns in \cite{banerjee2012turing}, the steady state bifurcation of simple and double eigenvalues in \cite{Ma2013Bifurcation}, the degenerated Turing-Hopf bifurcation in \cite{An2017J}. In particular, Chang, L. \textit{et al} \cite{Chang2015Rich} unveiled six types of patterns exist in \eqref{eqA3} with a time delay in the negative feedback of the predator density by the numerical method and the  eigenvalue analysis of the Turing and Hopf bifurcation.

It is worth noting that the Turing-Hopf bifurcation can be considered as the simplest mechanism to form the patterns that are periodic  oscillated in both spatial and temporal. Meanwhile, this interaction between Hopf bifurcation and Turing bifurcation can bring many interesting dynamics, such as the bistable structure and the space-time chaos. Much previous work has focus on Turing-Hopf bifurcations of reaction diffusion systems (see \cite{Cangelosi2015Nonlinear,Meixner1997Generic,song2017,Song2014Spatiotemporal,Shi2015Spatial}), but there are few related theoretical research for the time-delay systems to the best of our knowledge.

The normal form  method \cite{Faria2000Normal} and centre manifold theorem \cite{Lin1992Centre,Wu1996Theory,MR1950831} are the effective tools to study the bifurcation dynamics, see \cite{yi2009bifurcation,Peng2013Spatiotemporal}. The advantage here is that they can give a complete division to the parameter space according to the different dynamics of the original system. Meanwhile, the mathematical approximation of spatial or temporal patterns are clear by using this method, see \cite{An2017,Faria2000Normal,Guo2016Stability,JA2018,Song2009Stability}.
In order to simplify the calculation of the normal form,  Jiang \textit{et al.} \cite{An2017,JA2018} give a relevant formula for the delayed reaction diffusion system with a Turing-Hopf bifurcation singularity. This formula only involves some simple algebraic operations and can be applied to computer program.

 Motived by the work of \cite{Chang2015Rich,An2017J}, we investigate the following delayed system in a one-dimensional spatial region $\Omega=(0,l\pi)$:
\begin{equation}\label{eqA}
\left\{
\begin{aligned}
&\frac{\mathrm{d}}{\mathrm{d}t}u(t)-d_{1}\Delta u(t)=u(t)[1-u(t)]-\frac{au(t)v(t)}{u(t)+b},&&x\in(0,l\pi),t>0,&\\
&\frac{\mathrm{d}}{\mathrm{d}t}v(t)-d_{2}\Delta v(t)=rv(t)[1-\frac{v(t-\tau)}{u(t-\tau)}],&&x\in(0,l\pi),t>0,&\\
&u_x(0,t)=v_x(0,t)=0,  u_x(l\pi,t)=v_x(l\pi,t)=0, &&t>0,&\\
&u(x,t)=\varphi(x,t), v(x,t)=\psi(x,t),&&x\in(0,l\pi),t\in[-\tau,0]&\\
\end{aligned}\right.
\end{equation}
Different from the exiting work \cite{Chang2015Rich}, we will first establish the conditions for the existence of the Turing pattern, and then further consider the effect of the time delay on the Turing pattern. By applying the normal form method, the parameter region near the Turing-Hopf bifurcation is divided into several parts with different dynamics. Some interesting phenomena such as two non-constant steady state coexist and two spatially inhomogeneous periodic solutions coexist will be found. We will show that large delay may induce the oscillation of the Turing pattern.
Of particular interest here, is the discovery of the Turing-Turing-Hopf type spatiotemporal patterns, that is a subharmonic phenomenon with two spatial wave numbers and one temporal frequency, which usually exist when
 there is another Turing bifurcation close to the studied one. We claim that the second Turing bifurcation would contribute a cosine function $\cos(\frac{n_I}{l}x)$ to affect the spatial distribution of the Turing-Hopf type patterns, but it have no impact on the division of the parameter plane.




 This paper is organized as follows. In Section 2, we devote to the bifurcation analysis of the Holling-Tanner system \eqref{eqA} with and without delay. The conditions for the existence of Turing, Bogdanov-Takens and Turing-Hopf bifurcation are given. In Section 3, the normal form near the Turing-Hopf critical point up to the third order are given by using the method present in \cite{An2017}. Then in Section 4, by analyzing the normal forms, we show that the Holling-Tanner models exhibits various spatiotemporal patterns. Appropriate simulations are carried out to illustrate the theoretical results. Finally a brief conclusion completes the paper.

\section{Stability and bifurcation analysis}
First of all, we define the following real-value Hilbert space
$$X:=\{(u,v)\in{H^{2}(0,l\pi)\times H^{2}(0,l\pi)} : (u_{x},v_{x})|_{x=0,l\pi}=0\}.$$
and the corresponding complexification $X_{\mathbb{C}}:=\{ x_{1}+ix_{2} : x_{1},x_{2}\in X \}.$
Let
$\mathcal{C}_{\tau}:=C([-\tau,0],X_{\mathbb{C}})$ ($\tau>0$) denote the phase space
with the sup norm. We write $\varphi^{t}\in \mathcal{C}_{\tau}$ for  $\varphi^{t}(\theta)=\varphi(t+\theta),-\tau\leq \theta \leq0.$

It is easy to check that system \eqref{eqA} has a unique coexistence equilibrium point
$E=(u_{0},v_{0})$, with
$u_{0}=v_{0}=\dfrac{1}{2}[(1-a-b)+\sqrt{(a+b-1)^{2}+4b}]<1$.
Taking the transformation $u\rightarrow u+u_{0}$ and $v\rightarrow v+u_{0}$ in \eqref{eqA}, we obtain an equivalent abstract equation in phase space $\mathcal{C}_{\tau}$:
\begin{equation}\label{eqB}
\frac{\mathrm{d}}{\mathrm{d}t}U(t)=D\Delta U(t)+L(r,\tau)(U^{t})+F(r,\tau,U^{t}).
\end{equation}
Here $D=\mathrm{diag}~(d_{1},d_{2})$,~ $U=(u,v)^{\mathrm{T}}\in X_{\mathbb{C}}$,~
$U^{t}=(u^{t},v^{t})^{\mathrm{T}}\in \mathcal{C}.$ And $L(r,\tau)(\cdot):\mathcal{C}_{\tau}\rightarrow X_{\mathbb{C}} $ is a bounded linear operator given by
\begin{equation}\label{eqL}
L(r,\tau)(\phi)={\left(\begin{array}{cc}
	A_{0} &B_{0}\\
	0  &0
	\end{array}\right)} \phi(0)+{\left(\begin{array}{cc}
	0&0\\
	r&-r
	\end{array}\right)} \phi(-\tau),
\end{equation}
with $A_0=\dfrac{u_{0}}{b+u_{0}}(1-b-2u_{0})$, $B_0=u_{0}-1$.
And $F(r,\tau,\cdot):\mathcal{C}_{\tau}\rightarrow X $ is a $C^{k}$ $(k\geq3)$ function and  given by
\begin{equation}\label{eqF}
F(r,\tau,\phi)=\begin{pmatrix}
f_{1}(\phi+E^\mathrm{T},r,\tau)-A_{0}\phi_{1}(0)-B_{0}\phi_{2}(0)\\
f_{2}(\phi+E^\mathrm{T},r,\tau)-r\phi_{1}(-\tau)+r\phi_{2}(-\tau)
\end{pmatrix}
\end{equation}
with $f_{1}(\varphi,r,\tau)=\varphi_{1}(0)[1-\varphi_1(0)]-
\dfrac{a\varphi_{1}(0)\varphi_{2}(0)}{\varphi_{1}(0)+b}$, $f_{2}(\varphi,r,\tau)=r\varphi_{2}(0)[1-\dfrac{\varphi_{2}(-\tau)}{\varphi_{1}(-\tau)}]$,
for $\varphi=(\varphi_{1},\varphi_{2})\in\mathcal{C}_{\tau}$, and satisfies $F(r,\tau,0)=0,~D_{\phi}F(r,\tau,0)=0$.

The corresponding characteristic equation of the linearized system of \eqref{eqB} is
\begin{equation}\label{eqC}
\mathbf{\Delta}(\lambda,r,\tau)y=\lambda y-D\Delta y-L(r,\tau)(e^{\lambda\cdot}y)=0,
\end{equation}
for some $y\in\mathrm{dom}(\Delta)\backslash\{0\},$ which is equivalent to the sequence of characteristic equations
\begin{equation}\label{characteristic}
\begin{aligned}
&G_n(\lambda,r,\tau):=\lambda^{2}-{T_{n}}(\lambda,r,\tau)\lambda+{D_{n}}(\lambda,r,\tau)=0,&&n=0,1,2,\cdots&
\end{aligned}
\end{equation}
with $n$ is identified as the wave number, and
\begin{equation}\label{eqTD}
\begin{aligned}
&{T_{n}}(\lambda,r,\tau)=A_{0}-(d_{1}+d_{2})\dfrac{n^{2}}{l^{2}}-re^{-\lambda\tau},&\\
&{D_{n}}(\lambda,r,\tau)=d_{2}\dfrac{n^{2}}{l^{2}}(d_{1}\dfrac{n^{2}}{l^{2}}-A_{0})+re^{-\lambda\tau}(d_{1}\dfrac{n^{2}}{l^{2}}-A_{0}-B_{0}).&
\end{aligned}
\end{equation}
\subsection{Turing bifurcation and Bogdanov-Takens bifurcation}
We, firstly, investigate the dynamics of \eqref{eqA} when $\tau=0$ and always assume that $a>\frac{(b+1)^{2}}{2(1-b)}$, if not, the constant steady state $(u_0,v_0)$ is  locally asymptotically stable (see \cite{An2017J}). Define
\begin{equation*}
\begin{aligned}
& r_n^H:=A_{0}-(d_{1}+d_{2})\frac{n^{2}}{l^{2}},&
&r_n^{T}:=-\frac{{d_{2}\frac{n^{2}}{l^{2}}(d_{1}\frac{n^{2}}{l^{2}}-A_{0})}}{{(d_{1}\frac{n^{2}}{l^{2}}-A_{0}-B_{0})}},&&\quad n\in \mathbb{N}_0,&
\end{aligned}
\end{equation*}
which satisfy $T_n(\lambda,r_n^H,0)=0$ and $D_n(\lambda,r_n^{T},0)=0$, respectively.
 The following results was proved by An and Jiang in \cite{An2017J}.

\begin{lemma}\label{lemma 1}
For system \eqref{eqA}, we assume that $d_1,d_2,r,l>0$, $1>b>0$, $a>\dfrac{(b+1)^{2}}{2(1-b)}$. Define $r_*:=\max\limits_{n\geq 0}r_n^{T}=r_{n_T}^{T}\; ( n_T\in\mathbb{N}_0)$
and
\begin{equation*}
\begin{aligned}
&b_*:=\dfrac{[(d_{1}+d_{2})^{2}-\sqrt{(d_{1}+d_{2})^{4}-(d_{1}-d_{2})^{4}}]^{2}}{(d_{1}-d_{4})^{4}}<1,&\\
&a_{\pm}:=\dfrac{(1\!-\!b)(d_{1}\!+\!d_{2})^{4}\!\pm\!(d_{1}\!+\!d_{2})^{2}\sqrt{(b\!+\!1)^{2}(d_{1}\!-\!d_{2})^{4}\!-\!4b(d_{1}\!+\!d_{2})^{4}}}{(d_{1}\!+\!d_{2})^{4}\!\!-\!\!(d_{1}\!-\!d_{2})^{4}},&\\
&x_{\pm}:=\frac{1}{2d_{1}d_{2}}[(d_{2}-d_{1})A_{0}\pm\sqrt{(d_{1}+d_{2})^{2}A_{0}^{2}+4d_{1}d_{2}A_{0}B_{0}}].
\end{aligned}
\end{equation*}
 Let
\begin{equation}
l_n^- := n\sqrt{\frac{1}{x_-}},  \hspace{1cm}
l_n^+ := n\sqrt{\frac{1}{x_+}},  \hspace{1cm}
\forall n\in\mathbb{N}_0.
\end{equation}
And $M_1(l),M_2(l)\in\mathbb{N}$ are the two non-negative integers, such that
$l_{M_{1}-1}^{-}\leq l< l_{M_{1}}^{-},\;\; l_{M_{2}}^{+}<l\leq l_{M_{2}+1}^{+}.$ Then we have
			$\max\limits_{n\geq 0}r_n^H=A_{0}<r_{*}$ if and only if
			\begin{enumerate}[$({{\mathbf{A}}}6^{''})$]
				\item $d_{2}>d_{1}$, $0<b<b_*$,  $a_{-}<a<a_{+}$, $M_1(l)\leq M_2(l)$.  Moreover, $A_{0}< r_{n}^{T}$ only when $M_1\leq n\leq M_2.$
			\end{enumerate}
\end{lemma}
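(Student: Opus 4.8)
The plan is to translate the inequality $\max_{n\geq 0} r_n^H = A_0 < r_*$ into explicit conditions on the parameters by analyzing the two auxiliary functions $r_n^H$ and $r_n^T$ as functions of the discrete variable $\mu_n = n^2/l^2$. First I would note that $r_n^H = A_0 - (d_1+d_2)\mu_n$ is strictly decreasing in $\mu_n$, so $\max_{n\geq 0} r_n^H = r_0^H = A_0$ is immediate; this explains the left-hand side of the equivalence. The real content is the inequality $A_0 < r_*$, equivalently the existence of some $n$ with $r_n^T > A_0$. I would therefore study the rational function
\begin{equation*}
g(\mu) := -\frac{d_2\mu(d_1\mu - A_0)}{d_1\mu - A_0 - B_0}, \qquad r_n^T = g(\mu_n),
\end{equation*}
on the admissible range $\mu \geq 0$. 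Since $A_0>0$ (this uses $a > (b+1)^2/(2(1-b))$, which forces $1-b-2u_0 > 0$) and $B_0 = u_0 - 1 < 0$, the denominator $d_1\mu - A_0 - B_0$ changes sign; one checks that $g$ is positive exactly on a bounded interval and that $g(\mu) - A_0 > 0$ is a quadratic-type inequality in $\mu$. Completing the square / solving $g(\mu) = A_0$ gives the two roots $x_\pm$ as written in the statement; the condition $g(\mu) > A_0$ then reads $x_- < \mu < x_+$, i.e. $x_-< n^2/l^2 < x_+$.

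Next I would convert $x_- < n^2/l^2 < x_+$ into a condition on $n$ and $l$. Rewriting it as $n/\sqrt{x_+} < l < n/\sqrt{x_-}$ — wait, more precisely as $l\sqrt{x_-} < n < l\sqrt{x_+}$, hence $l_n^+ < l$ is not quite it; rather $n/l < \sqrt{x_+}$ and $n/l > \sqrt{x_-}$, which is $l_n^- \le l$ and $l < l_n^+$ after rearranging with the definitions $l_n^\pm = n\sqrt{1/x_\pm}$. The integers $M_1(l), M_2(l)$ defined by the sandwich inequalities $l_{M_1-1}^- \le l < l_{M_1}^-$ and $l_{M_2}^+ < l \le l_{M_2+1}^+$ are then exactly the smallest, resp. largest, index $n$ for which $\mu_n$ lies in $(x_-,x_+)$; so the set of $n$ with $r_n^T > A_0$ is precisely $\{M_1, M_1+1,\dots,M_2\}$, which is nonempty iff $M_1(l) \le M_2(l)$. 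This simultaneously proves the last sentence of the lemma ("$A_0 < r_n^T$ only when $M_1 \le n \le M_2$") and reduces the existence of some such $n$ to $M_1(l)\le M_2(l)$.

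The remaining task is to guarantee that $x_\pm$ are real and positive so that the preceding analysis is non-vacuous, and to express this in terms of $a$ and $b$. Reality of $x_\pm$ requires the discriminant $(d_1+d_2)^2 A_0^2 + 4 d_1 d_2 A_0 B_0 \ge 0$; substituting $A_0, B_0$ in terms of $u_0$ and then $u_0$ in terms of $a,b$, and using the identity $u_0 + b = $ (something clean from the quadratic defining $u_0$), this becomes a quadratic inequality in $a$ whose roots are $a_\pm$; one also needs $a_\pm$ real, which is the condition $(b+1)^2(d_1-d_2)^4 - 4b(d_1+d_2)^4 \ge 0$, and solving that for $b$ gives $0 < b < b_*$. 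The sign requirement $x_- < x_+$ together with positivity selects $d_2 > d_1$. Assembling these pieces yields exactly hypothesis $(\mathbf{A}6'')$. I expect the main obstacle to be the algebra in this last step: carefully eliminating $u_0$ (using $u_0^2 = (1-a-b)u_0 + b\cdot(\text{correction})$, i.e. the defining quadratic $u_0^2 + (a+b-1)u_0 - b(1-u_0)=0$ rearranged) to turn each geometric condition on $\mu$ into the stated closed-form bound on $a$ and $b$, and checking that the various sign conventions (which root is $a_-$ vs $a_+$, the orientation of the inequalities defining $M_1,M_2$) are mutually consistent. Since this computation is exactly the one carried out in \cite{An2017J}, I would cite that reference for the bookkeeping and only highlight the structural steps above.
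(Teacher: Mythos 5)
The paper does not actually prove this lemma: it states the result and attributes it entirely to An and Jiang \cite{An2017J}, so there is no in-paper proof to compare against. Your sketch is the standard (and surely the intended) argument --- reduce $r_n^T>A_0=\max_n r_n^H$ to the quadratic $d_1d_2\mu^2-(d_2-d_1)A_0\mu-A_0(A_0+B_0)<0$ in $\mu=n^2/l^2$, whose roots are exactly $x_\pm$, then read off the admissible integers $n$ via $l_n^\pm$ and push the reality/positivity of $x_\pm$ back through $u_0$ to get $a_\pm$, $b_*$ and $d_2>d_1$ --- and I verified that the discriminant $(d_2-d_1)^2A_0^2+4d_1d_2A_0(A_0+B_0)=(d_1+d_2)^2A_0^2+4d_1d_2A_0B_0$ and the sign analysis ($A_0+B_0<0$ forces both roots to share the sign of $d_2-d_1$) come out as you claim. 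The one slip is in your rearrangement of $x_-<n^2/l^2<x_+$: the correct reading is $l_n^+<l<l_n^-$ (not $l_n^-\le l$ and $l<l_n^+$), which is what makes $M_1$ the smallest and $M_2$ the largest admissible index under the paper's sandwich definitions; since your final characterization of the set $\{M_1,\dots,M_2\}$ is consistent with this, the error is only in the intermediate sentence, not in the conclusion.
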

Applying the Lemma \ref{lemma 1} when system parameters of \eqref{eqA} satisfy $({{\mathbf{A}}}6^{''})$, we obtain $D_{n_T}(\lambda,r_*,0)=0$, $T_{n_T}(\lambda,r_*,0)<0$ and $D_{n}(\lambda,r_*,0)>0$, $T_n(\lambda,r_*,0)<0$ for $n\neq {n_T}$. Furthermore, it means that the characteristic equation \eqref{eqC} has just one simple zero eigenvalue when $r=r_*$, and other eigenvalues have strictly negative real parts.
We claim the following global Turing bifurcation theorem.
\begin{theorem}\label{th1}
		Suppose that $a\leq\dfrac{(b+1)^{2}}{2(1-b)}$ and the condition $({{\mathbf{A}}}6^{''})$ hold in the Holling-Tanner system \eqref{eqA} without time delay. When the time delay $\tau=0$, we have:
		\begin{enumerate}[(1)]
			\item The coexistence equilibrium $E=(u_0,v_0)$ is  locally asymptotically stable when $r>r_*$, and unstable when $r<r_*$.
			\item The system \eqref{eqA} undergoes a Turing bifurcation at $r=r_*$. Moreover, there is a smooth curve $\varGamma$ of positive steady state of \eqref{eqA} bifurcating from $(r_*,u_0,v_0)$, with $\varGamma$ contained in a global branch $\mathcal{B}\in \mathbb{R}_{+}\times X$ of the positive steady state of \eqref{eqA}. Either $\mathcal{B}$ contains another bifurcation point $(r_n^T,u_0,v_0)$, or the projection of $\mathcal{B}$ onto $r$-axis contains the interval $(0,r_*)$ or $(r_*,\infty)$. If $d_1>l^2[1-2\mathcal{M}-\frac{ab\mathcal{M}}{(b+1)^2}]$ with $\mathcal{M}$ given by \eqref{M},  then the projection of $\mathcal{B}$ onto $r$-axis  can not contain the entire interval $(0,r_*)$.
		\end{enumerate}	
\end{theorem}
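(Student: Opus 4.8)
The plan is to prove the two assertions separately: (1) by a direct eigenvalue count at $\tau=0$, and (2) by combining the Crandall--Rabinowitz local bifurcation theorem, the Rabinowitz global alternative, and a priori estimates for positive steady states.

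For (1): when $\tau=0$ the coefficients $T_n(\cdot,r,0)$ and $D_n(\cdot,r,0)$ in \eqref{characteristic} are independent of $\lambda$, so all roots of every $G_n$ have negative real part precisely when $T_n(\cdot,r,0)<0$ and $D_n(\cdot,r,0)>0$ for all $n\ge 0$. I would first record the elementary identities: $T_n(\cdot,r,0)=r_n^H-r$, and, using $a u_0=(1-u_0)(u_0+b)$, that $A_0+B_0=-(u_0^2+b)/(u_0+b)<0$; hence $d_1\tfrac{n^2}{l^2}-A_0-B_0>0$ for all $n$, and consequently $D_n(\cdot,r,0)=\big(d_1\tfrac{n^2}{l^2}-A_0-B_0\big)\,(r-r_n^T)$ has the sign of $r-r_n^T$. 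Since $(\mathbf{A}6'')$ gives $\max_n r_n^H=A_0<r_*=\max_n r_n^T$, for $r>r_*$ one gets $T_n<0$ and $D_n>0$ for every $n$, so $E$ is locally asymptotically stable; for $r<r_*$ the choice $n=n_T$ makes $D_{n_T}<0$, producing one positive real eigenvalue, so $E$ is unstable. The same computation yields the transversality number $\lambda'(r_*)=D_{n_T}'(r_*)/T_{n_T}(\cdot,r_*,0)<0$ used in (2).

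For (2), the steady states of \eqref{eqA} with $\tau=0$ solve the elliptic system $-D\Delta(u,v)^{\mathrm{T}}=(f_1,f_2)^{\mathrm{T}}$ under Neumann conditions; after the shift by $E$ I would recast this as a fixed-point equation $w=\mathcal{K}(r,w)$ in $X$ with $\mathcal{K}(r,\cdot)=(I-D\Delta)^{-1}\big[(f_1,f_2)^{\mathrm{T}}(\,\cdot+E)+(\,\cdot)\big]$, compact by elliptic regularity, with trivial branch $w\equiv 0$. The kernel of $I-D_w\mathcal{K}(r,0)$ is nontrivial iff $D_n(\cdot,r,0)=0$ for some $n$, i.e.\ $r=r_n^T$; at $r=r_*$ this kernel is one-dimensional, spanned by $\cos(\tfrac{n_T}{l}x)$ times the null vector of the $2\times2$ matrix, and the transversality above shows the relevant eigenvalue crosses $0$ simply, so Crandall--Rabinowitz produces the smooth local curve $\varGamma$. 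Since the crossing number is $1$ (odd), the Rabinowitz alternative applies to the connected component $\mathcal{B}$ of nontrivial solutions containing $\varGamma$: either $\mathcal{B}$ is unbounded in $\mathbb{R}_+\times X$, or it meets the trivial branch at a second point, which — by the implicit function theorem away from singular values — must be a bifurcation point $(r_n^T,E)$. To turn unboundedness into the stated claim about the $r$-projection, I would invoke the a priori estimates: by the maximum principle $0<\underline{m}\le u,v\le \mathcal{M}$ with $\mathcal{M}$ as in \eqref{M} and $\underline{m}$ uniform on compact $r$-intervals, whence elliptic estimates bound $\|(u,v)\|_X$ on such intervals; positivity is preserved along $\mathcal{B}$ (the lower bound, and the equation degenerating as $u\to0$). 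Thus $\mathcal{B}$ is bounded over every compact subinterval of $(0,\infty)$, and being connected with $r$-projection a nondegenerate interval containing $r_*$, if it is unbounded its projection must reach $0$ or $\infty$, i.e.\ contain $(0,r_*)$ or $(r_*,\infty)$.

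For the last clause I would exclude $\operatorname{proj}_r\mathcal{B}\supseteq(0,r_*)$ by ruling out nonconstant positive steady states for $r$ near $0$: testing the first equation against $u-\bar u$ (with $\bar u$ the spatial mean, using $\int(u-\bar u)f_1(\bar u,\bar v)=0$, the mean value theorem, the a priori bound $\mathcal{M}$ to control $\partial_u f_1,\partial_v f_1$, and the Poincar\'e inequality $\int|u-\bar u|^2\le l^2\int|u_x|^2$ from the first nonzero Neumann eigenvalue $1/l^2$ on $(0,l\pi)$), while the analogous test of the second equation against $v-\bar v$ contributes only a term of order $r$ that is absorbed for $r$ small. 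The resulting inequality is incompatible with $d_1>l^2\big[1-2\mathcal{M}-\tfrac{ab\mathcal{M}}{(b+1)^2}\big]$ unless $u_x\equiv v_x\equiv0$, forcing the steady state to be constant; hence $\mathcal{B}$, being a component of nonconstant solutions, cannot reach $r$-values near $0$, so its projection cannot contain all of $(0,r_*)$. I expect the a priori estimates — the uniform positive lower bound $\underline{m}$ and the precise constant $\mathcal{M}$ of \eqref{M}, together with the careful bookkeeping of the reaction-term derivatives in the energy inequality — to be the main obstacle; the local Crandall--Rabinowitz step and the abstract Rabinowitz alternative are routine once the compact reformulation and transversality are in place.
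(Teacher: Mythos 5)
Your proposal is correct and follows essentially the same route as the paper: part (1) via the sign analysis of $T_n$ and $D_n$ in $r$, and part (2) via the transversality computation plus the global bifurcation alternative, combined with the a priori bounds of Lemma \ref{lem2} and the energy/Poincar\'e estimate of Lemma \ref{lem3} ruling out nonconstant positive steady states for small $r$. One minor slip worth fixing: in \eqref{M} the constant $\mathcal{M}$ is the uniform positive \emph{lower} bound (the upper bound being $1$), whereas you use it as the upper bound; this does not affect the structure of the argument.
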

In order to prove the Theorem \eqref{th1}, we need to introduce two lemmas at first.
\begin{lemma}\label{lem2}
Suppose that $a,r,d_1,d_2>0$, $1>b>0$. Then there exists a positive constants $\mathcal{M}$ depending on $a$, $b$ and $\Omega$, such that any positive solution $(u(x),v(x))\in X$ of \eqref{eqA} satisfies
\begin{equation}\label{M}
\mathcal{M}\leq u(x),v(x) \leq 1 \quad \mathit{for\; any\;} x\in\overline{\Omega}.
\end{equation}
\end{lemma}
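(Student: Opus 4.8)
The plan is to run the classical maximum-principle-plus-rescaling argument for the stationary system associated with \eqref{eqA},
\[
-d_1\Delta u = u\Big(1-u-\frac{av}{u+b}\Big),\qquad -d_2\Delta v = rv\Big(1-\frac{v}{u}\Big)\quad\text{in }\Omega,\qquad \partial_\eta u=\partial_\eta v=0\quad\text{on }\partial\Omega .
\]
First I would derive the upper bounds. Let $x_1\in\overline\Omega$ be a maximum point of $u$; the Neumann maximum principle (the boundary case being handled as usual via the no-flux condition) gives $u(x_1)\big[1-u(x_1)-\frac{av(x_1)}{u(x_1)+b}\big]\ge0$, and since $u>0$ this forces $u\le1$ on $\overline\Omega$. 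Evaluating the second equation at a maximum point $x_2$ of $v$ yields $rv(x_2)\big(1-\frac{v(x_2)}{u(x_2)}\big)\ge0$, hence $v(x_2)\le u(x_2)$, and combined with $u\le1$ this gives $v\le1$ on $\overline\Omega$. Evaluating the second equation at a \emph{minimum} point $x_3$ of $v$ gives, in the same way, $v(x_3)\ge u(x_3)\ge\min_{\overline\Omega}u$, so that $\min_{\overline\Omega}v\ge\min_{\overline\Omega}u$; it therefore remains only to bound $u$ from below by a positive constant depending on $a$, $b$ and $\Omega$.

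For this lower bound I would rewrite the first equation as $-d_1\Delta u=c(x)\,u$ with $c(x):=1-u(x)-\frac{av(x)}{u(x)+b}$, and observe that the bounds $0<u,v\le1$ already obtained give $\|c\|_{L^\infty(\Omega)}\le\max\{1,a/b\}=:C_0$, a constant depending only on $a$ and $b$. The Harnack inequality for the Neumann problem then provides $C_1>0$, depending only on $\Omega$ and $C_0$, with $\max_{\overline\Omega}u\le C_1\min_{\overline\Omega}u$. Suppose now that the conclusion fails; then there is a sequence of positive stationary solutions $(u_j,v_j)$ with $\min_{\overline\Omega}u_j\to0$. By the Harnack estimate $\max_{\overline\Omega}u_j\to0$, so $u_j\to0$ uniformly on $\overline\Omega$, and since $0<v_j\le\max_{\overline\Omega}u_j$ also $v_j\to0$ uniformly; hence $c_j:=1-u_j-\frac{av_j}{u_j+b}\to1$ uniformly. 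Normalizing $\tilde u_j:=u_j/\|u_j\|_{L^\infty(\Omega)}$, we have $\|\tilde u_j\|_{L^\infty}=1$ and $-d_1\Delta\tilde u_j=c_j\tilde u_j$ with $\partial_\eta\tilde u_j=0$; by $L^p$ elliptic estimates and Sobolev embedding $\{\tilde u_j\}$ is bounded in $C^{1,\alpha}(\overline\Omega)$, so along a subsequence $\tilde u_j\to\tilde u$ in $C^1(\overline\Omega)$ with $\tilde u\ge0$, $\|\tilde u\|_{L^\infty}=1$, and $\tilde u$ solving $-d_1\Delta\tilde u=\tilde u$ in $\Omega$, $\partial_\eta\tilde u=0$ on $\partial\Omega$. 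Integrating this identity over $\Omega$ and using the no-flux condition forces $\int_\Omega\tilde u\,dx=0$, hence $\tilde u\equiv0$, which contradicts $\|\tilde u\|_{L^\infty}=1$. Thus there is $\mathcal M>0$, depending only on $a$, $b$ and $\Omega$, with $\min_{\overline\Omega}u\ge\mathcal M$, and then also $\min_{\overline\Omega}v\ge\mathcal M$; together with the upper bounds this is \eqref{M}.

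The only delicate step is the lower bound on $u$: the upper bounds and the lower bound on $v$ follow immediately from the Neumann maximum principle, whereas ruling out $\min_{\overline\Omega}u\to0$ genuinely needs the Harnack inequality — to propagate the smallness of $u$ from a single point to all of $\overline\Omega$ — together with the rescaling and compactness step above. I would be careful to invoke a version of the Harnack inequality valid for a bounded, possibly sign-indefinite, zeroth-order coefficient under Neumann boundary conditions (classical, of Lin--Ni--Takagi type), and to check that the elliptic estimates in the limiting argument are uniform in $j$, which holds because $\|c_j\|_{L^\infty(\Omega)}$ is uniformly bounded.
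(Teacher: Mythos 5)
Your proposal is correct, and its overall skeleton (maximum principle at extremum points for the upper bounds, $\min_{\overline\Omega}v\ge\min_{\overline\Omega}u$ from the predator equation, Harnack inequality for the zero-order equation $-d_1\Delta u=c(x)u$ with $\|c\|_{L^\infty}$ controlled by $a,b$) coincides with the paper's. The one step you handle genuinely differently is the crucial point of why $\max_{\overline\Omega}u$ cannot degenerate to zero. The paper gets this \emph{quantitatively}: evaluating the $u$-equation at a minimum point of $u$ gives $av(\underline x)\ge\bigl(1-u(\underline x)\bigr)\bigl(u(\underline x)+b\bigr)$, which combined with $\min v\ge\min u$ yields the explicit bound $\max_{\overline\Omega}u\ge\max_{\overline\Omega}v\ge\frac{b}{a+b}$, and then Harnack immediately gives $\min_{\overline\Omega}u\ge\frac{M_1b}{a+b}$. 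You instead rule out $\max_{\overline\Omega}u\to0$ by a normalization-and-compactness argument (pass to $\tilde u_j=u_j/\|u_j\|_{L^\infty}$, extract a $C^1$ limit solving $-d_1\Delta\tilde u=\tilde u$ with Neumann data, integrate to get a contradiction). Your route is sound but heavier — it needs uniform $L^p$ elliptic estimates and a limiting equation — and it is non-constructive, producing no explicit $\mathcal M$, whereas the explicit value of $\mathcal M$ matters here because it enters the hypotheses of Lemma \ref{lem3} and Theorem \ref{th1} through the condition $d_1>l^2[1-2\mathcal M-\frac{ab\mathcal M}{(b+1)^2}]$. Two minor points: if you run the contradiction over sequences with varying $r$ and $d_2$ (which is harmless, since neither enters the limit equation and $\|c_j\|_{L^\infty}$ depends only on $a,b$), you recover the claimed independence from those parameters; and both your argument and the paper's share the caveat that the Harnack constant in fact also depends on $d_1$, a dependence the statement of the lemma suppresses.
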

\begin{proof}
Set $u(\overline{x})=\max\limits_{\overline{\Omega}}u(x)$, $u(\underline{x})=\min\limits_{\overline{\Omega}}u(x)$, $v(\overline{y})=\max\limits_{\overline{\Omega}}v(x)$, $v(\underline{y})=\min\limits_{\overline{\Omega}}v(x)$.
From the maximum principle for weak solutions (see \cite{peng2008stationary}), we obtain
\begin{equation*}
\begin{aligned}
&1-u(\overline{x})-\frac{av(\overline{x})}{u(\overline{x})+b}\geq 0,& &1-u(\underline{x})-\frac{av(\underline{x})}{u(\underline{x})+b}\leq 0,&
&1-\frac{v(\overline{y})}{u(\overline{y})}\geq 0,& &1-\frac{v(\underline{y})}{u(\underline{y})}\leq 0.&
\end{aligned}
\end{equation*}
Which implies that $\frac{b}{a+b}\leq v(\overline{y})\leq u(\overline{x})\leq 1.$

Let $c_1(x):=1-u({x})-\frac{av({x})}{u({x})+b}$ and then we have $|c_1(x)|\leq 2+\frac{a}{b}$.
Using the Harnack inequality for weak solutions (see \cite{peng2008stationary}), there exists a positive constant $M_1$, depending on $a$, $b$, $\Omega$ such that
\begin{equation*}
\min\limits_{\overline{\Omega}} u(x) \geq M_1 \max\limits_{\overline{\Omega}} u(x) \geq \frac{M_1 b}{a+b}.
\end{equation*}
A similar method deal with $v(x)$, we can obtain the uniformly lower bound $\mathcal{M}=\mathcal{M}(a,b,\Omega)$ and complete the proof.
\end{proof}

\begin{lemma}\label{lem3}
Suppose that $a,r,d_2>0$, $1>b>0$ and $d_1\geq \mathcal{D}_1:=\frac{1}{\lambda_1}[1-2\mathcal{M}-\frac{ab\mathcal{M}}{(b+1)^2}]$, with $\lambda_1$ is the positive eigenvalue of the operator $-\Delta$ on $\Omega$ with the homogeneous Neumann boundary condition. Then there exists a small positive constant $R={R}(d_1,d_2,a,b,\Omega)$, such that the system \eqref{eqA} has no non-constant positive solution provided $r\leq R$.
\end{lemma}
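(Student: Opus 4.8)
The plan is to prove non-existence by the energy (integral) method. Since $\tau=0$ here, a non-constant positive solution of \eqref{eqA} is a positive steady state, i.e. a classical solution $(u,v)\in X$ of the corresponding elliptic system with Neumann data. Write $\bar u=\frac1{|\Omega|}\int_\Omega u$ and $\bar v=\frac1{|\Omega|}\int_\Omega v$ for the spatial means and $g(u,v)=u(1-u)-\frac{auv}{u+b}$, $h(u,v)=rv(1-v/u)$ for the reaction terms. The first ingredient is the a priori bound $\mathcal M\le u,v\le1$ supplied by Lemma \ref{lem2}; this keeps every coefficient below under control and is precisely the source of the quantity $\lambda_1\mathcal D_1=1-2\mathcal M-\frac{ab\mathcal M}{(b+1)^2}$ that enters the hypothesis.

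First I would test the $u$-equation against $u-\bar u$ and the $v$-equation against $v-\bar v$, integrate over $\Omega$, integrate by parts using the Neumann condition, and use $\int_\Omega(u-\bar u)=\int_\Omega(v-\bar v)=0$ to subtract the values of the reactions at $(\bar u,\bar v)$, arriving at
\[
d_1\!\int_\Omega\!|\nabla u|^2=\!\int_\Omega\!\bigl[g(u,v)-g(\bar u,\bar v)\bigr](u-\bar u),\qquad d_2\!\int_\Omega\!|\nabla v|^2=\!\int_\Omega\!\bigl[h(u,v)-h(\bar u,\bar v)\bigr](v-\bar v).
\]
Then I would split the differences by the exact algebraic identities
\[
g(u,v)-g(\bar u,\bar v)=\Bigl(1-u-\bar u-\tfrac{abv}{(u+b)(\bar u+b)}\Bigr)(u-\bar u)-\tfrac{a\bar u}{\bar u+b}(v-\bar v),
\]
\[
h(u,v)-h(\bar u,\bar v)=\tfrac{r\bar v^{2}}{u\bar u}(u-\bar u)+r\Bigl(1-\tfrac{v+\bar v}{u}\Bigr)(v-\bar v),
\]
which hold with no remainder, so no Taylor estimate is needed. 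Lemma \ref{lem2} then gives: the coefficient of $(u-\bar u)$ in the first identity is $\le 1-2\mathcal M-\frac{ab\mathcal M}{(b+1)^2}=\lambda_1\mathcal D_1$; the coefficient of $(v-\bar v)$ there has modulus $\le C_1:=\frac a{1+b}$; and \emph{every} coefficient in the second identity is $r$ times a quantity bounded solely in terms of $\mathcal M$.

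Next, applying the Poincaré inequality $\int_\Omega|\nabla w|^2\ge\lambda_1\int_\Omega(w-\bar w)^2$ on the left and Cauchy--Schwarz together with Young's inequality on the cross terms, the $v$-identity yields, for all $r$ below a threshold $r_0=r_0(d_2,a,b,\Omega)$, an estimate $\|v-\bar v\|_{L^2}^2\le C_4\,r\,\|u-\bar u\|_{L^2}^2$ with $C_4$ independent of $r$ — a step that uses no hypothesis on $d_1$. Substituting this into the $u$-identity produces
\[
\bigl(d_1\lambda_1-\lambda_1\mathcal D_1-C_1\sqrt{C_4\,r}\,\bigr)\,\|u-\bar u\|_{L^2}^2\le 0 .
\]
Since $d_1\ge\mathcal D_1$ makes $d_1\lambda_1-\lambda_1\mathcal D_1\ge0$ (and strictly positive in the case $d_1>\mathcal D_1$ actually used in Theorem \ref{th1}), choosing $R=R(d_1,d_2,a,b,\Omega)\le r_0$ small enough forces $\|u-\bar u\|_{L^2}=0$; then $v$ is constant by the previous estimate, and any positive constant solution must equal the unique coexistence equilibrium $E=(u_0,v_0)$, contradicting non-constancy. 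Tracking how $r_0$ and $C_4$ depend on the data, so that $R$ indeed depends only on $d_1,d_2,a,b,\Omega$, is routine.

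The hard part will be the coupling inside the $u$-equation: the dependence of $g$ on $v$ produces the term $-\frac{a\bar u}{\bar u+b}(v-\bar v)$, whose coefficient is $O(1)$ rather than $O(r)$, so the two equations cannot be decoupled by a single Young's inequality with an $r$-independent weight while absorbing all the error. The way around this — which must be set up with some care — is the two-stage scheme above: exploit first that the \emph{entire} reaction term of the $v$-equation carries the factor $r$, obtaining $\|v-\bar v\|_{L^2}=O(\sqrt r)\,\|u-\bar u\|_{L^2}$, and only then return to the $u$-equation, where this smallness beats the fixed constant $C_1$. The one genuinely delicate point is the borderline case $d_1=\mathcal D_1$ with $\lambda_1\mathcal D_1>0$, where the last displayed inequality degenerates; since Theorem \ref{th1} invokes the strict inequality $d_1>\mathcal D_1$, this case is not needed for the sequel and I would not pursue it further here.
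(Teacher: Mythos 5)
Your proposal is correct and follows essentially the same route as the paper: test each equation against the deviation from its spatial mean, use the identical algebraic decompositions of the reaction differences together with the a priori bounds of Lemma \ref{lem2} and the Poincar\'e inequality. The only immaterial difference is in the final combination step — you first extract $\|v-\bar v\|_{L^2}^2\le C_4\,r\,\|u-\bar u\|_{L^2}^2$ from the $v$-equation and substitute into the $u$-equation, whereas the paper divides the $v$-estimate by $r$ and sums the two estimates — and both versions genuinely require the strict inequality $d_1>\mathcal{D}_1$ (as the paper's own concluding sentence does), a point you rightly flag.
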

\begin{proof}
Let $(u, v)$ be a positive steady state solution of \eqref{eqA}, and denote
\begin{equation*}
\bar{u}=\frac{1}{|\Omega|}\int_{\Omega} u\; \mathrm{d}x, \qquad \bar{v}=\frac{1}{|\Omega|}\int_{\Omega} v\; \mathrm{d}x.
\end{equation*}
Multiplying the equation of $u$ in \eqref{eqA} by $(u-\bar{u})$ and integrating over $\Omega$. According to  Lemma \ref{lem2}, we have
\begin{equation*}
\begin{aligned}
d_1\int_{\Omega}|\nabla(u\!-\!\bar{u})|^2 \mathrm{d}x&=\int_{\Omega}[1\!-\!(u+\bar{u})\!-\!\frac{abv}{(u+b)(\bar{u}+b)}](u\!-\!\bar{u})^2\! -\! \frac{a\bar{u}}{\bar{u}+b}(u\!-\!\bar{u})(v\!-\!\bar{v}) \mathrm{d}x\\
&\leq [1-2\mathcal{M}-\frac{ab\mathcal{M}}{(b+1)^2}+\varepsilon] \int_{\Omega}(u-\bar{u})^2 \mathrm{d}x + C(\varepsilon)\int_{\Omega}(v-\bar{v})^2 \mathrm{d}x
\end{aligned}
\end{equation*}
for any $\varepsilon=\varepsilon(a,b,\Omega)>0$.
Similarly to deal with the equation of $v$ in \eqref{eqA}, we obtain
\begin{equation*}
\begin{aligned}
\frac{d_2}{r}\int_{\Omega}|\nabla(v-\bar{v})|^2 \mathrm{d}x&=\int_{\Omega}[1-\frac{(v+\bar{v})\bar{u}}{u\bar{u}}](v-\bar{v})^2+\frac{\bar{v}^2}{u\bar{u}}(u-\bar{u})(v-\bar{v}) \mathrm{d}x\\
&\leq\varepsilon \int_{\Omega}(u-\bar{u})^2 \mathrm{d}x + [1+C(\varepsilon)]\int_{\Omega}(v-\bar{v})^2 \mathrm{d}x
\end{aligned}
\end{equation*}
Summing up the above two estimates and using the Poinc\'{a}re inequality, we get
 \begin{equation*}
 \begin{aligned}
d_1\int_{\Omega}|\nabla(u\!-\!\bar{u})|^2 \mathrm{d}x &+ \frac{d_2}{r}\int_{\Omega}|\nabla(v-\bar{v})|^2 \mathrm{d}x&\\&\leq  (\mathcal{D}_1+\varepsilon)\int_{\Omega}|\nabla(u\!-\!\bar{u})|^2 \mathrm{d}x + [\frac{1}{\lambda_1}+C(\varepsilon)]\int_{\Omega}|\nabla(v-\bar{v})|^2 \mathrm{d}x.
 \end{aligned}
 \end{equation*}

 It follows that, if $d_1>\mathcal{D}_1$, then there exists a ${R}={R}(d_1,d_2,a,b,\Omega)$ such that \eqref{eqA} has only the positive constant solution $(u,v)=(u_0,v_0)$ when $r<{R}$. The proof is completed.
\end{proof}
Based on the above lemmas, we prove the Theorem \ref{th1} as follows.
\begin{proof}[\bf Proof of Theorem \ref{th1}]
	Since $D_n(\lambda,r,0)$ and $T_n(\lambda,r,0)$ about $r$  are the  increasing and decreasing function respectively, we have $D_n(\lambda,r,0)>0$, $T_{n}(\lambda,r,0)<0$ for $r>r_*, n\in  \mathbb{N}_0$ and $D_{n_T}(\lambda,r,0)<0$ for $r<r_*.$ That proved the result of (1).

Now, we prove (2).  Assume that $\gamma(r)$ is the eigenvalue of the characteristic equation \eqref{eqC} with $\tau=0$, which satisfies $\gamma(r_*)=0$ and $G_{n_T}(\gamma(r),r,0)=0$ with $r$ close to $r_*$. Due to the fact that
\begin{equation*}
\frac{\partial}{\partial r}\gamma(r_*)= \frac{d_1\frac{n_T^2}{l^2}-A_0-B_0}{A_0-(d_1+d_2)^2\frac{n_T^2}{l^2}-r_*}\neq 0,
\end{equation*}
we conclude that the system \eqref{eqA} undergoes a Turing bifurcation at $r=r_*$. For the remainder part, we use the global bifurcation theory (see \cite{yi2009bifurcation}) and take $I=(0,\infty)$. Form the Lemma \eqref{lem2} and the elliptic regularity theory, any positive steady state of \eqref{eqA} are uniformly bounded in $X$. Hence the global branch $\mathcal{B}$ is bounded in $X$ and $\mathcal{B}\cap \mathbb{R}_+\times \{0\}=\emptyset$. If, in addition $ d_1>l^2[1-2\mathcal{M}-\frac{ab\mathcal{M}}{(b+1)^2}]$, then from Lemma \ref{lem3} we have the projection of $\mathcal{B}$ onto $r$-axis cannot contains the interval $(0,r_*)$, which completes the proof of Theorem \ref{th1}.
\end{proof}

When $\tau>0$, since $G_{n}(0,r,\tau)= D_{n}(\lambda,r,0)=0$, we obtain that zero is always a eigenvalue of the characteristic equation \eqref{eqC} with $r=r_*$.
We can get a deeper conclusion in the following theorem.
\begin{theorem}\label{theorem2}
Suppose that $a\leq\dfrac{(b+1)^{2}}{2(1-b)}$ and the condition $({{\mathbf{A}}}6^{''})$ hold in the Holling-Tanner system \eqref{eqA}.
Let \begin{equation}
\tau_0:=\frac{r_*+(d_1+d_2)\frac{n_T^2}{l^2}-A_0}{r_*(d_1\frac{n_T^2}{l^2}-A_0-B_0)}>0.\quad
\end{equation}
We have the following results.
\begin{enumerate}[(1)]
	\item The characteristic equation \eqref{eqC} has a simple zero eigenvalue when $r=r_*$ and $0\leq\tau\neq\tau_0.$ Moreover, if other eigenvalues have non-zero real part, then the system \eqref{eqA} undergoes a Turing bifurcation at $(r_*,\tau)$.
	\item The characteristic equation \eqref{eqC} has a double zero eigenvalues when $(r,\tau)=(r_*,\tau_0)$. Moreover, if other eigenvalues have non-zero real part, then
	the system \eqref{eqA} undergoes a {Bogdanov-Takens} bifurcation at $(r_*,\tau_0)$.
\end{enumerate}
\end{theorem}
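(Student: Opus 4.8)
The plan is to push everything through the scalar characteristic functions $G_n(\lambda,r,\tau)$ in \eqref{characteristic}, exploiting that the delay enters only via $e^{-\lambda\tau}$, which equals $1$ at $\lambda=0$. Hence $G_n(0,r,\tau)=D_n(\lambda,r,0)$ is independent of $\tau$, and by Lemma \ref{lemma 1} and the discussion following it, under $({\mathbf{A}}6'')$ this quantity vanishes at $r=r_*$ exactly when $n=n_T$ and is strictly positive for $n\ne n_T$. So $\lambda=0$ is an eigenvalue of \eqref{eqC} at $r=r_*$ for every $\tau\ge0$, carried only by the $n_T$-th Fourier mode, and I would record once that its geometric multiplicity is $1$: the $2\times2$ matrix $\mathbf{\Delta}_{n_T}(0)=\frac{n_T^2}{l^2}D-\begin{pmatrix}A_0&B_0\\ r_*&-r_*\end{pmatrix}$ is singular (its determinant is $G_{n_T}(0,r_*,\cdot)=0$) but nonzero.

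For part (1), differentiating once gives
\[
\frac{\partial G_{n_T}}{\partial\lambda}(0,r_*,\tau)=\Big(r_*+(d_1+d_2)\tfrac{n_T^2}{l^2}-A_0\Big)-r_*\tau\Big(d_1\tfrac{n_T^2}{l^2}-A_0-B_0\Big)=r_*\Big(d_1\tfrac{n_T^2}{l^2}-A_0-B_0\Big)(\tau_0-\tau),
\]
which is the source of the formula for $\tau_0$ and vanishes iff $\tau=\tau_0$. So for $0\le\tau\ne\tau_0$ the value $\lambda=0$ is a simple root of $G_{n_T}$, hence — with the previous paragraph — an algebraically and geometrically simple zero eigenvalue of \eqref{eqC}. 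By the implicit function theorem one gets a $C^1$ branch $\gamma(r)$ of eigenvalues with $\gamma(r_*)=0$ and $\gamma'(r_*)=-\partial_rG_{n_T}(0,r_*,\tau)/\partial_\lambda G_{n_T}(0,r_*,\tau)=1/\big(r_*(\tau-\tau_0)\big)\ne0$ (which recovers, at $\tau=0$, the expression used in the proof of Theorem \ref{th1}). Since this eigenvalue is real and sits in the mode $n_T\ge1$ (note $r_0^T=0<r_*$, so $n_T\ne0$), a simple real eigenvalue crosses transversally, and if the other eigenvalues have non-zero real part at $r=r_*$ the Crandall--Rabinowitz/global steady-state bifurcation argument of Theorem \ref{th1} gives a Turing bifurcation at $(r_*,\tau)$.

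For part (2), at $(r,\tau)=(r_*,\tau_0)$ we have $G_{n_T}(0,r_*,\tau_0)=\partial_\lambda G_{n_T}(0,r_*,\tau_0)=0$, so the decisive quantity is
\[
\frac{\partial^2 G_{n_T}}{\partial\lambda^2}(0,r_*,\tau_0)=2-2r_*\tau_0+r_*\tau_0^2\Big(d_1\tfrac{n_T^2}{l^2}-A_0-B_0\Big),
\]
which one checks to be nonzero; then $\lambda=0$ is a zero of $G_{n_T}$ of order exactly $2$, and since its geometric multiplicity is $1$ and no other mode contributes a zero, \eqref{eqC} has a double zero eigenvalue forming a single $2\times2$ Jordan block. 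To promote this to a Bogdanov-Takens bifurcation I would check transversality of the two-parameter unfolding: with $g_0(r,\tau):=G_{n_T}(0,r,\tau)$ and $g_1(r,\tau):=\partial_\lambda G_{n_T}(0,r,\tau)$, the Jacobian
\[
\frac{\partial(g_0,g_1)}{\partial(r,\tau)}\Big|_{(r_*,\tau_0)}=\begin{pmatrix} d_1\frac{n_T^2}{l^2}-A_0-B_0 & 0 \\ 1-\tau_0\big(d_1\frac{n_T^2}{l^2}-A_0-B_0\big) & -r_*\big(d_1\frac{n_T^2}{l^2}-A_0-B_0\big)\end{pmatrix}
\]
has determinant $-r_*\big(d_1\frac{n_T^2}{l^2}-A_0-B_0\big)^2\ne0$, the factor $d_1\frac{n_T^2}{l^2}-A_0-B_0$ being positive because $\tau_0>0$ has numerator $r_*+(d_1+d_2)\frac{n_T^2}{l^2}-A_0>0$ (using $\max_nr_n^H=A_0<r_*$ from Lemma \ref{lemma 1}). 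Thus $(r,\tau)\mapsto(g_0,g_1)$ is a local diffeomorphism at $(r_*,\tau_0)$, which is the transversality required; reducing \eqref{eqB} to the two-dimensional centre manifold governed by the $n_T$-mode and invoking the Bogdanov-Takens theorem then yields the claimed bifurcation, provided the remaining eigenvalues have non-zero real part.

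The step I expect to be the actual obstacle is the non-degeneracy bookkeeping rather than anything conceptual: one must extract from $({\mathbf{A}}6'')$ (and the definitions of $n_T$, $x_\pm$, $l_n^\pm$) that $d_1\frac{n_T^2}{l^2}-A_0-B_0>0$, and, more delicately, that $\partial_\lambda^2 G_{n_T}(0,r_*,\tau_0)\ne0$; after using $r_*\tau_0\big(d_1\frac{n_T^2}{l^2}-A_0-B_0\big)=r_*+(d_1+d_2)\frac{n_T^2}{l^2}-A_0$ the latter becomes $\tau_0\ne 2\big/\big(r_*+A_0-(d_1+d_2)\frac{n_T^2}{l^2}\big)$, which is the one place a genuine non-degeneracy assumption (or a short argument excluding it under $({\mathbf{A}}6'')$) is needed. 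A secondary, routine point — the nonlinear non-degeneracy of the Bogdanov-Takens normal form — follows from computing the quadratic coefficients of $F$ in \eqref{eqF} along the $n_T$-eigenfunction, and I would not carry it out here.
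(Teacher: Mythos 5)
Your proposal follows essentially the same route as the paper's proof: both reduce everything to the scalar function $G_{n_T}(\lambda,r,\tau)$, use that $G_{n_T}(0,r,\tau)$ is independent of $\tau$ and vanishes at $r=r_*$ only in the mode $n=n_T$, locate $\tau_0$ as the unique zero of $\tau\mapsto\partial_\lambda G_{n_T}(0,r_*,\tau)$, and verify transversality in $r$ by the implicit function theorem (your $\gamma'(r_*)=1/(r_*(\tau-\tau_0))$ agrees with the paper's expression). Two points of genuine difference are worth recording. First, your second derivative $\partial_\lambda^2 G_{n_T}(0,r_*,\tau)=2-2r_*\tau+r_*\tau^2\bigl(d_1\tfrac{n_T^2}{l^2}-A_0-B_0\bigr)$ is the correct one; the paper states it with $+2r_*\tau$ and deduces positivity from that sign error. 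Consequently your observation that the nonvanishing of this quantity at $\tau=\tau_0$ is a genuine non-degeneracy condition, reducing to $\tau_0\neq 2/\bigl(r_*+A_0-(d_1+d_2)\tfrac{n_T^2}{l^2}\bigr)$ and not obviously guaranteed by $({\mathbf{A}}6'')$, identifies a real gap in the paper's own argument rather than in yours. Second, your $2\times2$ Jacobian check of the unfolding $(r,\tau)\mapsto\bigl(G_{n_T}(0,r,\tau),\partial_\lambda G_{n_T}(0,r,\tau)\bigr)$, with determinant $-r_*\bigl(d_1\tfrac{n_T^2}{l^2}-A_0-B_0\bigr)^2\neq0$, supplies the Bogdanov--Takens transversality at $(r_*,\tau_0)$ itself; the paper only verifies $\partial_r\gamma(r_*,\tau)\neq0$ for $\tau\neq\tau_0$ and gives no transversality statement at the double-zero point, so this step strengthens rather than merely reproduces the published proof.
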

\begin{proof}  When $\tau=\tau_0$, since
	\begin{equation}
	\begin{aligned}
	&\frac{\partial}{\partial \lambda}G_{n_T}(0,r_*,\tau_0)=[r_*+(d_1+d_2)\frac{n_T^2}{l^2}-A_0]- r_*(d_1\frac{n_T^2}{l^2}-A_0-B_0)\tau_0=0,\\
	&\frac{\partial^2}{\partial \lambda^2}G_{n_T}(0,r_*,\tau)=2+2r_*\tau+r_*(d_1\frac{n_T^2}{l^2}-A_0-B_0)\tau^2>0,
	\end{aligned}
	\end{equation}
we have zero is a eigenvalue of (algebraic) multiplicity two of the characteristic equation \eqref{eqC}.

When $\tau\neq\tau_0$ and $r$ close to $r_*$, apply the implicit function theorem to $G_{n_T}(\lambda,r,\tau)=0$, we can assume that $\gamma(r,\tau)$ is the eigenvalue of the characteristic equation \eqref{eqC} and  satisfies $\gamma(r_*,\tau)=0$ and $G_{n_T}(\gamma(r,\tau),r,\tau)=0$. Then we have
\begin{equation}
\begin{aligned}
\frac{\partial}{\partial r}\gamma(r_*,\tau)=\frac{d_1\frac{n_T^2}{l^2}-A_0-B_0}{A_0-(d_1+d_2)^2\frac{n_T^2}{l^2}-r_*+r_*\tau(d_1\frac{n_T^2}{l^2}-A_0-B_0)}\neq 0,\\
\end{aligned}
\end{equation}
 which satisfies the transversal condition. We complete the proof.
\end{proof}

\begin{example}\label{exa1}
Let $d_1=0.5$, $d_2=8.0$, $a=1$, $b=0.1$ and $l=5.0$, which are satisfy the condition $({{\mathbf{A}}}6^{''})$. From calculation, we have ${n_T}=2$, $r_*=0.4268>0.2625=A_0$ and $\tau_0=6.5248$.
\end{example}
\subsection{Turing-Hopf bifurcation}
In the following, we further investigate the impact of time delay on dynamics of system  \eqref{eqA}.
Assume that $\mathrm{i}\omega$$(\omega>0)$ is a pure imaginary eigenvalue of \eqref{eqC} and substitute it into \eqref{characteristic}. After the separation of the real and imaginary parts, we obtain that $\omega$ satisfies the following equations for some $n\in\mathbb{N}_0$,
\begin{equation}\label{eqw}
\left\{
\begin{aligned}
&\omega^{4}-P_{n}(r)\omega^{2}+Q_{n}(r)=0,\\
&\cos\omega\tau=C_n(\omega,r),\quad
\sin\omega\tau=S_n(\omega,r),
\end{aligned}
\right.
\end{equation}
with
\begin{equation}\label{eqPnQn}
\begin{aligned}
&P_{n}(r)
=-(d_{1}^{2}+d_{2}^{2})\frac{n^{4}}{l^{4}}+2d_{1}A_{0}\frac{n^{2}}{l^{2}}+r^{2}-A_{0}^{2}\\
&Q_{n}(r)=-(r-r_n^T)(r+r_n^T)(d_{1}\frac{n^{2}}{l^{2}}-A_{0}-B_{0})^{2},\\
&C_n(\omega,r)=\frac{-(B_{0}+d_{2}\frac{n^{2}}{l^{2}})\omega^{2}-d_{2}\frac{n^{2}}{l^{2}}(d_{1}\frac{n^{2}}{l^{2}}-A_{0})(d_{1}\frac{n^{2}}{l^{2}}-A_{0}-B_{0})}{r[\omega^{2}+(d_{1}\frac{n^{2}}{l^{2}}-A_{0}-B_{0})^{2}]},\\
&S_n(\omega,r)=\frac{\omega\{\omega^{2}+(d_{1}\frac{n^{2}}{l^{2}}-A_{0})^{2}+B_{0}[A_{0}-(d_{1}+d_{2})\frac{n^{2}}{l^{2}}]\}}{r[\omega^{2}+(d_{1}\frac{n^{2}}{l^{2}}-A_{0}-B_{0})^{2}]}.
\end{aligned}
\end{equation}
The existence of positive roots of \eqref{eqw} can be characterized as: (1) if one of the conditions  ${\bf(C1)}\, Q_{n}(r)<0$, ${\bf(C2)}\, Q_{n}(r)=0,\, P_{n}(r)>0$ or ${\bf(C3)}\, Q_{n}(r)>0,\, P_{n}(r)=2\sqrt{Q_n}(r) $ is satisfied, then  \eqref{eqw} has one positive root;
(2) if ${\bf(C4)}\, Q_{n}(r)>0,\, P_{n}(r)>2\sqrt{Q_n(r)}$ is hold, then \eqref{eqw} has two positive roots.

Firstly, we define two auxiliary functions
\begin{equation}
\begin{aligned}
&P(x,r):=-(d_{1}^{2}+d_{2}^{2})x^2+2d_{1}A_{0}x+r^{2}-A_{0}^{2}, &\quad x\geq 0,\;r>0,\\
&Q(x,r)
:=d_{1}d_{2}x^2-(d_{1}r+\!d_{2}A_{0})x+\!r(A_{0}+B_{0}),&\quad x\geq 0,\;r>0.\\
\end{aligned}
\end{equation}
They are satisfy $P(\frac{n^2}{l^2},r)=P_n(r)$ and $Q(\frac{n^2}{l^2},r)=-(r\!+\!r_{n}^{T})(d_{1}\frac{n^{2}}{l^{2}}\!-\!A_{0}\!-\!B_{0})$.
The zero roots of the equations $P(x,r)=0$ and $Q(x,r)=0$ are
\begin{equation}
\begin{aligned}
&x_{P}(r):=\frac{d_1A_0+\sqrt {r^2(d_1^2+d_2^2)-d_2^2A_0^2}}{d_1^2+d_2^2},\\
&x_Q(r):=\frac{d_1r+d_2A_0+\sqrt {(d_1r-d_2A_0)^2-4d_1d_2rB_0}}{2d_1d_2},
\end{aligned}
\end{equation}
respectively.
We can get the following result about the existence of the pure imaginary eigenvalues.
\begin{lemma}\label{lem_w} Assume that $a>\dfrac{(b+1)^2}{2(1-b)}$ and $({{\mathbf{A}}}6^{''})$ hold in the Holling-Tanner system \eqref{eqA}.
Then there exist two  positive integers $N_1\leq N_Q(r)$ such that $$ {N_1}\sqrt{\frac{d_1}{A_0}}<l\leq {(N_1+1)}\sqrt{\frac{d_1}{A_0}} \quad \textit{and}\quad {N_Q(r)}\sqrt{\frac{1}{x_Q(r)}}<l\leq {(N_Q(r)+1)}\sqrt{\frac{1}{x_Q(r)}} .$$
The characteristic equation \eqref{eqC} has $N_Q(r)+1$ pairs of  of pure imaginary eigenvalues when $r>r_*$, and at most $N_1+N_Q(r)+1$ pairs of  of pure imaginary eigenvalues when $r<r_*$. More precisely, the characteristic equation \eqref{eqC} has $card(S_0)$ pairs of pure imaginary eigenvalues
$\pm\mathrm{i}\omega_{n}(r_*)$, when $(r,\tau)=(r_*,\tau_n^{(k)}(r_*))$ and $n\in S_0$.
Here $\omega_{n}(r)=\left[\frac{Pn(r)+\sqrt{Pn(r)^2-4Q_n(r)}}{2}\right]^{\frac{1}{2}}$, and
	\begin{equation}
	\tau_{n}^{(k)}(r)=\left\{
	\begin{aligned}
	&\frac{1}{\omega_{n}}[\arccos C_n(\omega_{n},r)+2k\pi],&  &S_n(\omega_{n})>0,&\\         &\frac{1}{\omega_{n}}[-\arccos C_n(\omega_{n},r)+2(k+1)\pi],&  &S_n(\omega_{n})<0,&\\
	\end{aligned}
	\right.
	\end{equation}
	and $S_0$ is a set defined by
	\begin{equation}
	S_0 = \left\{
	\begin{aligned}
	&\{n\in\mathbb{N}_0: 0\leq n\leq N_Q(r_*) \; \mathit{and}\; n\neq {n_T}\},& &\mathit{when}\; l \leq {n_T}\sqrt{\frac{1}{x_P(r_*)}},&\\
	&\{n\in\mathbb{N}_0: 0\leq n\leq N_Q(r_*)\},& &\mathit{when}\; l > {n_T}\sqrt{\frac{1}{x_P(r_*)}}.&\\
	\end{aligned}
	\right.
	\end{equation}
\end{lemma}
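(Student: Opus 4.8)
The plan is to count the pure imaginary eigenvalues of the characteristic equation \eqref{eqC} mode by mode, i.e.\ to study for each wave number $n\in\mathbb{N}_0$ whether the quartic–plus–transcendental system \eqref{eqw} admits positive solutions $\omega$, and then to package the answer in terms of the sign of $Q_n(r)$ and $P_n(r)$. First I would rewrite $Q_n(r)=Q(\frac{n^2}{l^2},r)$ as $-(r+r_n^T)(d_1\frac{n^2}{l^2}-A_0-B_0)$ and observe that under $({{\mathbf{A}}}6^{''})$ one has $d_1\frac{n^2}{l^2}-A_0-B_0>0$ for all $n$ (since $A_0+B_0<0$ there by the computation preceding Lemma~\ref{lemma 1}, or more precisely because $x_-$ is the relevant positive root; this monotonicity in $n$ is exactly what makes $Q_n$ negative in a finite index window and positive outside it). Concretely, $Q_n(r)<0$ iff $r>r_n^T$ — wait, one must be careful with signs: $Q_n(r)=-(r-r_n^T)(r+r_n^T)(d_1\frac{n^2}{l^2}-A_0-B_0)^2$ from \eqref{eqPnQn}, so $Q_n(r)<0\iff r^2>{(r_n^T)}^2\iff r>|r_n^T|$. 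Thus the count of $n$ for which condition $\mathbf{(C1)}$ holds is governed by how many $n$ satisfy $r>|r_n^T|$; equivalently, via the substitution $x=\frac{n^2}{l^2}$, by how many lattice points $\frac{n^2}{l^2}$ lie to the left of the larger root $x_Q(r)$ of $Q(x,r)=0$. This is precisely the role of $N_Q(r)$ defined through $N_Q(r)\sqrt{1/x_Q(r)}<l\le (N_Q(r)+1)\sqrt{1/x_Q(r)}$: the integers $n=0,1,\dots,N_Q(r)$ are exactly those with $\frac{n^2}{l^2}<x_Q(r)$, hence with $Q_n(r)<0$, hence (by the $\mathbf{(C1)}$ clause) contributing one pair of pure imaginary eigenvalues each.

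Second, for $r>r_*$ I would argue that for every such $n\le N_Q(r)$ the first equation of \eqref{eqw}, $\omega^4-P_n(r)\omega^2+Q_n(r)=0$, has a unique positive root $\omega_n(r)$ because $Q_n(r)<0$ makes the product of the two roots-in-$\omega^2$ negative, so exactly one is positive, namely $\omega_n(r)^2=\frac{P_n(r)+\sqrt{P_n(r)^2-4Q_n(r)}}{2}$; and for $n>N_Q(r)$ one has $Q_n(r)>0$, and I would check (using that $P(x,r)$ is a downward parabola in $x$ with $P(0,r)=r^2-A_0^2>0$ since $r>r_*>A_0$, and that for $x\ge x_Q(r)$ the discriminant $P_n^2-4Q_n$ stays negative — this is the delicate inequality and where most of the work lies) that none of $\mathbf{(C2)},\mathbf{(C3)},\mathbf{(C4)}$ can hold, giving no extra crossings. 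This yields exactly $N_Q(r)+1$ pairs. For $r<r_*$ the bound $r^2<(r_{n_T}^T)^2=r_*^2$ no longer forces $Q_{n_T}(r)<0$, so the index window where $Q_n(r)<0$ can shrink by the single index $n=n_T$ while, on the other hand, on the complementary range condition $\mathbf{(C4)}$ ($Q_n>0$ and $P_n>2\sqrt{Q_n}$) can now be met because $P_n(r)$ is larger (more precisely for $n$ with $\frac{n^2}{l^2}<x_P(r)$, equivalently $n\le N_1$ since $x_P(r_*)$ reduces to $A_0/d_1$ and $N_1\sqrt{d_1/A_0}<l$), each contributing two roots; summing gives the stated upper bound $N_1+N_Q(r)+1$. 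The ``at most'' in the statement absorbs the possibility that some of those $\mathbf{(C4)}$-windows overlap the $Q_n<0$ window or that boundary cases $\mathbf{(C2)},\mathbf{(C3)}$ merge roots.

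Third, I would specialize to $r=r_*$. Here $Q_{n_T}(r_*)=0$ exactly (this is where the Turing mode sits), so mode $n_T$ is in the borderline case: whether it contributes a pure imaginary pair is decided by $\mathbf{(C2)}$, i.e.\ by the sign of $P_{n_T}(r_*)=P(\frac{n_T^2}{l^2},r_*)$, which by the parabola shape of $P(\cdot,r_*)$ is positive iff $\frac{n_T^2}{l^2}<x_P(r_*)$, i.e.\ iff $l>n_T\sqrt{1/x_P(r_*)}$. This is exactly the dichotomy in the definition of $S_0$: when $l\le n_T\sqrt{1/x_P(r_*)}$ we exclude $n_T$, otherwise we keep it. For every $n\in S_0$ one then defines $\omega_n(r_*)$ by the same formula and solves $\cos\omega_n\tau=C_n(\omega_n,r_*)$, $\sin\omega_n\tau=S_n(\omega_n,r_*)$ for $\tau$; since $C_n^2+S_n^2=1$ is an identity once $\omega_n$ solves the quartic (a short algebraic verification I would include), the solution set is the arithmetic progression $\tau_n^{(k)}(r_*)$ written in the lemma, with the two cases according to $\mathrm{sgn}\,S_n(\omega_n)$ to pin down the correct branch of $\arccos$. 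Thus at $(r,\tau)=(r_*,\tau_n^{(k)}(r_*))$ the characteristic equation has the pair $\pm\mathrm{i}\omega_n(r_*)$, and running over $n\in S_0$ gives $\mathrm{card}(S_0)$ such pairs (distinctness of the $\omega_n$ across different $n$, hence no accidental coincidences, follows from strict monotonicity of $x\mapsto\omega(x)$, which I would note).

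The main obstacle I expect is not the bookkeeping but the inequality controlling when $\mathbf{(C3)}$/$\mathbf{(C4)}$ can occur, i.e.\ showing that for $r>r_*$ and $\frac{n^2}{l^2}\ge x_Q(r)$ one has $P_n(r)^2-4Q_n(r)<0$ (so no spurious imaginary eigenvalues beyond the $N_Q(r)+1$ counted), and dually, for $r<r_*$, locating precisely the $x$-range where $P_n>2\sqrt{Q_n}$ and tying its integer count to $N_1$. This amounts to analyzing the single-variable function $x\mapsto P(x,r)^2-4Q(x,r)$ — a quartic in $x$ — and using $({{\mathbf{A}}}6^{''})$ (in particular $d_2>d_1$ and the bounds on $a,b$) to control the sign of its leading and lower-order coefficients; I would expect this to reduce, after factoring out the common factor coming from $Q=-(r+r_n^T)(d_1\frac{n^2}{l^2}-A_0-B_0)$, to comparing $x_P(r)$ with $x_Q(r)$, which is why both thresholds appear in the statement. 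Everything else — the unique-positive-root argument when $Q_n<0$, the identity $C_n^2+S_n^2=1$, and the branch selection for $\tau_n^{(k)}$ — is routine.
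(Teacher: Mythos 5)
Your overall strategy is the same as the paper's --- reduce to the quartic $\omega^{4}-P_{n}(r)\omega^{2}+Q_{n}(r)=0$ mode by mode, count crossings via the sign pattern of $(P_n,Q_n)$, and settle the borderline mode $n_T$ at $r=r_*$ by the sign of $P_{n_T}(r_*)$, which is precisely the dichotomy on $l$ versus $n_T\sqrt{1/x_P(r_*)}$ in the definition of $S_0$ --- and the parts you call routine (one positive root when $Q_n<0$, the identity $C_n^2+S_n^2=1$, branch selection for $\tau_n^{(k)}$) are handled the same way there. But the step you single out as ``where most of the work lies'' is organized around an inequality that is false. For $n\geq N_Q(r)+1$ you propose to show that the discriminant $P_n(r)^2-4Q_n(r)$ stays negative. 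Writing $x=\frac{n^2}{l^2}$, one has $P_n^2\sim(d_1^2+d_2^2)^2x^4$ while $4Q_n\sim 4d_1^2d_2^2x^4$ as $n\to\infty$, so $P_n^2-4Q_n\sim(d_1^2-d_2^2)^2x^4>0$ (recall $d_2>d_1$ under $({\mathbf{A}}6'')$): the quadratic in $\omega^2$ has two real roots for all large $n$. What excludes a positive $\omega$ is not the discriminant but the signs of the sum and product of those roots: for $n\geq N_Q(r)+1$ one has $r\leq -r_n^T$, hence $Q_n(r)\geq0$ and $P_n(r)\leq P_n(|r_n^T|)<0$, so both roots in $\omega^2$ are nonpositive whatever the discriminant does. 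This is exactly how the paper disposes of the tail modes, and your discriminant route would not close.

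The second soft spot is your link between the $\mathbf{(C4)}$-window and $N_1$. You justify ``$\frac{n^2}{l^2}<x_P(r)$, equivalently $n\leq N_1$'' by asserting that $x_P(r_*)$ reduces to $A_0/d_1$; it does not: $x_P$ is the positive root of $P(x,r)=0$, so for instance $x_P(A_0)=2d_1A_0/(d_1^2+d_2^2)\neq A_0/d_1$ unless $d_1=d_2$, and $P(A_0/d_1,r)=r^2-(d_2A_0/d_1)^2$ has no fixed sign. The threshold $\sqrt{d_1/A_0}$ defining $N_1$ comes from $d_1\frac{n^2}{l^2}<A_0$, i.e.\ from $r_n^T>0$, not from $x_P$. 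The paper's (correct) accounting is: within $0\leq n\leq N_Q(r)$, the inequality $Q_n(r)\geq0$ forces $r\leq r_n^T$, hence $r_n^T>0$, hence $n\leq N_1$; so at most $N_1$ modes can contribute two pairs via $\mathbf{(C4)}$ and the remaining $N_Q(r)+1-N_1$ contribute one each, giving the bound $N_1+N_Q(r)+1$. (Relatedly, for $r<r_*$ the indices leaving the $Q_n<0$ window are all $1\leq n\leq N_1$ with $r\leq r_n^T$, not just the single index $n_T$, although this does not affect the final ``at most'' bound.)
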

\begin{proof} Our proof based on the existence of the positive roots of \eqref{eqw}, one can refer to Table \ref{tab1} to get a more intuitive understanding. First of all, we declare $N_1\leq N_Q(r)$ since $x_Q(r)-\frac{A_0}{d_1}>0$.
\begin{table}[h]
	\caption{The existence of the pure imaginary eigenvalues}\label{tab1}
	\vskip 0.2cm
	\renewcommand\arraystretch{1.1}
	\centering
\begin{tabular}{|c|c|c|c|}
\hline
\rowcolor{mygray}
\hline
{$n=0$} &{$1\leq n\leq N_1$}&{ $N_1+1\leq n\leq  N_Q(r)$} & {$n\geq N_Q(r)+1$}\\
\hline
\multirow{4}{*}{$Q_n(r)<0$}&\multirow{2}{*}{$r_n^T>0$,}&\multirow{2}{*}{$r_n^T\leq 0$, $r>-r_n^T$}&  \multirow{2}{*}{$r_n^T<0<r\leq-r_n^T$}\\
&&       &  \\
&If $r>r_n^T$: $Q_n(r)<0$&\multirow{2}{*}{$Q_n(r)<0$}&\multirow{2}{*}{$Q_n(r)\geq 0, P_n(r)< 0$}\\
&If $r\leq r_n^T$: $Q_n(r)\geq 0$&&\\
\cline{1-3}
\hline
\end{tabular}
\end{table}

When $n\geq N_Q(r)+1$, we have $Q(\frac{n^2}{l^2},r)>0$, $r_n^T<0$ and then $Q_n(r)\geq 0$. In addition, due to the fact that $P_n(r)\leq -(d_{1}^{2}+d_{2}^{2})\frac{n^{4}}{l^{4}}+2d_{1}A_{0}\frac{n^{2}}{l^{2}}+(r_n^T)^{2}-A_{0}^{2}<0$, we claim that \eqref{eqw} has no positive root. Adopting the same method for $0\leq n\leq N_Q(r)$, we obtain that there exist $N_Q(r)+1$ pairs of of pure imaginary eigenvalues when $r>r_*$, since $Q_n(r)<0$. Meanwhile, there exist at most $N_1+N_Q(r)+1$ pairs of of pure imaginary eigenvalues when $r< r_*$, since $Q_n(r)\geq 0$ only when $1\leq n\leq N_1$.

If $r=r_*$, more accurately, we have that $Q_{n_T}(r_*)=0$, $P_{n_T}(r_*)>0$ when $l > {n_T}\sqrt{\frac{1}{x_P(r_*)}}$, and $Q_{n_T}(r_*)=0$, $P_{n_T}(r_*)\leq 0$ when $l\leq {n_T}\sqrt{\frac{1}{x_P(r_*)}}$. Thus the characteristic equation \eqref{eqC} has $card(S_0)$ pairs of pure imaginary eigenvalues $\pm\mathrm{i}\omega_n$ when $(r,\tau)=(r_*,\tau_n^{(k)}(r_*))$, with $\omega_n(r)$ and $\tau_n^{(k)}(r)$  are obtained by \eqref{eqw}. The proof is completed.
\end{proof}
In fact, we can learn from the proof of the Lemma \ref{lem_w} that $\pm\mathrm{i}\omega_{n}(r)$, $n\leq N_Q(r)$ are the entire pure imaginary eigenvalues of the characteristic equation \eqref{eqw}, when $r>r_*$. After a few straightforward calculations, we have the following transversality conditions.
\begin{lemma}
	Assume $a>\dfrac{(b+1)^2}{2(1-b)}$ and $({{\mathbf{A}}}6^{''})$ hold in the Holling-Tanner system \eqref{eqA}. Let $\lambda_{n,\pm}(r,\tau)=\alpha_{n}(r,\tau)\pm\mathrm{i}\omega_{n}(r,\tau)$ are the eigenvalues of \eqref{eqC} that satisfy $\alpha_{n}(r,\tau_n^{(k)}(r))=0$, $\omega_{n}(r,\tau_n^{(k)}(r))=\omega_{n}(r)>0$  $(k\in \mathbb{N}_0)$.
Then we have
$$\frac{\partial}{\partial \tau}\alpha_{n}(r,\tau_n^{(k)}(r))>0.$$
\end{lemma}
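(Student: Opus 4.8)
The plan is to run the classical implicit–differentiation argument for transversality of a delay–induced Hopf bifurcation. First I would rewrite the $n$-th characteristic function \eqref{characteristic}, using \eqref{eqTD}, in the split form
\begin{equation*}
G_n(\lambda,r,\tau)=p_n(\lambda)+q_n(\lambda)\,e^{-\lambda\tau}=0,
\end{equation*}
where $p_n(\lambda)=\lambda^2-\bigl(A_0-(d_1+d_2)\tfrac{n^2}{l^2}\bigr)\lambda+d_2\tfrac{n^2}{l^2}\bigl(d_1\tfrac{n^2}{l^2}-A_0\bigr)$ and $q_n(\lambda)=r\bigl(\lambda+d_1\tfrac{n^2}{l^2}-A_0-B_0\bigr)$ are read off directly from \eqref{eqTD}. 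Differentiating $G_n(\lambda(\tau),r,\tau)=0$ with respect to $\tau$ (with $r$ fixed), solving for $\mathrm{d}\lambda/\mathrm{d}\tau$, and then eliminating the exponential by means of $e^{-\lambda\tau}=-p_n(\lambda)/q_n(\lambda)$, I obtain the familiar identity
\begin{equation*}
\left(\frac{\mathrm{d}\lambda}{\mathrm{d}\tau}\right)^{-1}=\frac{p_n'(\lambda)}{-\lambda\,p_n(\lambda)}+\frac{q_n'(\lambda)}{\lambda\,q_n(\lambda)}-\frac{\tau}{\lambda}.
\end{equation*}

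Next I would evaluate at $\lambda=\mathrm{i}\omega_n(r)$, $\tau=\tau_n^{(k)}(r)$. Since $\lambda_{n,+}=\alpha_n+\mathrm{i}\omega_n$, we have $\frac{\partial}{\partial\tau}\alpha_n=\mathrm{Re}\,\frac{\mathrm{d}\lambda}{\mathrm{d}\tau}$, and $\mathrm{Re}\,z$ and $\mathrm{Re}\,z^{-1}$ always have the same sign, so it suffices to compute the real part of the right-hand side above; the term $-\tau/(\mathrm{i}\omega_n)$ is purely imaginary and drops out. Computing the real parts of the two remaining quotients, and using the modulus identity $|p_n(\mathrm{i}\omega_n)|=|q_n(\mathrm{i}\omega_n)|=r\sqrt{\omega_n^2+(d_1\tfrac{n^2}{l^2}-A_0-B_0)^2}$ (valid because $\mathrm{i}\omega_n$ is a root of $G_n$), together with the elementary identity
\begin{equation*}
\bigl(A_0-(d_1+d_2)\tfrac{n^2}{l^2}\bigr)^2-2d_2\tfrac{n^2}{l^2}\bigl(d_1\tfrac{n^2}{l^2}-A_0\bigr)=(d_1^2+d_2^2)\tfrac{n^4}{l^4}-2d_1A_0\tfrac{n^2}{l^2}+A_0^2=r^2-P_n(r)
\end{equation*}
(immediate from the definition of $P_n(r)$ in \eqref{eqPnQn}), the whole expression collapses to
\begin{equation*}
\mathrm{Re}\left(\frac{\mathrm{d}\lambda}{\mathrm{d}\tau}\right)^{-1}\Bigg|_{\lambda=\mathrm{i}\omega_n}=\frac{2\omega_n^2-P_n(r)}{r^2\bigl[\omega_n^2+(d_1\tfrac{n^2}{l^2}-A_0-B_0)^2\bigr]}.
\end{equation*}
Since $\omega_n(r)=\bigl[\tfrac12\bigl(P_n(r)+\sqrt{P_n(r)^2-4Q_n(r)}\,\bigr)\bigr]^{1/2}$ is the \emph{larger} root of \eqref{eqw}, one has $2\omega_n^2-P_n(r)=\sqrt{P_n(r)^2-4Q_n(r)}\ge0$, hence $\frac{\partial}{\partial\tau}\alpha_n(r,\tau_n^{(k)}(r))\ge0$.

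Finally I would upgrade this to a strict inequality. By the case analysis in the proof of Lemma \ref{lem_w} (see also Table \ref{tab1}), every purely imaginary eigenvalue $\mathrm{i}\omega_n$ that actually occurs arises either from case $({\bf C1})$ $Q_n(r)<0$ --- in which case $P_n(r)^2-4Q_n(r)>P_n(r)^2\ge0$ --- or from case $({\bf C2})$ $Q_n(r)=0,\,P_n(r)>0$ --- in which case $\sqrt{P_n(r)^2-4Q_n(r)}=P_n(r)>0$ --- or (for $r<r_*$, $1\le n\le N_1$) from case $({\bf C4})$, where again $P_n(r)^2>4Q_n(r)$; in all these situations $P_n(r)^2-4Q_n(r)>0$, so $\mathrm{i}\omega_n$ is a simple root (equivalently $\partial_\lambda G_n(\mathrm{i}\omega_n)\neq0$), the implicit function theorem applies so the $\lambda_{n,\pm}(r,\tau)$ of the statement are well defined, and $\frac{\partial}{\partial\tau}\alpha_n(r,\tau_n^{(k)}(r))>0$ as claimed. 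The only genuine work is the bookkeeping in the middle step: computing the two real parts correctly and invoking $|p_n(\mathrm{i}\omega_n)|=|q_n(\mathrm{i}\omega_n)|$ to reduce the denominator to $r^2[\omega_n^2+(d_1\tfrac{n^2}{l^2}-A_0-B_0)^2]$; everything else is the single algebraic identity displayed above, which is already implicit in \eqref{eqw}--\eqref{eqPnQn}.
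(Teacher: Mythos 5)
Your proposal is correct and follows essentially the same route as the paper: the paper's displayed formula for $\bigl[\frac{\partial}{\partial\tau}\lambda_{n,\pm}\bigr]^{-1}$ is exactly your identity $\frac{p_n'(\lambda)}{-\lambda p_n(\lambda)}+\frac{q_n'(\lambda)}{\lambda q_n(\lambda)}-\frac{\tau}{\lambda}$ with the same $p_n,q_n$, and it collapses to the same quantity $\dfrac{\sqrt{P_n^2-4Q_n}}{r^2[\omega_n^2+(d_1\frac{n^2}{l^2}-A_0-B_0)^2]}$. If anything you are slightly more careful than the paper, since you justify strict positivity by ruling out the degenerate case $P_n^2=4Q_n$, a point the paper passes over silently.
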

\begin{proof}
Taking $\lambda_{n,\pm}(r,\tau)$ into \eqref{eqC} and doing the partial derivative about $\tau$, we have
\begin{align*}
\left[\frac{\partial}{\partial \tau}\lambda_{n,\pm}(r,\tau)\right]^{-1}=&-\frac{2\lambda_{n,\pm}+(d_1+d_2)\frac{n^2}{l^2}-A_0}{\lambda_{n,\pm}^3-[A_0-(d_1+d_2)\frac{n^2}{l^2}]\lambda_{n,\pm}^2+d_2\frac{n^2}{l^2}(d_1\frac{n^2}{l^2}-A_0)\lambda_{n,\pm}}\\
&+\frac{1}{\lambda_{n,\pm}[\lambda_{n,\pm}+(d_1\frac{n^2}{l^2}-A_0-B_0)]}-\frac{\tau}{\lambda_{n,\pm}},
\end{align*}
and
\begin{align*}
\mathrm{Re}\left[\frac{\partial}{\partial \tau}\lambda_{n,\pm}(r,\tau_n^{(k)}(r))\right]^{-1}
=&\frac{2\omega_n^2+r^2-P_n}{r^2[\omega_n^2+(d_1\frac{n^2}{l^2}-A_0-B_0)^2]}\!-\!\frac{1}{\omega_n^2+(d_1\frac{n^2}{l^2}-A_0-B_0)^2}\\
=&\frac{\sqrt{P_n^2-4Q_n^2}}{r^2[\omega_n^2+(d_1\frac{n^2}{l^2}-A_0-B_0)^2]}>0
\end{align*}
Due to the fact that
$$\mathrm{Sign}\left\{\frac{\partial}{\partial \tau}\alpha_{n}{(r,\tau_n^{(k)}(r))}\right\}=\mathrm{Sign}\left\{\mathrm{Re}\left[\frac{\partial}{\partial \tau}\lambda_{n,\pm}(r,\tau_n^{(k)}(r))\right]^{-1}\right\},$$ we complete the proof.
\end{proof}
For convenience, we denote
\begin{equation}\label{eqt}
\tau_*:=\min\limits_{n\in S_0}\tau_n^{(0)}(r_*)=\tau_{n_H}^{(0)}(r_*),\qquad \omega_*:=\omega_{n_H}(r_*),\qquad n_H\in\mathbb{N}_0,
\end{equation}  in the remainder of this article.
Based on the analysis above, we obtain the following Turing-Hopf bifurcation theorem.
\begin{theorem}\label{theorem3} For the  Holling-Tanner system \eqref{eqA}, assume that $a>\dfrac{(b+1)^2}{2(1-b)}$ and $({{\mathbf{A}}}6^{''})$ are satisfied.
	Then the constant steady state $(u_0,v_0)$ is  locally asymptotically stable when $r>r_*$ and $\tau<\min\limits_{n\leq N_Q(r)}\{\tau_n^{(0)}(r)\}$. Moreover, the system \eqref{eqA} undergoes a Turing-Hopf bifurcation at $(r_*,\tau_n^{(k)}(r_*))$, with $n\in S_0$ and $k\in \mathbb{N}_0$.
	The stable  bifurcating solutions can only bifurcated from $(r,\tau)=(r_*,\tau_*)$.
\end{theorem}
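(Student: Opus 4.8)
The plan is to treat the three claims in order, drawing on the spectral picture of Lemma~\ref{lem_w} together with the remark following it, the transversality lemma right after it, and Theorems~\ref{th1} and~\ref{theorem2}.

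For the stability claim I would continue in $\tau$ along the family $G_n(\lambda,r,\tau)=0$, $n\in\mathbb{N}_0$. Each $G_n$ is of retarded type---the leading term $\lambda^2$ carries no delay, while $e^{-\lambda\tau}$ multiplies only a degree-one polynomial---so for fixed $(r,\tau)$ there are finitely many roots in every half-plane $\{\mathrm{Re}\,\lambda\ge-c\}$ and they depend continuously on $\tau$. At $\tau=0$ and $r>r_*$, Theorem~\ref{th1}(1) places all roots of all $G_n(\cdot,r,0)$ in the open left half-plane. As $\tau$ increases a root can meet $\mathrm{i}\mathbb{R}$ only at some $\mathrm{i}\omega$ with $\omega\ge0$; the value $\omega=0$ is impossible since $G_n(0,r,\tau)=D_n(\lambda,r,0)>0$ for every $n$ whenever $r>r_*$ (monotonicity of $D_n$ in $r$, as used in Theorem~\ref{th1}), while by Lemma~\ref{lem_w} and its remark the only purely imaginary roots when $r>r_*$ are $\pm\mathrm{i}\omega_n(r)$, $0\le n\le N_Q(r)$, occurring exactly at $\tau=\tau_n^{(k)}(r)$. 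Hence no root touches $\mathrm{i}\mathbb{R}$ for $\tau<\min_{n\le N_Q(r)}\tau_n^{(0)}(r)$, and by continuity the whole spectrum stays in the open left half-plane, so $(u_0,v_0)$ is locally asymptotically stable.

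For the Turing-Hopf claim, fix $n\in S_0$, $k\in\mathbb{N}_0$ and set $(r,\tau)=(r_*,\tau_n^{(k)}(r_*))$. Since $D_{n_T}(\lambda,r_*,0)=0$ we get $G_{n_T}(0,r_*,\tau)\equiv0$, so $\lambda=0$ is an eigenvalue; by Theorem~\ref{theorem2}(1) it is algebraically simple as long as $\tau_n^{(k)}(r_*)$ is not the Bogdanov-Takens value $\tau_0$ (the generic case, cf. Example~\ref{exa1}), and Theorem~\ref{theorem2}(1) also yields $\frac{\partial}{\partial r}\gamma(r_*,\tau)\ne0$ for this zero mode. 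By Lemma~\ref{lem_w} the pair $\pm\mathrm{i}\omega_n(r_*)$ sits on $\mathrm{i}\mathbb{R}$ at this $\tau$, and it is simple because the transversality lemma exhibits $\left[\frac{\partial}{\partial\tau}\lambda_{n,\pm}\right]^{-1}$ as a finite nonzero number, whence $\frac{\partial}{\partial\lambda}G_n(\mathrm{i}\omega_n,r_*,\tau_n^{(k)}(r_*))\ne0$; the same lemma gives $\frac{\partial}{\partial\tau}\alpha_n(r_*,\tau_n^{(k)}(r_*))>0$. Combined with the non-resonance condition that no further root lies on $\mathrm{i}\mathbb{R}$ (generic, and here read off from the counting in Lemma~\ref{lem_w} under $({\mathbf{A}}6'')$), the spectral configuration $\{0\}\cup\{\pm\mathrm{i}\omega_n\}\cup\{\mathrm{Re}\,\lambda<0\}$ and the two transversality conditions are exactly the data defining a Turing-Hopf bifurcation point.

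For the last claim I would compare unstable dimensions at the critical points. At $(r_*,\tau_*)$ with $\tau_*=\tau_{n_H}^{(0)}(r_*)=\min_{n\in S_0}\tau_n^{(0)}(r_*)$, the continuation argument of the stability claim---now at $r=r_*$, where $\lambda=0$ is a persistent simple root of $G_{n_T}$ for $\tau<\tau_0$---shows, provided $\tau_*<\tau_0$ (checked from the explicit formulas), that every nonzero root has negative real part for $\tau<\tau_*$; thus at $(r_*,\tau_*)$ the spectrum on $\mathrm{i}\mathbb{R}$ is exactly $\{0,\pm\mathrm{i}\omega_*\}$, a three-dimensional centre manifold reduction governs the local flow, and stable non-constant steady states or stable spatially inhomogeneous periodic orbits are not precluded. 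At any other critical point $(r_*,\tau_n^{(k)}(r_*))$ with $(n,k)\ne(n_H,0)$ we have $\tau_n^{(k)}(r_*)>\tau_*$, so by $\frac{\partial}{\partial\tau}\alpha_{n_H}>0$ the pair $\pm\mathrm{i}\omega_{n_H}(r_*)$ that crossed at $\tau_*$ already lies strictly in the right half-plane; the equilibrium then has an unstable subspace transverse to the centre directions, and every small solution bifurcating nearby inherits this instability. Hence stable bifurcating solutions, if any exist, can arise only at $(r_*,\tau_*)$.

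The main obstacle is the stability claim---and the same bookkeeping is reused for the last claim. One must control the whole countable family $\{G_n\}$ at once: ruling out roots coming from infinity via the retarded-type structure, ruling out a zero root via the monotonicity of $D_n$, invoking the complete classification of purely imaginary roots from Lemma~\ref{lem_w}, and verifying the ordering $\tau_*<\tau_0$ so that the Turing mode contributes no premature instability. Once the first crossing value is correctly identified, the Turing-Hopf verification reduces to assembling the transversality lemma and Theorem~\ref{theorem2}.
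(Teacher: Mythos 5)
Your proposal is correct and follows essentially the same route as the paper, which states this theorem without a written proof, relying on exactly the ingredients you assemble: Theorem~\ref{th1}(1) for the delay-free case, the classification of purely imaginary roots in Lemma~\ref{lem_w}, the transversality lemma, and Theorem~\ref{theorem2} for the zero mode. Your continuation-in-$\tau$ argument and the unstable-dimension comparison for the last claim are just the standard details the paper leaves implicit (including the same tacit genericity assumptions about non-resonance and $\tau_n^{(k)}(r_*)\neq\tau_0$).
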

\begin{example}\label{exa2}
	Let $d_1=0.5$, $d_2=8.0$, $a=1$, $b=0.1$ and $l=5.0$ as in Example \ref{exa1}. Further calculation, we have $N_Q(r_*)=4$ and $l<9.5258={n_T}\sqrt{\frac{1}{x_P}}$. Then
	$S_0=\{n\in \mathbb{N}_0: 0\leq n\leq 4\;\mathit{and}\;n\neq 2\},$ and
	\begin{equation*}
	\begin{aligned}
	&\omega_0(r_*) = 0.5138,&   &\omega_1(r_*) = 0.4514,&   &\omega_3(r_*) = 0.0495 ,&   &\omega_4(r_*) = 0.0318,& \\
	&\tau_0^{(0)}(r_*) = 0.7014,&  &\tau_1^{(0)}(r_*) = 1.9291,&  &\tau_3^{(0)}(r_*) = 12.1121,& &\tau_4^{(0)}(r_*) = 84.0058.&
	\end{aligned}
	\end{equation*}
	Moreover, we get $n_H=0$ and $\tau_*=\tau_0^{(0)}(r_*)=0.7014<\tau_0.$ Therefore, the system undergoes a Turing-Hopf bifurcation at $(r_*,\tau_*)$ and a Bogdanov-Takens bifurcation at $(r_*,\tau_0)$.
\end{example}

\section{Normal form of Turing-Hopf bifurcation}
In order to further study the detailed dynamics properties of the Holling-Tanner system with $(r,\tau)$ near the Turing-Hopf singularity $(r_*,\tau_*),$ we adopt the framework of \cite{An2017} to get the normal forms at $(r_*,\tau_*)$ up to three orders in this section. We assume that $n_H=0$ and $n_T\neq 0$, which is the most common case.

Taking the time scale $t\rightarrow {t}/{\tau}$ and
rewriting \eqref{eqB} into an equivalent system in $ \mathcal{C}:=C([-1,0],X_{\mathbb{C}})$,
\begin{equation}\label{eqE}
\frac{\mathrm{d}}{\mathrm{d}t}U(t) =D(r,\tau) \Delta U(t) +A(r,\tau)U(t)+B(r,\tau)U(t-1)+F_0(r,\tau,U^t),
\end{equation}
with $F_0(r,\tau,\phi)=\tau F(r,1,\phi)$
for $\phi\in \mathcal{C}$, and
\begin{equation}\label{eqDAB}
\begin{aligned}
D(r,\tau)\!=\! \left(
\begin{array}{cc}
\tau d_1&0\\0&\tau d_2
\end{array}\right)\!,
A(r,\tau)\!=\!& \left(
\begin{array}{cc}
\tau A_0&\tau B_0\\0&0
\end{array}\right)\!,
B(r,\tau)\!=\!\left(
\begin{array}{cc}
0&0\\r\tau&-r\tau
\end{array}\right)\!.
\end{aligned}
\end{equation}
The characteristic equation of the linearized system of \eqref{eqE} is
\begin{equation}\label{eqD}
\mathbf{\Delta}_0(\lambda,r,\tau)y=\lambda I y-D(r,\tau)\Delta y-A(r,\tau)y-B(r,\tau)e^{-\lambda}y=0,
\end{equation}
for some $y\in\mathrm{dom}(\Delta)\backslash\{0\},$ which is equivalent to the sequence of characteristic equations
\begin{equation*}
\begin{aligned}
&G_n^0(\lambda,r,\tau):=\lambda^{2}-\tau{T_{n}}(\lambda,r,1)\lambda+\tau^2{D_{n}}(\lambda,r,1)=0,&&n=0,1,2,\cdots&,
\end{aligned}
\end{equation*}
with $T_n,D_n$ are given by \eqref{eqTD}. According to Theorem \ref{theorem2} - Theorem \ref{theorem3}, we have the following result.
\begin{theorem}
Assume that $a>\dfrac{(b+1)^2}{2(1-b)}$ and $({{\mathbf{A}}}6^{''})$ are satisfied in the equivalent Holling-Tanner system \eqref{eqE}. Then the system \eqref{eqE} undergoes a Turing-Hopf bifurcation at $(r_*,\tau_*)$. In addition,  if $\tau_*<\tau_0$, then except the simple  zero eigenvalue and a pair of pure imaginary eigenvalues $\pm\mathrm{i}\omega_{*}\tau_*$, the rest eigenvalues of the characteristic equation \eqref{eqD} with $(r,\tau)=(r_*,\tau_*)$ have strictly negative real parts.
\end{theorem}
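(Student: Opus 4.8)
The plan is to reduce the whole statement to facts already established for the un‑rescaled characteristic equation \eqref{eqC}. The first and key step is the elementary rescaling identity: since the delay in \eqref{eqE} enters only through $e^{-\lambda}=e^{-(\tau\lambda)\cdot 1}$, the functions in \eqref{eqTD} satisfy $T_n(\tau\lambda,r,1)=T_n(\lambda,r,\tau)$ and $D_n(\tau\lambda,r,1)=D_n(\lambda,r,\tau)$, whence
\begin{equation*}
G_n^0(\tau\lambda,r,\tau)=\tau^2\lambda^2-\tau T_n(\tau\lambda,r,1)\,\tau\lambda+\tau^2 D_n(\tau\lambda,r,1)=\tau^2\,G_n(\lambda,r,\tau).
\end{equation*}
Thus $\lambda$ solves \eqref{characteristic} if and only if $\tau\lambda$ solves the $G_n^0$‑equation attached to \eqref{eqD}; as $\tau>0$ this bijection preserves the sign of $\mathrm{Re}$, fixes $0$, and maps $\pm\mathrm i\omega_*$ to $\pm\mathrm i\omega_*\tau_*$. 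It also preserves transversality up to the positive factor $\tau$ (the $r$‑derivative of the continued zero eigenvalue of \eqref{eqD} is $\tau$ times that for \eqref{eqC}, and the $\tau$‑derivative of $\mathrm{Re}\,\lambda^0$ at the Hopf point equals $\tau_*$ times the crossing speed for \eqref{eqC}, hence keeps its positive sign). Since moreover \eqref{eqE} is nothing but \eqref{eqB} after the time change $t\mapsto t/\tau$, the Turing--Hopf bifurcation of Theorem \ref{theorem3} at $(r_*,\tau_*)$ is inherited by \eqref{eqE}, which is the first assertion.

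For the spectral‑gap assertion it therefore suffices to prove the corresponding statement for \eqref{eqC} at $(r,\tau)=(r_*,\tau_*)$: apart from a simple zero and the pair $\pm\mathrm i\omega_*$, all eigenvalues have strictly negative real part. I would prove this by continuation in $\tau$ over $[0,\tau_*]$. At $\tau=0$, $r=r_*$ it was already recorded (before Theorem \ref{th1}) that \eqref{eqC} has exactly one simple zero eigenvalue --- carried by the $n=n_T$ mode, because $D_n(0,r_*,\tau)$ does not depend on $\tau$ and vanishes only for $n=n_T$ by the definitions of $r_*$ and $r_{n_T}^T$ --- while every other eigenvalue lies in the open left half‑plane. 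As $\tau$ grows the eigenvalues vary continuously; the $n_T$‑mode zero remains simple until $\tau=\tau_0$ (Theorem \ref{theorem2}), and $\tau_0>\tau_*$ by hypothesis, so it is still simple at $\tau_*$. A \emph{nonzero} eigenvalue can reach the imaginary axis only at one of the values $\tau_n^{(k)}(r_*)$, $n\in S_0$, $k\in\mathbb{N}_0$, since for $n\notin S_0$ equation \eqref{eqw} has no positive root (this is exactly the content of the proof of Lemma \ref{lem_w}). By \eqref{eqt} the smallest such value is $\tau_*=\tau_{n_H}^{(0)}(r_*)$, attained at $n=n_H$. Hence on $[0,\tau_*)$ the only imaginary‑axis eigenvalue of \eqref{eqC} is the simple zero; at $\tau=\tau_*$ exactly the $n_H$‑pair crosses to $\pm\mathrm i\omega_*$, while for every other $n\in S_0$ one has $\tau_n^{(0)}(r_*)>\tau_*$, so those modes stay in the open left half‑plane. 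Finiteness of the spectrum of a retarded equation in any half‑plane $\{\mathrm{Re}\,\lambda\ge-c\}$ together with continuity in $\tau$ forbids any untracked eigenvalue from moving onto or across the axis. Rescaling back by the factor $\tau_*$ gives the theorem.

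The conceptual heart of the argument is the rescaling identity, which is immediate; the only part that must be written out carefully is the ``no earlier crossing'' claim on $[0,\tau_*)$. This requires: (i) the enumeration from Lemma \ref{lem_w} to guarantee that the $\tau_n^{(k)}(r_*)$ really exhaust all pure‑imaginary crossings; (ii) the hypothesis $\tau_*<\tau_0$, which keeps the fixed zero simple and prevents any double‑zero collision from interfering before the Turing--Hopf point; and (iii) a check that $\pm\mathrm i\omega_*$ is exactly one conjugate pair at $\tau_*$, using $S_{n_H}(\omega_*)\neq 0$ and the fact that under $({{\mathbf{A}}}6^{''})$ the $n_T$‑mode contributes no \emph{nonzero} pure imaginary eigenvalue at $\tau_*$ (either $n_T\notin S_0$, or its crossing value exceeds $\tau_*$). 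None of these is difficult, but they are precisely the places where the bookkeeping of Lemma \ref{lem_w} and Theorems \ref{theorem2}--\ref{theorem3} must be brought to bear.
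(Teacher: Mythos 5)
Your proof is correct and follows essentially the same route as the paper, which states this theorem with only the citation ``According to Theorem \ref{theorem2} -- Theorem \ref{theorem3}'' and no written argument: the rescaling identity $G_n^0(\tau\lambda,r,\tau)=\tau^2 G_n(\lambda,r,\tau)$ is exactly the link the paper leaves implicit, and your continuation-in-$\tau$ bookkeeping (no crossing before $\tau_*=\min_{n\in S_0}\tau_n^{(0)}(r_*)$, zero eigenvalue kept simple by $\tau_*<\tau_0$) is the content of Lemma \ref{lem_w} and Theorems \ref{theorem2}--\ref{theorem3} that the citation invokes. No gaps; you have simply written out the details the paper omits.
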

\begin{remark}
We have done a great deal of numerical experiments, but the case when $\tau_*\geq \tau_0$ has not been found yet. It implies that $\tau_*<\tau_0$ is a usual case in this Holling-Tanner system.
\end{remark}

Expanding the phase space $\mathcal{C}$ into
$$\mathcal{BC}:=\{\psi:[-1,0]\rightarrow X_{\mathbb{C}} : \psi~\mathrm{is~ continuous~ on}~[-1,0),~\exists \lim_{\theta\rightarrow0^{-}}\psi(\theta)\in X_{\mathbb{C}}\}.$$
Then after the  parameters transformation $(\alpha_1,\alpha_2)=(r-r_*,\tau-\tau_*)$, the system \eqref{eqB} can be written as an abstract ordinary system in $\mathcal{BC}$ ,
\begin{equation}
\frac{\mathrm{d}}{\mathrm{d}t}U^t =AU^t+ X_0 \mathcal{F}(\alpha_1,\alpha_2,U^t).
\end{equation}
Here $$X_{0}(\theta)=\left\{\begin{array}{cc}
0,&~-1\leq\theta<0,\\
I,&~\theta=0.
\end{array}\right.$$
$A:\mathcal{C}_{0}^{1}\subset~ \mathcal{BC}\rightarrow\mathcal{BC},$ is defined by
$$ A\varphi=\dot{{\varphi}}+X_{0}[\tau_*D\Delta\varphi(0)+\tau_*L(r_*,1)(\varphi)-\dot{{\varphi}}(0)],$$
with $~\mathcal{C}_{0}^{1}=\{\varphi\in\mathcal{C}:\dot{\varphi}\in\mathcal{C},~\varphi(0)\in dom(\Delta)\}$.
And $\mathcal{F}:\mathbb{R}^2\times\mathcal{C}\rightarrow X_{\mathbb{C}} $ is defined by
$$\mathcal{F}(r,\tau,\varphi) = (\tau_*+\alpha_2)\{D\Delta\varphi(0)+L(r_*+\alpha_1,1)(\varphi)+F(r_*+\alpha_1,1,\varphi)\}-A\varphi(0).$$

We adopt the notations in \cite{An2017} with $(\mu_1,\mu_2)=(r,\tau)$, $\mu_0=(r_*,\tau_*)$  and $n_1=n_H$, $n_2=n_T$.
 Assume that $\{\phi_1(\theta)\beta_{n_{H}},\phi_2(\theta)\beta_{n_{T}}\}$ and $\{\psi_1(s)\beta_{n_{H}},\psi_2(s)\beta_{n_{T}}\}$ are the eigenfunctions of $A$ and its dual $A^*$ corresponding to the eigenvalues $\{ \mathrm{i}\omega_{*}\tau_*,0\}$, respectively. According to the Theorem 3.1 in \cite{An2017}, we obtain that
\begin{equation}\label{phi-psi}
\begin{aligned}
\phi_1(\theta)&= e^{\mathrm{i}\omega_{*}\tau_*\theta}(1\,,\,k_1)^{\mathrm{T}},\,\qquad
\phi_2(\theta)=(1\,,\,k_3)^{\mathrm{T}},\\
\psi_1(s)&= e^{\mathrm{-i}\omega_{*}\tau_* s}\,T_1(1\,,\,k_2),
\quad\psi_2(s)=T_2(1\,,\,k_4),
\end{aligned}
\end{equation}
with
\begin{equation*}
\begin{aligned}
&k_{1} = -(A_0 - \mathrm{i}\omega_{*} )/B_0,\qquad\quad
k_{2} = -(A_0 -\mathrm{i}\omega_{*} )e^{\mathrm{i}\omega_{*}\tau_*}/ r_*,\\
&k_{3} = -(A_{0} - d_{1} \frac{n_{T}^2}{l^2})/ B_{0},\quad\quad
k_{4} = -(A_{0} - d_{1} \frac{n_{T}^2}{l^2})/ r_*,\\
&T_{1} = [k_{1} k_{2} + e^{\mathrm{-i}\omega_{*}\tau_*}r_*\tau_* k_{2} ( 1 -k_1) + 1]^{-1},\\		
&T_{2} = [k_3 k_{4}+r_*\tau_* k_{4})(1- k_{3}) + 1)]^{-1}.
\end{aligned}
\end{equation*}
Decomposing $\mathcal{BC}$  into the direct sum of the center subspace $\mathcal{P}$ which is spanned by  $\{\phi_1(\theta)\beta_{n_{H}},\bar{\phi}_1(\theta)\beta_{n_{H}},\phi_2(\theta)\beta_{n_{T}}\}$, and its complement space,
$$\mathcal{BC}=\mathcal{P}\oplus \ker \pi.$$
Here $\pi:\mathcal{BC}\rightarrow\mathcal{P}$ is the projection operator.
Then $U^{t}\in\mathcal{C}_{0}^{1}\subset\mathcal{BC}$ can be divided as
\begin{equation}\label{U{t}}
\begin{split}
U^t(\theta)=\phi_{1}(\theta)z_1\beta_{n_{H}}+\bar{\phi}_{1}(\theta)\bar{z}_1\beta_{n_{H}}+\phi_{2}(\theta)z_2\beta_{n_{T}}+y(\theta),
\end{split}
\end{equation}
with $ y\in {\mathcal{C}_{0}^{1}}\cap\mathrm{ker}\pi :={\mathcal{Q}^{1}}$.
After a series of coordinate transformations $(z,y)\rightarrow (z+\frac{1}{j!}U_2^1(z),y+\frac{1}{j!}U_2^2(z))$ as shown in \cite{An2017},  the normal forms for \eqref{eqA} (or \eqref{eqB}) relative to $\Lambda=\{\pm\mathrm{i}\omega_{*}\tau_*,0 \}$ up to three order are obtained
\begin{equation}\label{eqNF}
\begin{aligned}
\dot{z_{1}}=&\quad \mathrm{i}\omega_*\tau_*z_{1}\!+\!\frac{1}{2}f_{\alpha_{1}z_1}^{11}\alpha_{1}z_{1}\!+\!\frac{1}{2}f_{\alpha_{2}z_1}^{11}\alpha_{2}z_{1}\!+\!\frac{1}{6}g_{210}^{11} z_1^2{\bar{z}_1} \!+\!\frac{1}{6} g_{102}^{11}z_1z_{2}^2+O(4),\\
\dot{\bar{z}}_1=&-\mathrm{i}\omega_*\tau_*{\bar{z}_1}\!+\!\frac{1}{2}\overline{f_{\alpha_{1}z_1}^{11}}\alpha_{1}{\bar{z}_1}\!+\!\frac{1}{2}\overline{f_{\alpha_{2}z_1}^{11}}\alpha_{2}{\bar{z}_1}\!+\!\frac{1}{6}\overline{g_{210}^{11}}z_1{\bar{z}_1^2}\!+\!\frac{1}{6}\overline{g_{102}^{11}}{\bar{z}_1}z_{2}^2+O(4) ,\\
\dot{z_{2}}=&\hspace{2cm}\frac{1}{2}f_{\alpha_{1}z_{2}}^{13}\alpha_{1}z_{2}\!+\!\frac{1}{2}f_{\alpha_{2}z_{2}}^{13}\alpha_{2}z_{2}\!+\!\frac{1}{6}g_{111}^{13}z_1{\bar{z}_1}z_{2}\!+\!\frac{1}{6}g_{003}^{13}z_{2}^3+O(4).
\end{aligned}
\end{equation}
 The formulas of $f_{\alpha_{1}z_1}^{11}$, $f_{\alpha_{2}z_1}^{11}$, $f_{\alpha_{1}z_{2}}^{13}$, $f_{\alpha_{2}z_{2}}^{13}$, $g_{210}^{11}$, $g_{102}^{11}$, $g_{111}^{13}$, $g_{003}^{13}$  can be accurately given by the help of {\bf{Matlab}}. The specific calculation process are based on the formulas in \cite{An2017,JA2018} and will be shown in the following.
 \vskip 0.25cm\noindent

{\bf Step 1.} First of all, we need calculate the second and third derivatives of $\mathcal{F}(\alpha_1,\alpha_2,U^t)$ with respect to $u(t),v(t),u(t-1),v(t-1)$ at $(\alpha_1,\alpha_2,U^t)=(0,0,0)$.
Denote $u,v,u_{\tau},v_{\tau}$ as the simplified form of $u(t),v(t),u(t-1),v(t-1)$, respective. From calculation,  the non-zero partial derivatives are listed as follows.
\begin{equation}
\begin{aligned}
&F_{uu} = ({2\tau_*abu_0}/{(b + u_0)^3} -1,\,0)^{\mathrm{T}},\qquad\\
&F_{uv} = (-\tau_*ab/(b + u_0)^2 ,\,0)^{\mathrm{T}},\\
&2F_{vv_\tau} = -2F_{vu_\tau} = -2F_{u_\tau v_\tau} = F_{u_\tau u_\tau} = (0,\,{-2r_*\tau_*}/{u_0})^{\mathrm{T}},\\
&F_{uuu} = (6\tau_*abu_0/(b + u_0)^4 ,\,0)^{\mathrm{T}},\qquad\qquad\\
&F_{uuv} = (2\tau_*ab/(b + u_0)^3 ,\,0)^{\mathrm{T}},\\
&-3 F_{vu_\tau u_\tau} =-3 F_{u_\tau u_\tau v_\tau}= 6F_{vu_\tau v_\tau}=F_{u_\tau u_\tau u_\tau}=
(0,\,6r_*\tau_*/u_0^2)^{\mathrm{T}} ,
\end{aligned}
\end{equation}
 with $F_{uu}=\frac{\partial}{\partial u}\mathcal{F}(0,0,0)$, and so forth.\
 \vskip 0.25cm\noindent

{\bf Step 2.} Secondly, the coefficient vectors $F_{mnk}$ of the terms $z_1^m\bar{z}_1^nz_2^k\beta_{n_{H}}^{m+n}\beta_{n_{T}}^k$ in $\mathcal{F}(\alpha_1,\alpha_2,U^t)$ after the decomposition \eqref{U{t}} are worked out.
\begin{equation*}
\begin{aligned}
F_{200} =&
 F_{uu}\! +\! 2F_{uv}k_1\! +\! (F_{u_\tau u_\tau } \!+\! 2F_{u_\tau v_\tau } k_1) e^{-2\mathrm{i}\omega_*\tau_*} \!+\! 2 k_1 ( F_{vu_\tau } \!+\! F_{vv_\tau } k_1) e^{-\mathrm{i}\omega_*\tau_*},\\
F_{110} =&
 2[ F_{uu} \!+\!  F_{u_{\tau}u_{\tau}} \!+\! \! (F_{uv}+ F_{u_{\tau}v_{\tau}}) (k_{1}+\bar{k}_1) \!+
\!+\! F_{vu_{\tau}} (\bar{k}_{1}e^{\mathrm{-i}\omega_*\tau_*}+ k_{1} e^{\mathrm{i}\omega_*\tau_*})\\&+ F_{vv_{\tau}} k_{1}\bar{k}_{1}( e^{\mathrm{-i}\omega_*\tau_*}\!+\!e^{\mathrm{i}\omega_*\tau_*}),\\
F_{101}  =& 2 [F_{uu}\!\! +\!\! F_{u_{\tau}u_{\tau}} e^{\mathrm{-i}\omega_*\tau_*} \!\!+\!\!  F_{uv} (k_{1}\! \!+\!\!k_{3})
\!\!+\!\!  F_{vu_{\tau}}( k_{1}\! \!+\!\! k_{3} e^{\mathrm{-i}\omega_*\tau_*})
\!+\!F_{vv_{\tau}} k_{1} k_{3}(1\!+\! e^{\mathrm{-i}\omega_*\tau_*}) \\&\!+\! F_{u_{\tau}v_{\tau}} (k_{1}\!+\! k_{3}) e^{\mathrm{-i}\omega_*\tau_*}],\\
F_{002}  =& F_{uu} +F_{u_{\tau}u_{\tau}}+2( F_{uv} k_{3} + F_{vu_{\tau}} k_{3}+F_{vv_{\tau}} k_{3}^2+ F_{u_{\tau}v_{\tau}} k_{3}),\\
F_{020} =& \overline{F_{200}},\hspace{1cm}
F_{011}  = \overline{F_{101}},\\
\end{aligned}
\end{equation*}
\begin{equation*}
\begin{aligned}
	F_{210} =& 3[F_{uuu}+F_{uuv}(2 k_{1}+\bar{k}_1)+  F_{vu_{\tau}u_{\tau}} (2 k_{1}+\bar{k}_{1}e^{\mathrm{-2i}\omega_*\tau_*} )+2  F_{vu_{\tau}v_{\tau}} k_1(\bar{k}_{1}\\&+\bar{k}_{1}e^{\mathrm{-2i}\omega_*\tau_*}+k_{1})+ F_{u_{\tau}u_{\tau}u_{\tau}} e^{\mathrm{-i}\omega_*\tau_*}+  F_{u_{\tau}u_{\tau}v_{\tau}} (2 k_{1}+ \bar{k}_{1})e^{\mathrm{-i}\omega_*\tau_*}],\\
	F_{102} =& 3 [F_{uuu} \!+\! F_{uuv} ( k_{1} \!+\! 2  k_{3})\!+\! F_{vu_{\tau}u_{\tau}} ( k_{1} \!+\! 2  k_{3} e^{\mathrm{-i}\omega_*\tau_*}) \!+\! 2F_{vu_{\tau}v_{\tau}}  k_{3}( k_{1}  \!+\!  k_{3} e^{\mathrm{-i}\omega_*\tau_*} \\&\!+\!  k_{1} e^{\mathrm{-i}\omega_*\tau_*}) \!+\! F_{u_{\tau}u_{\tau}u_{\tau}} e^{\mathrm{-i}\omega_*\tau_*} \!+\! F_{u_{\tau}u_{\tau}v_{\tau}} ( k_{1} e^{\mathrm{-i}\omega_*\tau_*} \!+\! 2  k_{3} e^{\mathrm{-i}\omega_*\tau_*})],\\
	F_{111} =&  6\{ F_{uuu}  +  F_{uuv} ( k_{1} + k_{3} +  \bar{k}_{1})  + F_{vu_{\tau}u_{\tau}} ( k_{3} +  k_{1} e^{\mathrm{i}\omega_*\tau_*} +  e^{\mathrm{-i}\omega_*\tau_*} \bar{k}_{1})  \\&+  F_{vu_{\tau}v_{\tau}} [k_{1} k_{3}(1+ e^{\mathrm{i}\omega_*\tau_*}) +  k_{1}\bar{k}_{1}(e^{\mathrm{-i}\omega_*\tau_*} + e^{\mathrm{i}\omega_*\tau_*})+\bar{k}_{1} k_{3} (1+ e^{\mathrm{-i}\omega_*\tau_*} ) ] \\& +  F_{u_{\tau}u_{\tau}u_{\tau}} +   F_{u_{\tau}u_{\tau}v_{\tau}} ( k_{1} +  k_{3} +  \bar{k}_{1})\},\\
	F_{003} =& F_{uuu} \!+\! 3F_{uuu_{\tau}} \!+\! F_{u_{\tau}u_{\tau}u_{\tau}} \!+\!3 F_{uuv} k_{3}\!+\!3 F_{u_{\tau}u_{\tau}v_{\tau}} k_{3} \!+\! 3F_{vu_{\tau}u_{\tau}} k_{3} \!+\! 6F_{vu_{\tau}v_{\tau}} k_{3}^2,
\end{aligned}
\end{equation*}
Therefore, according to $(2.23_2)$, $(2.23_3)$ in \cite{An2017}, we have
\begin{equation}\label{f_2^1}
\begin{aligned}
&f_{200}^{11} = \frac{1}{\sqrt{l\pi}}\psi_1(0)F_{200},\quad
f_{110}^{11} =  \frac{1}{\sqrt{l\pi}}\psi_1(0)F_{110},\quad
f_{020}^{11} =\frac{1}{\sqrt{l\pi}}\psi_1(0)F_{020},\\
&f_{002}^{11} = \frac{1}{\sqrt{l\pi}}\psi_1(0)F_{002},\quad
f_{101}^{13} =  \frac{1}{\sqrt{l\pi}}\psi_2(0) F_{101},\quad
f_{011}^{13} = \frac{1}{\sqrt{l\pi}}\psi_2(0)F_{011},~~~\\
&f_{200}^{12} = \overline{f_{020}^{11}},\qquad \quad\;\,
f_{110}^{12} = \overline{f_{110}^{11}},\qquad \quad\;\,
f_{020}^{12} = \overline{f_{200}^{11}},\qquad \quad\;\,
f_{002}^{12} = \overline{f_{002}^{11}}, \\
\end{aligned}
\end{equation}
and
\begin{equation}\label{f_3^1}
\begin{aligned}
f_{210}^{11}=&\frac{1}{l\pi}\psi_{1}(0)F_{210},\qquad
f_{102}^{11}=\frac{1}{l\pi}\psi_{1}(0)F_{102},\\
f_{111}^{13}=&\frac{1}{l\pi}\psi_{2}(0)F_{111},\qquad
f_{003}^{13}=\frac{3}{2l\pi}\psi_{2}(0)F_{003}.
\end{aligned}
\end{equation}
\vskip 0.25cm\noindent

{\bf Step 3.} Further more, the linear operators $S_{yz_i}(i=1,2) ,S_{y\bar{z}_1}: \mathcal{Q}_{1}\rightarrow X_{\mathbb{C}}$ are defined by
\begin{equation}\label{S}
\begin{aligned}
S_{yz_i}(\varphi) =& (F_{y_1(0)z_i},\;F_{y_2(0)z_i})\varphi(0)+(F_{y_1(-1)z_i},\; F_{y_2(-1)z_i})\varphi(-1),\\
S_{yz_2}(\varphi) =& (F_{y_1(0)z_2},\;F_{y_2(0)z_2})\varphi(0)+(F_{y_1(-1)z_2},\; F_{y_2(-1)z_2})\varphi(-1),\\
\end{aligned}
\end{equation}
with
\begin{equation*}\label{Eyz}
\begin{aligned}
&F_{y_1(0)z_1} \;\;= 2( F_{uu} \!+\! F_{uv} k_{1} ),\hspace{2cm}\\
&F_{y_1(-1)z_1} = 2( F_{vu_{\tau}} k_{1} \!+\!  F_{u_{\tau}u_{\tau}} e^{\mathrm{-i}\omega_*\tau_*} \!+\! F_{u_{\tau}v_{\tau}} k_{1} e^{\mathrm{-i}\omega_*\tau_*}),\\
&F_{y_2(0)z_1} \;\;= 2( F_{uv} \!+\! F_{vu_{\tau}} e^{\mathrm{-i}\omega_*\tau_*} \!+\! F_{vv_{\tau}} k_{1} e^{\mathrm{-i}\omega_*\tau_*}),\\
&F_{y_2(-1)z_1} =  2( F_{vv_{\tau}} k_{1} \!+\! F_{u_{\tau}v_{\tau}} e^{\mathrm{-i}\omega_*\tau_*}),\\
&F_{y_1(0)z_{2}} \;\;= 2( F_{uu} \!+\! F_{uv} k_{3} ),\\
&F_{y_1(-1)z_{2}} = 2( F_{u_{\tau}u_{\tau}} \!+\! F_{u_{\tau}v_{\tau}} k_{3} \!+\! F_{vu_{\tau}} k_{3}),\\
&F_{y_2(0)z_{2}} \;\;=2( F_{uv} \!+\! F_{vu_{\tau}} +\! F_{vv_{\tau}} k_{3} ),\\
&F_{y_2(-1)z_{2}} =2( F_{u_{\tau}v_{\tau}} \!+\! F_{vv_{\tau}} k_{3}).
\end{aligned}
\end{equation*}
\vskip 0.25cm \noindent

{\bf Step 4.} Next, we will calculate $U_2^2(\cdot)\in V_{2}^{3}(\mathcal{Q}_{1})$.  In fact, it is enough to get the following formulas.
 \begin{equation*}
\begin{aligned}
&{\langle h_{200}(\theta)\beta_{n_H},\beta_{n_H} \rangle}
\!\!=\!\!~\frac{1}{l\pi}e^{2\mathrm{i}\omega_0\theta}[2\mathrm{i} \omega_0\!\!-\!\!L_0(e^{\mathrm{2i}\omega_0\cdot}I_d)]^{-1}F_{200} \!\!-\!\!\frac{1}{\mathrm{i}\omega_0}\frac{1}{\sqrt{l\pi}}[f_{200}^{11}\phi_1(\theta)\!\!+\!\!\frac{1}{3}f_{200}^{12}\bar{\phi}_1(\theta)],\\
&{\langle h_{110}(\theta)\beta_{n_H},\beta_{n_H} \rangle} \!\!=\!\! -\frac{1}{l\pi}[L_0(I_d)]^{-1}F_{110}+\frac{1}{\mathrm{i}\omega_0}\frac{1}{\sqrt{l\pi}}[f_{110}^{11}\phi_1(\theta)\!-\!f_{110}^{12}\bar{\phi}_1(\theta)],\\
&{\langle h_{110} (\theta)\beta_{n_T},\beta_{n_T} \rangle}=~ {\langle h_{110}(\theta)\beta_{n_H},\beta_{n_H} \rangle},\hspace{4cm}\\
\end{aligned}
\end{equation*}
\begin{equation*}
\begin{aligned}
&{\langle h_{101} (\theta)\beta_{n_T},\beta_{n_H} \rangle} \!=\!~\frac{1}{l\pi}e^{\mathrm{i}\omega_0\theta}[\mathrm{i} \omega_0\!+\!\frac{n_T^2}{l^2}D_0\!-\!L_0(e^{\mathrm{i}\omega_0\cdot} I_d)]^{-1}F_{101}
\!-\!\frac{1}{\mathrm{i}\omega_0}\frac{1}{\sqrt{l\pi}}f_{101}^{13}\phi_2(0),\\
&{\langle h_{011} (\theta)\beta_{n_H},\beta_{n_T} \rangle}\!\!=\!\!~\frac{1}{l\pi}e^{\mathrm{\!-\!i}\omega_0\theta}[-\mathrm{i} \omega_0+\frac{n_T^2}{l^2}D_0\!-\!L_0(e^{\mathrm{-i}\omega_0\cdot} I_d)]^{-1}F_{011}
\!+\!\frac{1}{\mathrm{i}\omega_0}\frac{1}{\sqrt{l\pi}}f_{011}^{13}\phi_2(0),\\
&{\langle h_{002} (\theta)\beta_{n_H},\beta_{n_H} \rangle}=\!-\!\frac{1}{l\pi}[L_0(I_d)]^{-1}F_{002}\!+\!\frac{1}{\mathrm{i}\omega_0}\frac{1}{\sqrt{l\pi}}[f_{002}^{11}\phi_1(\theta)-f_{002}^{12}\bar{\phi}_1(\theta)],\\
&{\langle h_{002} (\theta)\beta_{n_T},\beta_{n_T} \rangle}=\frac{1}{2l\pi}[\frac{(2n_T)^2}{l^2}D_0\!-\!L_0(I_d)]^{-1}F_{002}\!+\!{\langle h_{002} (\theta)\beta_{n_H},\beta_{n_H} \rangle}.\\
\end{aligned}
\end{equation*}
Here $\omega_0=\omega_*\tau_*$. $L_0:\mathcal{C}\rightarrow X_{\mathbb{C}}$ is a linear operator and  given by
$L_0(\phi)=\tau_*L(r_*,1)(\phi)$.
\vskip 0.25cm \noindent

{\bf Step 5.} In finally, according to Theorem 3.2-Theorem 3.3 in
\cite{An2017}, we get the formulas of the coefficients in the normal forms \eqref{eqNF}.
\begin{theorem}\label{theNF}
For the Holling-Tanner system \eqref{eqA}, assume that $a>\dfrac{(b+1)^2}{2(1-b)}$ and $({{\mathbf{A}}}6^{''})$ are hold. If $n_H=0$ and $n_T\neq0$,  then the  quadratic and cubic terms in the normal forms \eqref{eqNF} of the system \eqref{eqA} relative to $(r_*,\tau_*)$ are
\begin{equation*}\label{alphaz}
\begin{aligned}
&f_{\alpha_{1}z_1}^{11} =
2\psi_1(0)\,\left[\frac{\partial}{\partial r}A(r_*,\tau_*)\phi_1(0)+\frac{\partial}{\partial r}B(r_*,\tau_*)\phi_1(-1)\right],\\
&f_{\alpha_{2}z_1}^{11}=
2\psi_1(0)\,\left[\frac{\partial}{\partial \tau}A(r_*,\tau_*)\phi_1(0)+\frac{\partial}{\partial \tau}B(r_*,\tau_*)\phi_1(-1)\right],\\
&f_{\alpha_{1}z_{2}}^{13} =
2\psi_2(0)\,\left[-\frac{n_T^2}{l^2}\frac{\partial}{\partial r}D(r_*,\tau_*)\phi_2(0)+\frac{\partial}{\partial r}A(r_*,\tau_*)\phi_2(0)+\frac{\partial}{\partial r}B(r_*,\tau_*)\phi_2(-1)\right],\\
&f_{\alpha_{2}z_{2}}^{13} =
2\psi_2(0)\,\left[-\frac{n_T^2}{l^2}\frac{\partial}{\partial \tau}D(r_*,\tau_*)\phi_2(0)+\frac{\partial}{\partial \tau}A(r_*,\tau_*)\phi_2(0)+\frac{\partial}{\partial \tau}B(r_*,\tau_*)\phi_2(-1)\right],\\
&g_{210}^{11} = f_{210}^{11}+\frac{3}{\mathrm{2i}\omega_0}(-f_{110}^{11} f_{200}^{11} +f_{110}^{11} f_{110}^{12} + \frac{2}{3}f_{020}^{11} f_{200}^{12})+\\
&\hspace{1.2cm}\frac{3}{2}\psi_1(0)\left[S_{yz_1}(\langle h_{110}(\theta)\beta_{n_{H}},\beta_{n_{H}}\rangle)  +  S_{y{\bar{z}_1}}(\langle h_{200}(\theta)\beta_{n_{H}},\beta_{n_{H}}\rangle)\right],\\
&g_{102}^{11} =f_{102}^{11}+ \frac{3}{\mathrm{2i}\omega_0}(-2f_{002}^{11} f_{200}^{11} +f_{002}^{12} f_{110}^{11} +2f_{002}^{11} f_{101}^{13} )+\\
&\hspace{1.2cm}\frac{3}{2}\psi_1(0)[S_{yz_1}(\langle h_{002}(\theta)\beta_{n_{H}},\beta_{n_{H}}\rangle)  +  S_{yz_{2}}(\langle h_{101}(\theta)\beta_{n_{T}},\beta_{n_{H}}\rangle)],\\
&g_{111}^{13} =f_{111}^{13}+\frac{3}{\mathrm{2i}\omega_0}(-f_{101}^{13} f_{110}^{11} +f_{011}^{13} f_{110}^{12})+\frac{3}{2}\psi_2(0)[S_{yz_1}(\langle h_{011}(\theta)\beta_{n_{H}},\beta_{n_{T}}\rangle)  +\\& \hspace{1.2cm} S_{y{\bar{z}_1}}(\langle h_{101}(\theta)\beta_{n_{H}},\beta_{n_{T}}\rangle)+S_{yz_{2}}(\langle h_{110}(\theta)\beta_{n_{T}},\beta_{n_{T}}\rangle)],\\
&g_{003}^{13} = f_{003}^{13}+\frac{3}{\mathrm{2i}\omega_0}(-f_{002}^{11} f_{101}^{13} +f_{002}^{12} f_{011}^{13})+\frac{3}{2}\psi_2(0)[S_{yz_{2}}(\langle h_{002}(\theta)\beta_{n_{T}},\beta_{n_{T}}\rangle)].
\end{aligned}
\end{equation*}
\end{theorem}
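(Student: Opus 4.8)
The plan is to deduce Theorem~\ref{theNF} from the general normal-form formulas for delayed reaction--diffusion systems with a Turing--Hopf singularity established in \cite{An2017,JA2018}, specialized to the concrete system \eqref{eqE}. The theorems proved above already supply the hypotheses of that framework: at $(r,\tau)=(r_*,\tau_*)$ the characteristic equation \eqref{eqD} has exactly $\{0,\pm\mathrm i\omega_*\tau_*\}$ on the imaginary axis with all other eigenvalues having strictly negative real parts, so the center subspace $\mathcal P$ is three-dimensional with basis $\{\phi_1\beta_{n_H},\bar\phi_1\beta_{n_H},\phi_2\beta_{n_T}\}$ and dual basis \eqref{phi-psi}. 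Hence the splitting $\mathcal{BC}=\mathcal P\oplus\ker\pi$ and the ansatz \eqref{U{t}} are legitimate, and \eqref{eqNF} is precisely the image of the reduced equation on $\mathcal P$ under the near-identity transformations of \cite{An2017}. It then remains only to match each coefficient in \eqref{eqNF} with the quantity produced by the formulas $(2.23)$ and Theorems~3.1--3.3 of \cite{An2017}.

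First I would carry out Step~1: differentiate the concrete nonlinearity $\mathcal F(\alpha_1,\alpha_2,\cdot)$, built from $f_1,f_2$ in \eqref{eqF}, twice and three times in $u(t),v(t),u(t-1),v(t-1)$ at the origin; only the listed vectors $F_{uu},F_{uv},F_{vv_\tau},\dots$ are nonzero, since $f_1$ involves $u(t),v(t)$ alone (with the rational term producing the $ab$-coefficients) and $f_2$ is essentially the trilinear term $r\,v(t)v(t-\tau)/u(t-\tau)$. Next, in Step~2 I substitute \eqref{U{t}} into these multilinear forms and collect the coefficient of each monomial $z_1^m\bar z_1^n z_2^k$; because $\beta_{n_H}=\beta_0$ is constant and $\beta_{n_T}$ is a cosine mode, the triple products obey the standard trigonometric selection rules, which is exactly why $F_{200},F_{110},\dots,F_{003}$ have the stated shapes, and pairing with $\psi_1(0)$ or $\psi_2(0)$ (and dividing by the appropriate power of $\sqrt{l\pi}$) yields \eqref{f_2^1}--\eqref{f_3^1}. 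The parameter-linear coefficients $f^{11}_{\alpha_i z_1}$, $f^{13}_{\alpha_i z_2}$ come directly from differentiating $D(r,\tau),A(r,\tau),B(r,\tau)$ of \eqref{eqDAB} in $r$ and $\tau$ and pairing with the dual eigenfunctions, exactly as in the general formula.

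The substantive part is Step~4: computing the second-order term $U_2^2$ of the transformation, i.e.\ the functions $h_{mnk}(\theta)$ in the complement $\mathcal Q_1$. Each $h_{mnk}$ solves a linear nonhomogeneous problem whose solvability is guaranteed by invertibility of the relevant operator on $\ker\pi$; concretely one must invert $[2\mathrm i\omega_0-L_0(e^{2\mathrm i\omega_0\cdot}I_d)]$, $[L_0(I_d)]$, $[\mathrm i\omega_0+\tfrac{n_T^2}{l^2}D_0-L_0(e^{\mathrm i\omega_0\cdot}I_d)]$, and — because $\beta_{n_T}^2$ decomposes into a $\beta_0$-component and a $\beta_{2n_T}$-component — also $[\tfrac{(2n_T)^2}{l^2}D_0-L_0(I_d)]$, the last being the source of the second, spatially $2n_T$-dependent, piece of $h_{002}$. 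I expect the bookkeeping of which spatial mode $\beta_n$ each quadratic interaction feeds into, kept consistent with the delay factor $e^{-\lambda}$ in the $2\times2$ matrix inversions, to be the main place where slips can occur; everything else is substitution. Finally, Step~5 assembles $g^{11}_{210},g^{11}_{102},g^{13}_{111},g^{13}_{003}$ by inserting the $f$'s, the $h$'s, and the operators $S_{yz_1},S_{y\bar z_1},S_{yz_2}$ of \eqref{S} into the cubic-coefficient formulas of Theorems~3.2--3.3 of \cite{An2017}, which completes the proof. Since every ingredient is an explicit rational/exponential expression in the original parameters, the coefficients can be evaluated symbolically in \textbf{Matlab}, as claimed.
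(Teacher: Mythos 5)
Your proposal follows essentially the same route as the paper: the paper's justification of Theorem \ref{theNF} is precisely the preceding Steps 1--5, which compute the derivatives of $\mathcal F$, the coefficient vectors $F_{mnk}$, the operators $S_{yz_i}$, and the functions $h_{mnk}$, and then invoke formulas $(2.23)$ and Theorems 3.1--3.3 of the cited normal-form framework to assemble the coefficients. Your account of the mode-selection bookkeeping (in particular the $\beta_{2n_T}$ component of $h_{002}$) and of the required operator inversions matches the paper's computation, so the proposal is correct.
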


\section{Turing-Hopf and Turing-Turing-Hopf type spatiotemporal patterns}

Through a large number of numerical experiments, we have observed the widespread existence of two types of spatiotemporal patterns in the vicinity of the Turing-Hopf bifurcation point. In more detail, these two types of patterns follow two different spatial distribution laws.  One of them can be portrayed as
$$\rho\left[\phi_{1}(0)e^{\mathrm{i}\omega_*\tau_* t}+\bar{\phi}_{1}(0)e^{\mathrm{-i}\omega_*\tau_* t}\right]+h\cos(\frac{n_T}{l}x),$$
where $\rho$, $h$ are constants (see {\bf Group 1} below), and the formation mechanism of it can be  completely interpreted by Turing-Hopf bifrucation.
The other can be  characterized as
$$\rho\left[\phi_{1}(0)e^{\mathrm{i}\omega_*\tau_* t}+\bar{\phi}_{1}(0)e^{\mathrm{-i}\omega_*\tau_* t}\right]+h_1\cos(\frac{n_T}{l}x)+h_2\cos(\frac{n_I}{l}x)$$
where $\rho$, $h_1$, $h_2$ are constants and $h_1,\;h_2$ are zero or non-zero at the same time (see {\bf Group 2} below), and we believe that its formation is due to the further impact of the Turing-Turing-Hopf bifurcation,  although the parameters are valued near Turing-Hopf singularity.

For detailed explanations and intuitive understanding, please refer to the following two groups of numerical experiments.
\subsection{Turing-Hopf type spatiotemporal patterns}
{\bf Group 1.} Taking $d_1 = 2.53,$ $d_2 = 9.87,$ $a = 0.90,$ $b = 0.001,$ and $l = 5.45$, which are satisfy the condition $({{\mathbf{A}}}6^{''})$. The unique coexistence equilibrium point now is
$(u_0, v_0) = (0.1082, 0.1082)$. From the  eigenvalue analysis, we can get $r_1^T = 1.1377$, $r_2^T = 1.2639$, $r_3^T = 0.0297$, $r_n^T <0\,(n\geq 4)$, $\tau_0^{(0)} = 0.7937$, $\tau_1^{(0)} = 1.0805$, $\tau_3^{(0)} = 4.5350$, $\omega_0 = 1.0087$, $\omega_1 = 1.0112$, $\omega_3 = 0.4070$.
The important informations can be summed up as
$$n_T = 2, \quad n_H=0, \quad r_* = 1.2639,  \quad \tau_*= 0.7937, \quad \omega_* = 1.0087.$$
The bifurcation diagram in $r-\tau$ plane has been shown in Figure \ref{figTH}.  The intersection of the colored Hopf curve and the dotted Turing curve, which is marked by TH1, TH2, TH3, are the Turing-Hopf bifurcation points.
A  stable region (i.e., $r>r_*,\tau<\tau_0^{0}(r)$) of the equilibrium $(u_0,v_0)$ is painted pale green and  labeled as "Stable Region" in Figure \ref{figTH}.
\begin{figure}[htb]
	\centering
	\subfigure[]{\begin{minipage}{0.49\linewidth}
	\centering\includegraphics[height = 0.78\linewidth,width=1.05\linewidth]{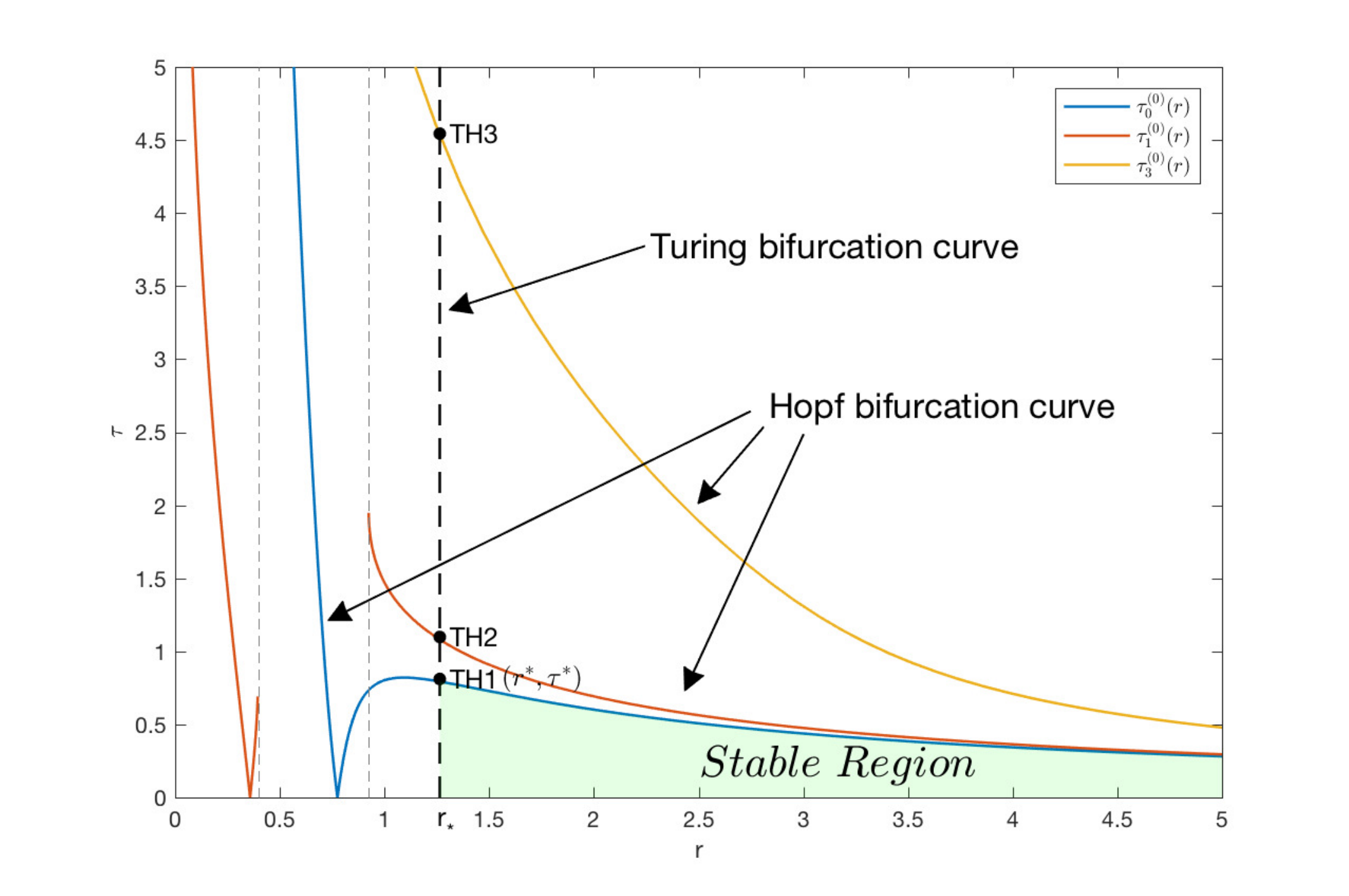}\label{figTH}
	\end{minipage}}
     \subfigure[]{\begin{minipage}{0.49\linewidth}
	\centering\includegraphics[scale=0.35]{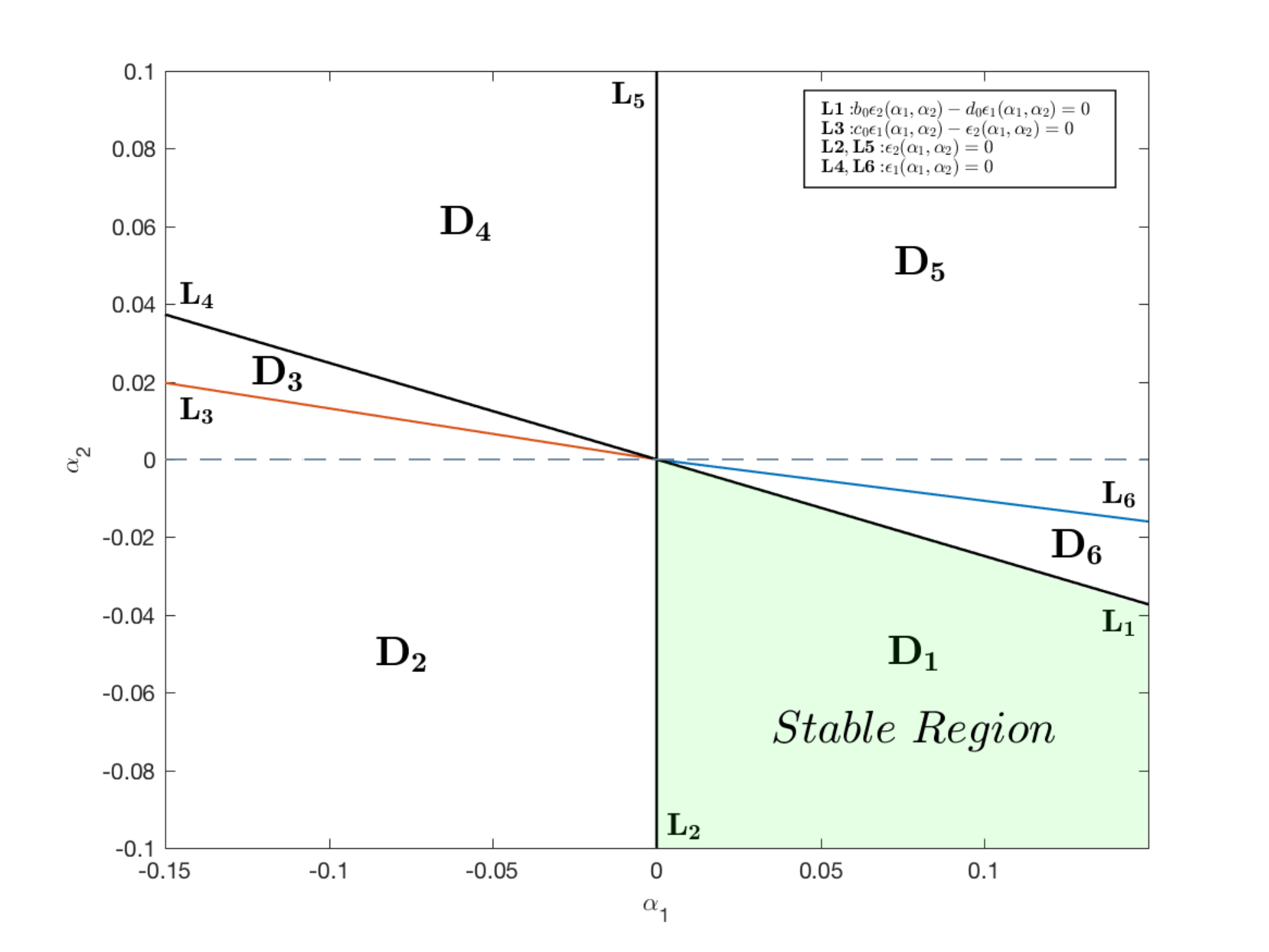}\label{Detail1}
    \end{minipage}}

\subfigure[]{\begin{minipage}{0.85\linewidth}\centering\includegraphics[height=0.45\linewidth,width=1\linewidth]{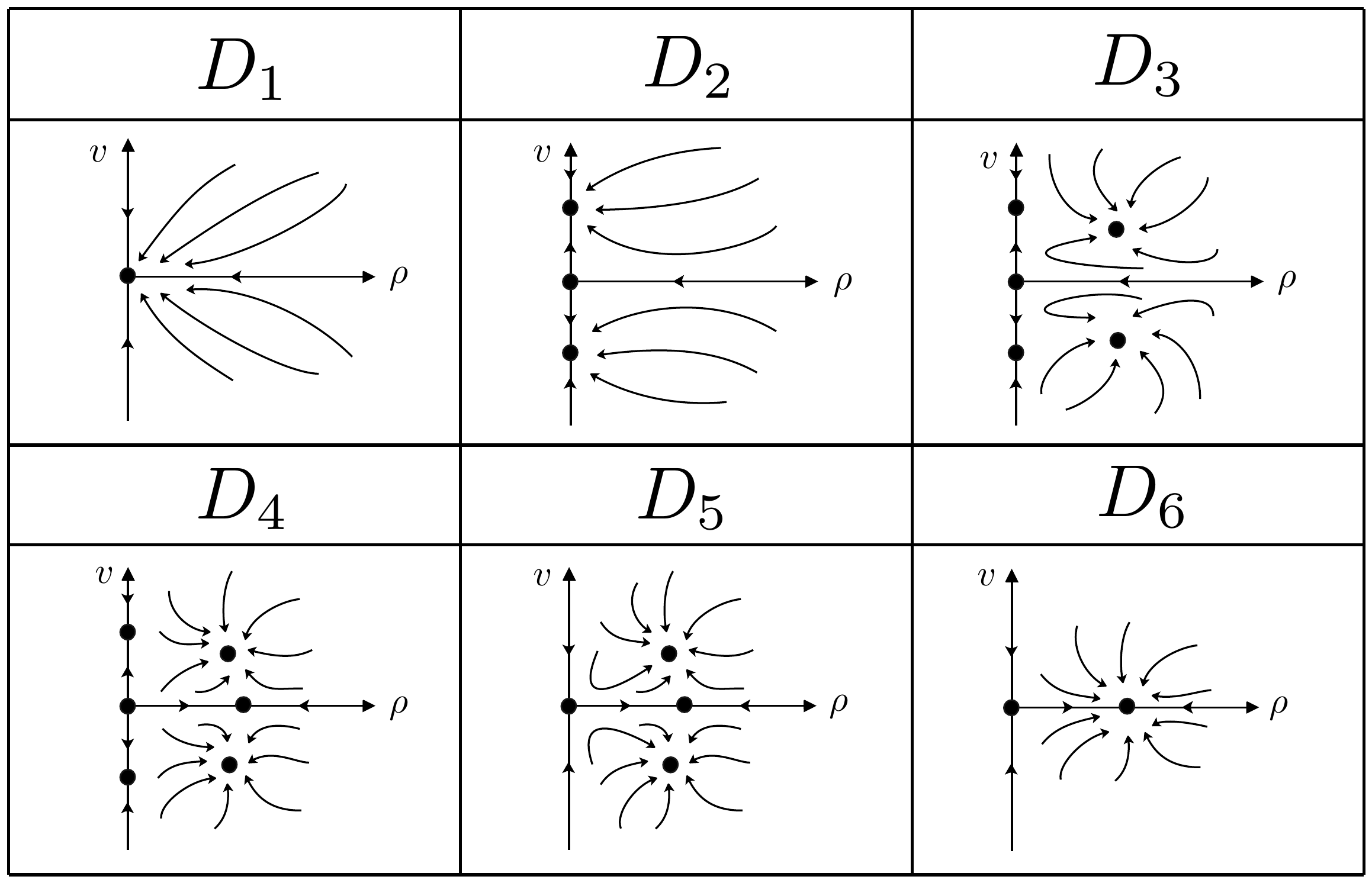}\label{phase}\end{minipage}}
\caption{(a) Bifurcation sets in $r-\tau$ plane. (b) Detailed bifurcation diagram near $(r_*,\tau_*)$ in $\alpha_1-\alpha_2$ plane. (c) Phase portraits in $D_1-D_6$}
\end{figure}

Using the  Theorem \ref{theNF}, we obtain the second and third order coefficients in the normal forms \eqref{eqNF} for system \eqref{eqA} near $(r_*,\tau_*)$:
\begin{equation*}
\begin{aligned}
&f_{\alpha_1z_1}^{11} = 0.5688 \!+\! 1.7938\mathrm{i}, \quad f_{\alpha_2z_1}^{11} = 2.2867 \!+\! 1.2923\mathrm{i},
\quad f_{\alpha_1z_2}^{13}= -0.4269,\quad  f_{\alpha_2z_2}^{13} = 0,\\
&g_{210}^{11} = -4.1739 \!-\!30.7512\mathrm{i},\; g_{102}^{11} = 1.3212 \!+\! 2.7793\mathrm{i},
\; g_{111}^{13} = 5.4813, \; g_{003}^{13} = -10.5144.
\end{aligned}
\end{equation*}
After the cylindrical coordinate transformation
\begin{equation}\label{eqcct}
\begin{aligned}
z_{1}=\mathcal{R}\cos\theta+\mathrm{i}\mathcal{R}\sin\theta,\quad
{\bar{z}_1}=\mathcal{R}\cos\theta-\mathrm{i}\mathcal{R}\sin\theta,\quad
z_{2}=\mathcal{V},
\end{aligned}
\end{equation}
and a  re-scaling ${\rho}=\sqrt{\frac{|\mathrm{Re}(g_{210}^{11})|}{6}}\mathcal{R}$, ${v}=\sqrt{\frac{|g_{003}^{13}|}{6}}\mathcal{V}$,
we get an equivalent planner system
\begin{equation}\label{eqrv2}
\begin{aligned}
&\frac{\mathrm{d}\rho}{\mathrm{d}{t}}=-\rho\left[\epsilon_{1}(\alpha_1,\alpha_2) + \rho^{2}+b_0 v^{2}\right],\\
&\frac{\mathrm{d}v}{\mathrm{d}{t}}=-v\left[\epsilon_{2}(\alpha_1,\alpha_2) + c_0 \rho^{2} + d_0 v^{2}\right],
\end{aligned}
\end{equation}
with  $(\alpha_1,\alpha_2)=(r-r_*,\tau-\tau_*)$ and $\epsilon_{1}(\alpha)=~-0.2273\alpha_1-1.3130\alpha_2$, $\epsilon_{2}(\alpha)=0.0839\alpha_1$, $b_0=18.8564$, $c_0=-0.6218$, $d_0=-1$, $d_0-b_0c_0= 10.7252>0$.
We claim that the  Case \uppercase\expandafter{\romannumeral4}a in \cite[\S 7.5]{Phillp1988} occurs, the detailed bifurcation diagram near $(r_*,\tau_*)$ in $(\alpha_1,\alpha_2)$ plane is given in Figure \ref{Detail1} and it is a larger image of Figure \ref{figTH} near the critical value. The detailed dynamics in $D_1-D_6$ can be summed up as the following proposition. For a more intuitive understanding, please refer to Figure \ref{phase}.
\begin{proposition}
When $d_1 = 2.53,$ $d_2 = 9.87,$ $a = 0.90,$ $b = 0.001,$ and $l = 5.45$, the system \eqref{eqA} undergoes a Turing-Hopf bifurcation at $(r_*,\tau_*)$ with $n_T=2$ and $n_H=0$. The parameter plane near the critical value is divided into six regions (see Figure \ref{Detail1}). The dynamics of each region $D_1-D_6$ are:
\begin{itemize}
	\item In $D_1$, the constant steady state $(u_0,v_0)$ is locally asymptotically stable (see Figure \ref{figD1}), but it lost its stability when the parameters passing the Turing bifurcation line $L_2$.
	\item In $D_2$, two stable non-constant steady states are coexistence (see Figure \ref{figD2_1}-Figure \ref{figD2_2}, the spatial distribution follows to the function: $h\cos(\frac{n_T}{l}x)$). Moreover, they undergo a Turing bifurcation at $L_3$, and lost their stability in $D_3$.
	\item Two stable spatially non-homogeneous periodic orbits are generated in $D_3$ (see Figure \ref{figD3_1}-Figure \ref{figD3_2}, the spatial distribution follows to the function: $h\cos(\frac{n_T}{l}x)$).
	\item In $D_4$, a unstable spatially homogeneous periodic orbits bifurcating form $(u_0,v_0)$, since a Hopf bifurcation occurs at $L_4$.
	\item $L_5$ is another Turing bifurcation curve of the constant steady state $(u_0,v_0)$, it eliminate the two non-constant steady states in $D_5$.
	\item In $D_6$, the spatially non-homogeneous periodic orbits becomes stable through the Turing curve $L_6$ and two spatially non-homogeneous periodic orbits are disappeared (see Figure \ref{figD6}). In addition, it lost its stability in $D_1$ since the existence of the Hopf bifurcation line $L_1$.
\end{itemize}

\end{proposition}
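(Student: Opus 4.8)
\noindent\emph{Proof strategy.}\quad The plan is to reduce the local dynamics of \eqref{eqA} near $(r_*,\tau_*)$ to the planar amplitude system \eqref{eqrv2} and then read off the full bifurcation picture from the classification in \cite[\S 7.5]{Phillp1988}. First I would substitute the machine-computed coefficients $f^{11}_{\alpha_1z_1},\dots,g^{13}_{003}$ obtained from Theorem \ref{theNF} into the truncated normal form \eqref{eqNF}, pass to the cylindrical coordinates \eqref{eqcct}, and apply the indicated rescaling to arrive at \eqref{eqrv2} with $b_0=18.8564$, $c_0=-0.6218$, $d_0=-1$ and, crucially, $d_0-b_0c_0=10.7252>0$. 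Since $d_0<0$, $b_0>0$, $c_0<0$ and $d_0-b_0c_0>0$, the sign pattern of $(b_0,c_0,d_0)$ is precisely that of Case IVa of \cite[\S 7.5]{Phillp1988}, whose universal unfolding is the six-sector diagram of Figure \ref{Detail1}; this fixes once and for all the number of equilibria and limit cycles of \eqref{eqrv2} and their stability type in each of $D_1,\dots,D_6$, together with the phase portraits in Figure \ref{phase}.

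Second, I would translate the phase portrait of \eqref{eqrv2} back to \eqref{eqA} through the centre-manifold decomposition with eigenfunctions \eqref{phi-psi}. Because $n_H=0$ and $n_T=2$, the origin $(\rho,v)=(0,0)$ corresponds to the constant steady state $(u_0,v_0)$; an equilibrium with $\rho=0$, $v\neq0$ corresponds to a steady state of \eqref{eqA} whose leading term is proportional to $\phi_2(0)\cos(\tfrac{n_T}{l}x)$, and the $v\mapsto-v$ symmetry of the truncated normal form \eqref{eqrv2} forces such equilibria to occur in pairs, which is exactly the coexistence of two non-constant steady states; an equilibrium with $\rho\neq0$, $v=0$ corresponds to a spatially homogeneous periodic orbit born at the Hopf curve; and an equilibrium with $\rho\neq0$, $v\neq0$ corresponds to a pair of spatially inhomogeneous periodic orbits $\rho[\phi_1(0)e^{\mathrm{i}\omega_*\tau_*t}+\bar\phi_1(0)e^{-\mathrm{i}\omega_*\tau_*t}]+h\cos(\tfrac{n_T}{l}x)$. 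Because every eigenvalue of the linearization of \eqref{eqA} at $(r_*,\tau_*)$ other than $\{0,\pm\mathrm{i}\omega_*\tau_*\}$ has strictly negative real part (established at the beginning of Section 3), the stability type computed from the Jacobian of \eqref{eqrv2} on the two-dimensional centre manifold transfers directly to the (orbital) asymptotic stability or instability of the corresponding invariant set of \eqref{eqA}.

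Third, I would identify the six boundary rays $L_1,\dots,L_6$ with the critical lines of the Case IVa unfolding, expressed through $\epsilon_1(\alpha)=-0.2273\alpha_1-1.3130\alpha_2$ (the scaled real part of the Hopf eigenvalue) and $\epsilon_2(\alpha)=0.0839\alpha_1$ (the scaled zero eigenvalue of the $n_T$-mode, so that $\epsilon_2=0$ is the line $r=r_*$). The locus $\epsilon_2=0$ gives the two Turing rays $L_2$ and $L_5$, across which $(u_0,v_0)$ and the $\cos(\tfrac{n_T}{l}x)$ steady states exchange or lose a real eigenvalue; the locus $\epsilon_1=0$ gives the two Hopf rays $L_4$ (where the spatially homogeneous periodic orbit is born) and $L_1$ (where the inhomogeneous periodic orbit finally loses stability); and the two secondary rays of Case IVa, on which the mixed equilibrium $(\rho_*,v_*)$ detaches from $(0,v_*)$ (a secondary Hopf of the non-constant steady states, giving $L_3$) and from $(\rho_*,0)$ (a symmetry-breaking bifurcation of the homogeneous periodic orbit, giving $L_6$). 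On crossing each $L_i$ exactly one eigenvalue of the pertinent equilibrium of \eqref{eqrv2} changes sign, which yields, region by region, the transitions and stability statements of the six items.

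The main obstacle is the bookkeeping rather than any analytic estimate: one must match the signs and the orientation of the machine-computed coefficients to the precise conventions of \cite[\S 7.5]{Phillp1988} (the subcases of Case IV differ only by sign patterns and by which secondary bifurcation is sub- versus supercritical), and then argue that the cubic truncation already captures the complete local picture. It is at this last point that Case IVa is benign: unlike the chaotic cases, its unfolding contains only hyperbolic equilibria together with a single branch of limit cycles crossed transversally, so persistence of hyperbolic invariant sets under the $O(4)$ remainder in \eqref{eqNF} suffices and no Melnikov-type analysis of a broken heteroclinic loop is required.
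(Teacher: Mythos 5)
Your proposal follows essentially the same route as the paper: compute the normal-form coefficients via Theorem \ref{theNF}, pass to the planar amplitude system \eqref{eqrv2}, identify the sign pattern of $(b_0,c_0,d_0,d_0-b_0c_0)$ with Case IVa of \cite[\S 7.5]{Phillp1988}, and translate the resulting six-region unfolding back to \eqref{eqA} through the centre-manifold decomposition (the paper supplements this with the numerical simulations in Figures \ref{figD1}--\ref{figD6} rather than spelling out the persistence and mode-correspondence arguments you make explicit). Your added remarks on the $v\mapsto -v$ symmetry forcing paired steady states and on the benignity of Case IVa for the $O(4)$ truncation are consistent with, and slightly more careful than, what the paper records.
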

\begin{figure}[htbp]
	\centering
\subfigure[$u(x,t)$]{\begin{minipage}{0.25\linewidth}
		\centering\includegraphics[scale=0.2]{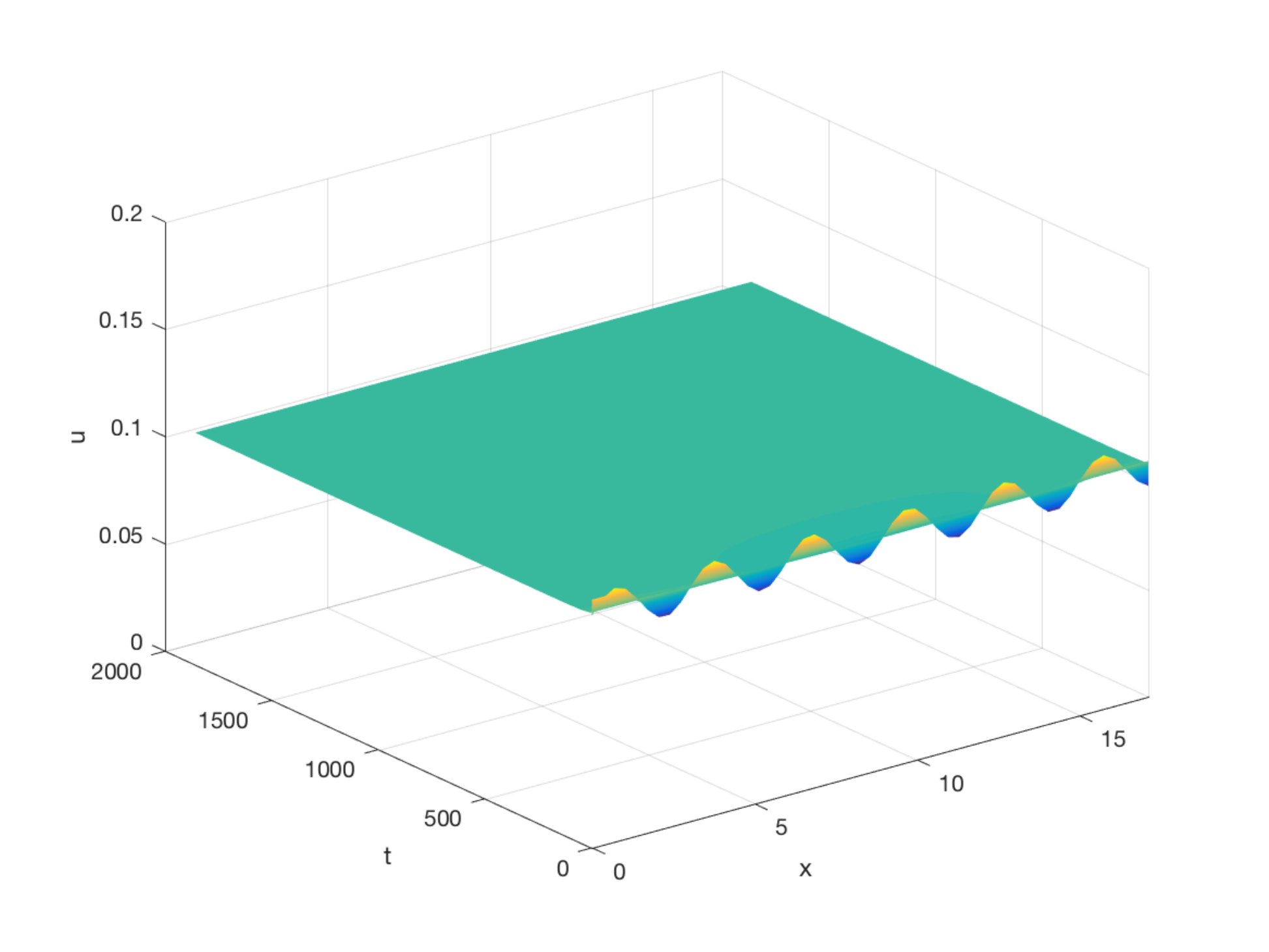}
\end{minipage}}
\subfigure[prey pattern]{\begin{minipage}{0.23\linewidth}
		\centering\includegraphics[height=0.93\linewidth,width=1.1\linewidth]{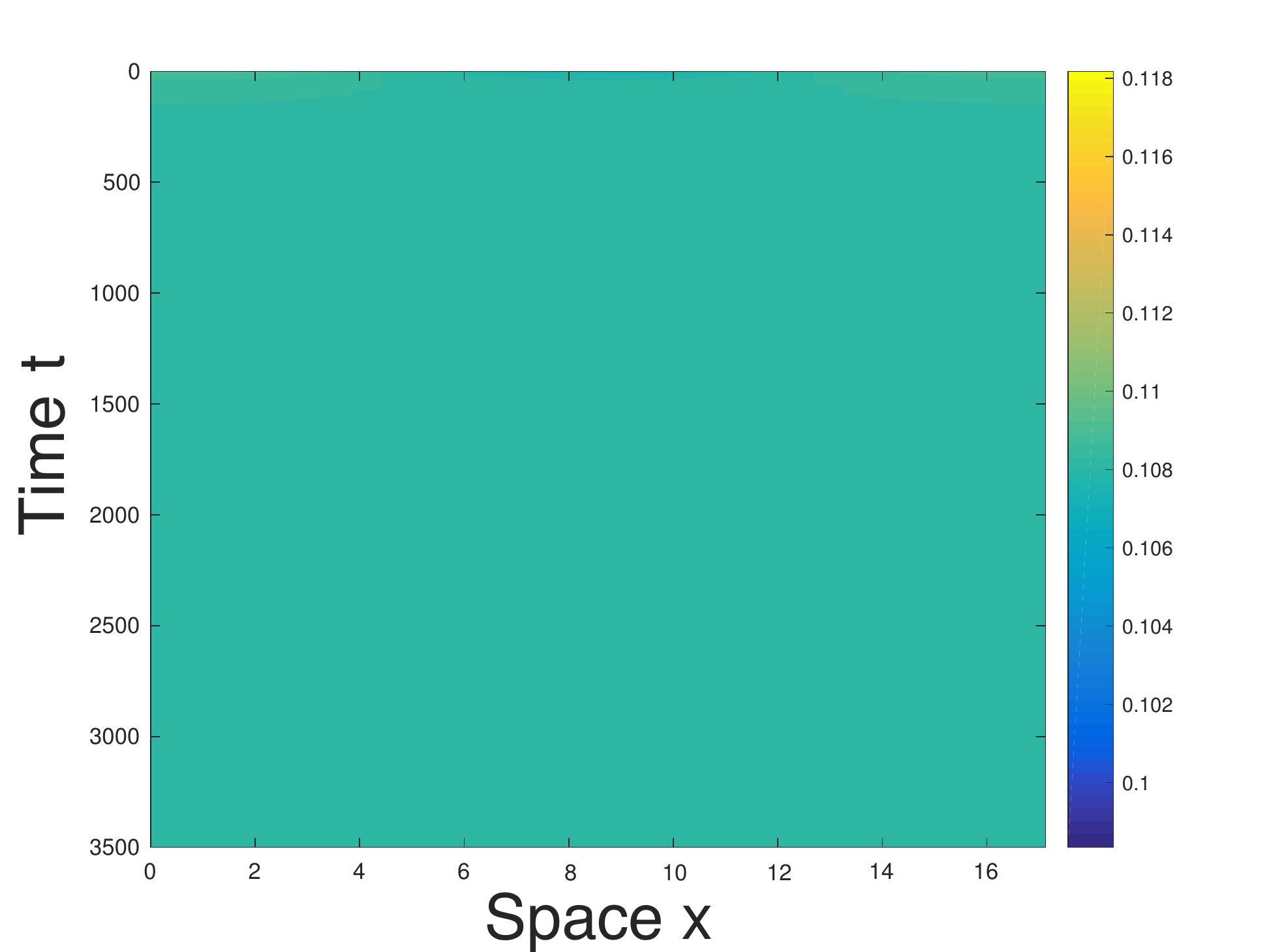}
\end{minipage}}
\subfigure[$v(x,t)$]{\begin{minipage}{0.25\linewidth}
		\centering\includegraphics[scale=0.2]{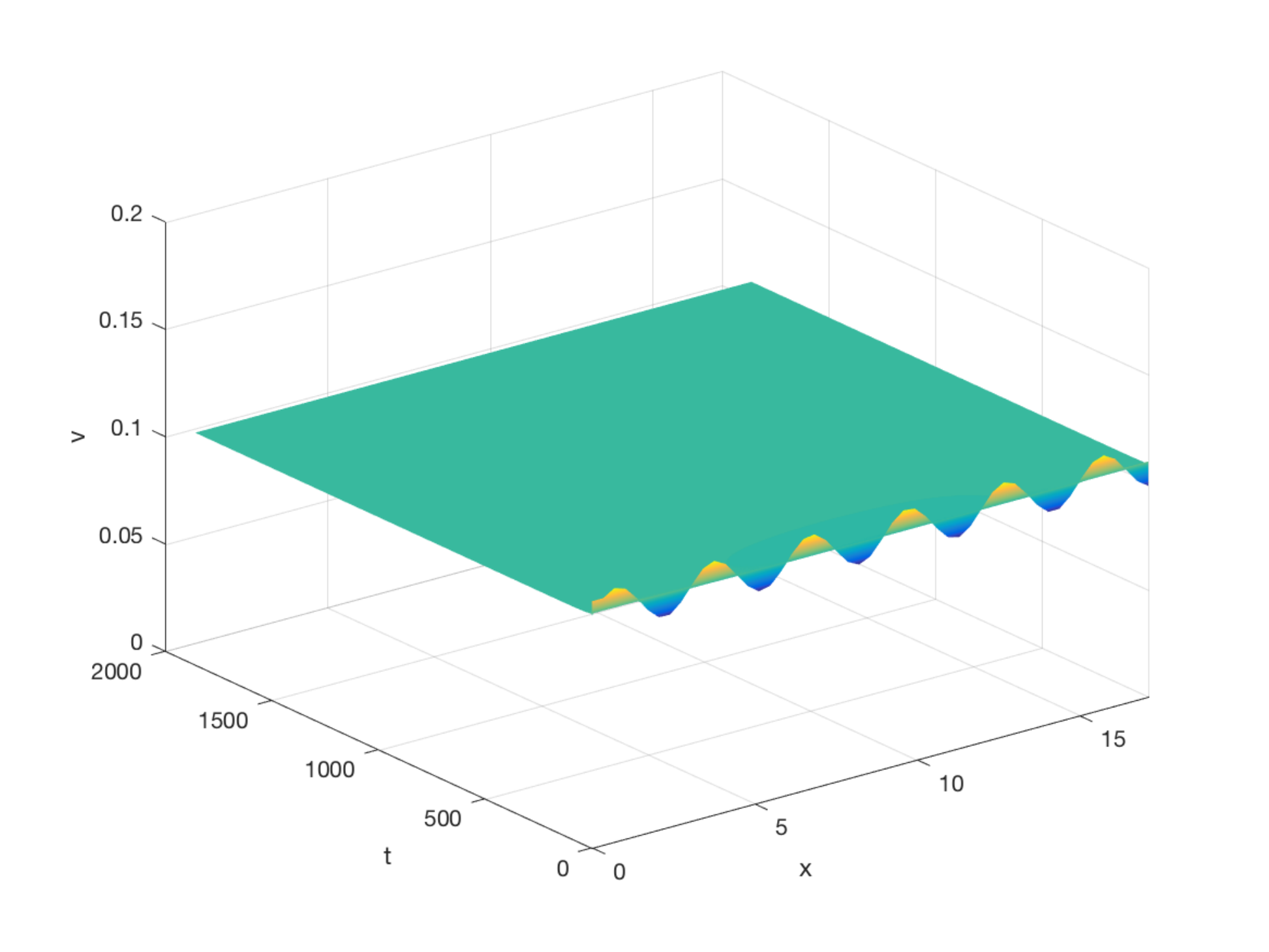}
\end{minipage}}
\subfigure[predator pattern]{\begin{minipage}{0.23\linewidth}
		\centering\includegraphics[height=0.93\linewidth,width=1.1\linewidth]{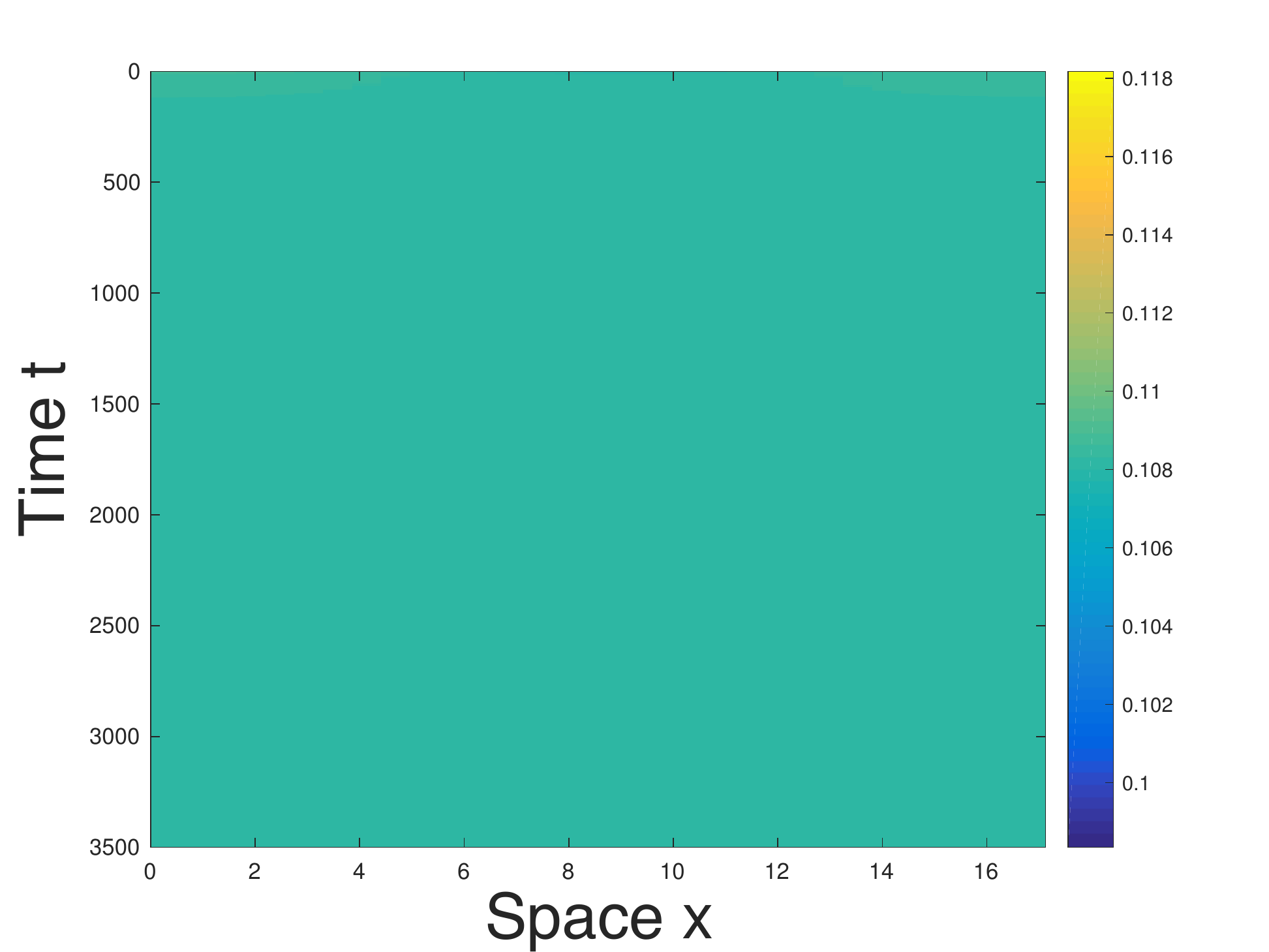}
\end{minipage}}
\caption{Constant steady state in $D_1$, with $(\alpha_1,\alpha_2)=(0.05,-0.05)$  and initial functions are $(u_0+0.01\sin 2x,u_0+0.01\sin 2x)$. }\label{figD1}
\end{figure}
\begin{figure}[htbp]
	\centering
	\subfigure[$u(x,t)$]{\begin{minipage}{0.25\linewidth}
			\centering\includegraphics[scale=0.2]{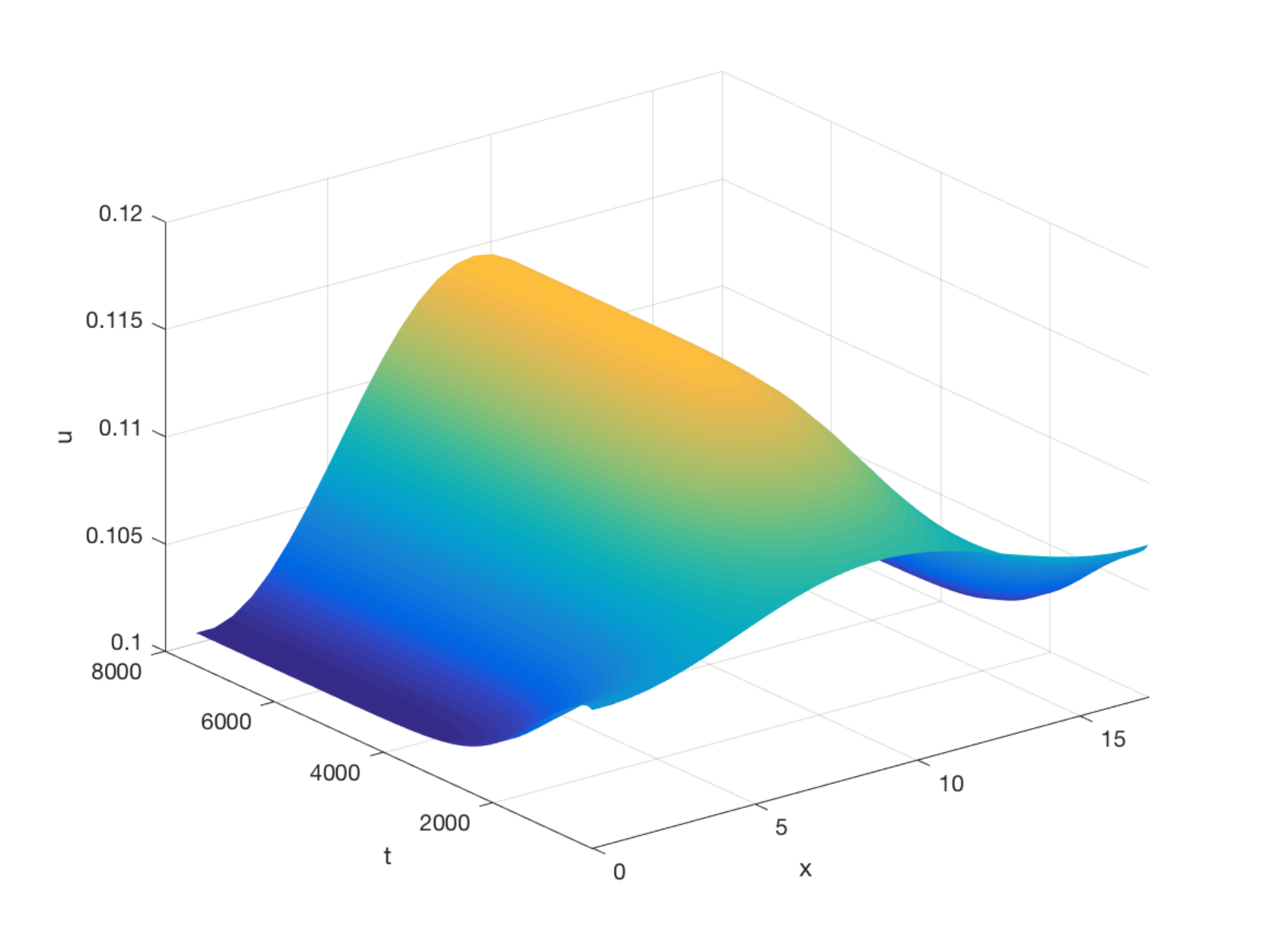}
	\end{minipage}}
    \subfigure[prey pattern]{\begin{minipage}{0.23\linewidth}
	\centering\includegraphics[height=0.93\linewidth,width=1.1\linewidth]{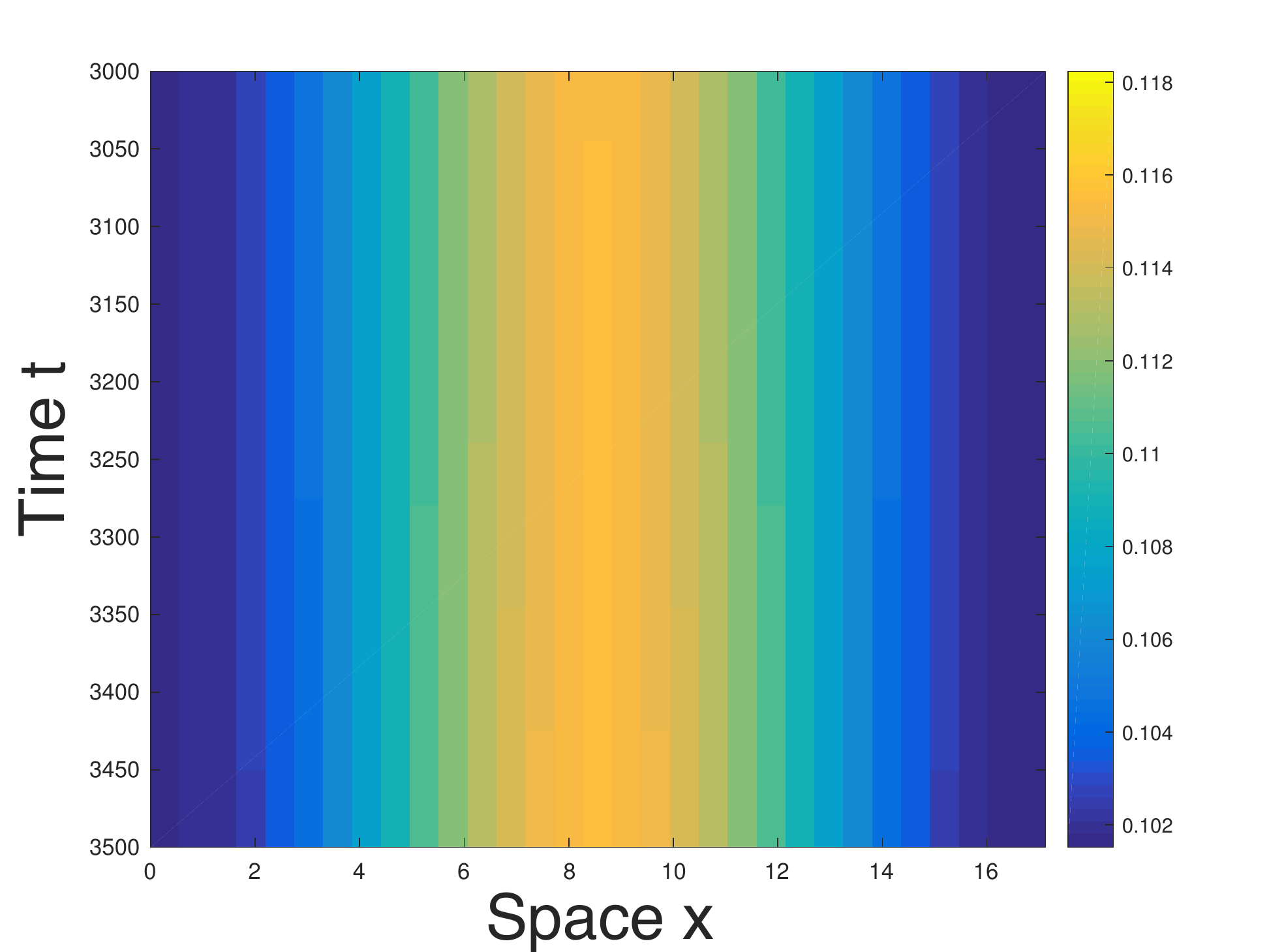}
    \end{minipage}}
    \subfigure[$v(x,t)$]{\begin{minipage}{0.25\linewidth}
		\centering\includegraphics[scale=0.2]{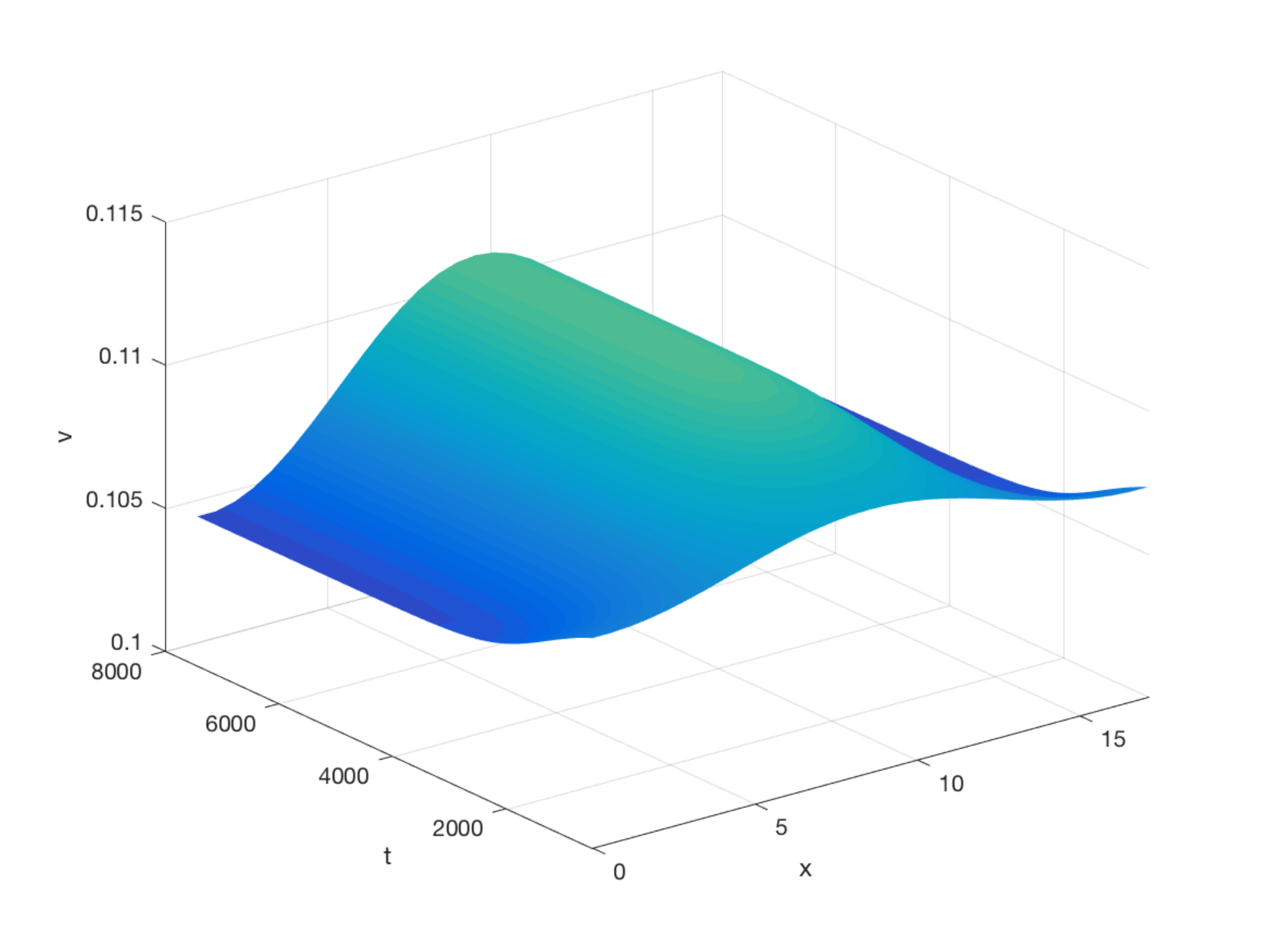}
    \end{minipage}}
    \subfigure[predator pattern]{\begin{minipage}{0.23\linewidth}
    		\centering\includegraphics[height=0.93\linewidth,width=1.1\linewidth]{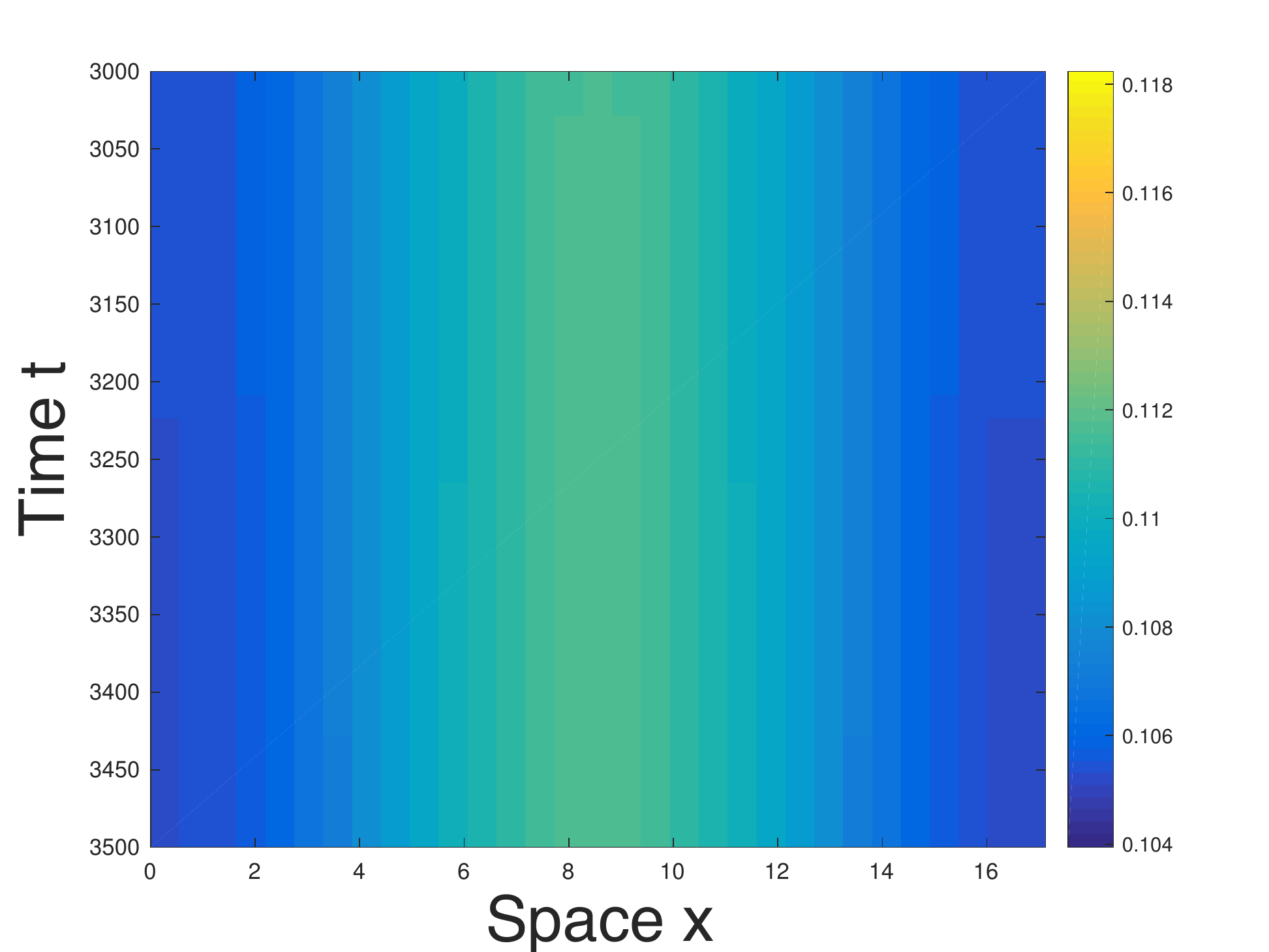}
    \end{minipage}}
\caption{Non-constant steady state in $D_2$, with $(\alpha_1,\alpha_2)=(-0.05,-0.05)$ and initial functions are $(u_0+0.01\sin 0.1x,u_0+0.01\sin 0.1x)$.}\label{figD2_1}
\end{figure}
\begin{figure}[htbp]
    \centering
    \subfigure[$u(x,t)$]{\begin{minipage}{0.25\linewidth}
    		\centering\includegraphics[scale=0.2]{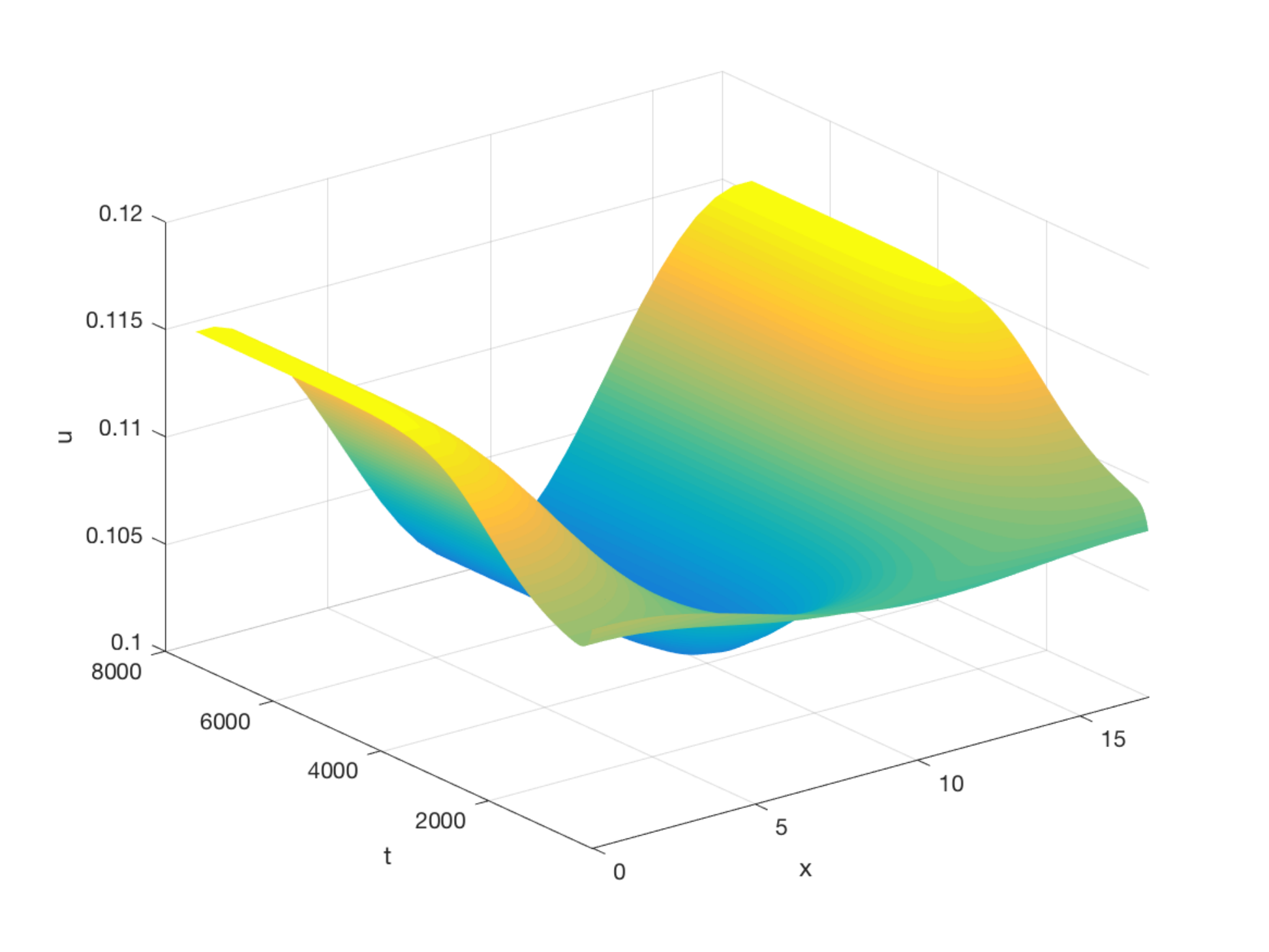}
    \end{minipage}}
    \subfigure[prey pattern]{\begin{minipage}{0.23\linewidth}
    		\centering\includegraphics[height=0.93\linewidth,width=1.1\linewidth]{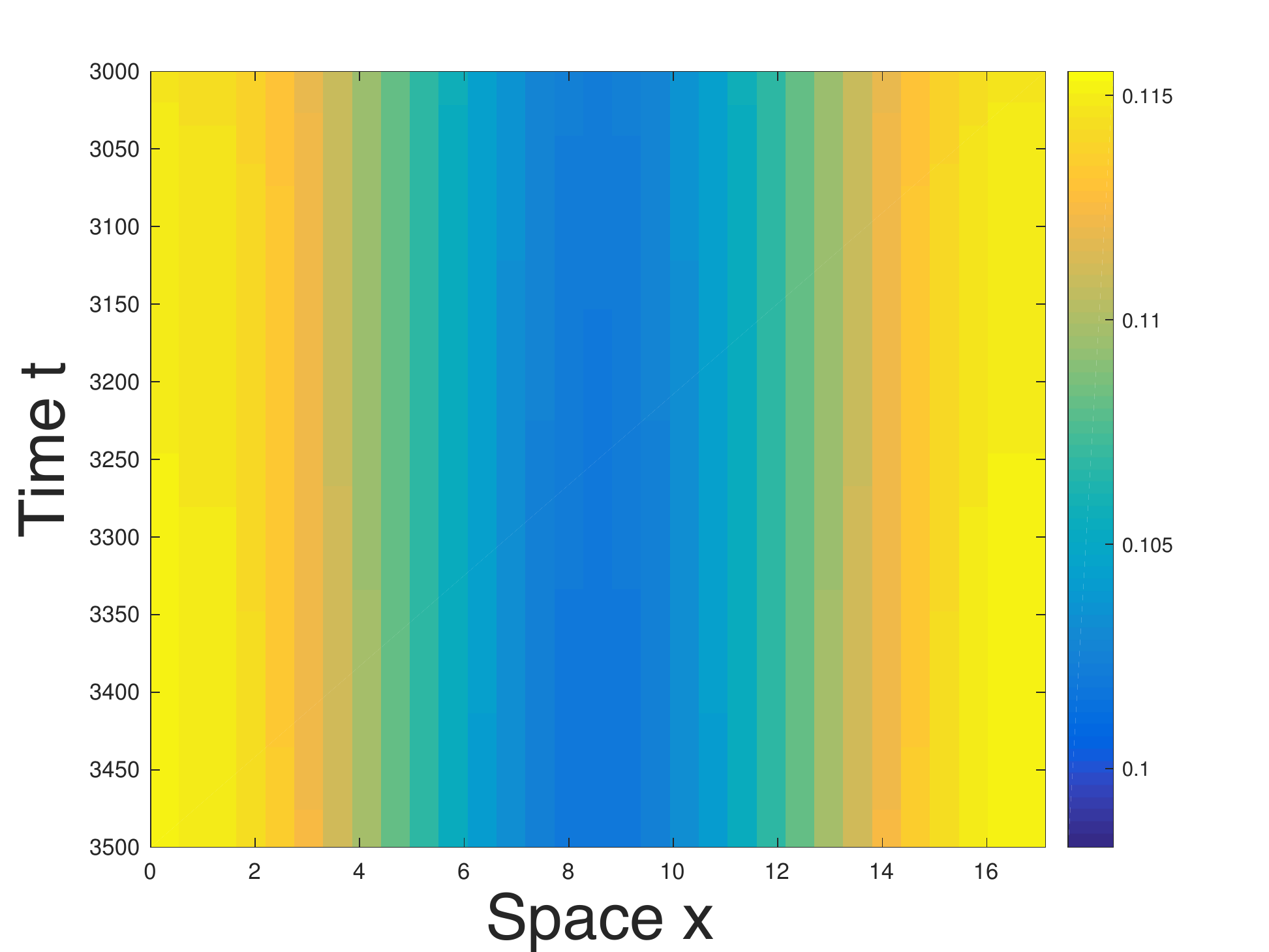}
    \end{minipage}}
    \subfigure[$v(x,t)$]{\begin{minipage}{0.25\linewidth}
    		\centering\includegraphics[scale=0.2]{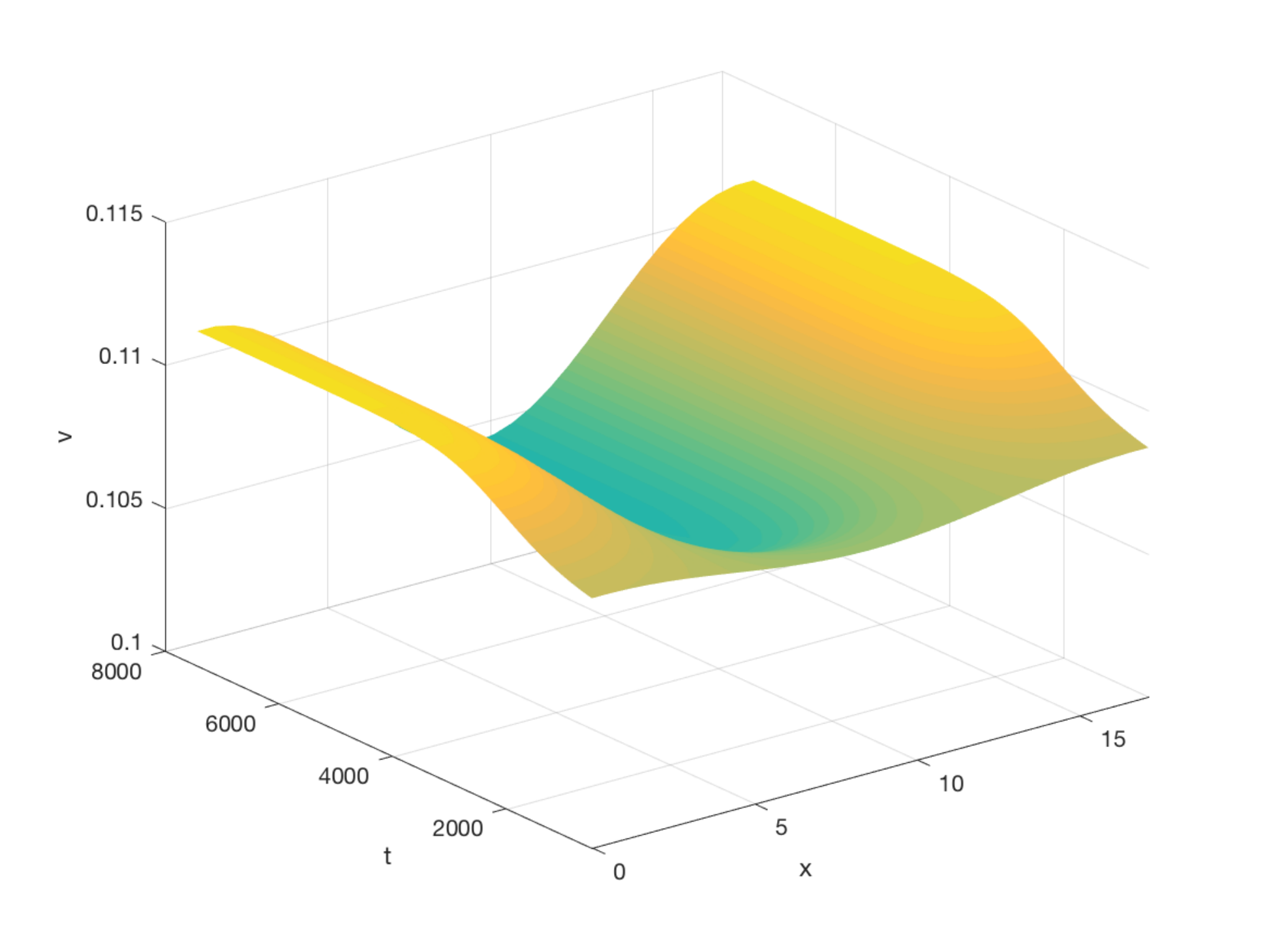}
    \end{minipage}}
    \subfigure[predator pattern]{\begin{minipage}{0.23\linewidth}
    		\centering\includegraphics[height=0.93\linewidth,width=1.1\linewidth]{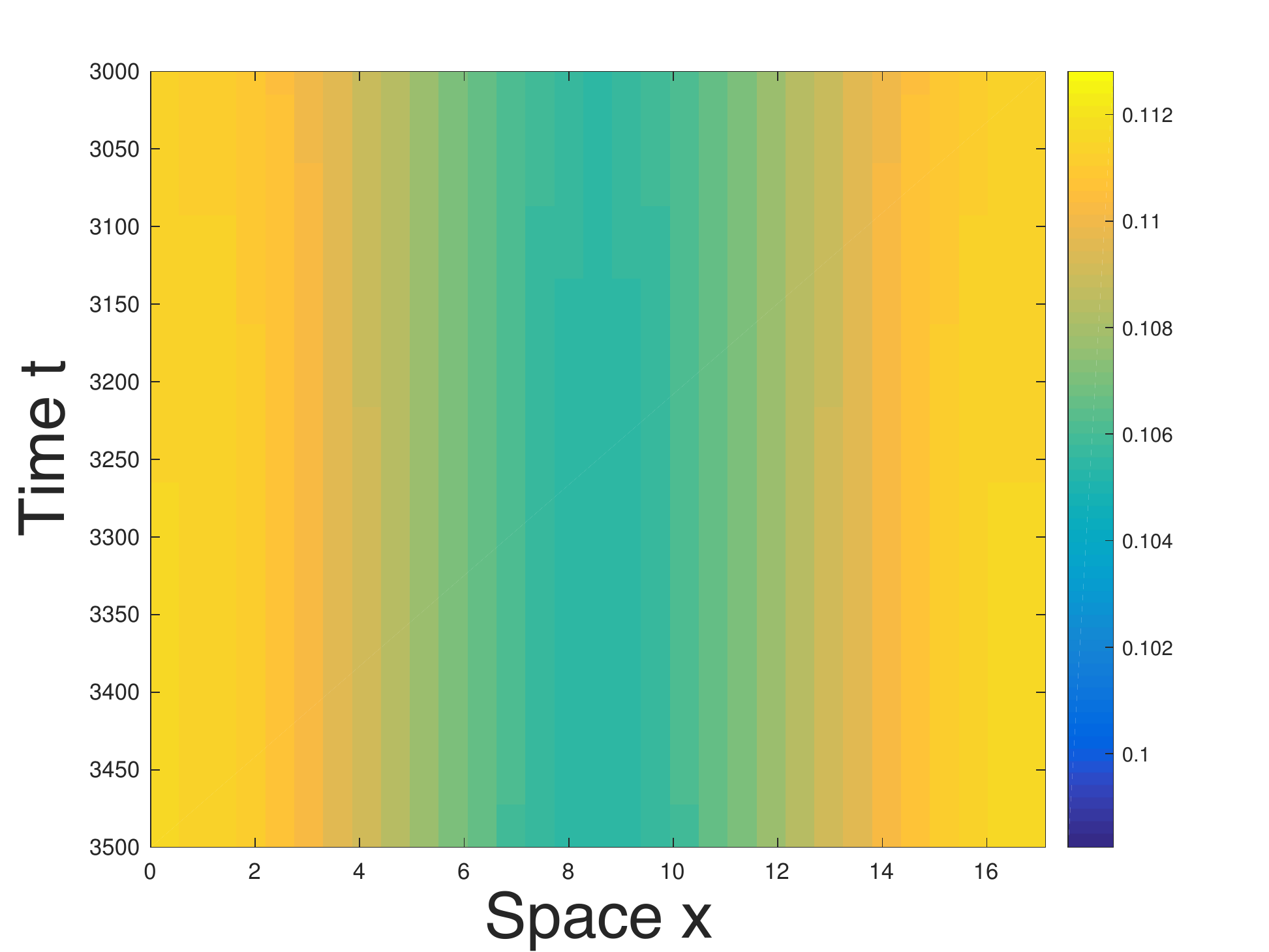}
    \end{minipage}}
\caption{ Non-constant steady state in $D_2$, with $(\alpha_1,\alpha_2)=(-0.05,-0.05)$ and initial functions are $(u_0-0.01\sin 0.1x,u_0-0.01\sin 0.1x)$. }\label{figD2_2}
\end{figure}
\begin{figure}[htbp]
	\subfigure[$u(x,t)$]{\begin{minipage}{0.25\linewidth}
			\centering\includegraphics[scale=0.2]{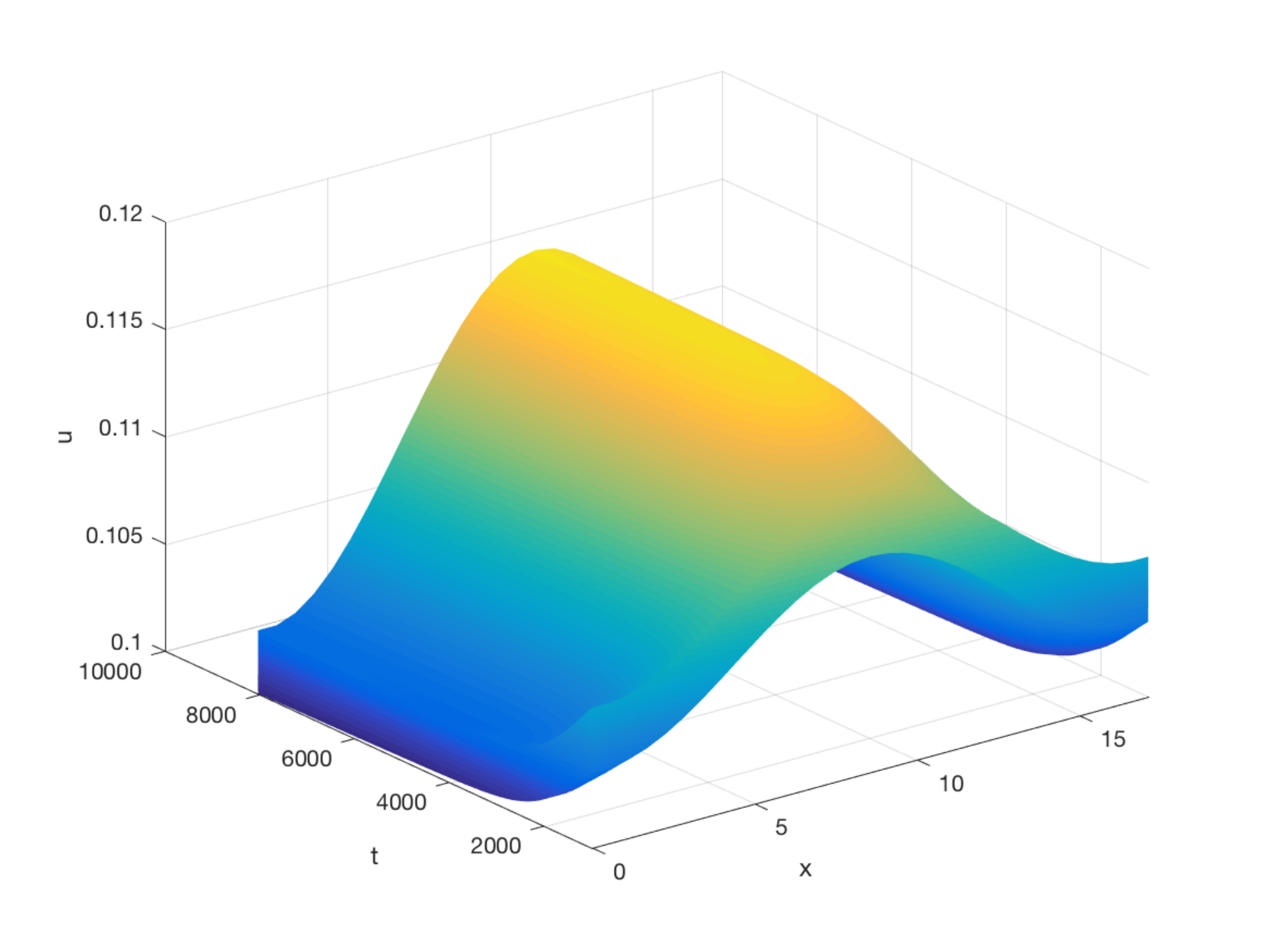}
	\end{minipage}}
	\subfigure[target pattern]{\begin{minipage}{0.23\linewidth}
			\centering\includegraphics[height=0.93\linewidth,width=1.1\linewidth]{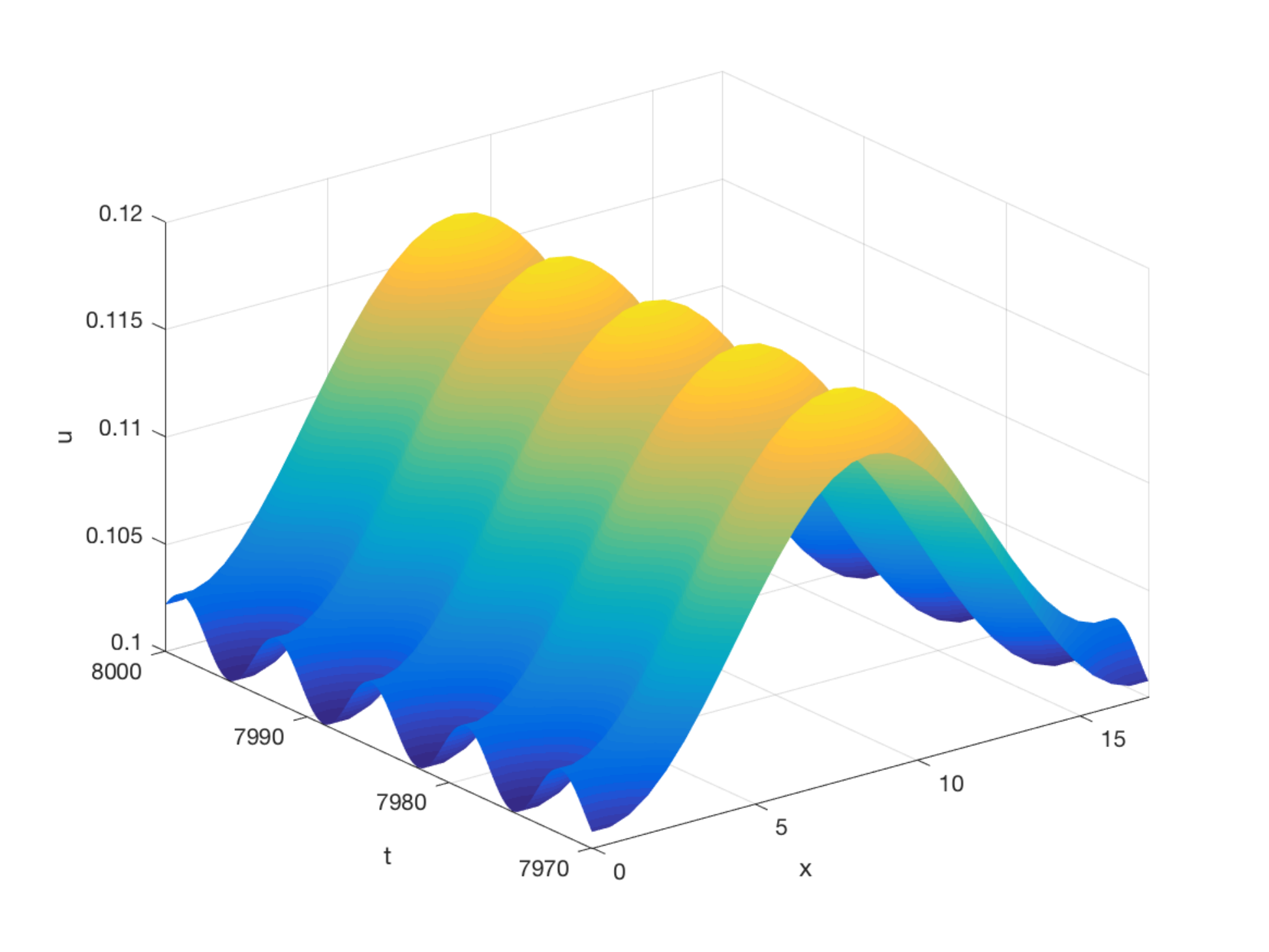}
	\end{minipage}}
	\subfigure[prey pattern]{\begin{minipage}{0.25\linewidth}
		\centering\includegraphics[scale=0.2]{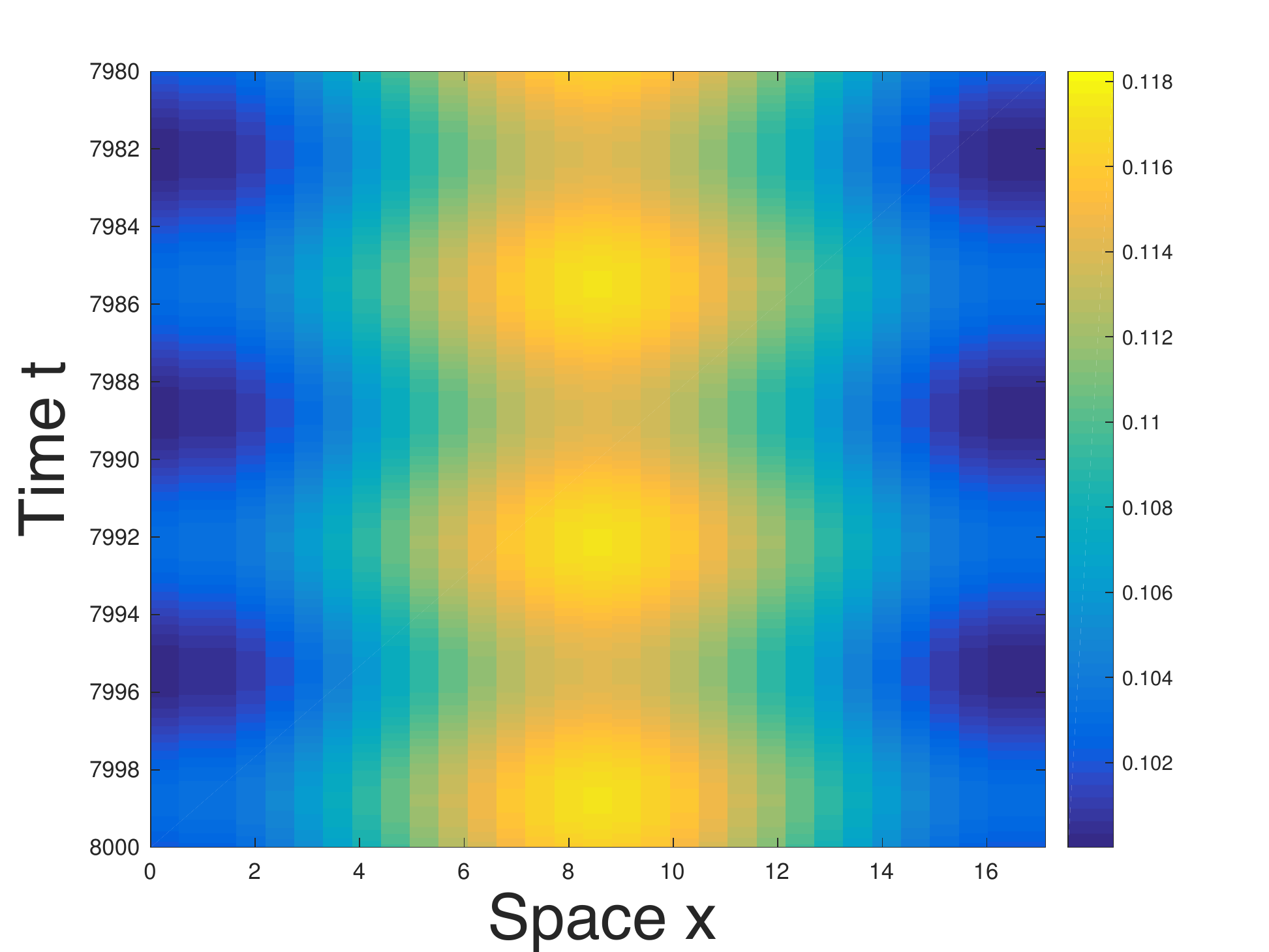}
    \end{minipage}}
	\subfigure[$u(x,7998)$]{\begin{minipage}{0.23\linewidth}
			\centering\includegraphics[height=0.93\linewidth,width=1.1\linewidth2]{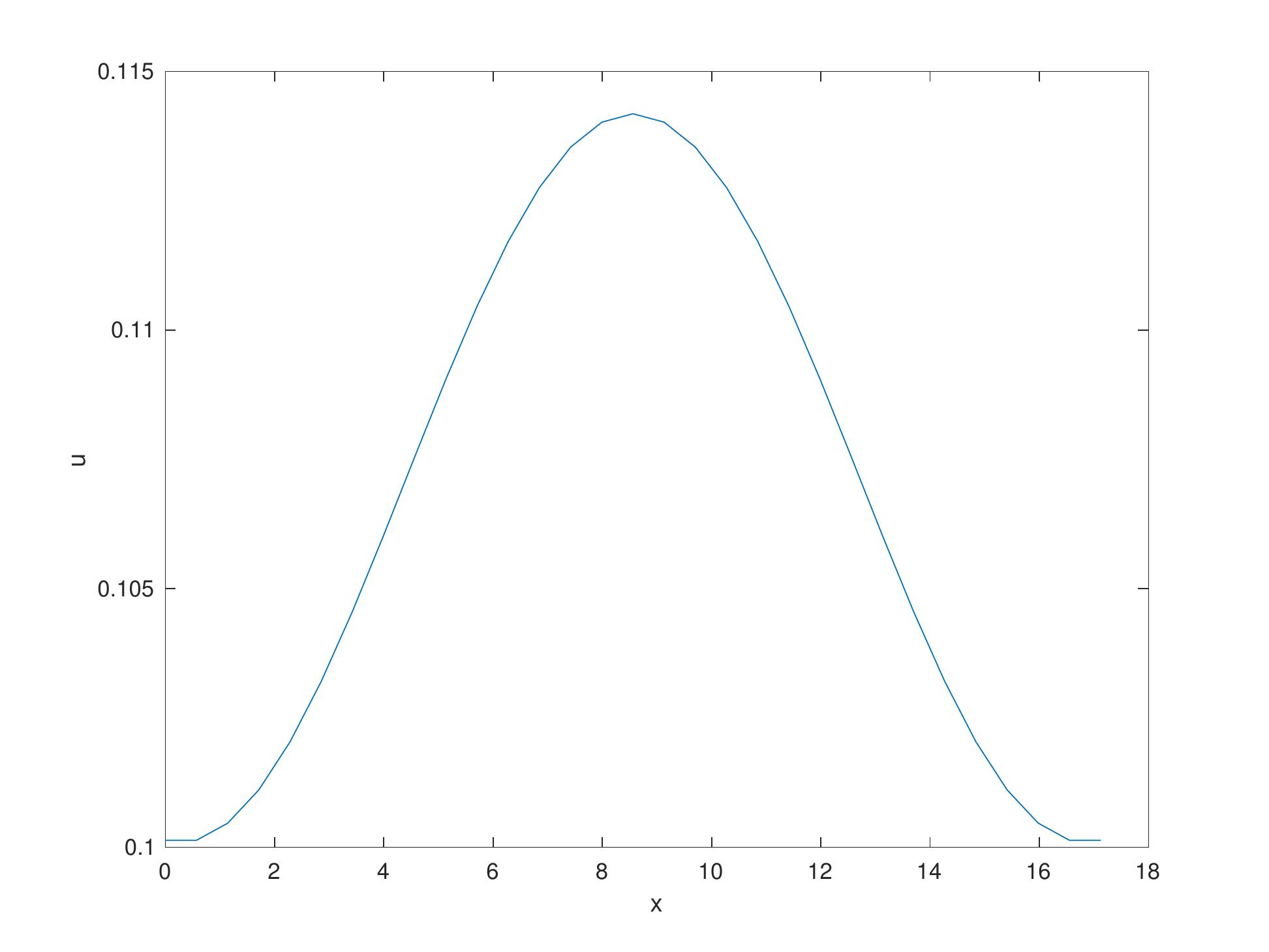}
	\end{minipage}}
	
	\subfigure[$v(x,t)$]{\begin{minipage}{0.25\linewidth}
			\centering\includegraphics[scale=0.2]{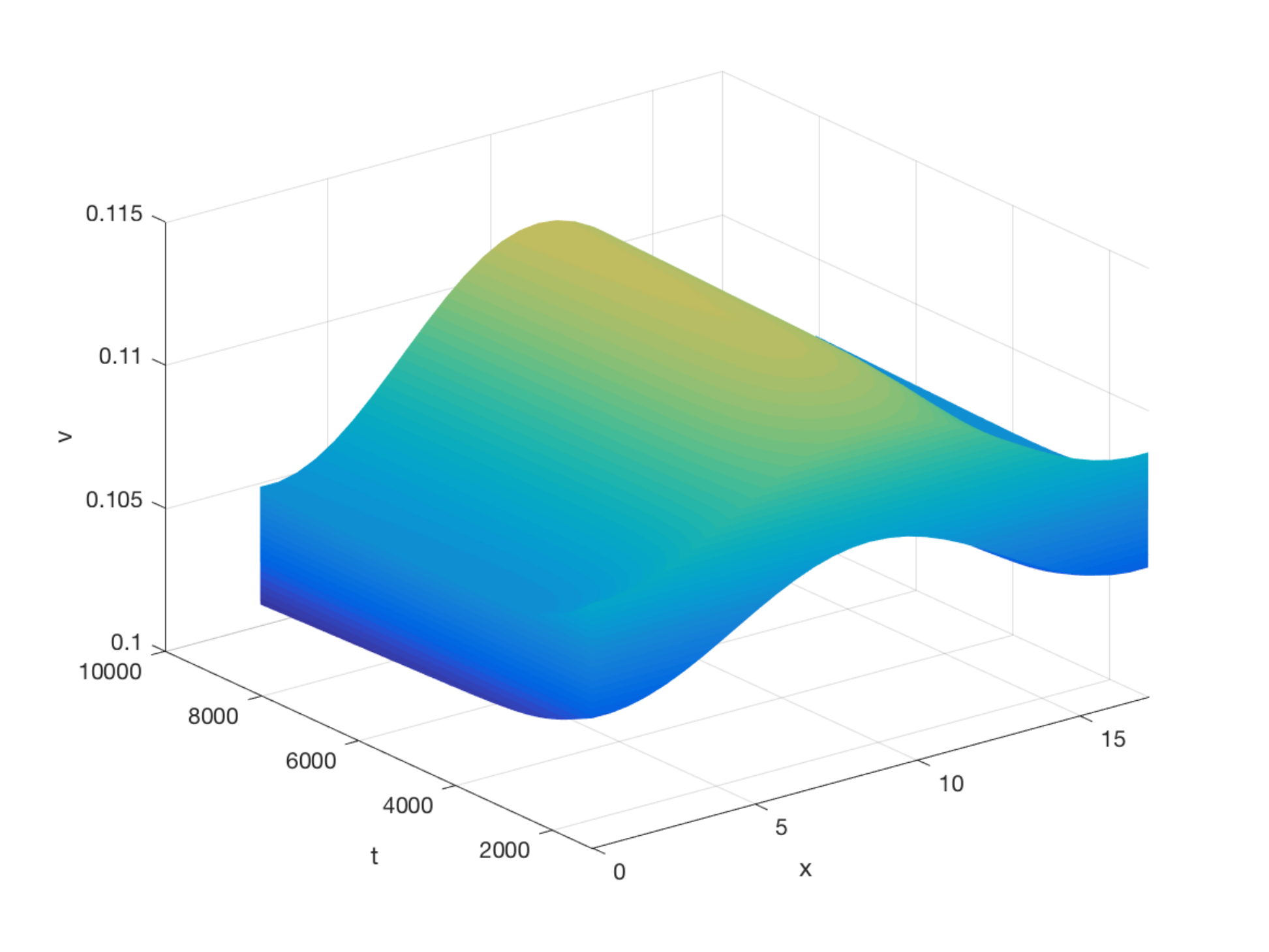}
	\end{minipage}}
	\subfigure[target pattern]{\begin{minipage}{0.23\linewidth}
			\centering\includegraphics[height=0.93\linewidth,width=1.1\linewidth2]{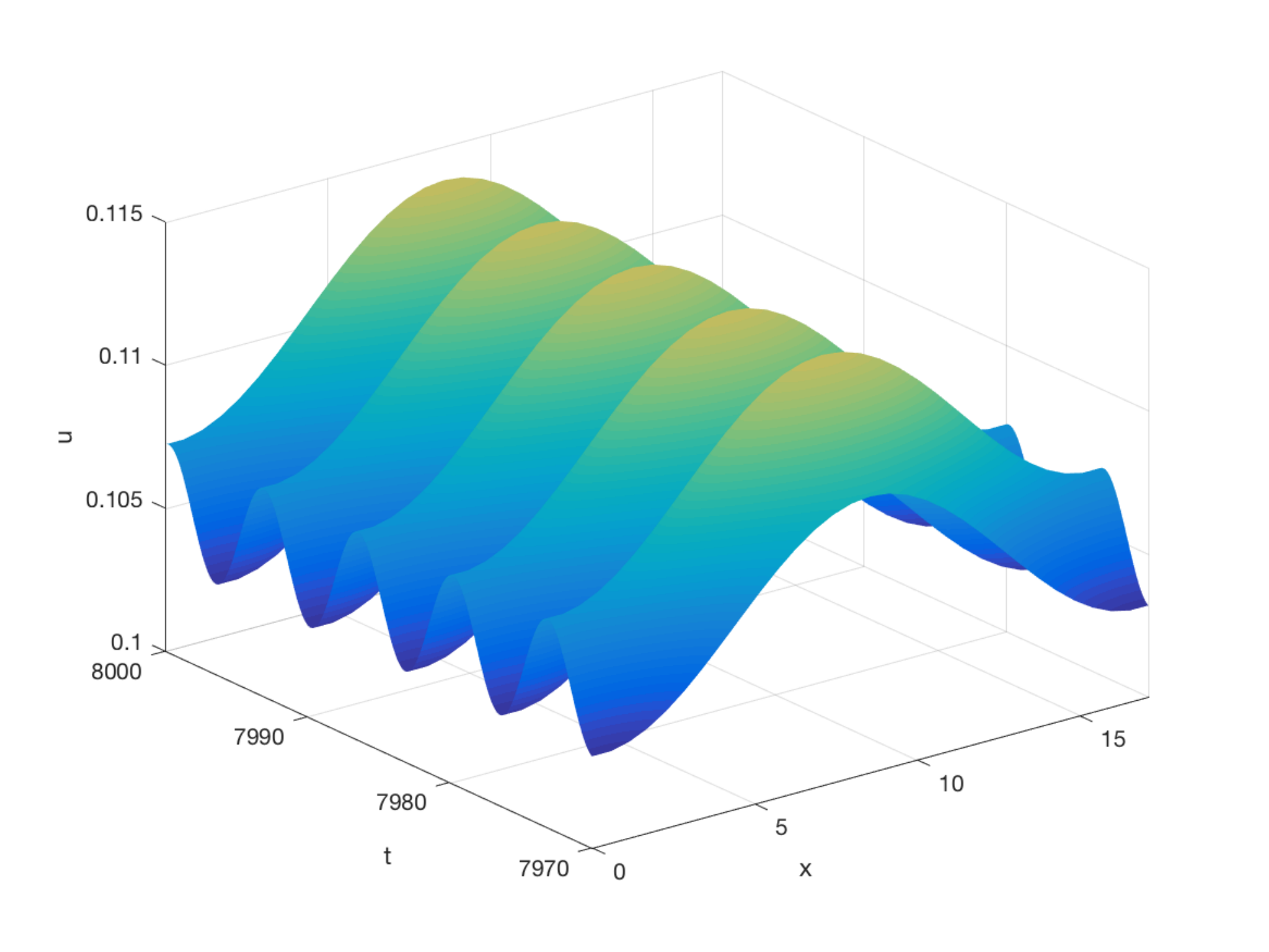}
	\end{minipage}}
	\subfigure[predator pattern]{\begin{minipage}{0.25\linewidth}
			\centering\includegraphics[scale=0.2]{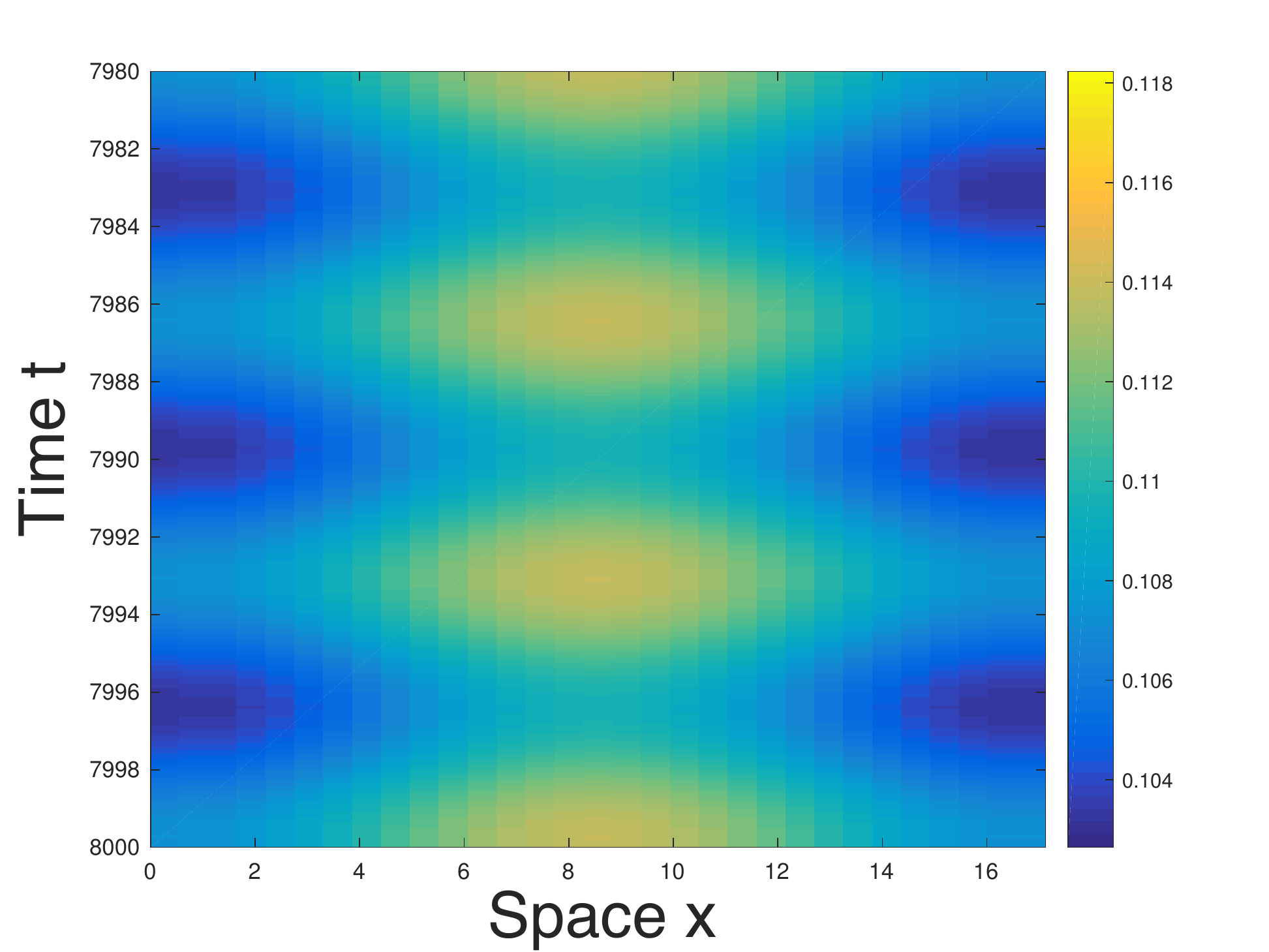}
	\end{minipage}}
	\subfigure[$v(x,7998)$]{\begin{minipage}{0.23\linewidth}
			\centering\includegraphics[height=0.93\linewidth,width=1.1\linewidth]{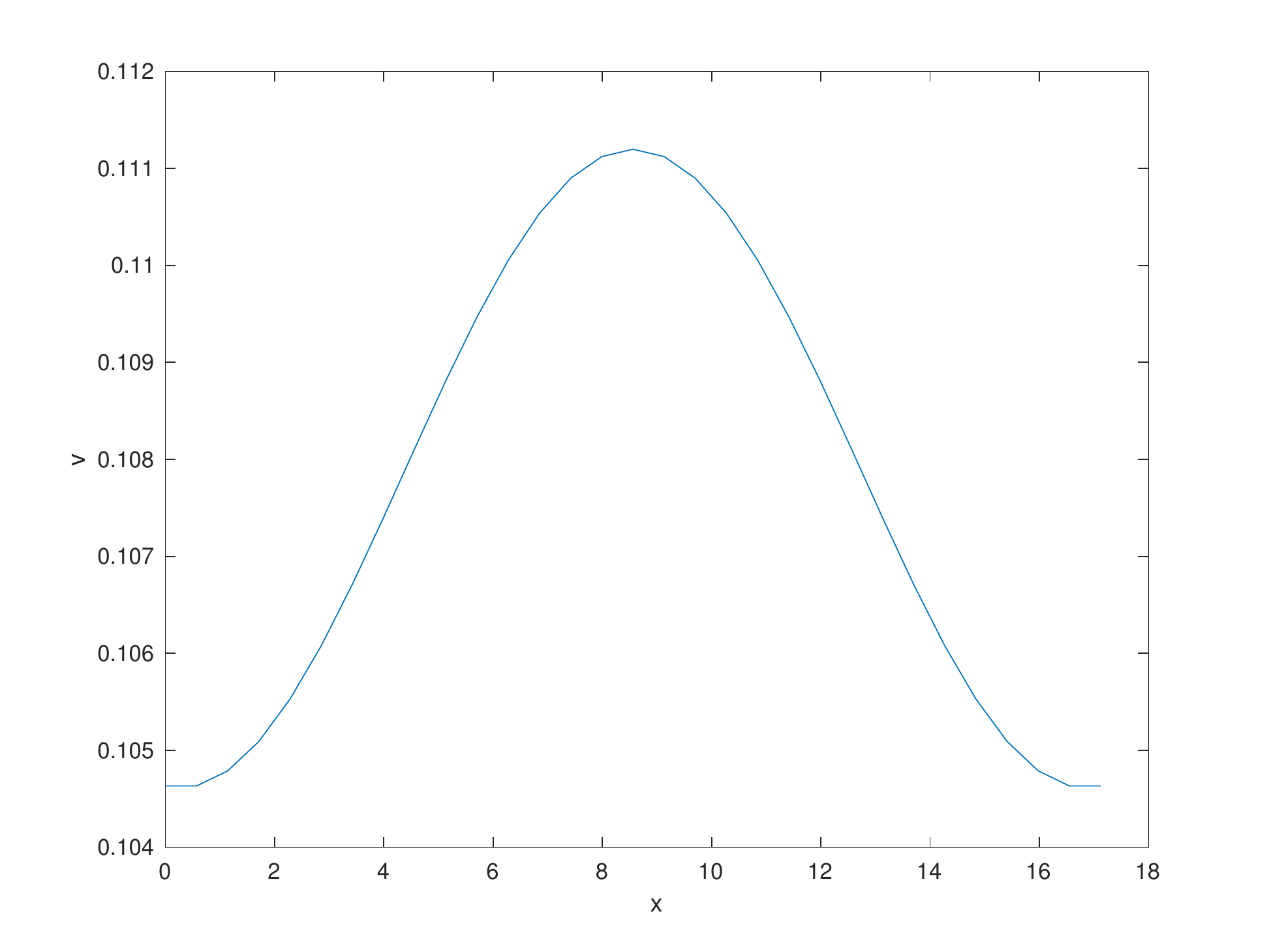}
	\end{minipage}}
\caption{Spatially non-homogeneous periodic solution in $D_3$, with $(\alpha_1,\alpha_2)=(-0.05,0.0105)$ and initial functions are $(u_0+0.01\sin 0.1x,u_0+0.01\sin 0.1x)$.}\label{figD3_1}
\end{figure}

\begin{figure}[htbp]
	\subfigure[$u(x,t)$]{\begin{minipage}{0.25\linewidth}
			\centering\includegraphics[scale=0.2]{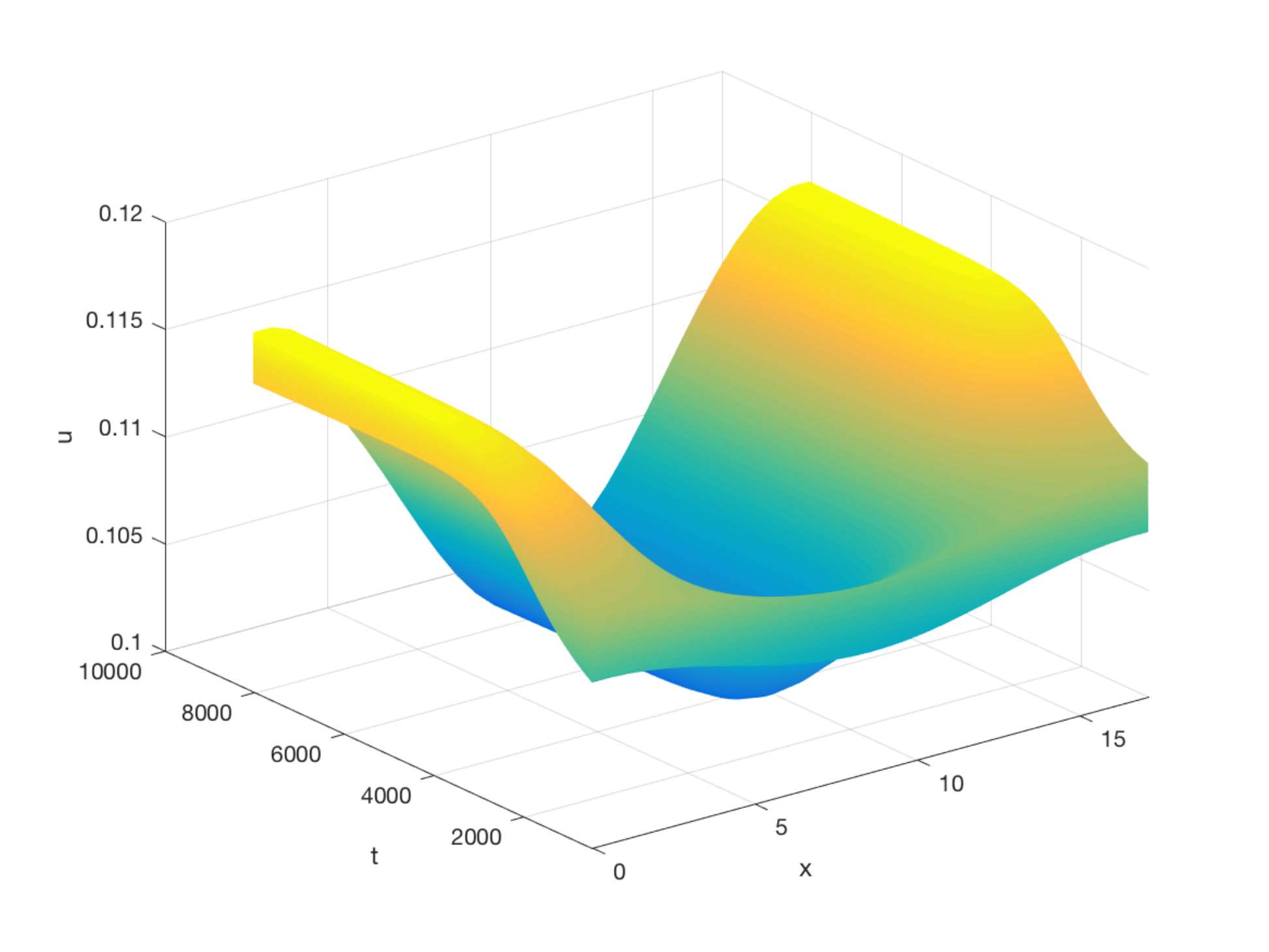}
	\end{minipage}}
	\subfigure[target pattern]{\begin{minipage}{0.23\linewidth}
			\centering\includegraphics[height=0.93\linewidth,width=1.1\linewidth]{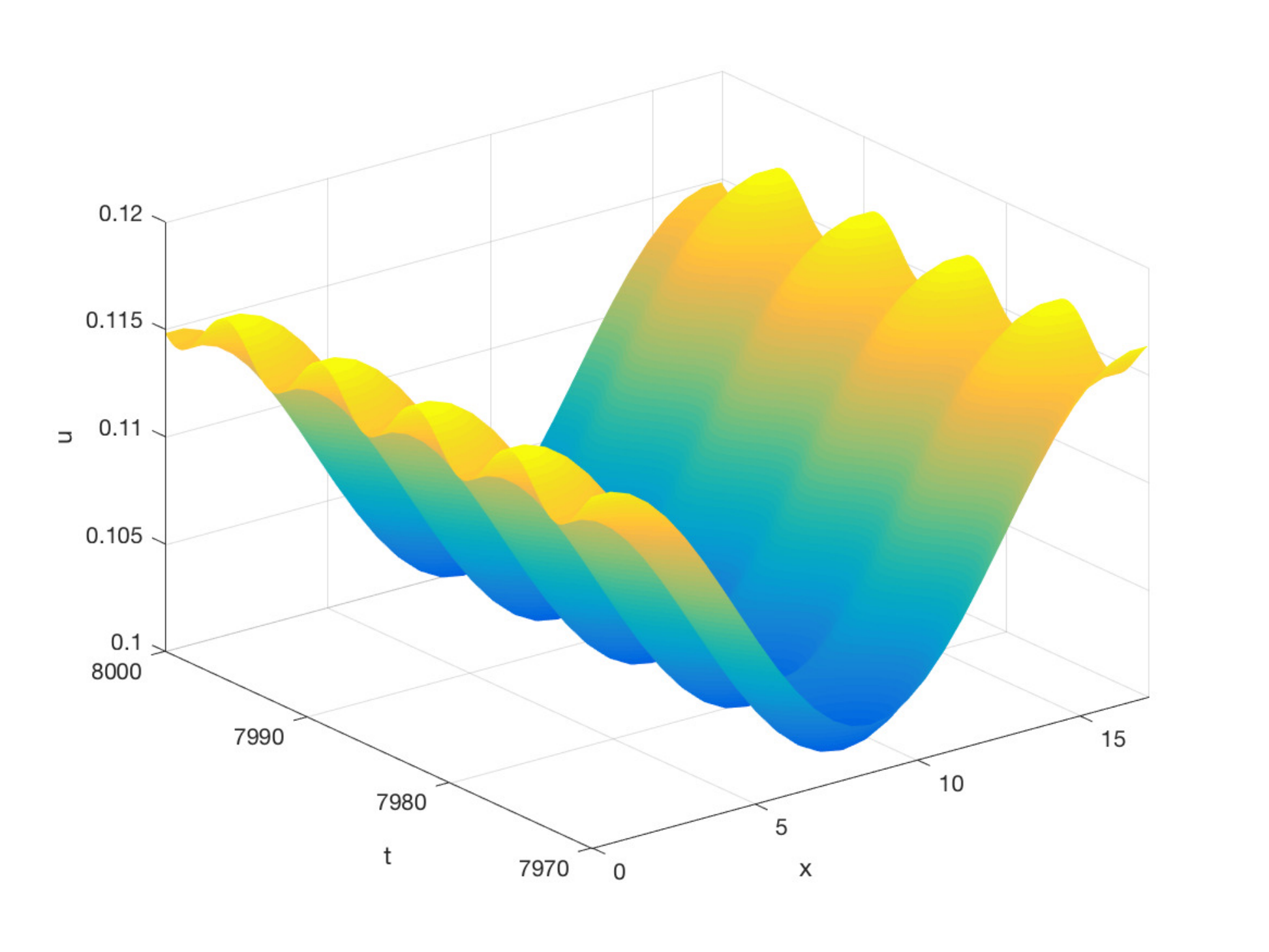}
	\end{minipage}}
	\subfigure[prey pattern]{\begin{minipage}{0.25\linewidth}
			\centering\includegraphics[scale=0.2]{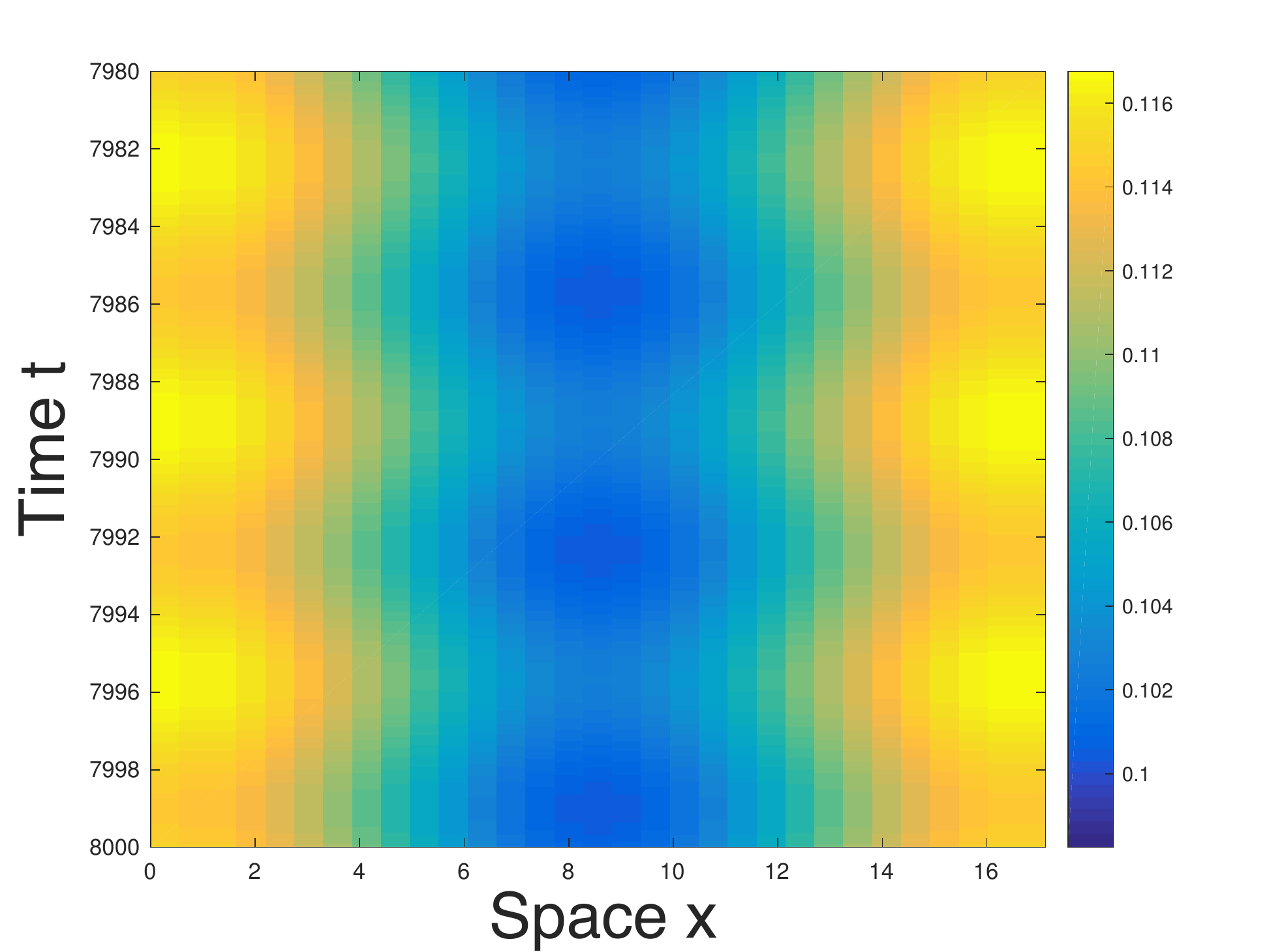}
	\end{minipage}}
	\subfigure[$u(x,7998)$]{\begin{minipage}{0.23\linewidth}
			\centering\includegraphics[height=0.93\linewidth,width=1.1\linewidth]{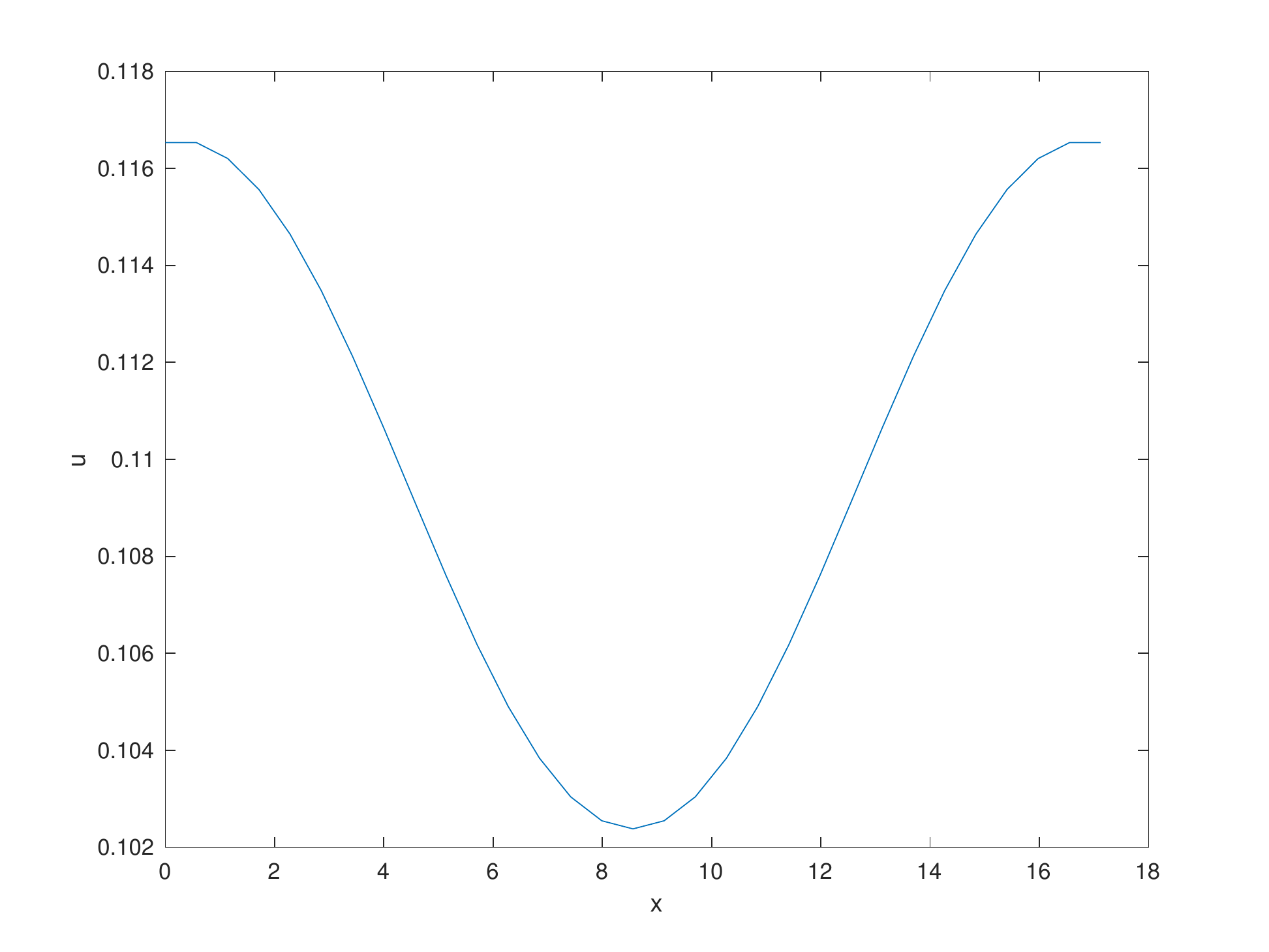}
	\end{minipage}}
	
	\subfigure[$v(x,t)$]{\begin{minipage}{0.25\linewidth}
			\centering\includegraphics[scale=0.2]{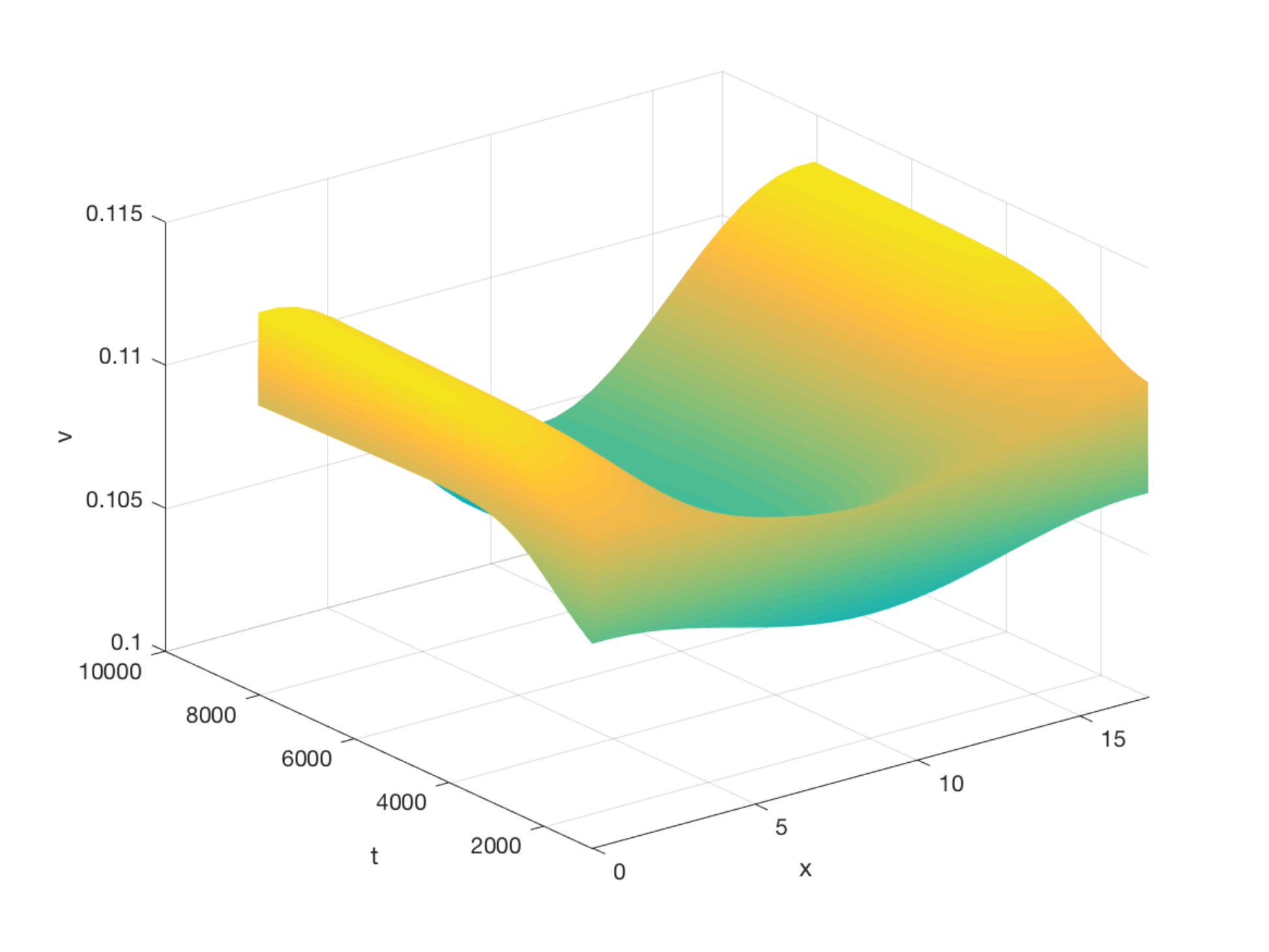}
	\end{minipage}}
	\subfigure[target pattern]{\begin{minipage}{0.23\linewidth}
			\centering\includegraphics[height=0.93\linewidth,width=1.1\linewidth]{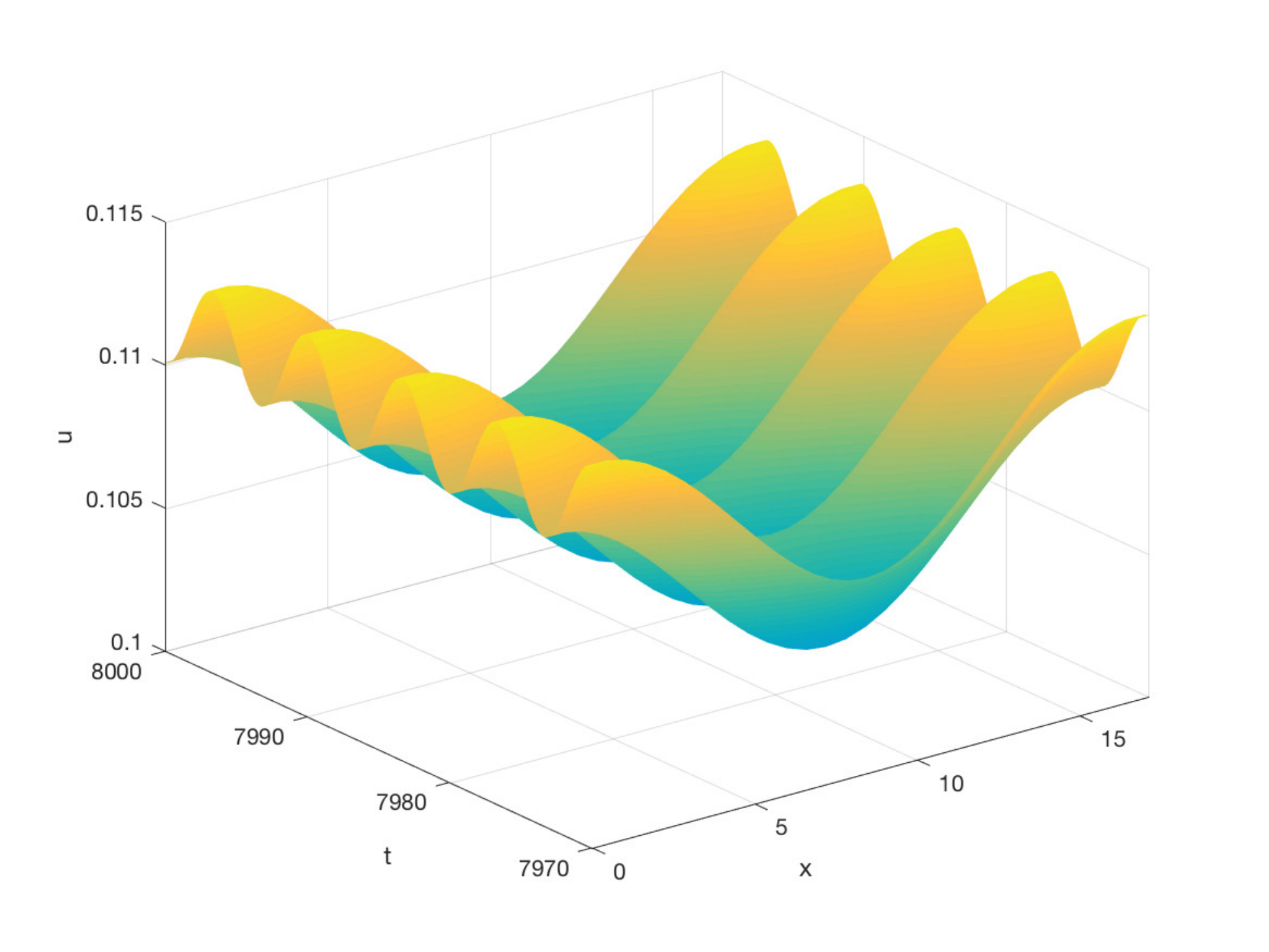}
	\end{minipage}}
	\subfigure[predator pattern]{\begin{minipage}{0.25\linewidth}
			\centering\includegraphics[scale=0.2]{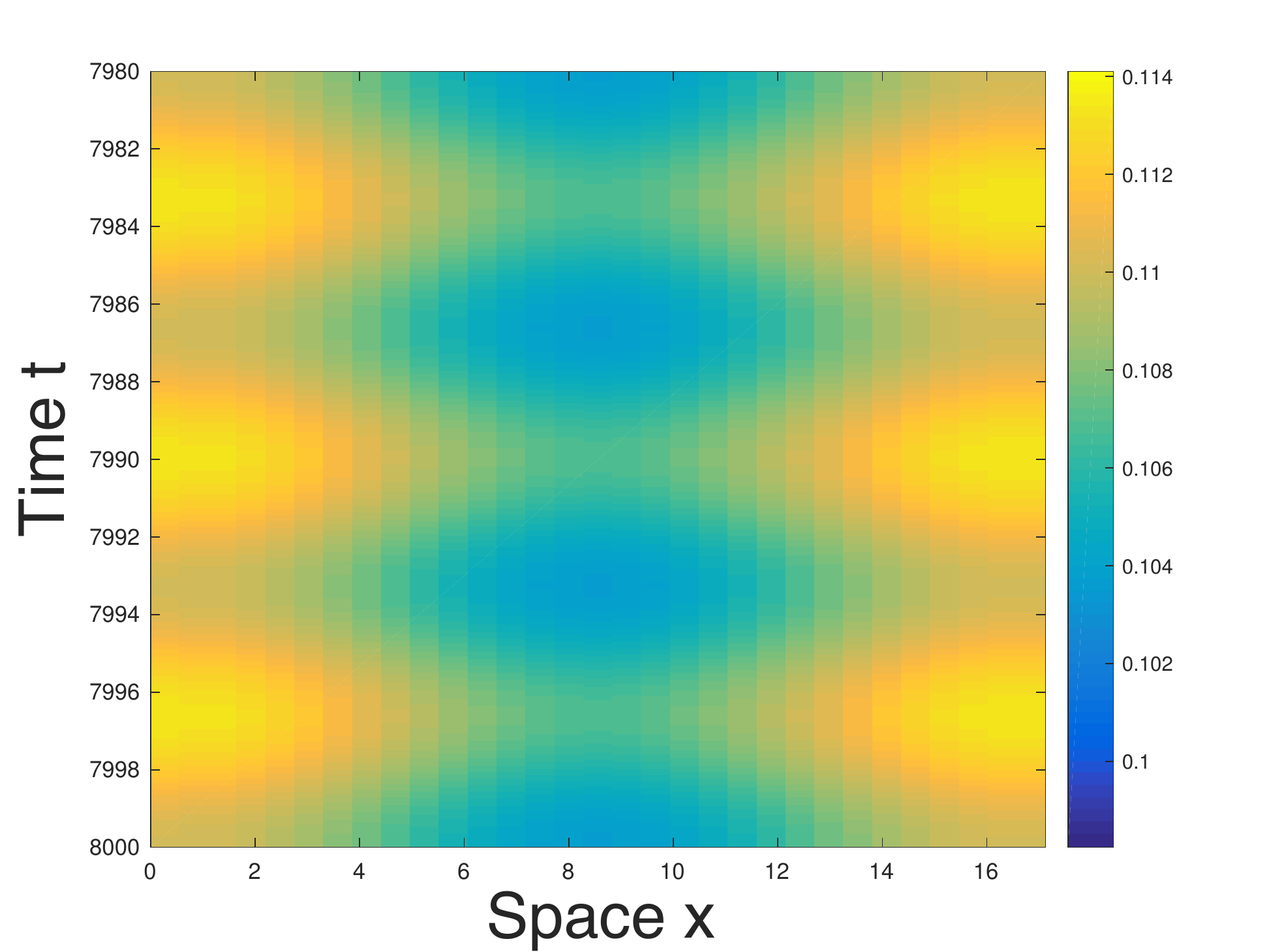}
	\end{minipage}}
	\subfigure[$v(x,7998)$]{\begin{minipage}{0.23\linewidth}
			\centering\includegraphics[height=0.93\linewidth,width=1.1\linewidth]{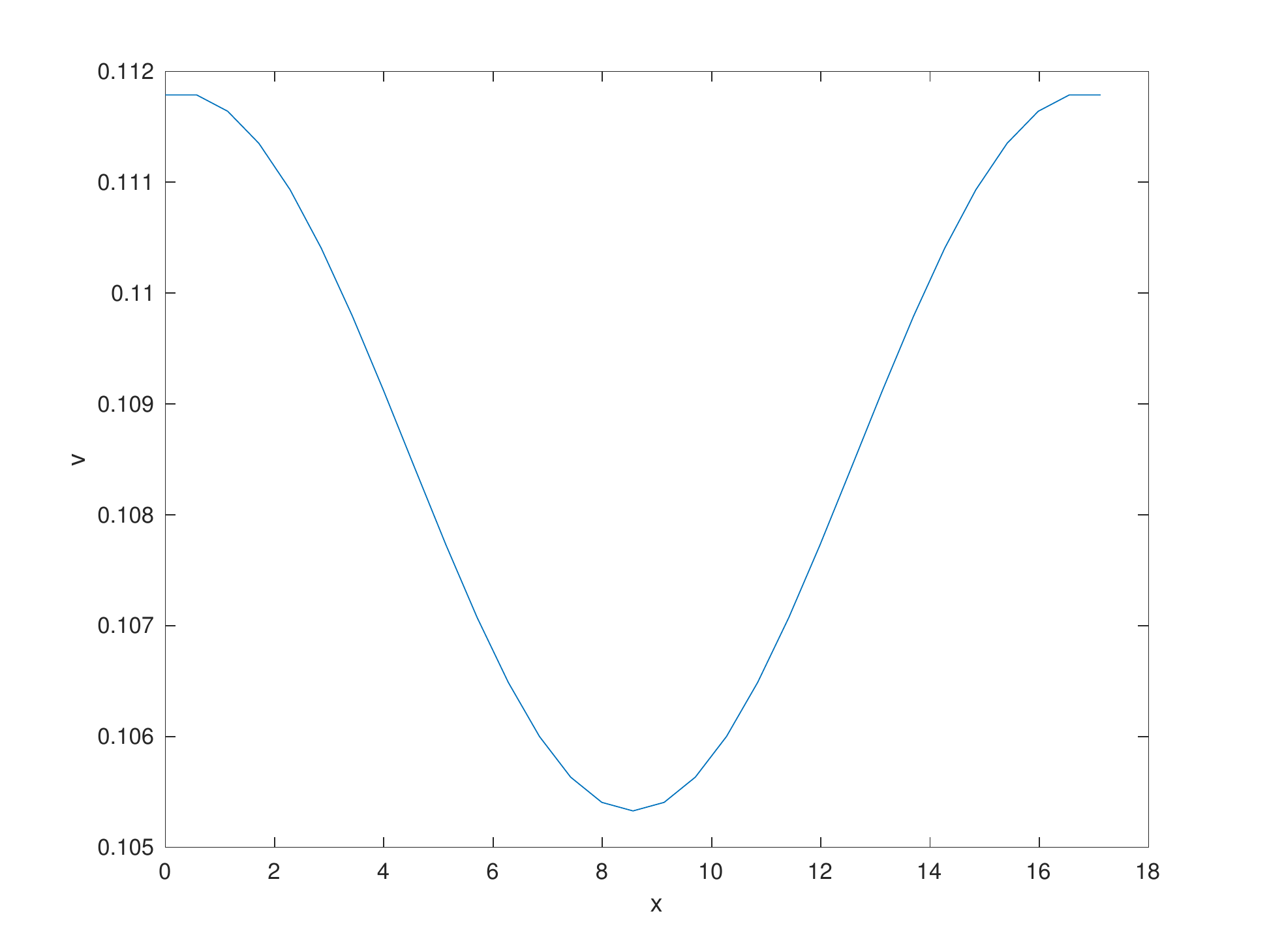}
	\end{minipage}}
\caption{Spatially non-homogeneous periodic solution in $D_3$, with $(\alpha_1,\alpha_2)=(-0.05,0.0105)$ and initial functions are $(u_0-0.01\sin 0.1x,u_0-0.01\sin 0.1x)$.}\label{figD3_2}
\end{figure}

\begin{figure}[htb]
\subfigure[$u(x,t)$]{\begin{minipage}{0.25\linewidth}
		\centering\includegraphics[scale=0.2]{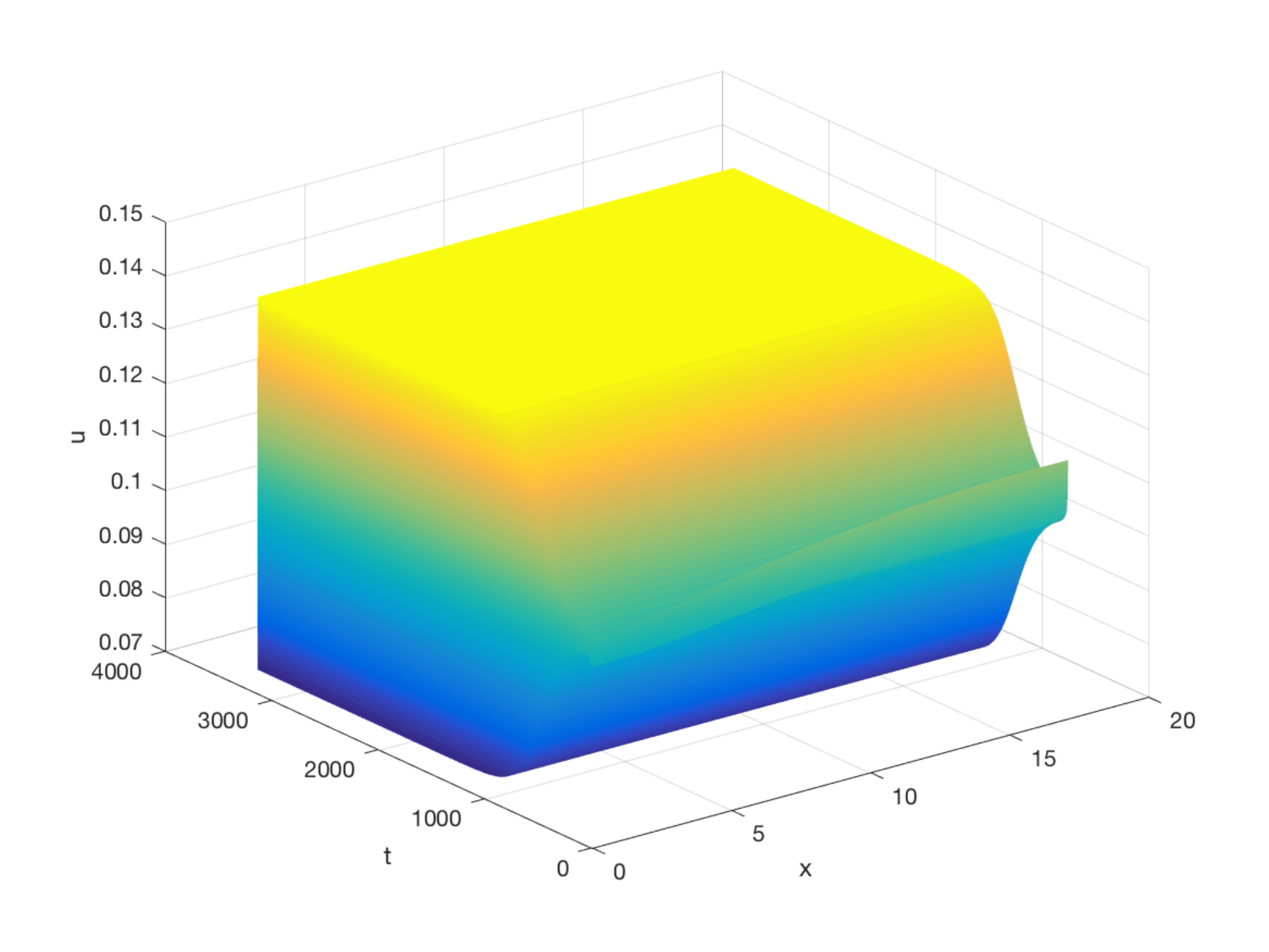}
\end{minipage}}
\subfigure[prey pattern]{\begin{minipage}{0.23\linewidth}
		\centering\includegraphics[height=0.93\linewidth,width=1.1\linewidth]{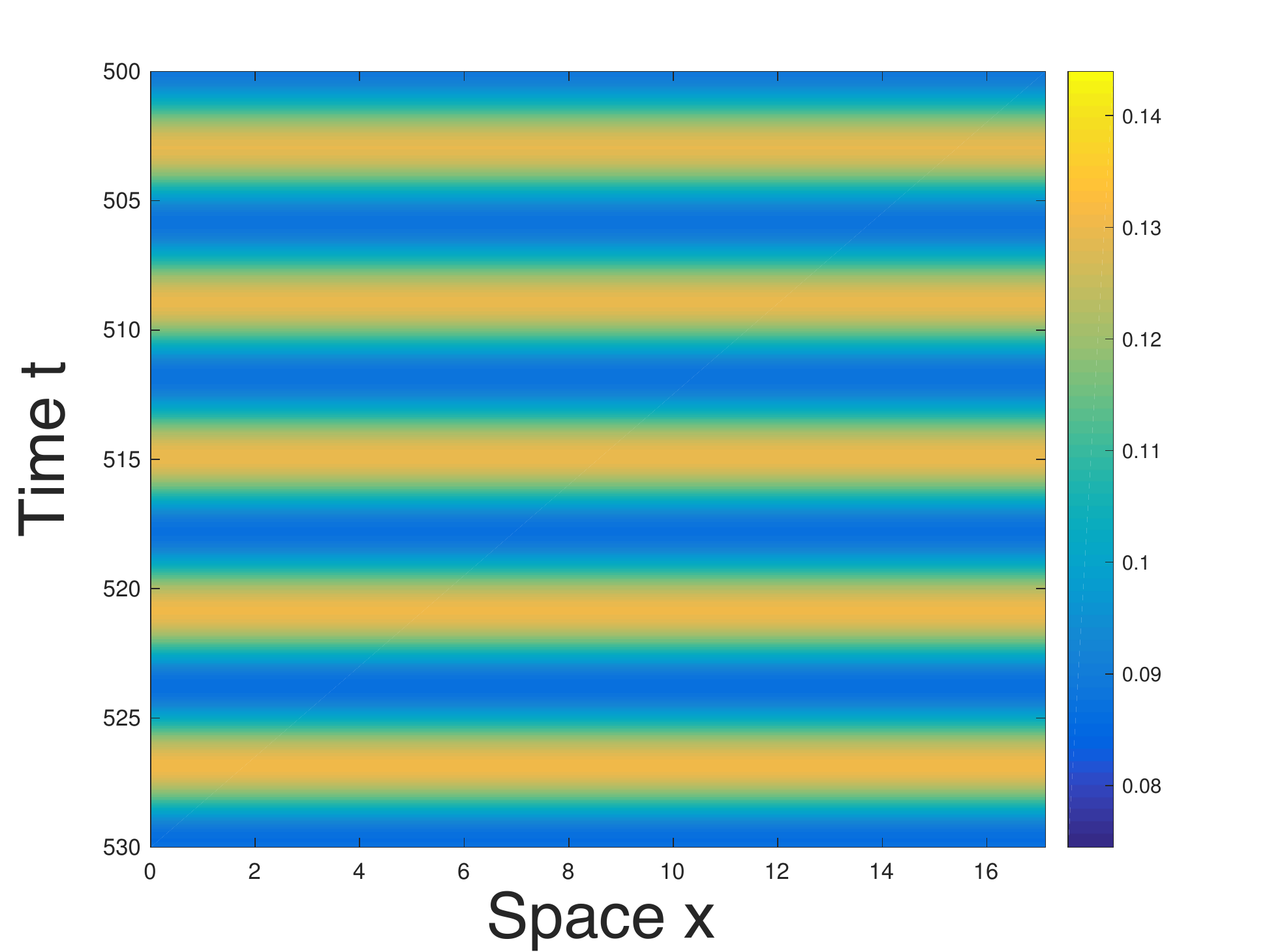}
\end{minipage}}
\subfigure[$v(x,t)$]{\begin{minipage}{0.25\linewidth}
		\centering\includegraphics[scale=0.2]{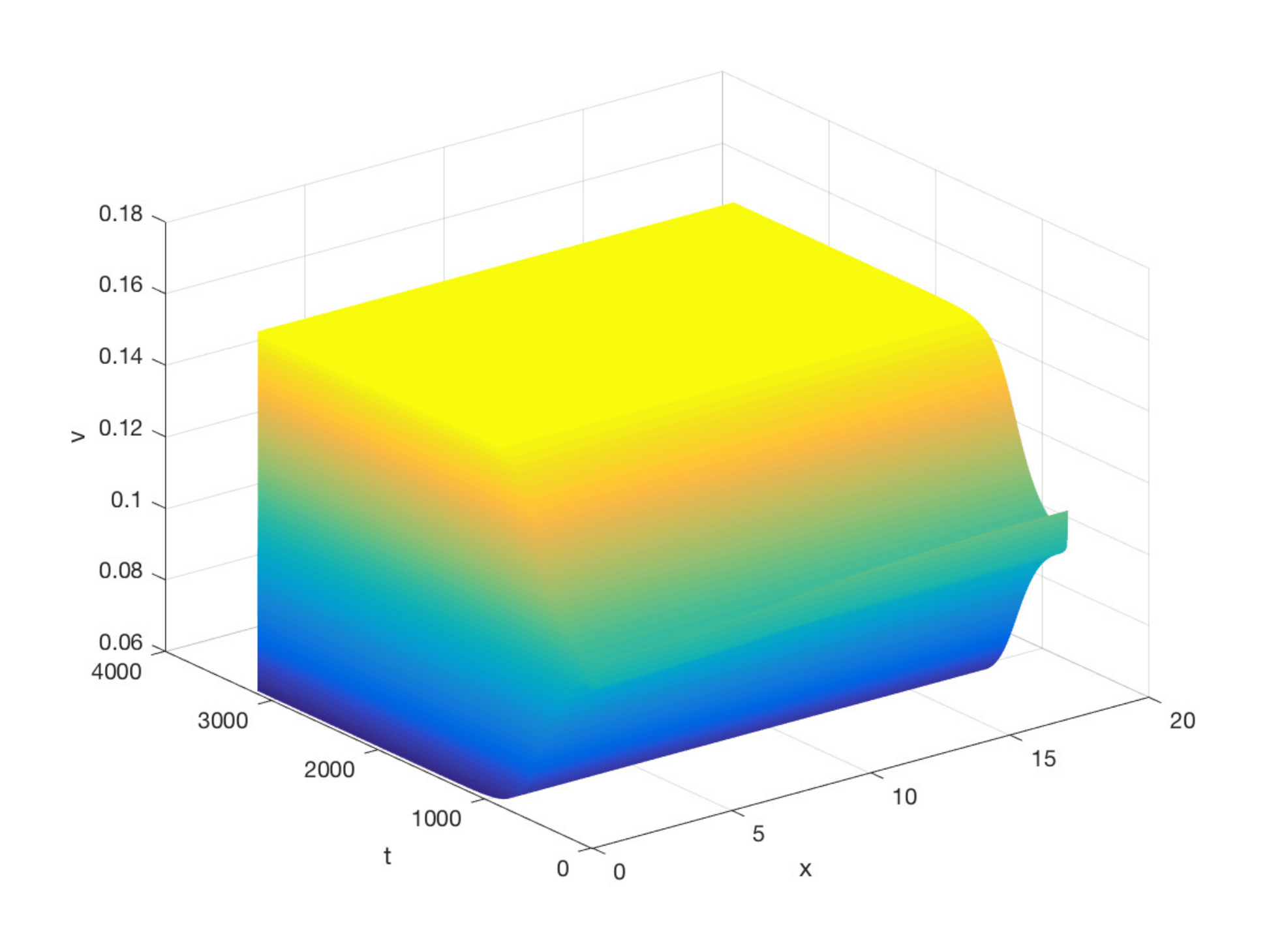}
\end{minipage}}
\subfigure[predator pattern]{\begin{minipage}{0.23\linewidth}
		\centering\includegraphics[height=0.93\linewidth,width=1.1\linewidth]{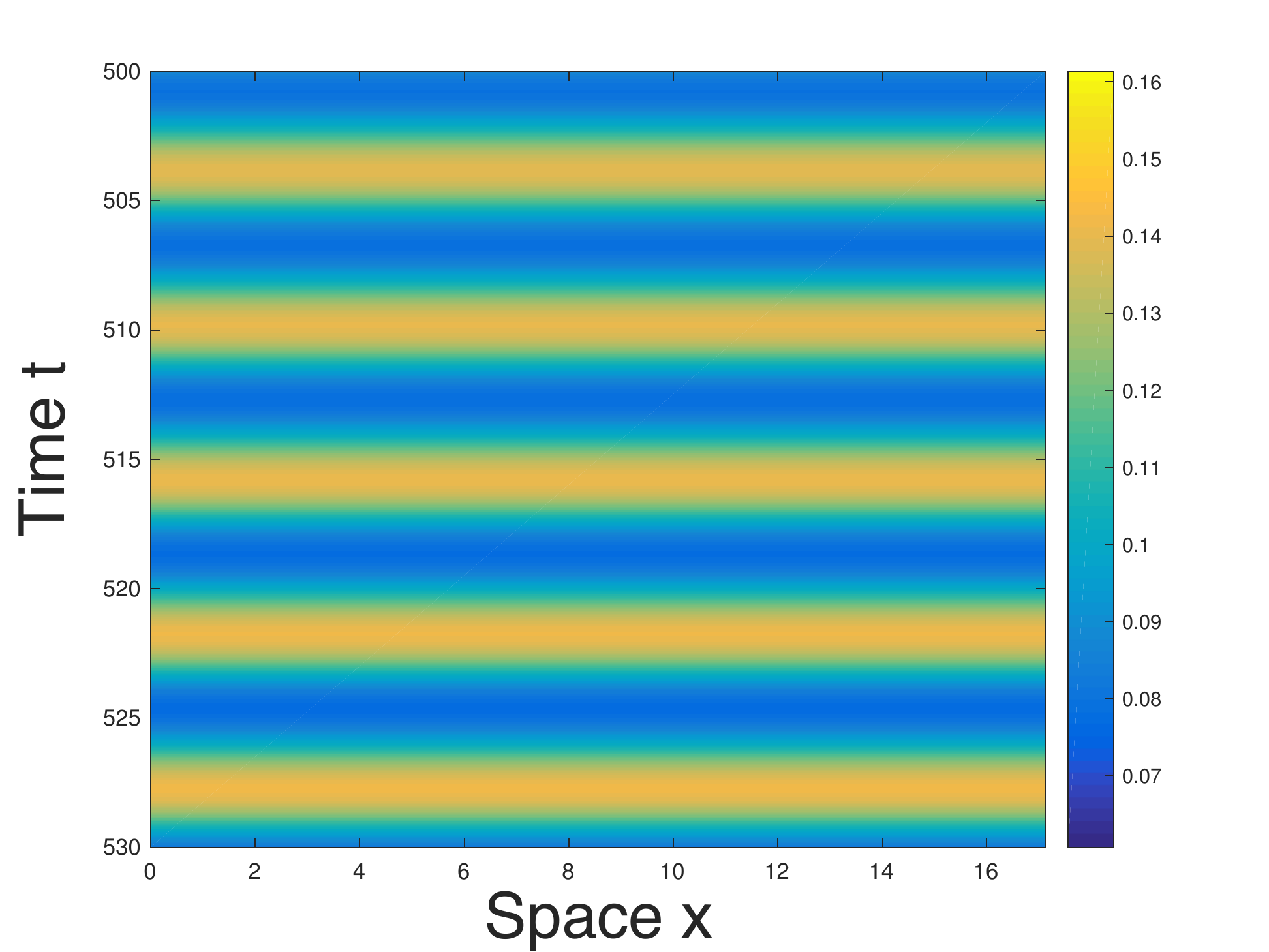}
\end{minipage}}
\caption{Spatially homogeneous periodic solution in $D_6$, with $(\alpha_1,\alpha_2)=(0.05,-0.01)$ and initial functions are $(u_0+0.01\sin 0.1x,u_0+0.01\sin 0.1x)$.}\label{figD6}
\end{figure}
\subsection{Turing-Turing-Hopf type spatiotemporal patterns}
{\bf Group 2.} Taking $d_1 = 1.68,$ $d_2 = 16.0,$ $a = 0.80,$ $b = 0.0004,$ and $l = 4.37$, which are satisfy the condition $({{\mathbf{A}}}6^{''})$. The unique coexistence equilibrium point now is
$(u_0, v_0) = (0.2016, 0.2016)$. From the  eigenvalue analysis, we can get $r_1^T = 1.4598$, $r_2^T = 1.4694$, $r_n^T <0\,(n\geq 3)$, $\tau_0^{(0)} = 0.7423$, $\tau_1^{(0)} = 1.3960$, $\omega_0 = 1.3612$, $\omega_1 = 1.0962$.
The important information can be summed up as
$$n_T = 2, \quad n_H=0, \quad r_* = 1.2639,  \quad \tau_*= 0.7937, \quad \omega_* = 1.0087.$$
Comparing with Group 1, the values of $r_*$ and  the second largest Turing point $r_1^T$ are relatively close in this group. The bifurcation diagram in $r-\tau$ plane has been shown in Figure \ref{figTH2}.

Deal with the same method as in {\bf Group 1}, we obtain the coefficients in the equivalent plane system \eqref{eqrv2} are  $\epsilon_{1}(\alpha)=~0.2844\alpha_1 + 1.1434 \alpha_2$, $\epsilon_{2}(\alpha)=-0.2134\alpha_1$, and $b_0=-0.1257$, $c_0=-1.3132$, $d_0=1$, $d_0-b_0c_0= 0.8350>0$.
The  Case \uppercase\expandafter{\romannumeral4}a in \cite[\S 7.5]{Phillp1988} occurs in {\bf Group 2} and the parameters plane $(\alpha_1,\alpha_2)$ can also  be divided into six parts as shown in \ref{Detail2}. Through a series of the numerical experiments, we summarize the dynamics of each region into the following proposition.

\begin{figure}[htb]
	\centering
	\subfigure[]{\begin{minipage}{0.48\linewidth}
			\centering\includegraphics[height = 0.78\linewidth,width=1.05\linewidth]{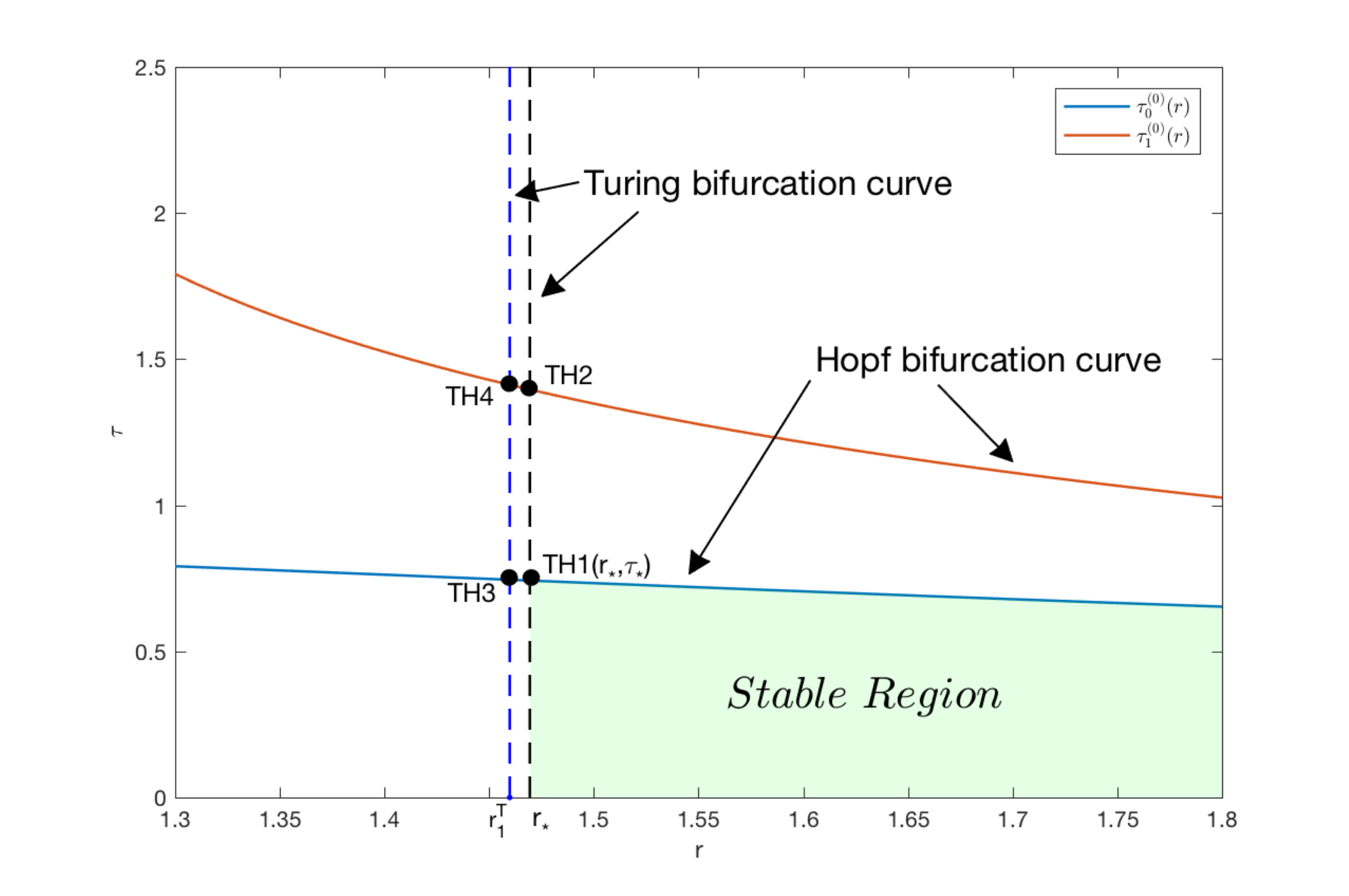}\label{figTH2}
	\end{minipage}}
	\subfigure[]{\begin{minipage}{0.48\linewidth}
			\centering\includegraphics[scale=0.35]{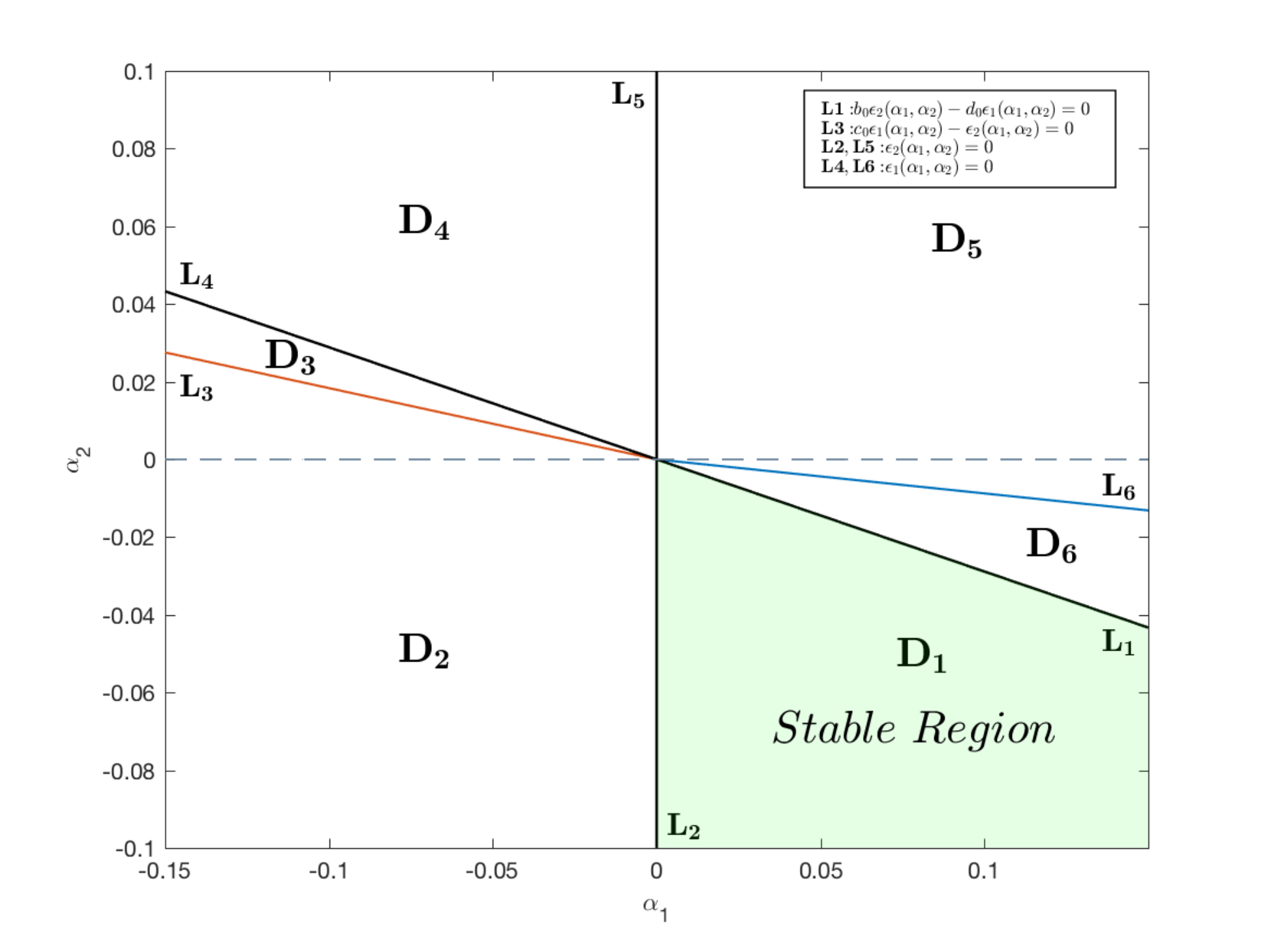}\label{Detail2}
	\end{minipage}}
\caption{(a) Bifurcation sets in $r-\tau$ plane. (b) Detailed bifurcation diagram near $(r_*,\tau_*)$ in $(\alpha_1,\alpha_2)$ plane.}
\end{figure}

\begin{proposition}
	When $d_1 = 1.68,$ $d_2 = 16.0,$ $a = 0.80,$ $b = 0.0004,$ and $l = 4.37$, the system \eqref{eqA} undergoes a Turing-Hopf bifurcation at $(r_*,\tau_*)$ with $n_T=2$ and $n_H=0$. The parameter plane near the critical value is divided into six regions (see Figure \ref{Detail2}). The dynamics of each region $D_1-D_6$ are:
	\begin{itemize}
		\item In $D_1$, the constant steady state $(u_0,v_0)$ is locally asymptotically stable.
		\item In $D_2$, two stable non-constant steady states are coexistence (see Figure \ref{fig2D2_1}-Figure \ref{fig2D2_2}), the spatial distribution follows to the function: $h_1\cos(\frac{2}{l}x)+h_2\cos(\frac{1}{l}x)$.
		\item In $D_3$, two stable spatially non-homogeneous periodic orbits are coexistence (see Figure \ref{fig2D3_1}-Figure \ref{fig2D3_2}).
		\item In $D_4$, two stable spatially non-homogeneous periodic orbits are coexistence.
		\item In $D_5$, two stable spatially non-homogeneous periodic orbits are coexistence (see Figure \ref{fig2D5_1}-Figure \ref{fig2D5_2}), the spatial distribution follows to the function: $h_1\cos(\frac{2}{l}x)+h_2\cos(\frac{1}{l}x)$.  In contrast to $D_3-D_4$, the solutions in this region have been oscillation for a long time before reaching the target patterns.
		\item In $D_6$, a spatially non-homogeneous periodic orbits is stable.
	\end{itemize}
\end{proposition}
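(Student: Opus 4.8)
The plan is to reduce the dynamics of \eqref{eqA} near the Turing--Hopf singularity $(r_*,\tau_*)$ to its three-dimensional center manifold, bring the reduced vector field into the normal form \eqref{eqNF}, pass to the planar amplitude system \eqref{eqrv2}, and then read the six-region bifurcation picture off the known classification of that planar system. First I would check that the Group~2 parameters $d_1=1.68,\ d_2=16.0,\ a=0.80,\ b=0.0004,\ l=4.37$ satisfy condition $({\mathbf A}6'')$, so that Lemma~\ref{lemma 1} applies; evaluating $r_n^T,\ r_n^H$, the quantity $x_P(r_*)$ and the delays $\tau_n^{(k)}$ then yields $n_T=2$, $n_H=0$ and the critical triple $(r_*,\tau_*,\omega_*)$. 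By Theorems~\ref{theorem2}--\ref{theorem3}, together with the inequality $\tau_*<\tau_0$ (which holds for these data), the characteristic equation \eqref{eqC} at $(r_*,\tau_*)$ has precisely $\{0,\,\pm\mathrm i\omega_*\tau_*\}$ on the imaginary axis and all remaining eigenvalues in the open left half plane, so a genuine codimension-two Turing--Hopf bifurcation occurs and a smooth three-dimensional center manifold exists on which the flow is governed by \eqref{eqNF}.

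Next I would apply Theorem~\ref{theNF} to compute the eight coefficients $f^{11}_{\alpha_i z_1}$, $f^{13}_{\alpha_i z_2}$, $g^{11}_{210}$, $g^{11}_{102}$, $g^{13}_{111}$, $g^{13}_{003}$ of \eqref{eqNF}, feeding in the intermediate objects $\phi_i,\psi_i,k_i,T_i$ and $F_{mnk},\ h_{mnk}$ assembled in Steps~1--5 of Section~3. Because $n_T\neq0$ the reflection $x\mapsto l\pi-x$ acts on \eqref{eqNF} as $z_2\mapsto-z_2$, so the truncated normal form is $\mathbb Z_2$-equivariant; the cylindrical substitution \eqref{eqcct} together with the rescaling of $\mathcal R,\mathcal V$ then converts it into the planar system \eqref{eqrv2}. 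Evaluating the coefficients at the Group~2 data gives $b_0=-0.1257$, $c_0=-1.3132$, $d_0=1$, hence $d_0-b_0c_0=0.8350>0$, with unfolding functions $\epsilon_1=0.2844\alpha_1+1.1434\alpha_2$, $\epsilon_2=-0.2134\alpha_1$; these are linearly independent near $(\alpha_1,\alpha_2)=(0,0)$, so $(\epsilon_1,\epsilon_2)$ is an admissible pair of unfolding parameters.

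The sign pattern --- the two self-cubic coefficients negative, the cross-coefficients $b_0,c_0$ both negative, and $d_0-b_0c_0>0$ --- is exactly Case~IVa of \cite[\S7.5]{Phillp1988}. From that classification the $(\epsilon_1,\epsilon_2)$ plane near the origin (equivalently, after the invertible linear map above, the $(\alpha_1,\alpha_2)$ plane) is partitioned by the two pitchfork lines $\{\epsilon_1=0\}$, $\{\epsilon_2=0\}$ and the secondary lines on which the mixed-mode equilibrium of \eqref{eqrv2} crosses a coordinate axis and on which it undergoes a Hopf bifurcation, giving six sectors $D_1,\dots,D_6$. In each sector \eqref{eqrv2} has a structurally stable phase portrait built from the origin, the two pure-Turing equilibria $(0,\pm v^*)$, at most one pure-Hopf limit cycle $(\rho^*,0)$, and the two mixed-mode equilibria $(\rho^{**},\pm v^{**})$, whose existence and stability per sector are read directly from the Case~IVa diagram. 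I would then translate these objects back through the center-manifold decomposition \eqref{U{t}}: the origin is the constant state $(u_0,v_0)$; each pure-Turing equilibrium is a non-constant steady state with leading spatial profile $v^*\phi_2(0)\cos(\tfrac{2}{l}x)$; the pure-Hopf cycle is the spatially homogeneous periodic solution $\rho^*[\phi_1(0)e^{\mathrm i\omega_*\tau_* t}+\bar\phi_1(0)e^{-\mathrm i\omega_*\tau_* t}]$; and each mixed-mode equilibrium is a spatially inhomogeneous periodic solution $\rho^{**}[\phi_1(0)e^{\mathrm i\omega_*\tau_* t}+\bar\phi_1(0)e^{-\mathrm i\omega_*\tau_* t}]+v^{**}\phi_2(0)\cos(\tfrac{2}{l}x)$. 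Since all of these are hyperbolic on the center manifold, normal hyperbolicity guarantees that each persists as an invariant set of the full flow \eqref{eqA} with its stability type unchanged, and matching sector by sector produces the six items of the proposition, in particular the coexistence of two spatially inhomogeneous periodic orbits.

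It remains to account for the extra term $h_2\cos(\tfrac{1}{l}x)$ appearing in the spatial profiles in $D_2$ and $D_5$, and for the long oscillatory transient noted in $D_5$. The point special to Group~2 is that $r_1^T=1.4598$ lies very close to $r_*=r_2^T$, so the $n=1$ eigenvalue $\lambda_1(r_*,\tau_*)$ is negative but small in modulus; the $\cos(\tfrac{1}{l}x)$ direction is therefore attracting but only weakly so, and on the finite time scale of the simulations this mode relaxes slowly, leaving a visible $h_2\cos(\tfrac{1}{l}x)$ contribution and --- when it interacts with the weakly stable Hopf mode --- a prolonged oscillation before the attractor is reached. Because this slow direction does not belong to the center manifold, it alters neither the codimension-two normal form nor the partition into $D_1,\dots,D_6$, which is why the proposition records $\cos(\tfrac{1}{l}x)$ only as a refinement of the spatial distribution. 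I expect this last point to be the main obstacle to a fully rigorous statement: making the $\cos(\tfrac{1}{l}x)$ contribution precise would require unfolding the nearby degenerate Turing--Turing--Hopf point, i.e.\ a codimension-three reduction keeping the $n=1$ mode on an extended center manifold; in the present proposition I would instead rely on the eigenvalue-proximity estimate above together with the direct numerical evidence reported for Group~2.
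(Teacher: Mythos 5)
Your route is essentially the paper's: check $({\mathbf A}6'')$, locate $(r_*,\tau_*)$ with $n_T=2$, $n_H=0$, compute the normal-form coefficients via Theorem \ref{theNF}, reduce through \eqref{eqcct} to the planar system \eqref{eqrv2}, identify Case IVa of \cite[\S 7.5]{Phillp1988} from the signs of $b_0,c_0,d_0$ and $d_0-b_0c_0>0$, and translate the sector-by-sector phase portraits back through \eqref{U{t}} — the paper itself then settles the region-by-region dynamics and the $\cos(\frac{1}{l}x)$ refinement ``through a series of numerical experiments'' and by appeal to the nearby Turing--Turing--Hopf interaction, exactly as you do in your final paragraph. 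One small correction: for $n_T=2$ the reflection $x\mapsto l\pi-x$ sends $\cos(\frac{n_T}{l}x)$ to $(-1)^{n_T}\cos(\frac{n_T}{l}x)=+\cos(\frac{n_T}{l}x)$, so it does \emph{not} induce $z_2\mapsto-z_2$; the oddness of \eqref{eqNF} in $z_2$ instead comes from the normal-form structure itself (non-resonant quadratic terms are removed and the projection $\int_0^{l\pi}\beta_{n_T}^3\,\mathrm{d}x$ vanishes), so the reduction to \eqref{eqrv2} survives, but not for the symmetry reason you give.
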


\begin{figure}[htbp]
	\subfigure[$u(x,t)$]{\begin{minipage}{0.25\linewidth}
			\centering\includegraphics[scale=0.2]{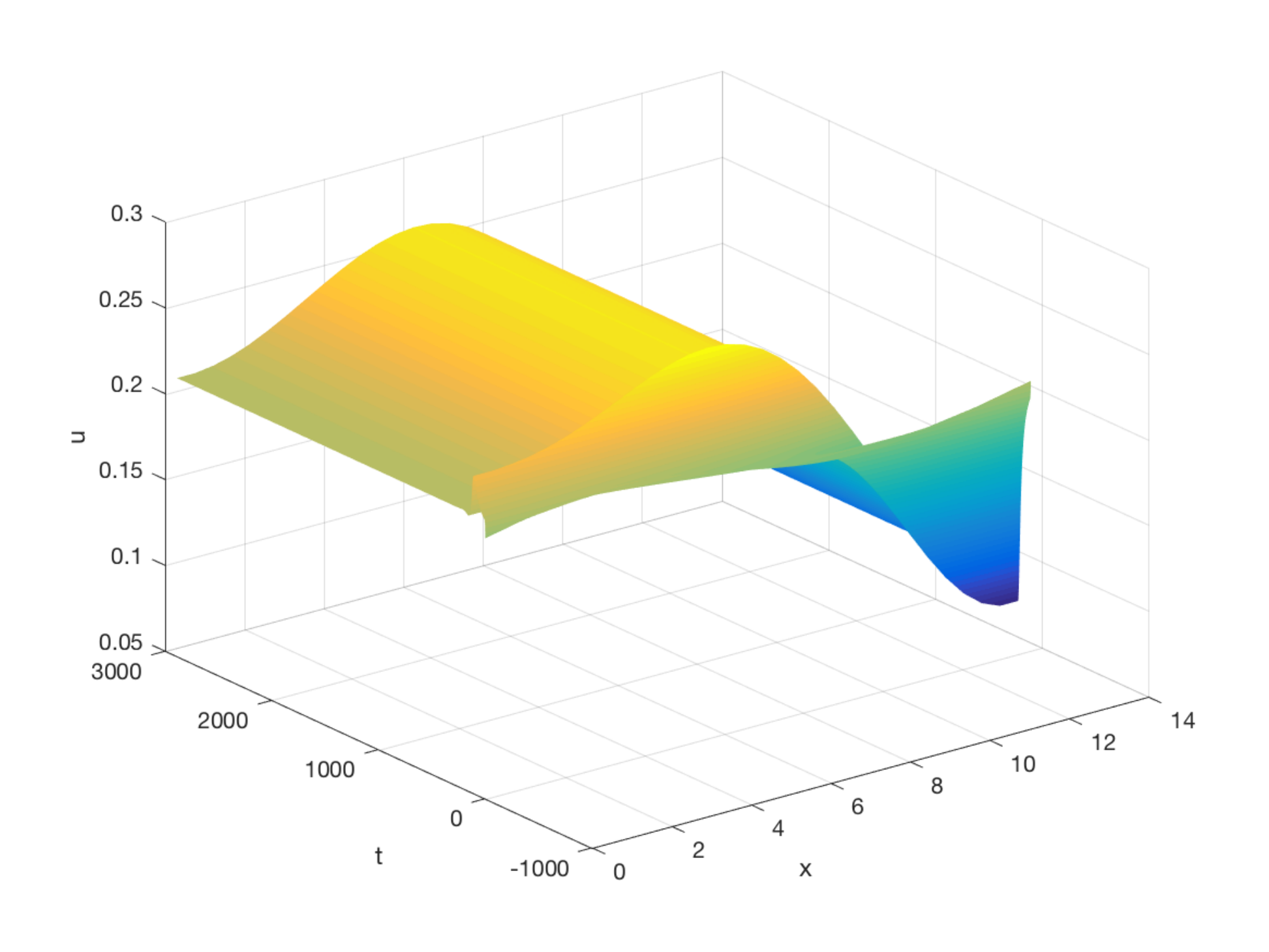}
	\end{minipage}}
	\subfigure[prey pattern]{\begin{minipage}{0.23\linewidth}
			\centering\includegraphics[height=0.93\linewidth,width=1.1\linewidth]{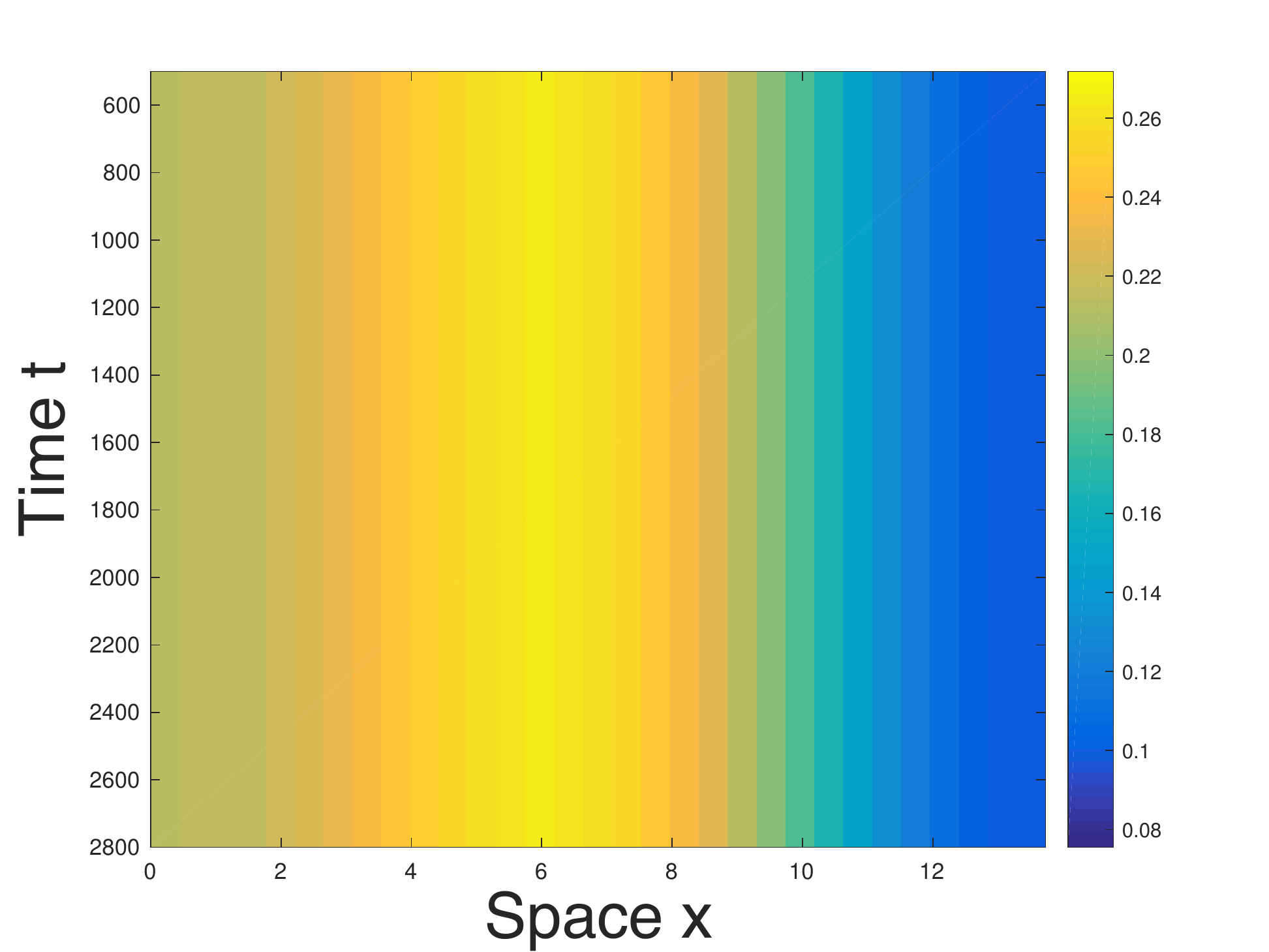}
	\end{minipage}}
	\subfigure[$v(x,t)$]{\begin{minipage}{0.25\linewidth}
			\centering\includegraphics[scale=0.2]{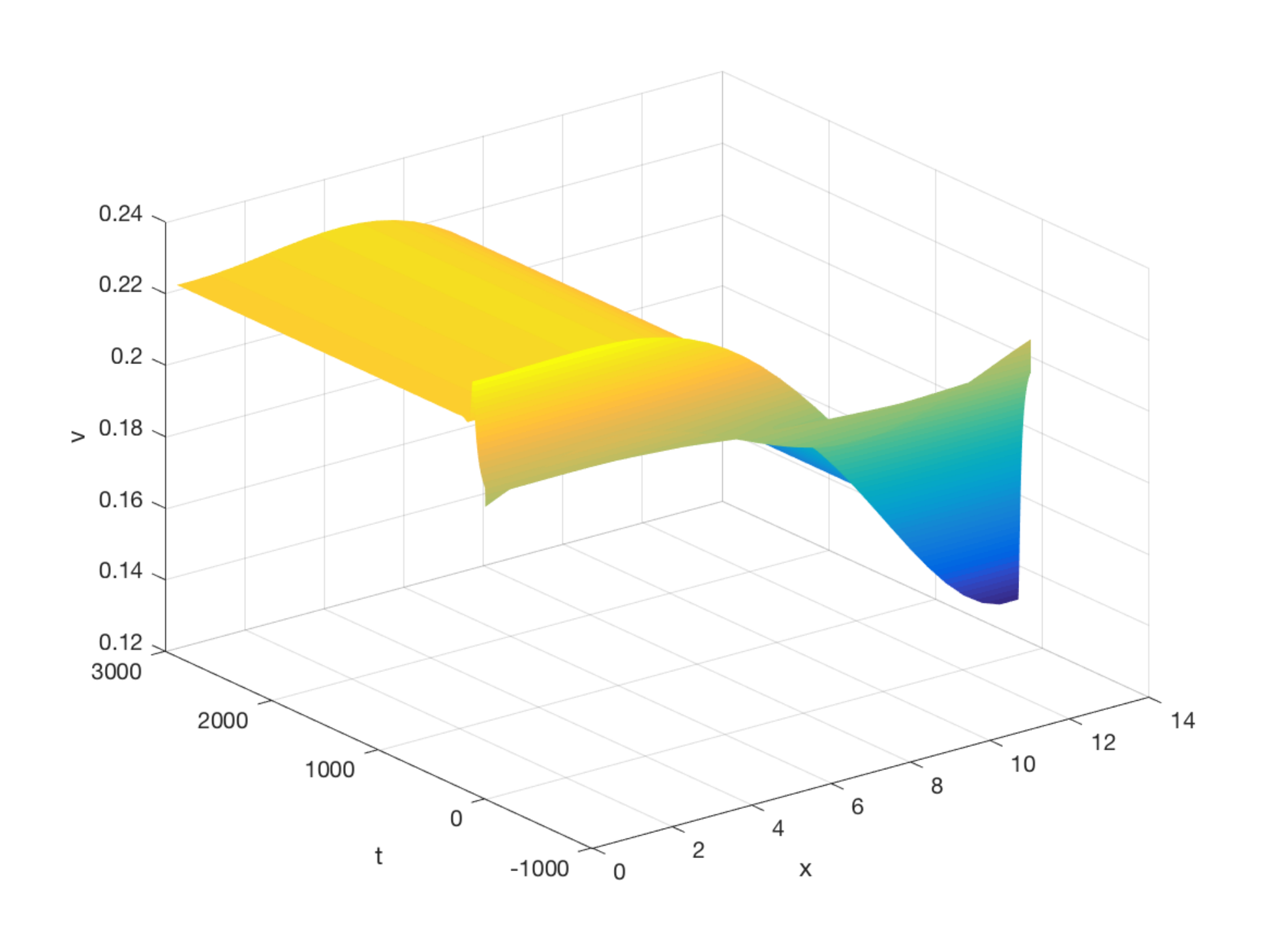}
	\end{minipage}}
	\subfigure[predator pattern]{\begin{minipage}{0.23\linewidth}
			\centering\includegraphics[height=0.93\linewidth,width=1.1\linewidth]{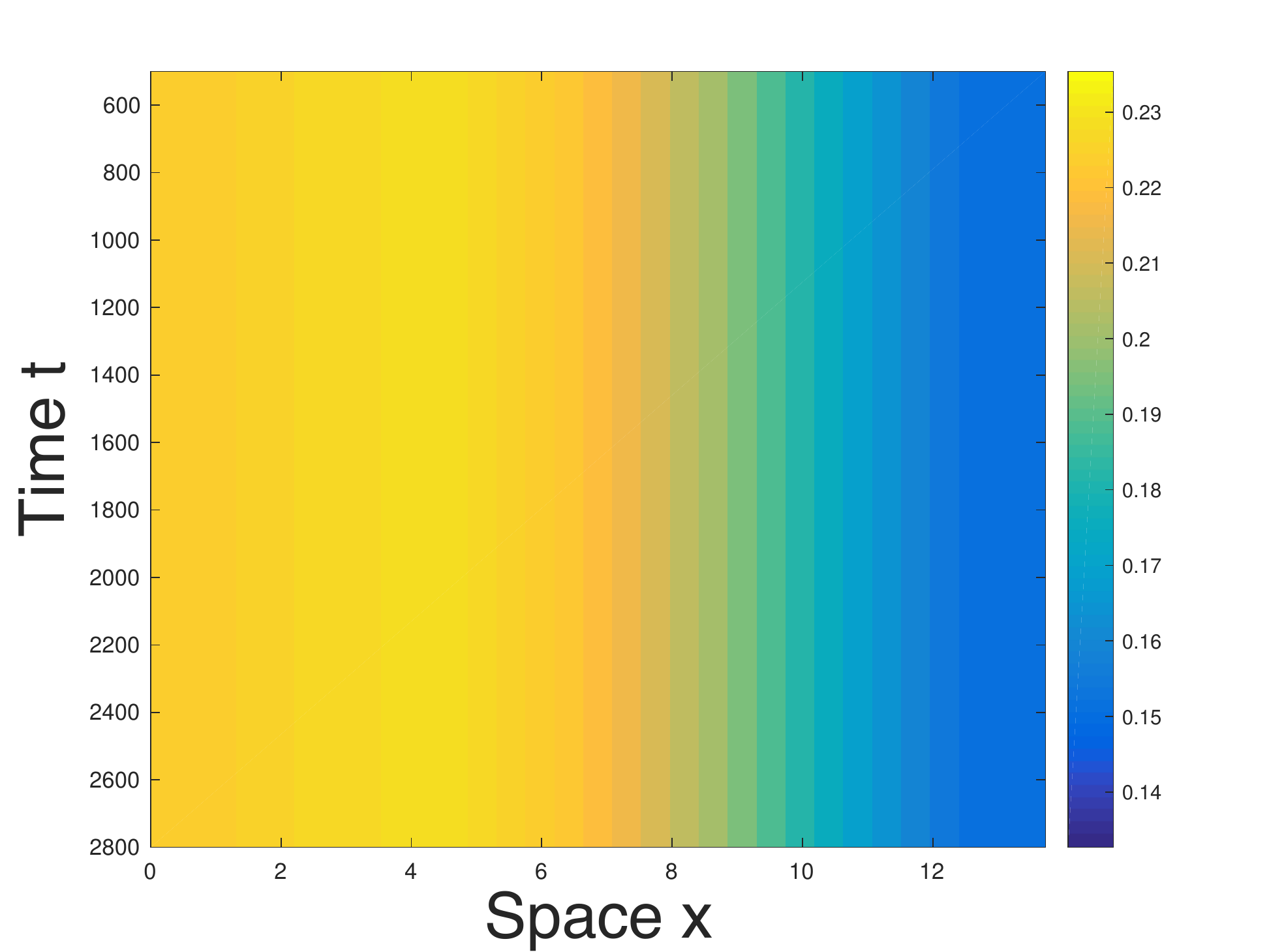}
	\end{minipage}}
\caption{Non-constant steady state in $D_2$, with $(\alpha_1,\alpha_2)=(-0.05,-0.02)$ and initial functions are $(u_0+0.01\sin 0.5x,u_0+0.01\sin 0.5x)$.}\label{fig2D2_1}
\end{figure}
\begin{figure}[htbp]
	\subfigure[$u(x,t)$]{\begin{minipage}{0.25\linewidth}
		\centering\includegraphics[scale=0.2]{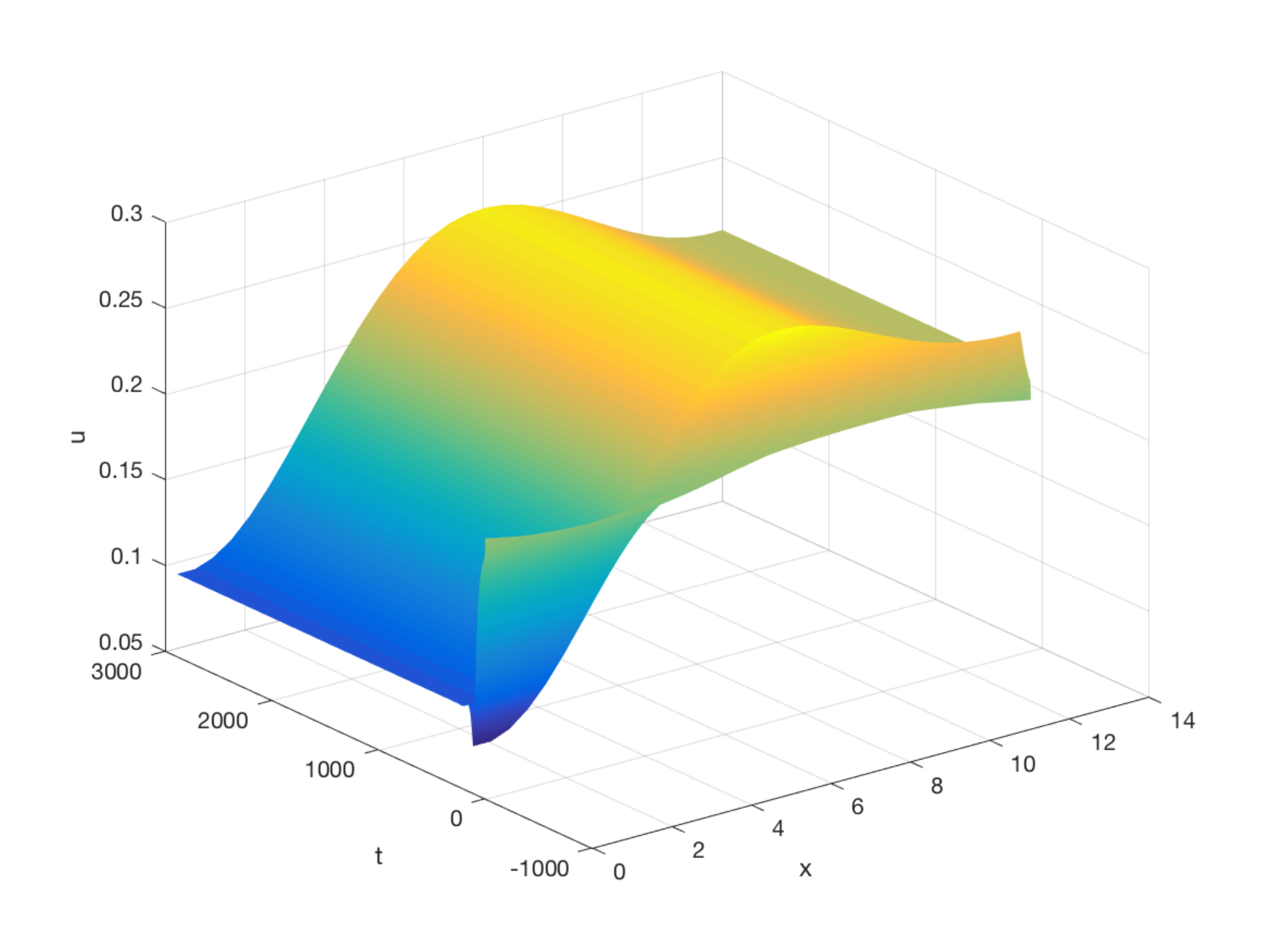}
\end{minipage}}
\subfigure[prey pattern]{\begin{minipage}{0.23\linewidth}
		\centering\includegraphics[height=0.93\linewidth,width=1.1\linewidth]{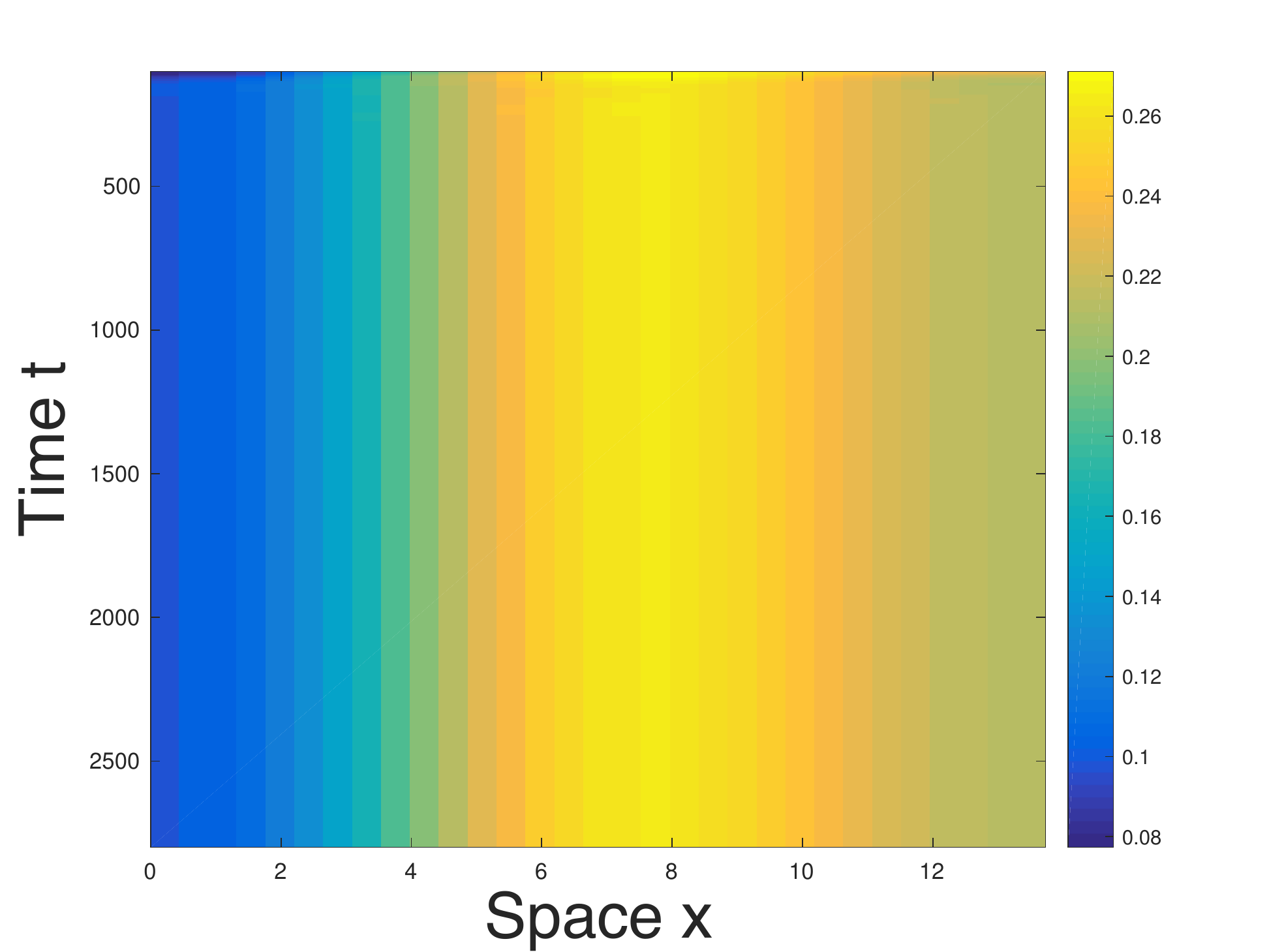}
\end{minipage}}
\subfigure[$v(x,t)$]{\begin{minipage}{0.25\linewidth}
		\centering\includegraphics[scale=0.2]{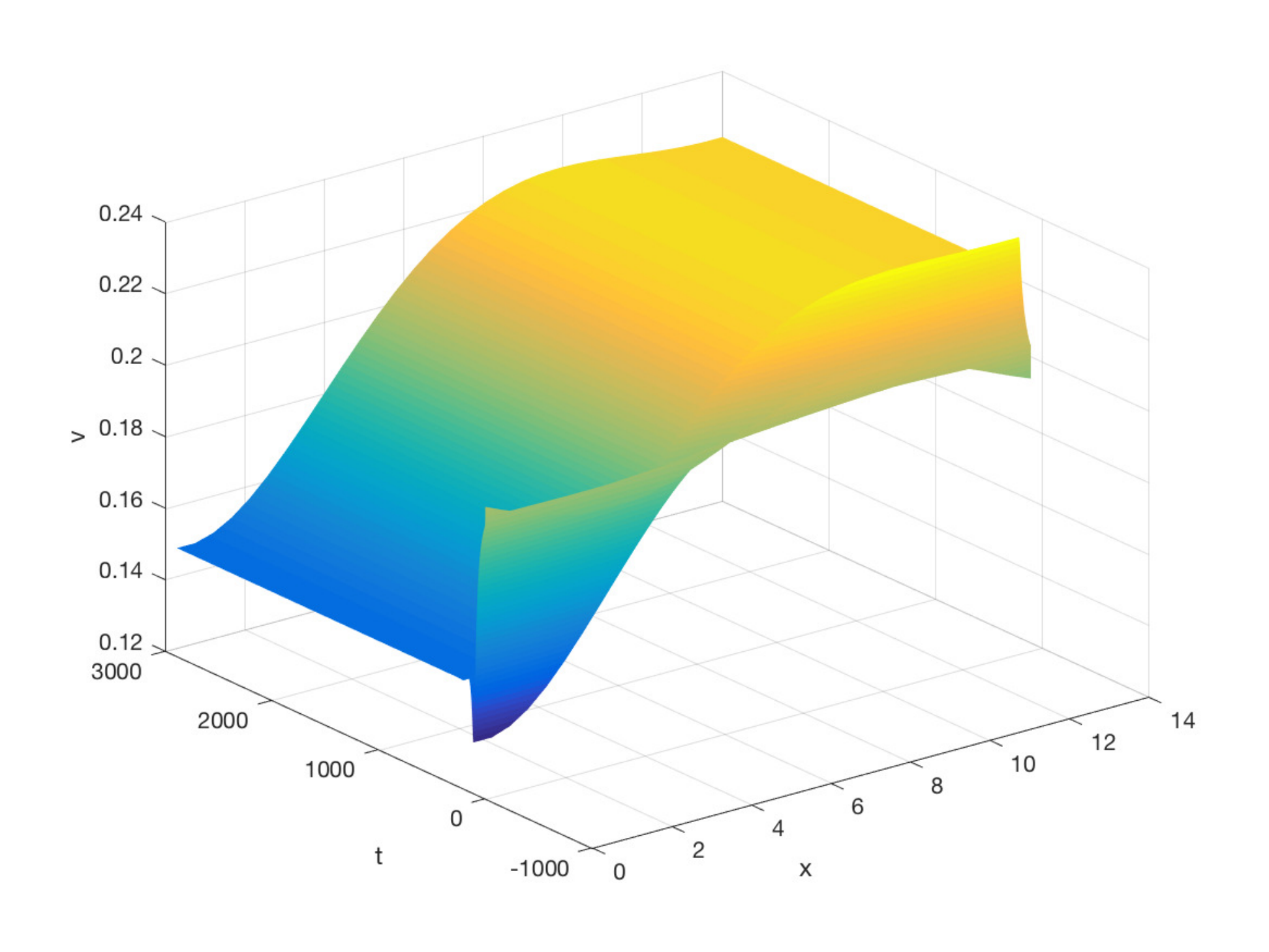}
\end{minipage}}
\subfigure[predator pattern]{\begin{minipage}{0.23\linewidth}
		\centering\includegraphics[height=0.93\linewidth,width=1.1\linewidth]{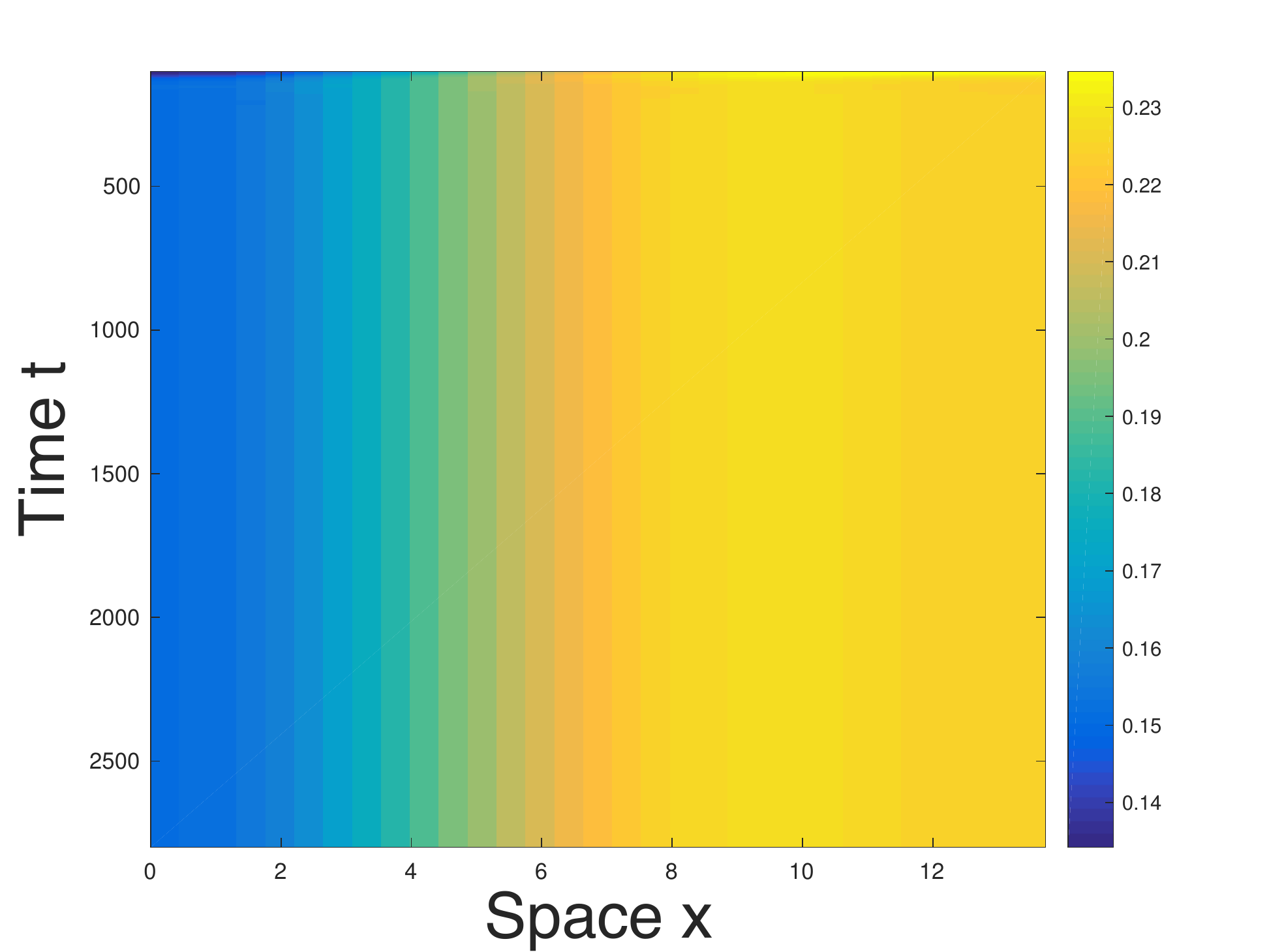}
\end{minipage}}
\caption{Non-constant steady state in $D_2$, with $(\alpha_1,\alpha_2)=(-0.05,-0.02)$ and initial functions are $(u_0-0.01\sin 0.5x,u_0-0.01\sin 0.5x)$.}\label{fig2D2_2}
\end{figure}

\begin{figure}[htbp]
\subfigure[$u(x,t)$]{\begin{minipage}{0.2\linewidth}
		\centering\includegraphics[scale=0.15]{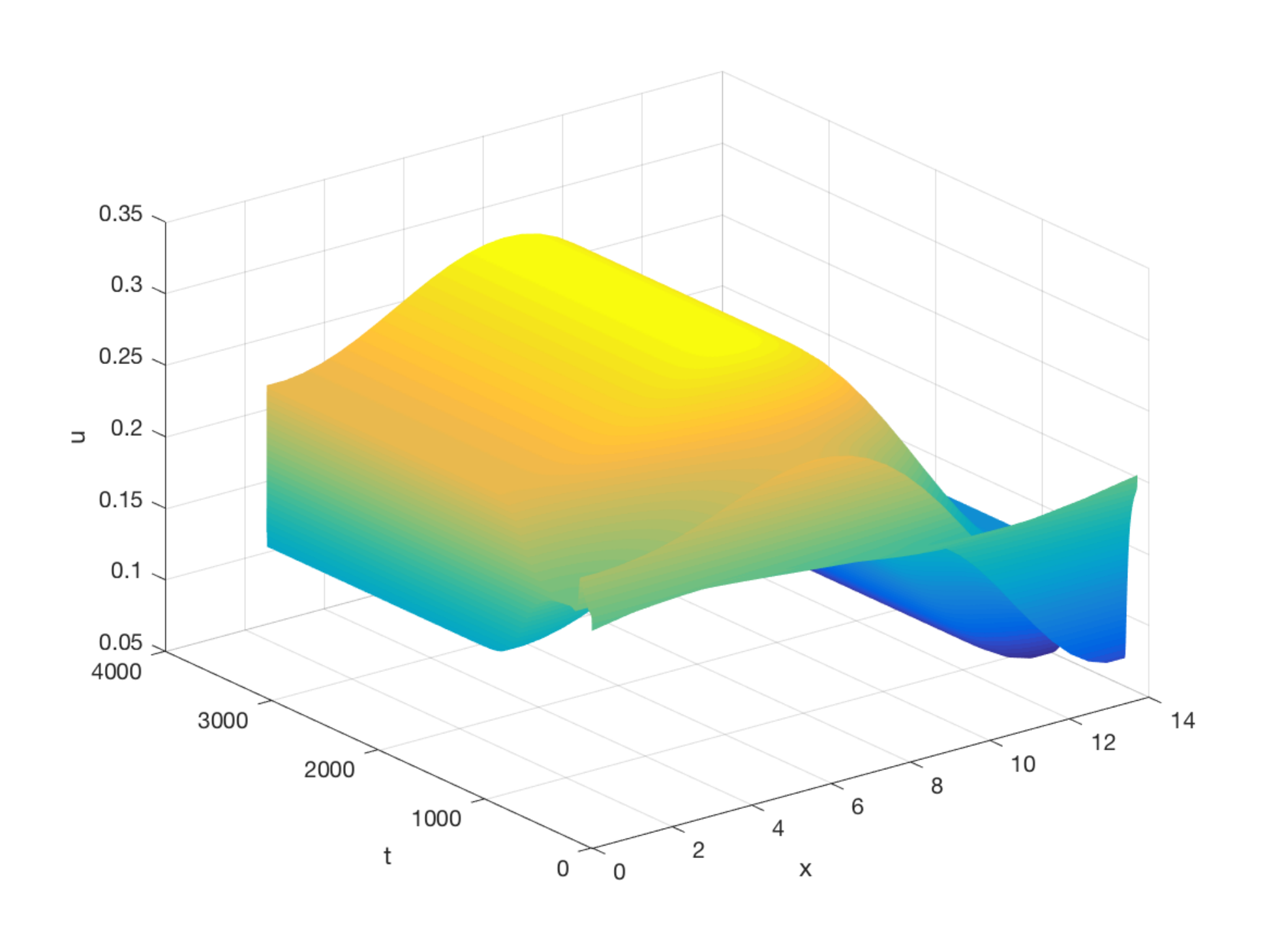}
\end{minipage}}
\subfigure[target pattern]{\begin{minipage}{0.2\linewidth}
		\centering\includegraphics[scale=0.15]{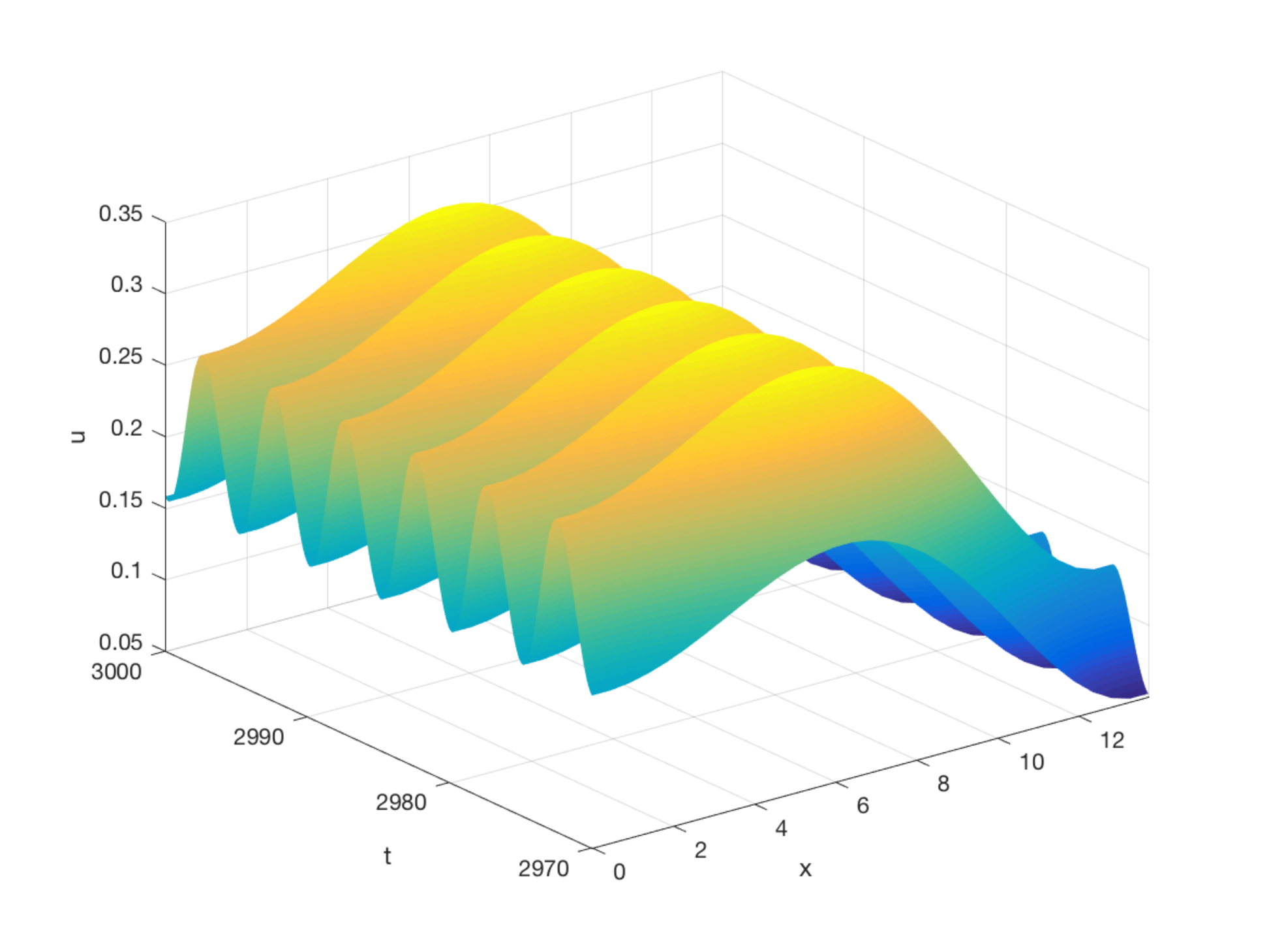}
\end{minipage}}
\subfigure[prey pattern]{\begin{minipage}{0.2\linewidth}
		\centering\includegraphics[height=0.8\linewidth,width=1.\linewidth]{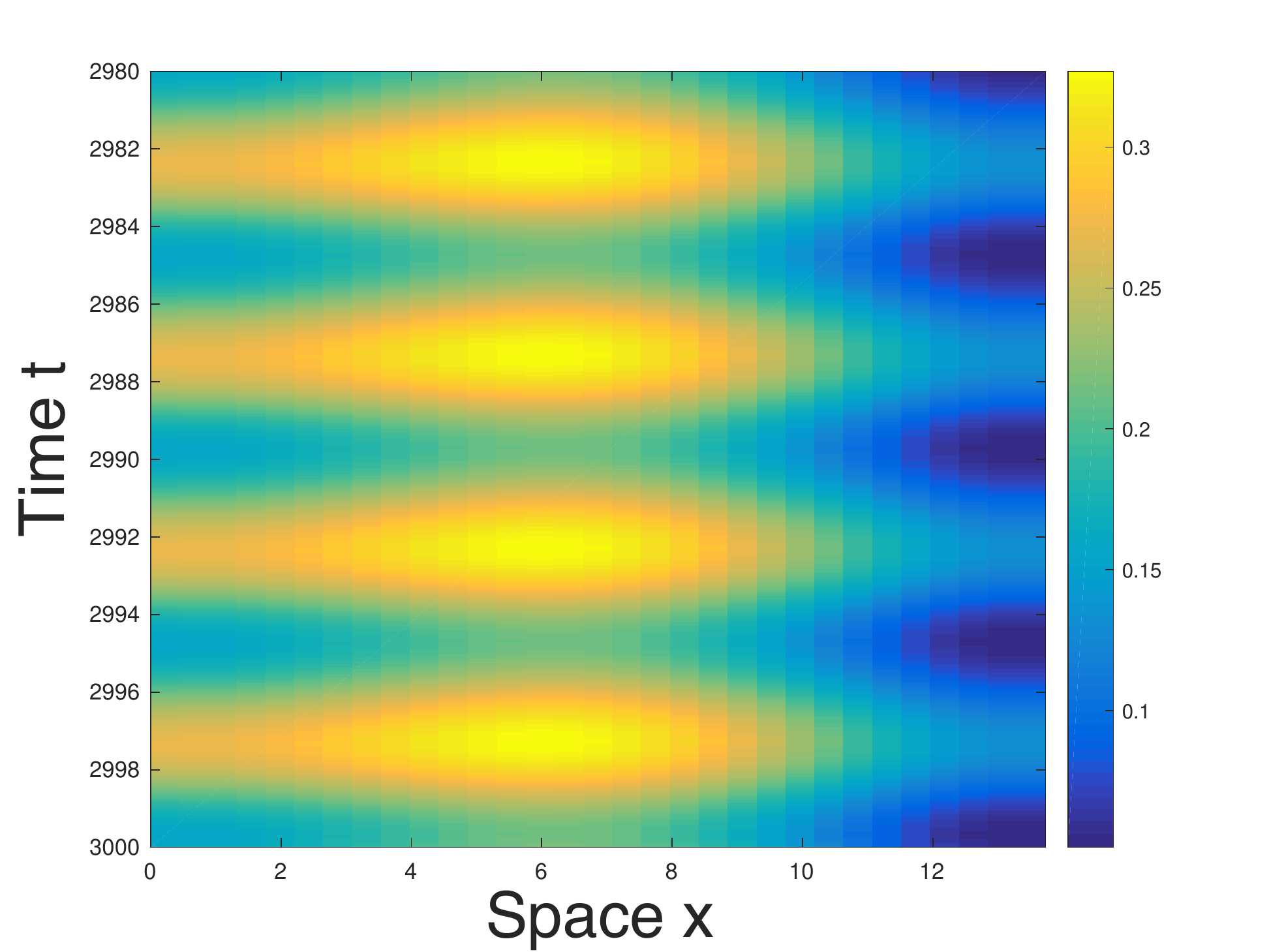}
\end{minipage}}
\subfigure[$u(x,3050)$]{\begin{minipage}{0.18\linewidth}
		\centering\includegraphics[height=0.85\linewidth,width=1.\linewidth]{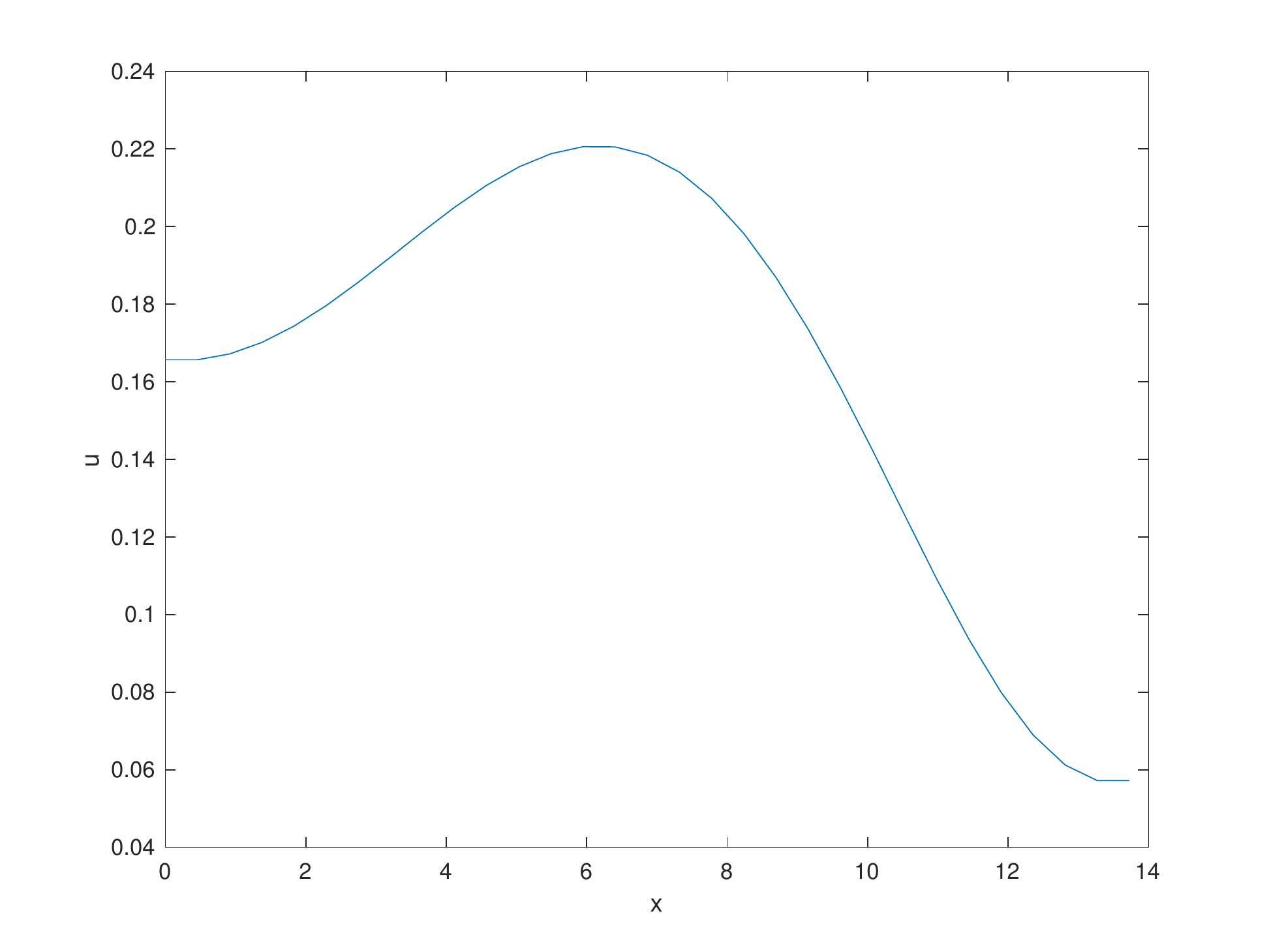}
\end{minipage}}
\subfigure[\tiny$\!\!cos(\frac{x}{l})\!-\!cos(\frac{2x}{l})$]{\begin{minipage}{0.18\linewidth}			 \centering\includegraphics[height=0.85\linewidth,width=1.\linewidth]{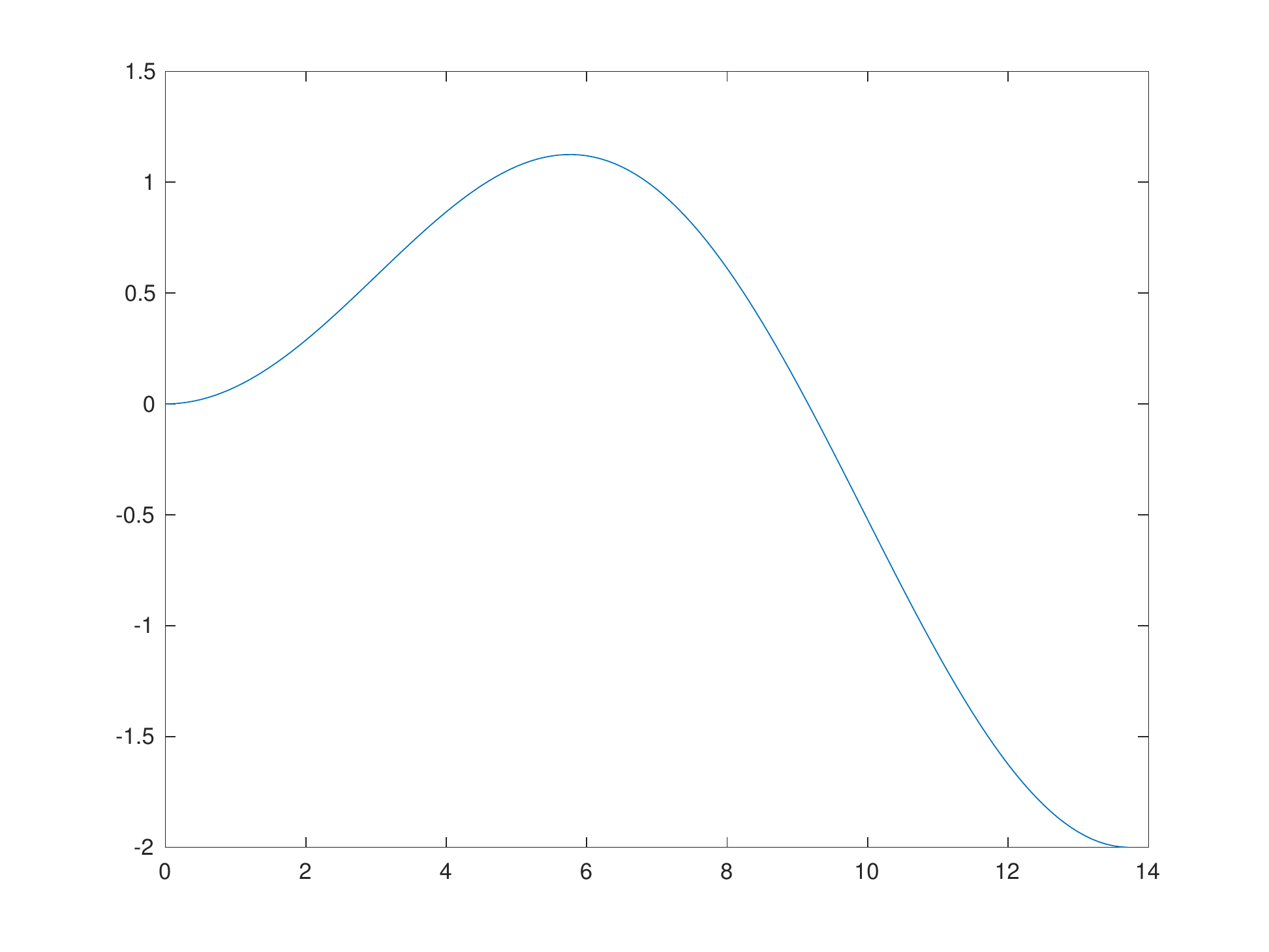}
\end{minipage}}
	
\subfigure[$v(x,t)$]{\begin{minipage}{0.2\linewidth}
		\centering\includegraphics[scale=0.15]{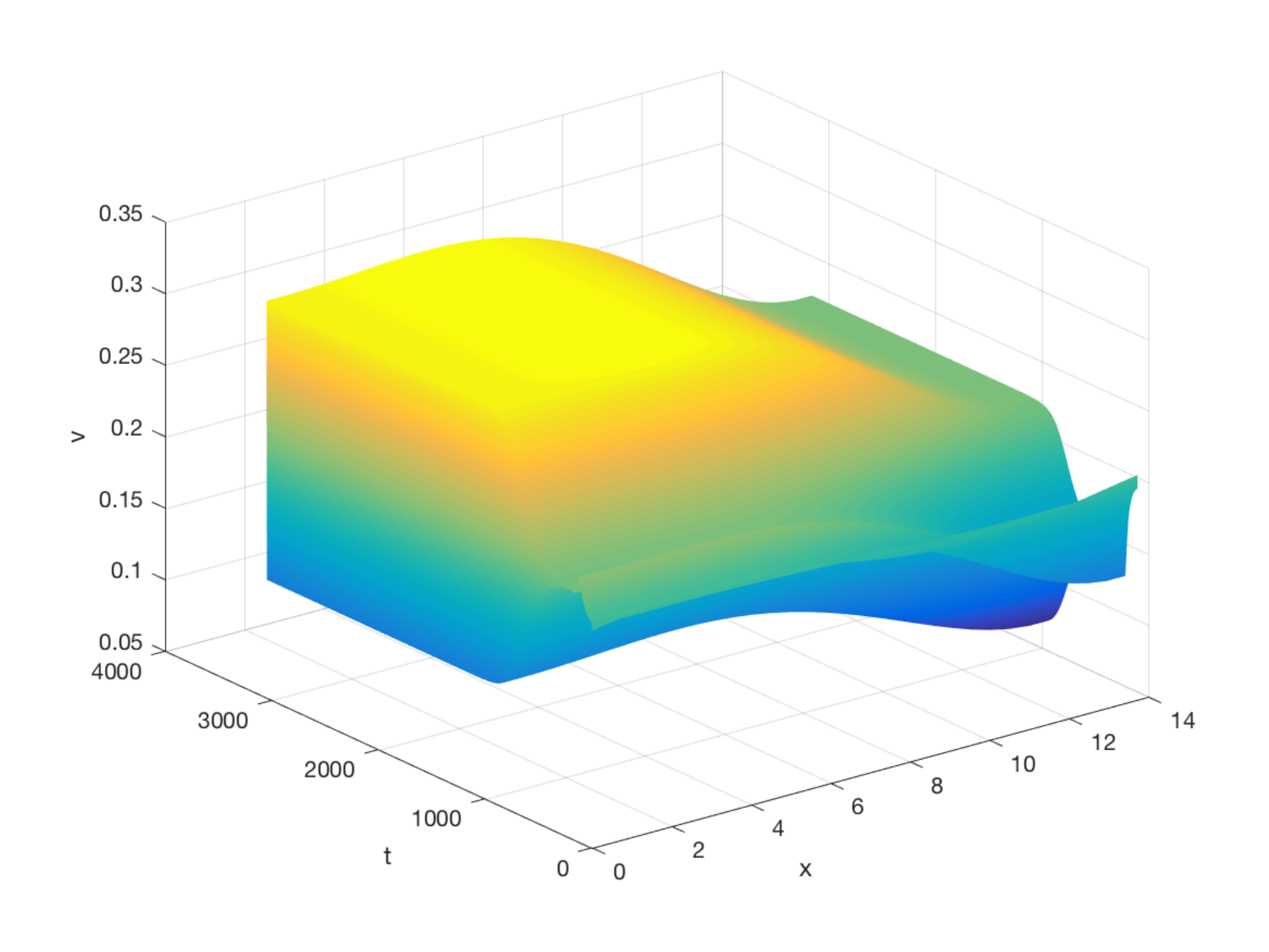}
\end{minipage}}
\subfigure[target pattern]{\begin{minipage}{0.2\linewidth}
		\centering\includegraphics[scale=0.15]{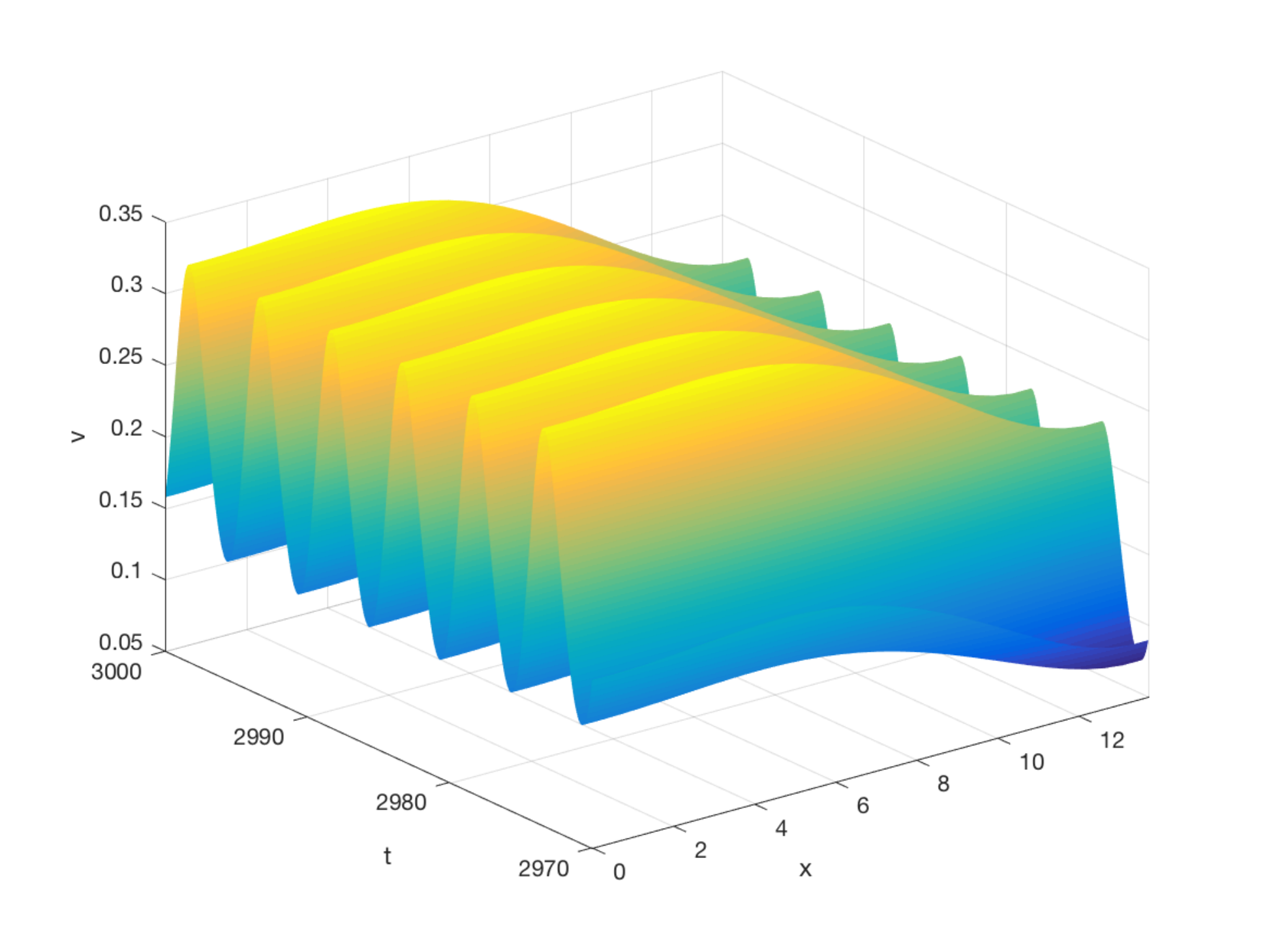}
\end{minipage}}
\subfigure[predator pattern]{\begin{minipage}{0.2\linewidth}
		\centering\includegraphics[height=0.8\linewidth,width=1.\linewidth]{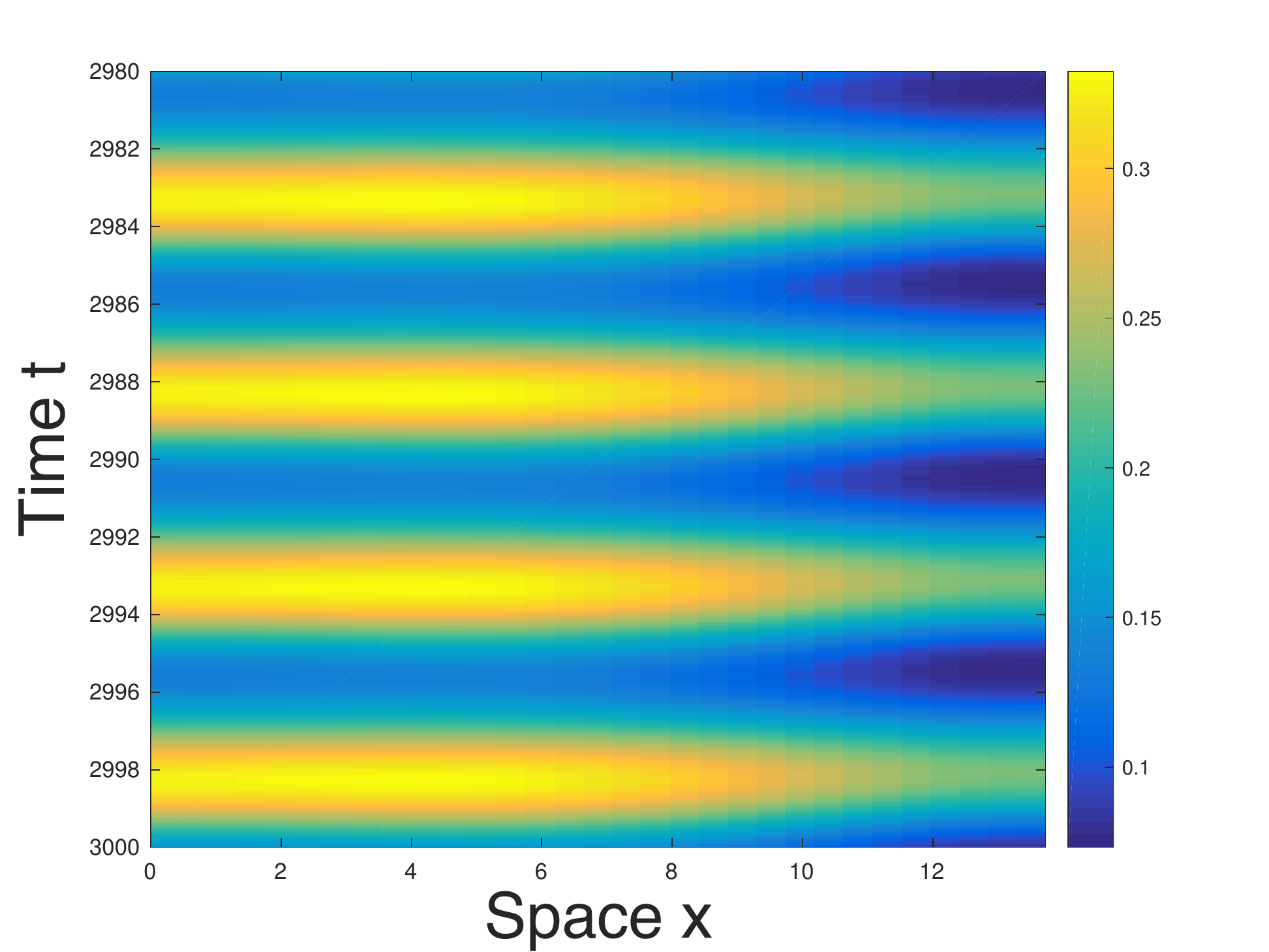}
\end{minipage}}
\subfigure[$v(x,3050)$]{\begin{minipage}{0.18\linewidth}
		\centering\includegraphics[height=0.85\linewidth,width=1.\linewidth]{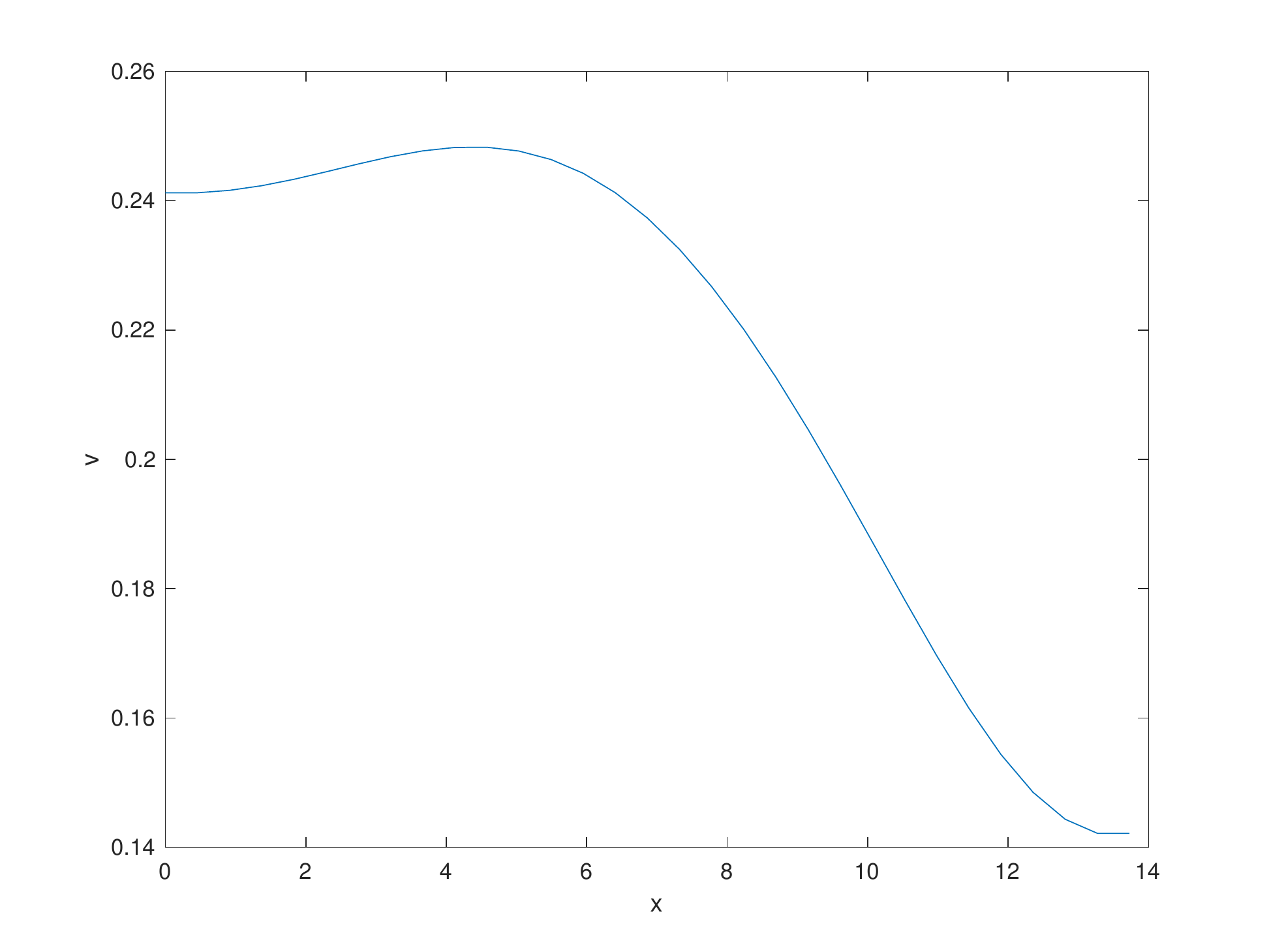}
\end{minipage}}
\subfigure[\tiny$2cos(\frac{x}{l})\!-\!cos(\frac{2x}{l})$]{\begin{minipage}{0.18\linewidth}			\centering\includegraphics[height=0.85\linewidth,width=1.\linewidth]{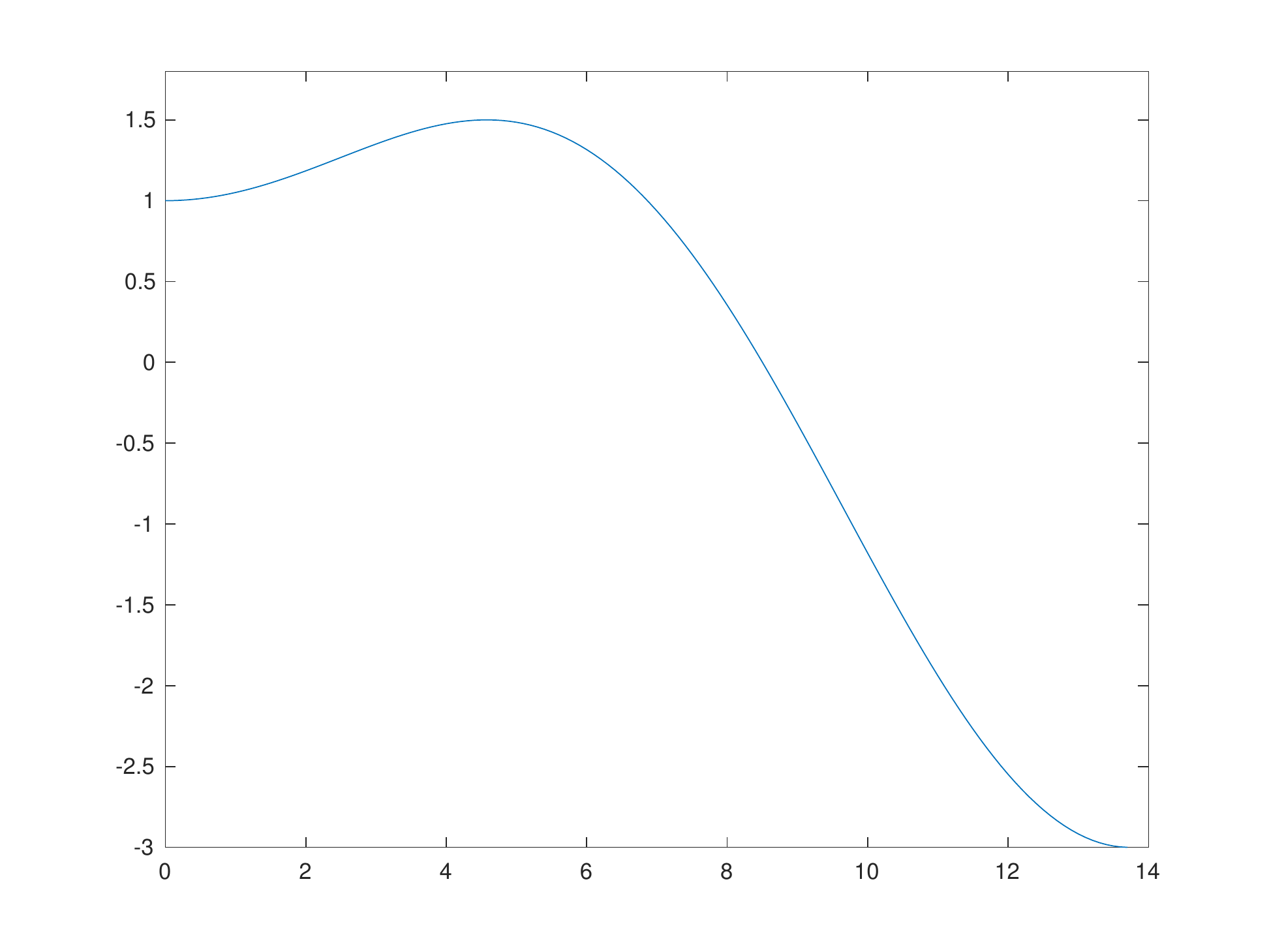}
\end{minipage}}
\caption{Spatially non-homogeneous periodic solution in $D_3$, with $(\alpha_1,\alpha_2)=(-0.05,0.02)$ and initial functions are $(u_0+0.01\sin 0.5x,u_0+0.01\sin 0.5x)$}\label{fig2D3_1}
\end{figure}

\begin{figure}[htbp]
	\subfigure[$u(x,t)$]{\begin{minipage}{0.2\linewidth}
			\centering\includegraphics[scale=0.15]{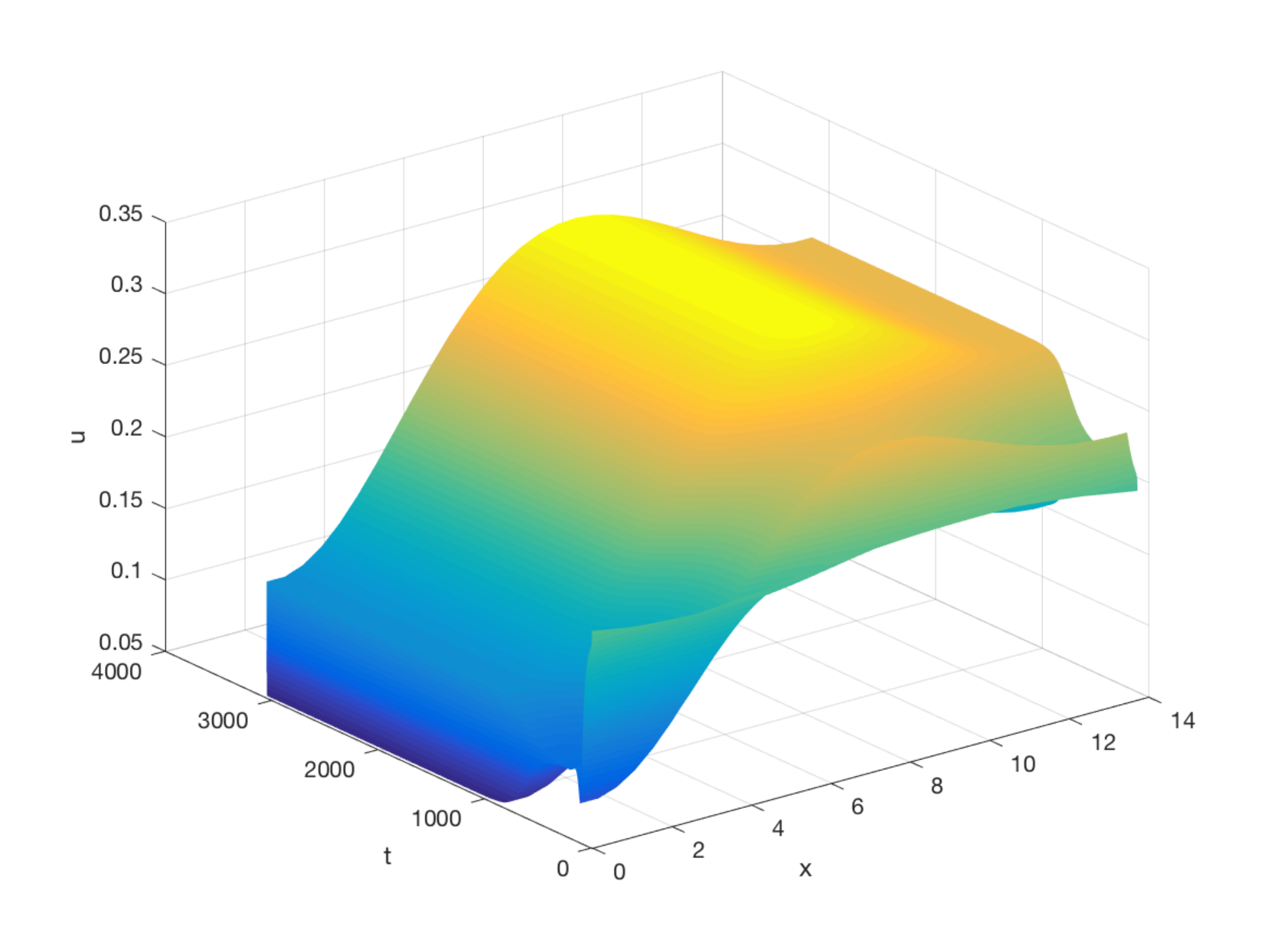}
	\end{minipage}}
	\subfigure[target pattern]{\begin{minipage}{0.2\linewidth}
			\centering\includegraphics[scale=0.15]{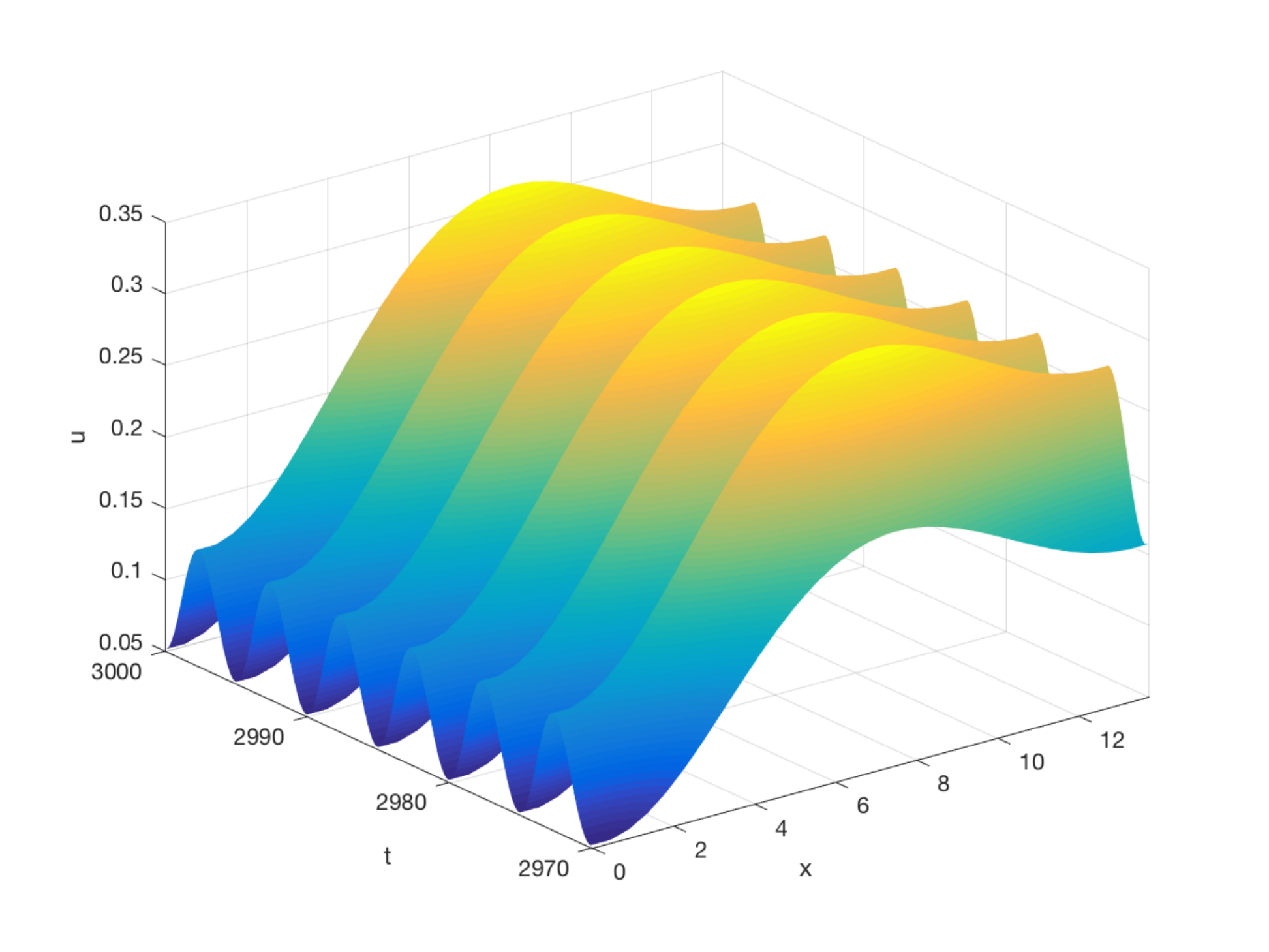}
	\end{minipage}}
    \subfigure[prey pattern]{\begin{minipage}{0.2\linewidth}
		\centering\includegraphics[height=0.8\linewidth,width=1.\linewidth]{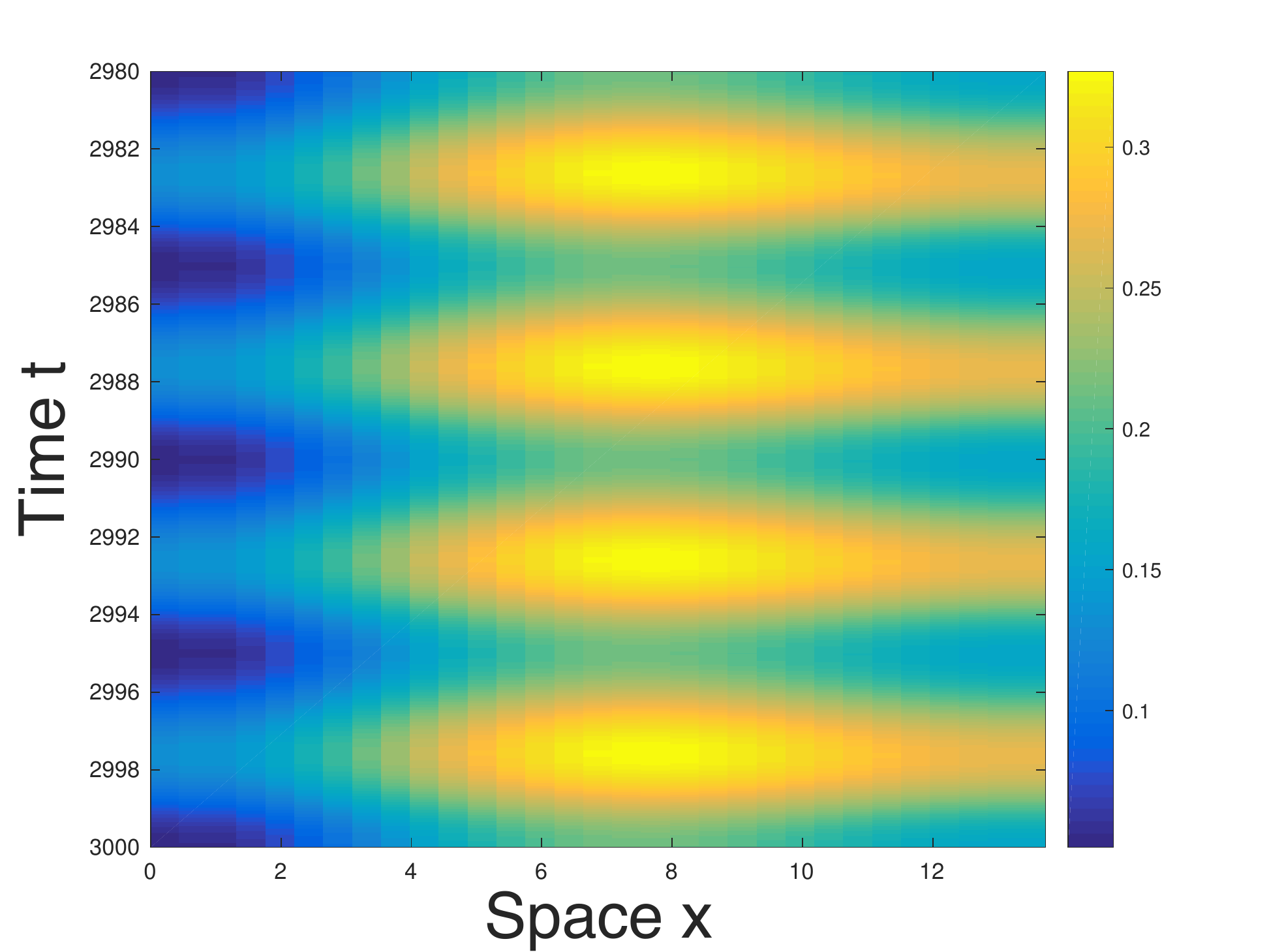}
    \end{minipage}}
	\subfigure[$u(x,3050)$]{\begin{minipage}{0.18\linewidth}
			\centering\includegraphics[height=0.85\linewidth,width=1.\linewidth]{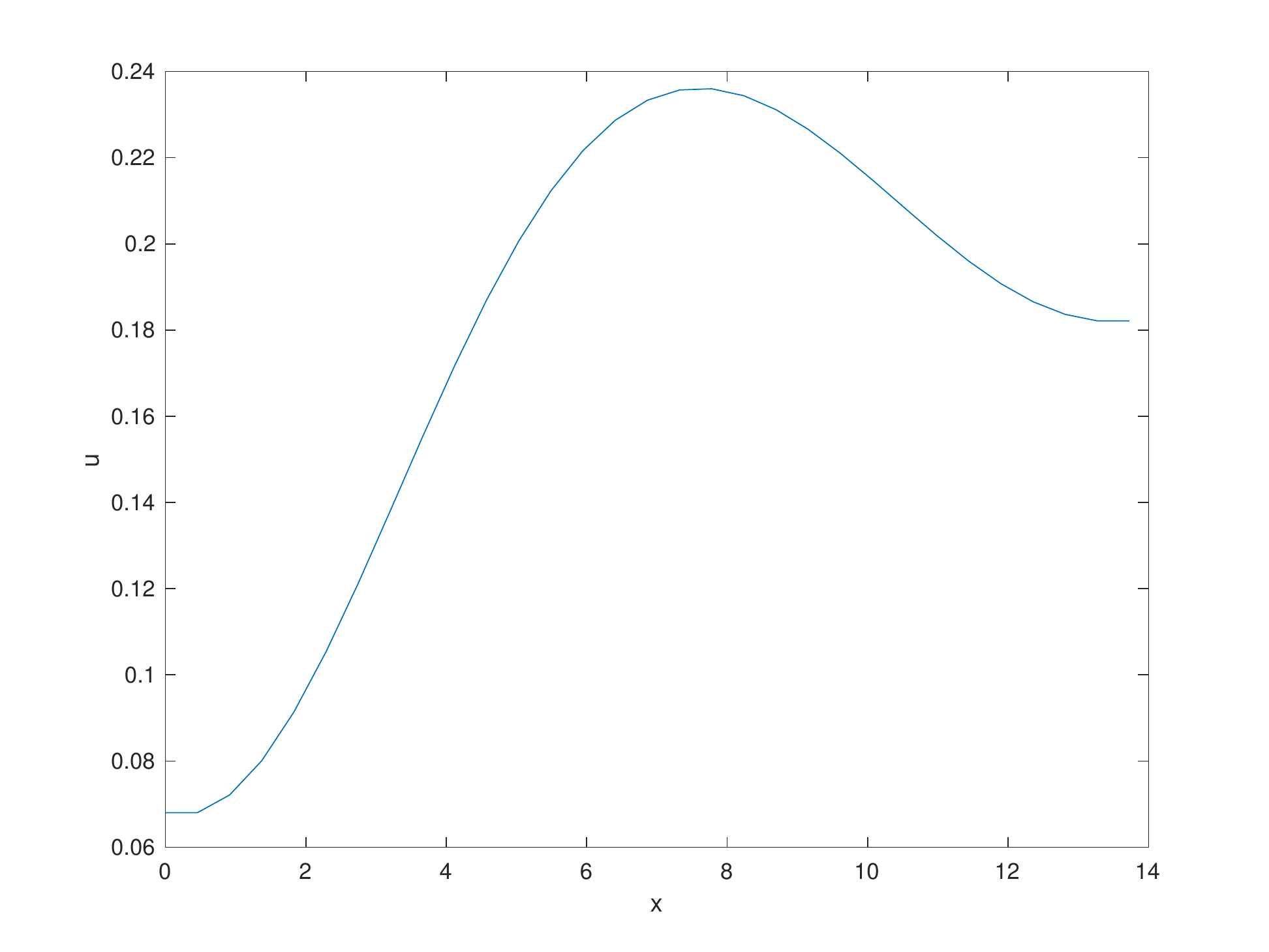}
	\end{minipage}}
	\subfigure[\tiny$\!-\!cos(\frac{x}{l})\!-\!cos(\frac{2x}{l})$]{\begin{minipage}{0.18\linewidth}			 \centering\includegraphics[height=0.85\linewidth,width=1.\linewidth]{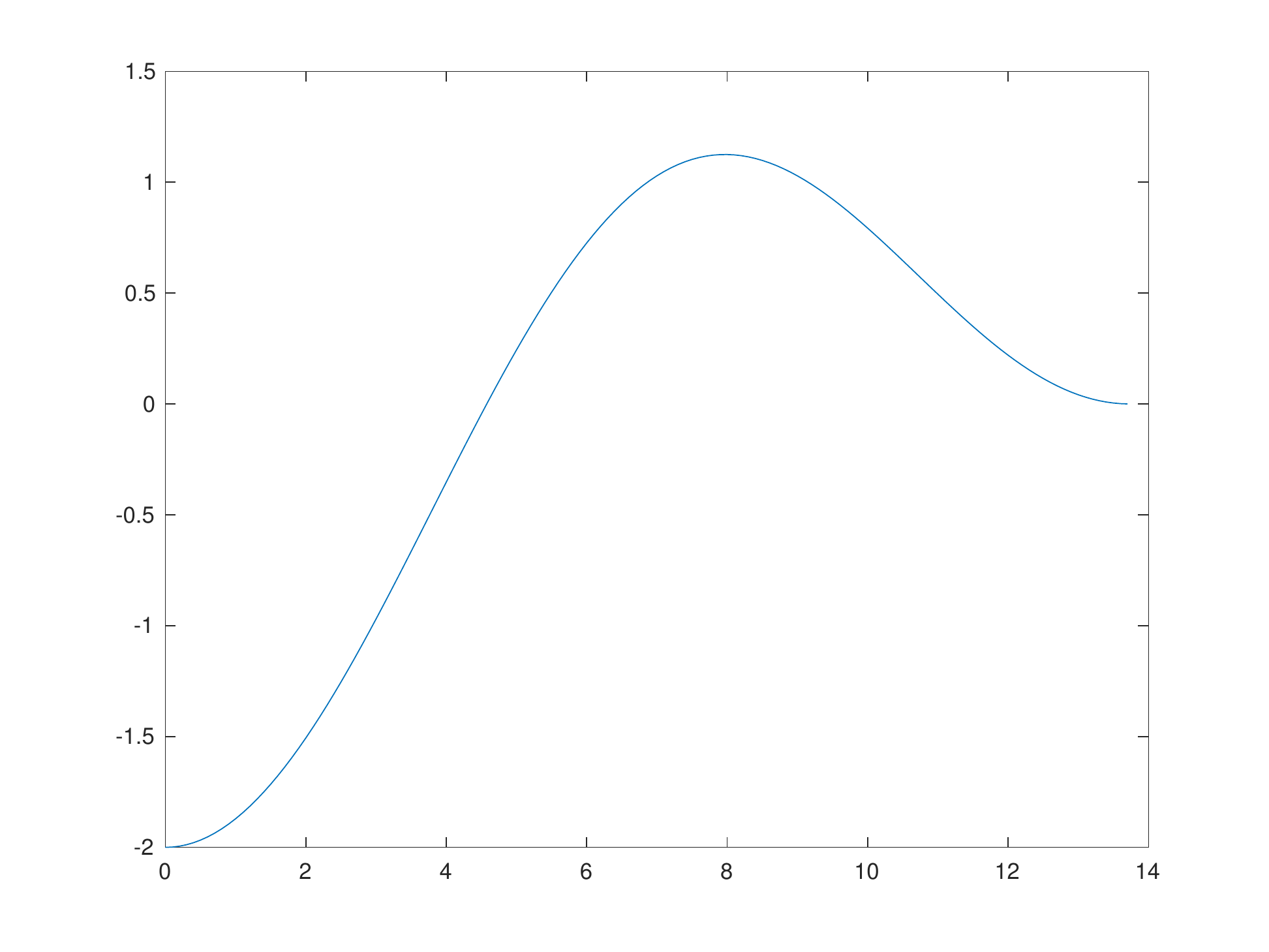}
	\end{minipage}}
	
\subfigure[$v(x,t)$]{\begin{minipage}{0.2\linewidth}
		\centering\includegraphics[scale=0.15]{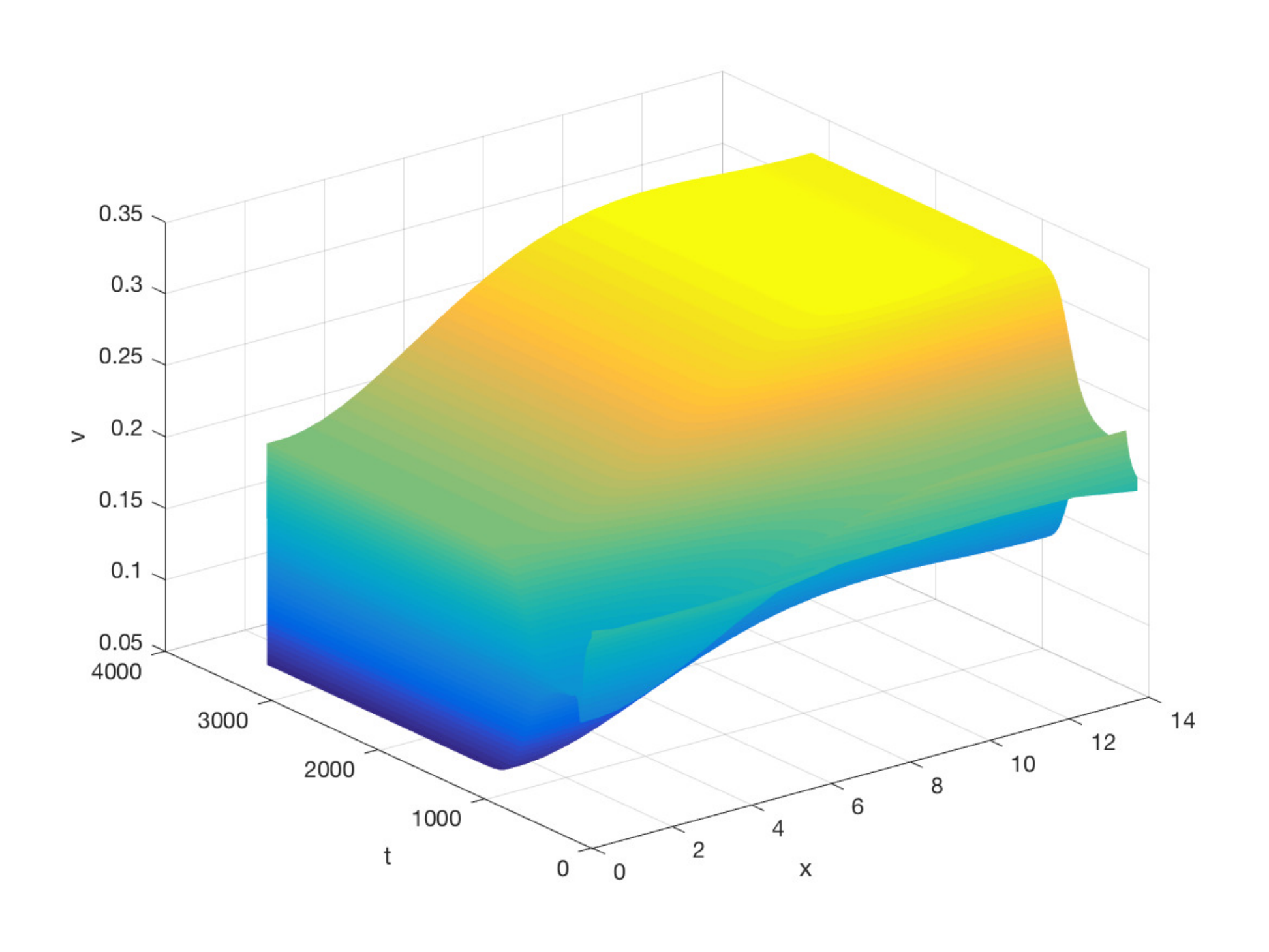}
\end{minipage}}
\subfigure[target pattern]{\begin{minipage}{0.2\linewidth}
		\centering\includegraphics[scale=0.15]{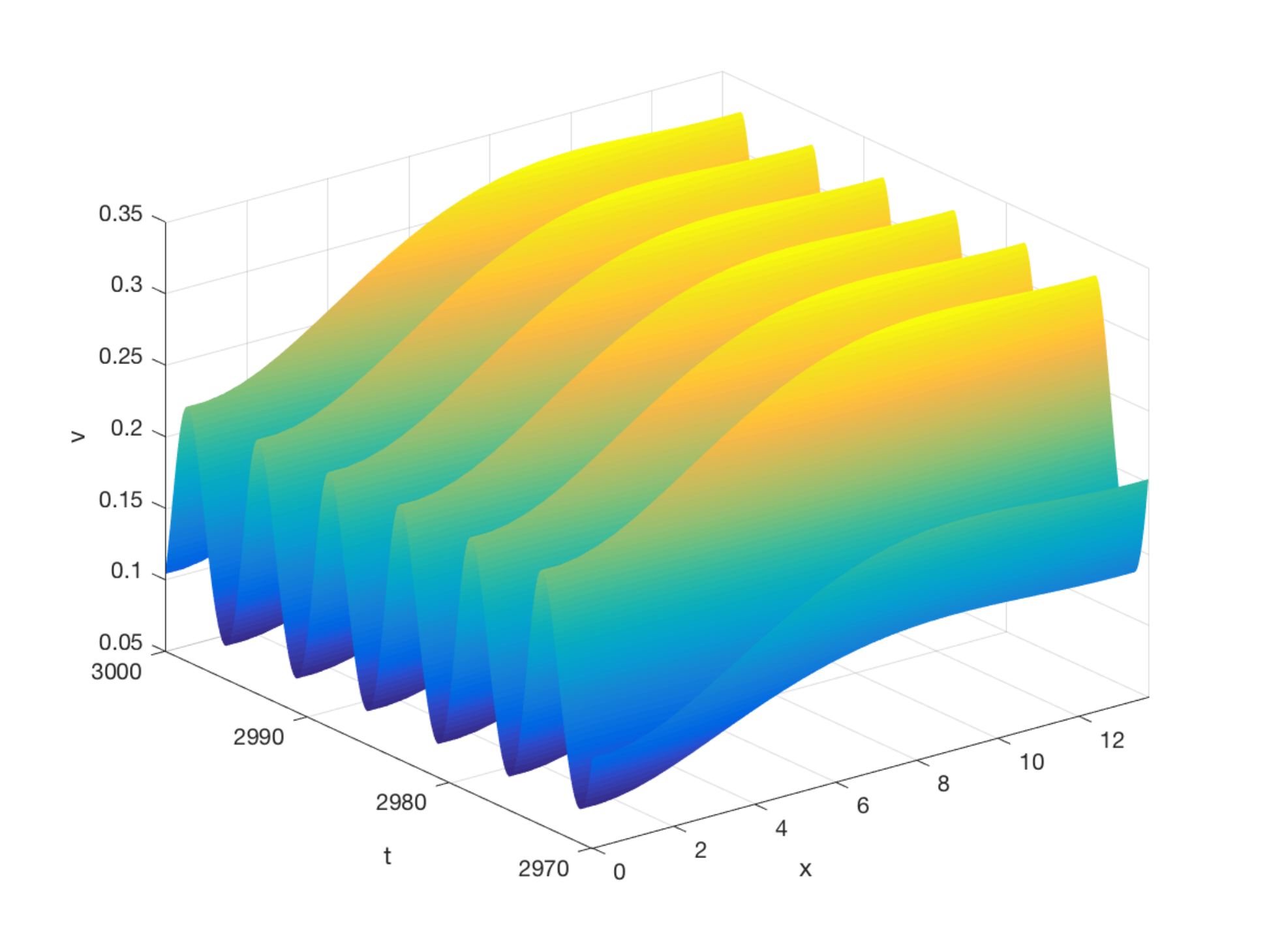}
\end{minipage}}
\subfigure[predator pattern]{\begin{minipage}{0.2\linewidth}
		\centering\includegraphics[height=0.8\linewidth,width=1.\linewidth]{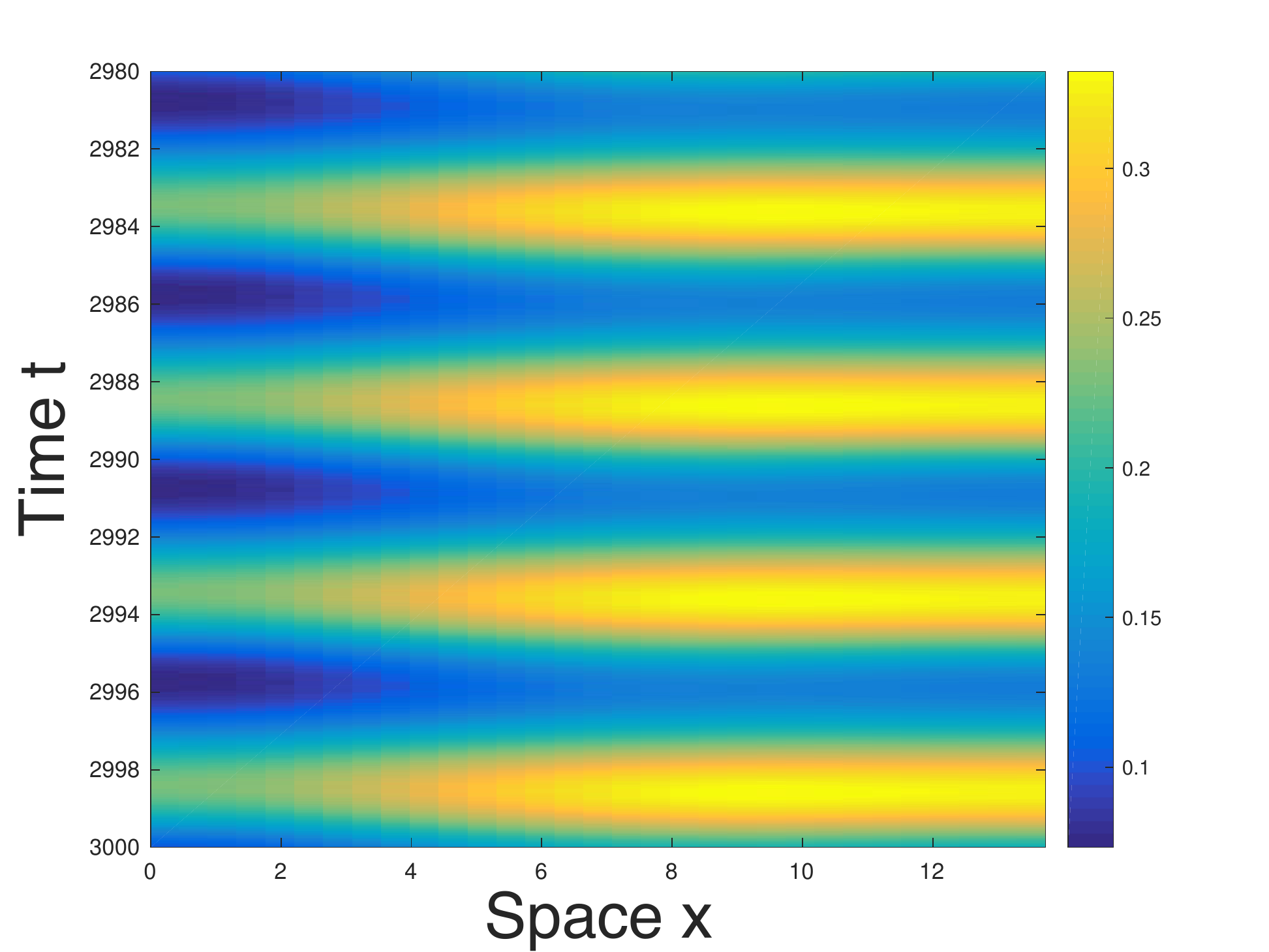}
\end{minipage}}
\subfigure[$v(x,3050)$]{\begin{minipage}{0.18\linewidth}
		\centering\includegraphics[height=0.85\linewidth,width=1.\linewidth]{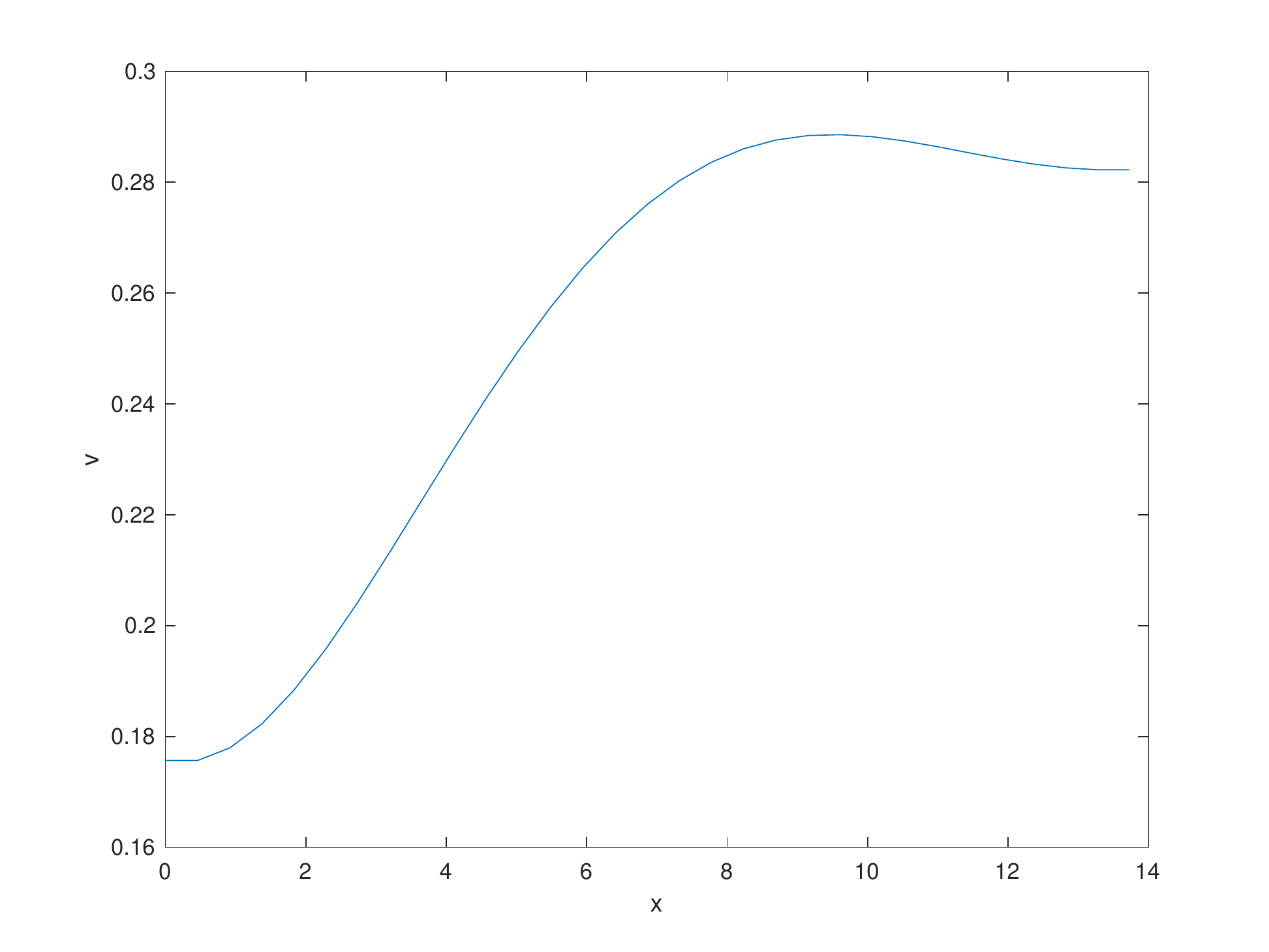}
\end{minipage}}
\subfigure[\tiny$\!-2\!cos(\frac{x}{l})\!-\!cos(\frac{2x}{l})$]{\begin{minipage}{0.18\linewidth}			 \centering\includegraphics[height=0.85\linewidth,width=1.\linewidth]{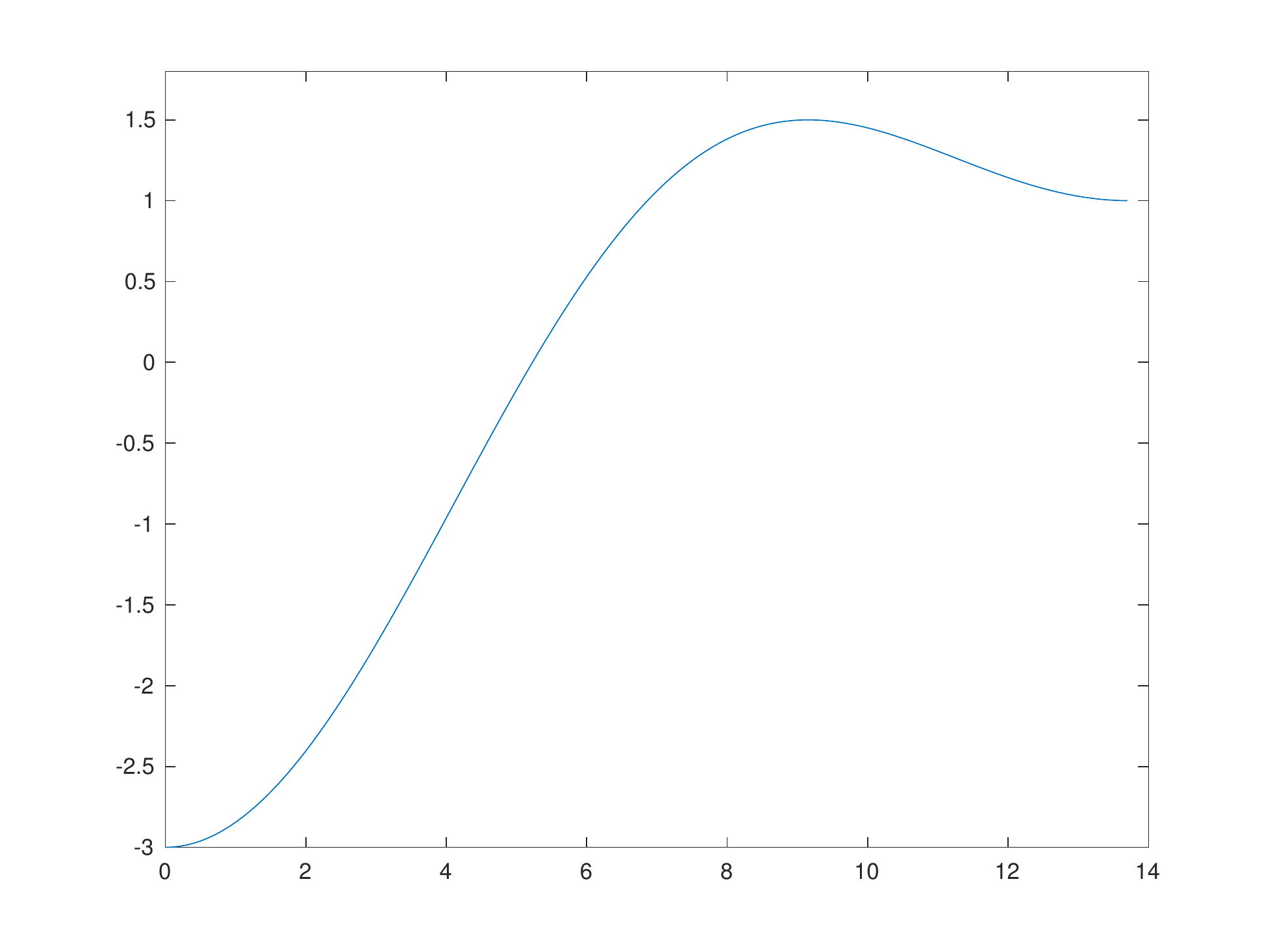}
\end{minipage}}
	\caption{Spatially non-homogeneous periodic solution in $D_3$, with $(\alpha_1,\alpha_2)=(-0.05,0.02)$ and initial functions are $(u_0-0.01\sin 0.5x,u_0-0.01\sin 0.5x)$}\label{fig2D3_2}
\end{figure}
\begin{figure}[htbp]
	\subfigure[\tiny$u(x,t)$]{\begin{minipage}{0.16\linewidth}
		\centering\includegraphics[scale=0.13]{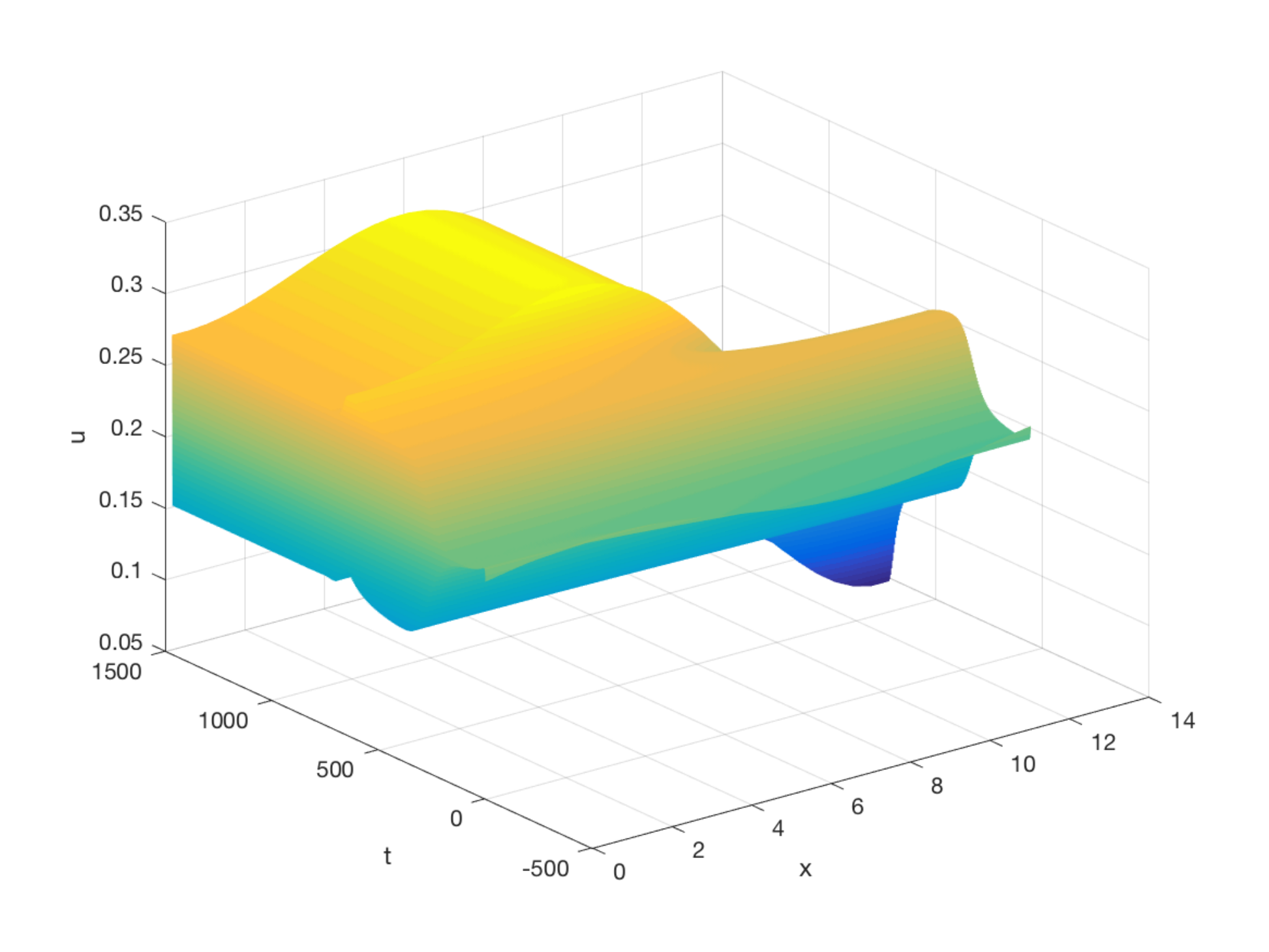}
\end{minipage}}
\subfigure[\tiny prey pattern]{\begin{minipage}{0.16\linewidth}
		\centering\includegraphics[scale=0.13]{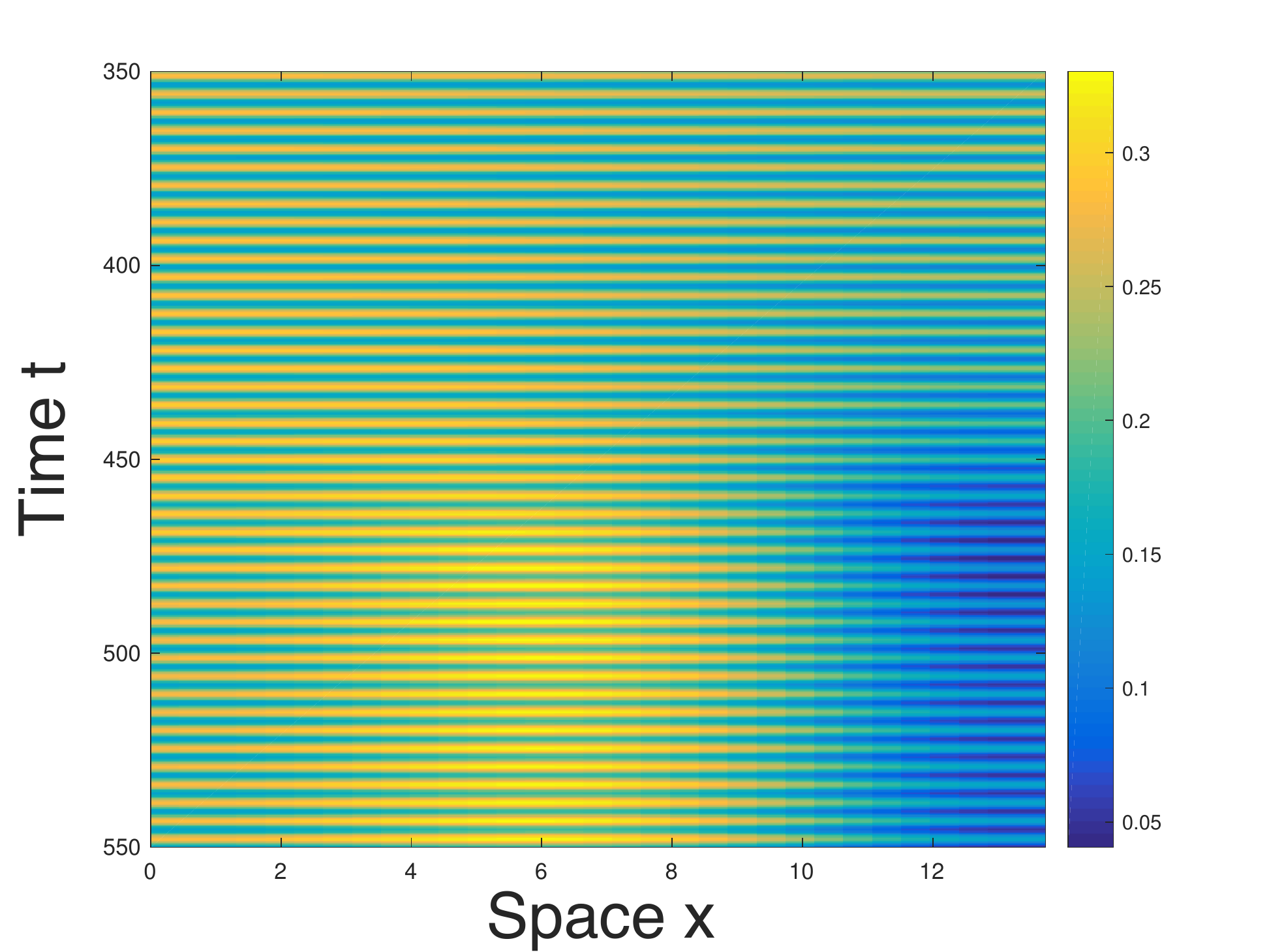}
\end{minipage}}
\subfigure[\tiny transition]{\begin{minipage}{0.16\linewidth}
		\centering\includegraphics[scale=0.13]{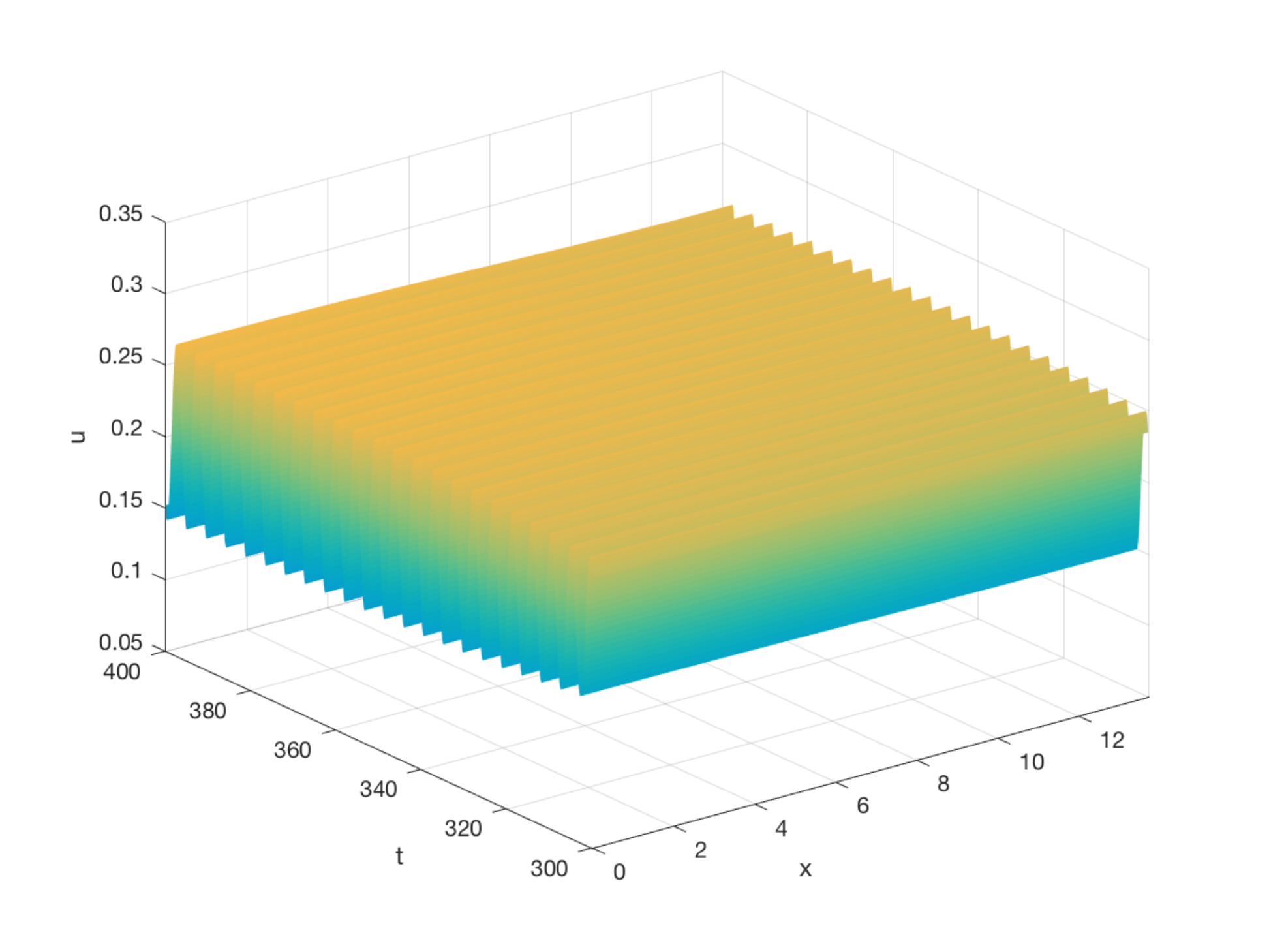}
\end{minipage}}
\subfigure[\tiny prey pattern]{\begin{minipage}{0.16\linewidth}
		\centering\includegraphics[scale=0.13]{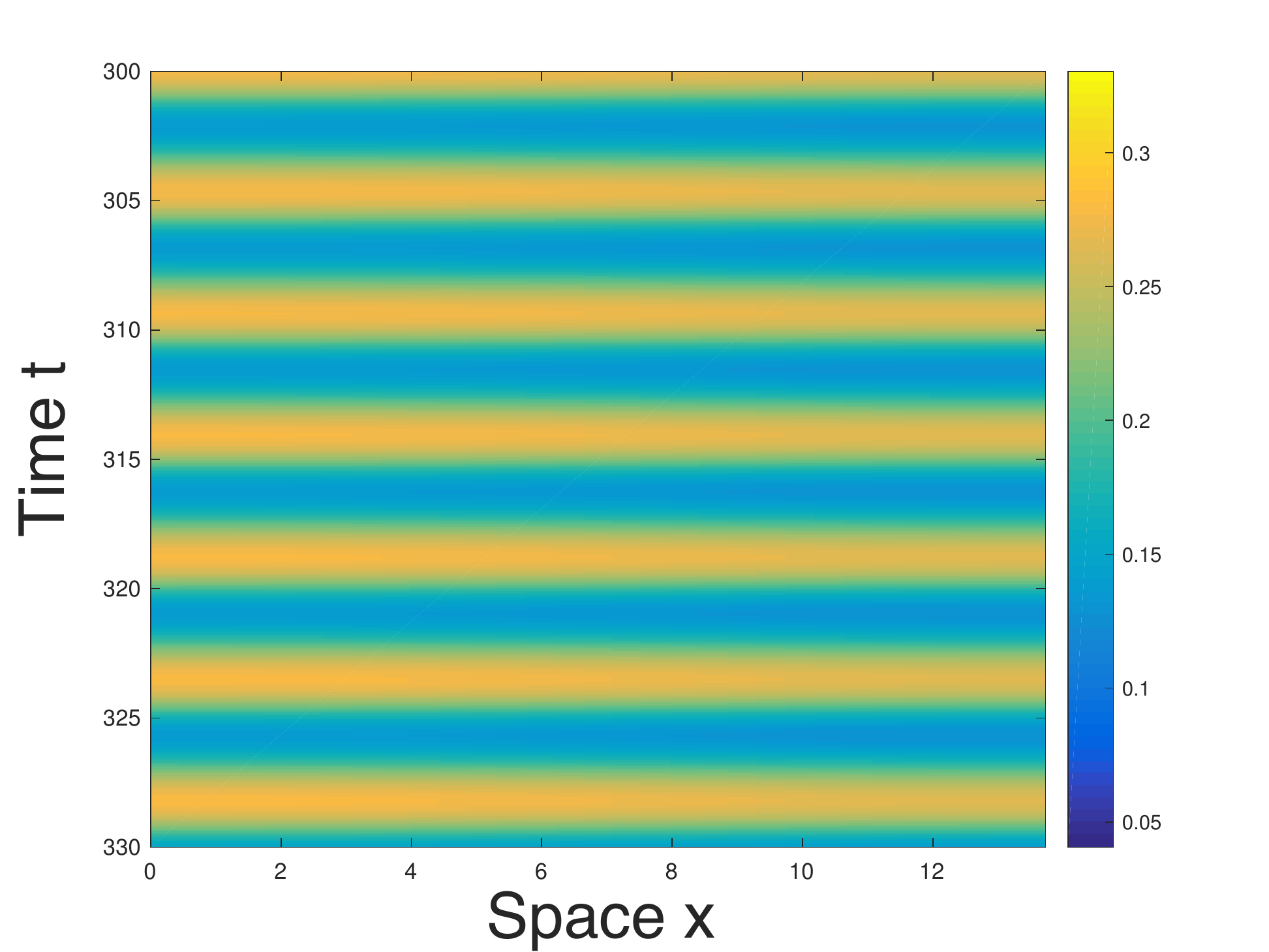}
\end{minipage}}
\subfigure[\tiny target]{\begin{minipage}{0.16\linewidth}
		\centering\includegraphics[scale=0.13]{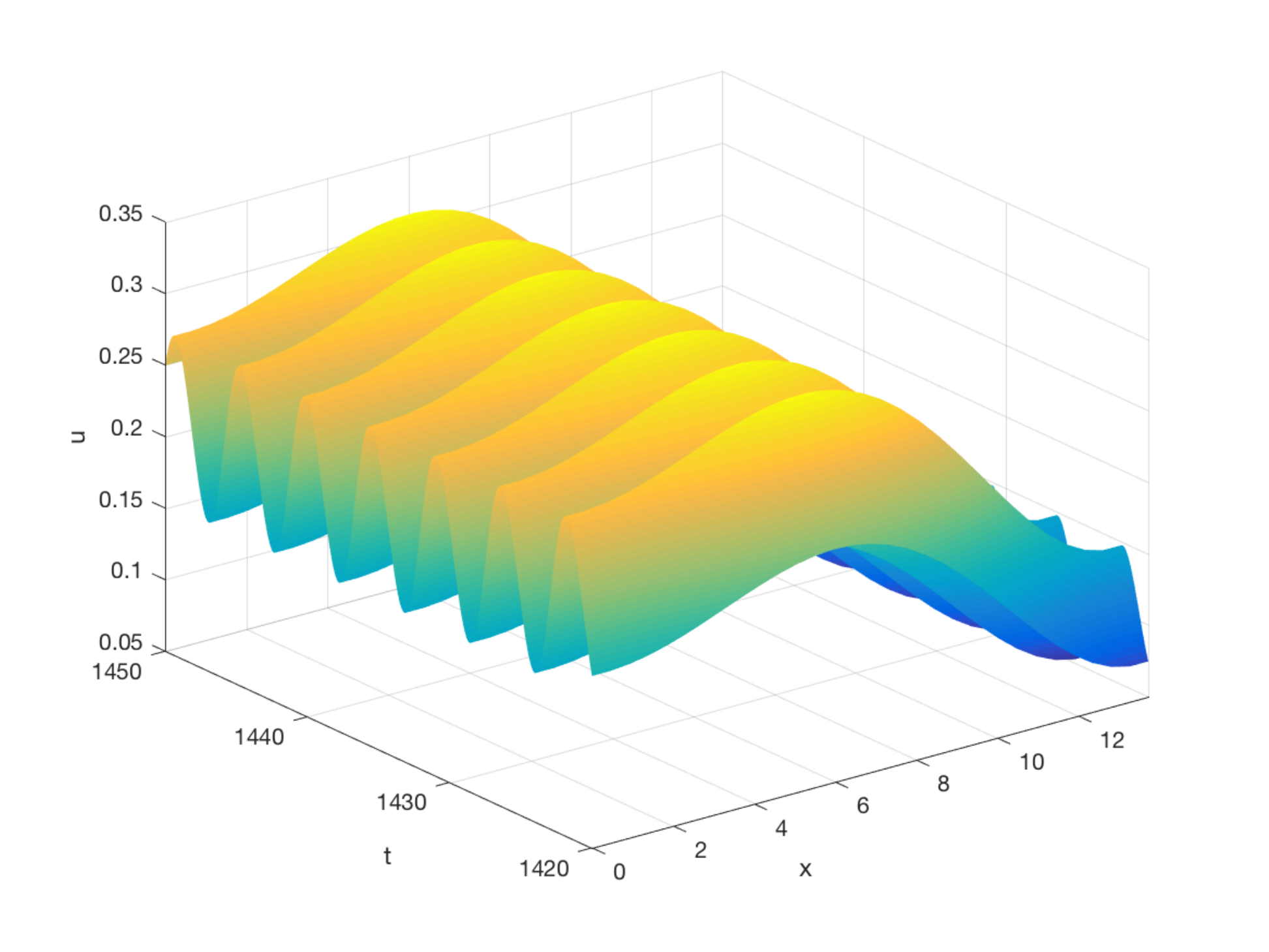}
\end{minipage}}
\subfigure[\tiny prey pattern]{\begin{minipage}{0.16\linewidth}
		\centering\includegraphics[scale=0.13]{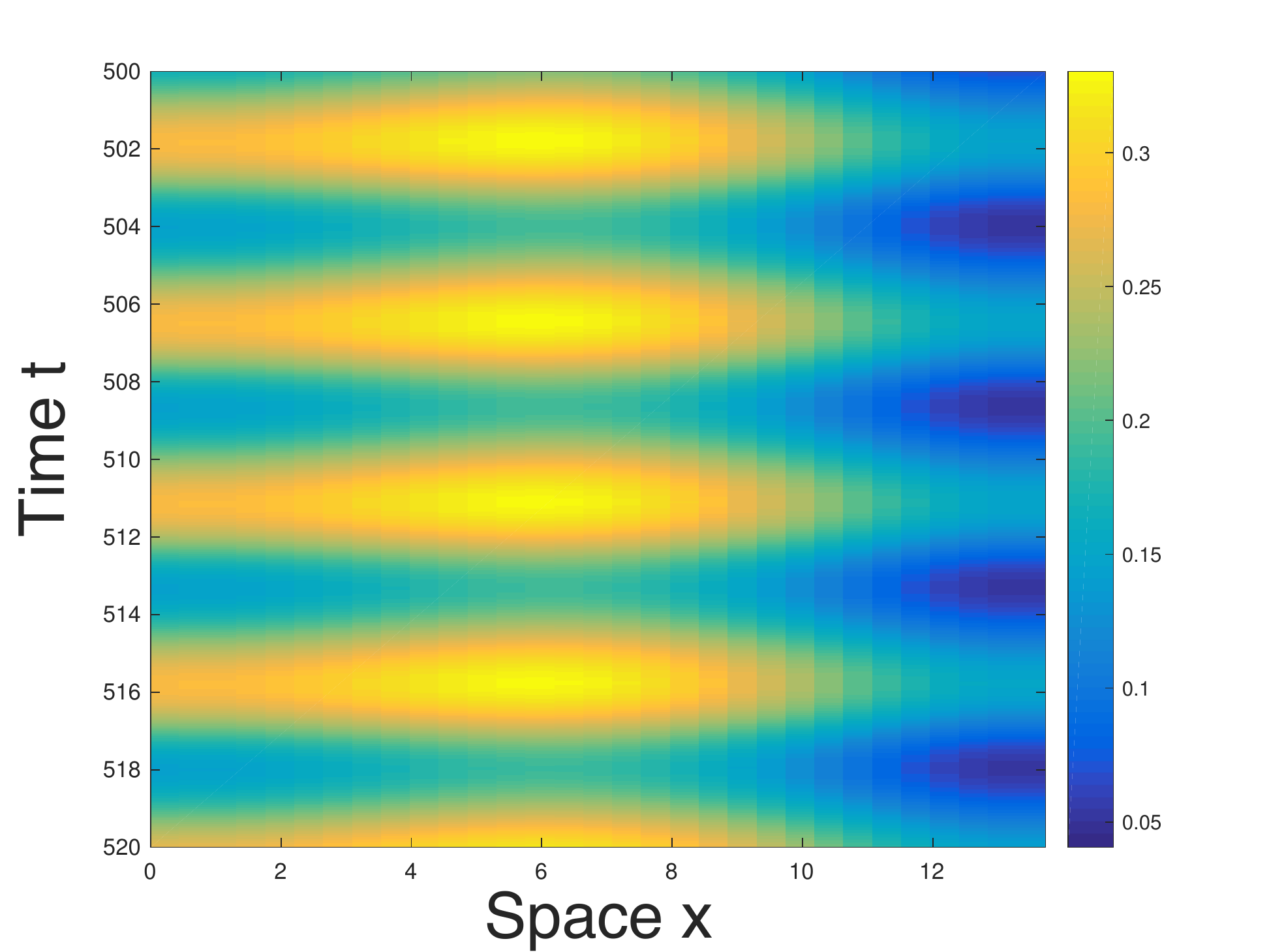}
\end{minipage}}

\subfigure[\tiny$v(x,t)$]{\begin{minipage}{0.16\linewidth}
		\centering\includegraphics[scale=0.13]{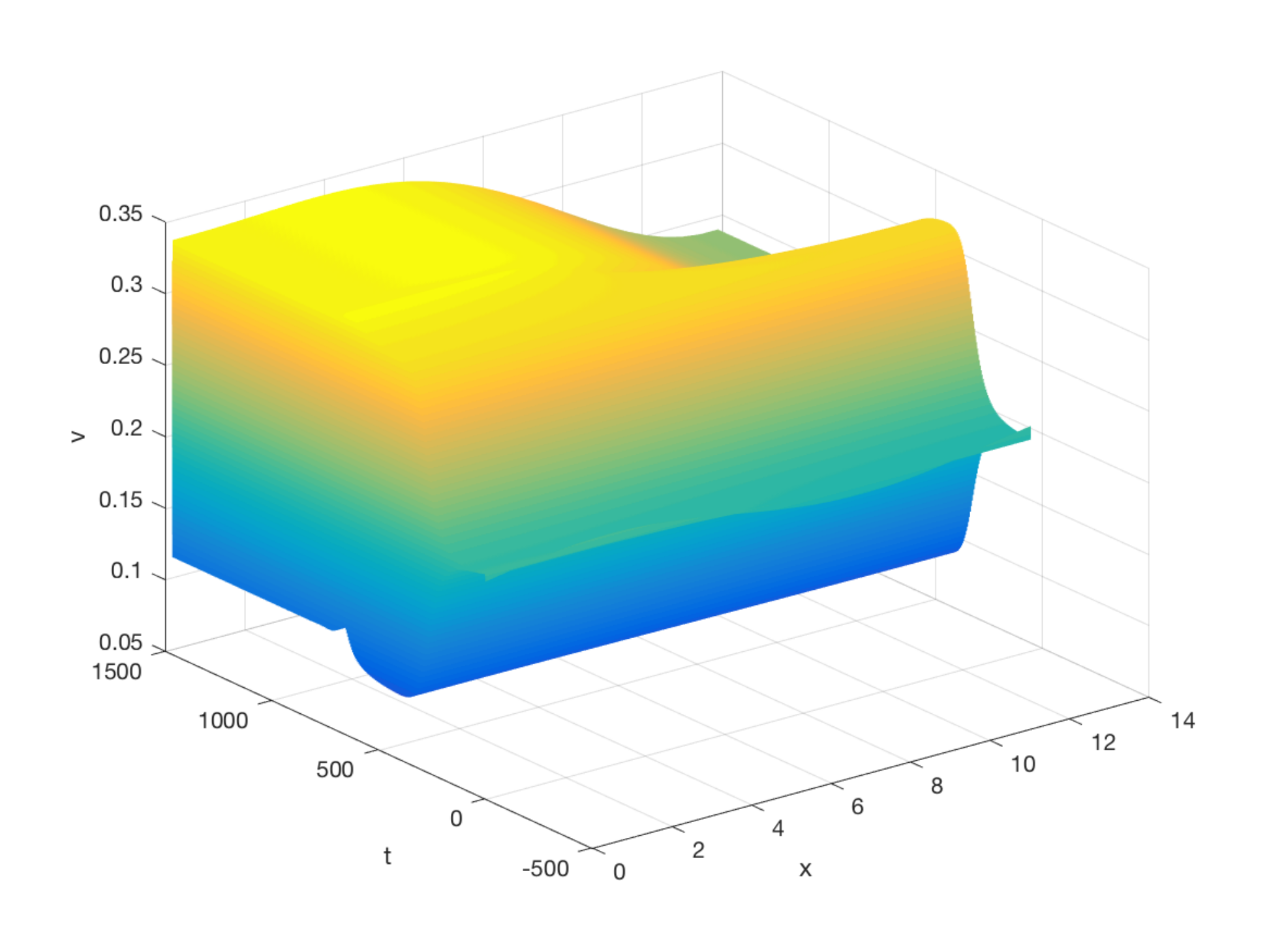}
\end{minipage}}
\subfigure[\tiny predator ]{\begin{minipage}{0.16\linewidth}
		\centering\includegraphics[scale=0.13]{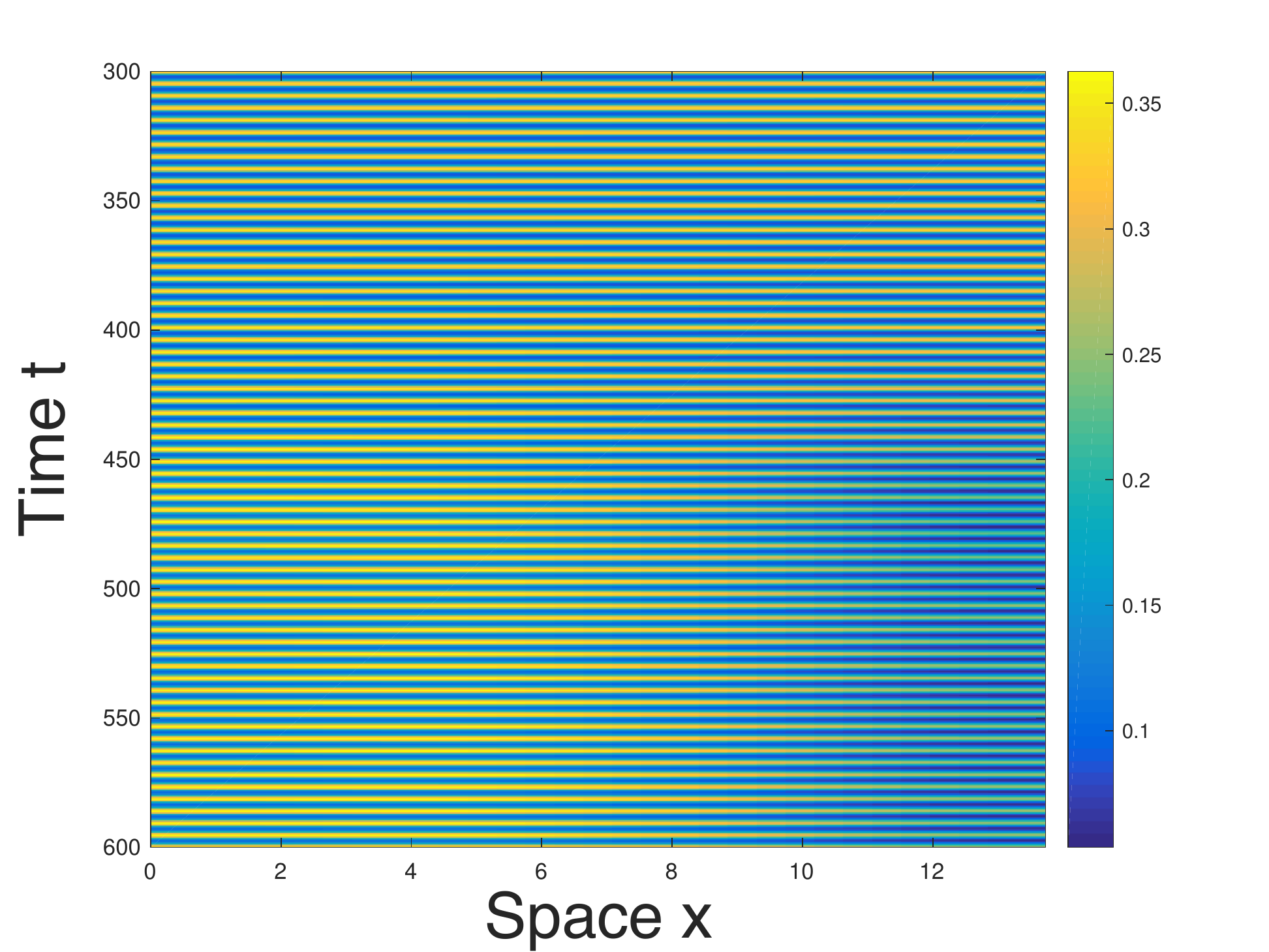}
\end{minipage}}
\subfigure[\tiny transition]{\begin{minipage}{0.16\linewidth}
		\centering\includegraphics[scale=0.13]{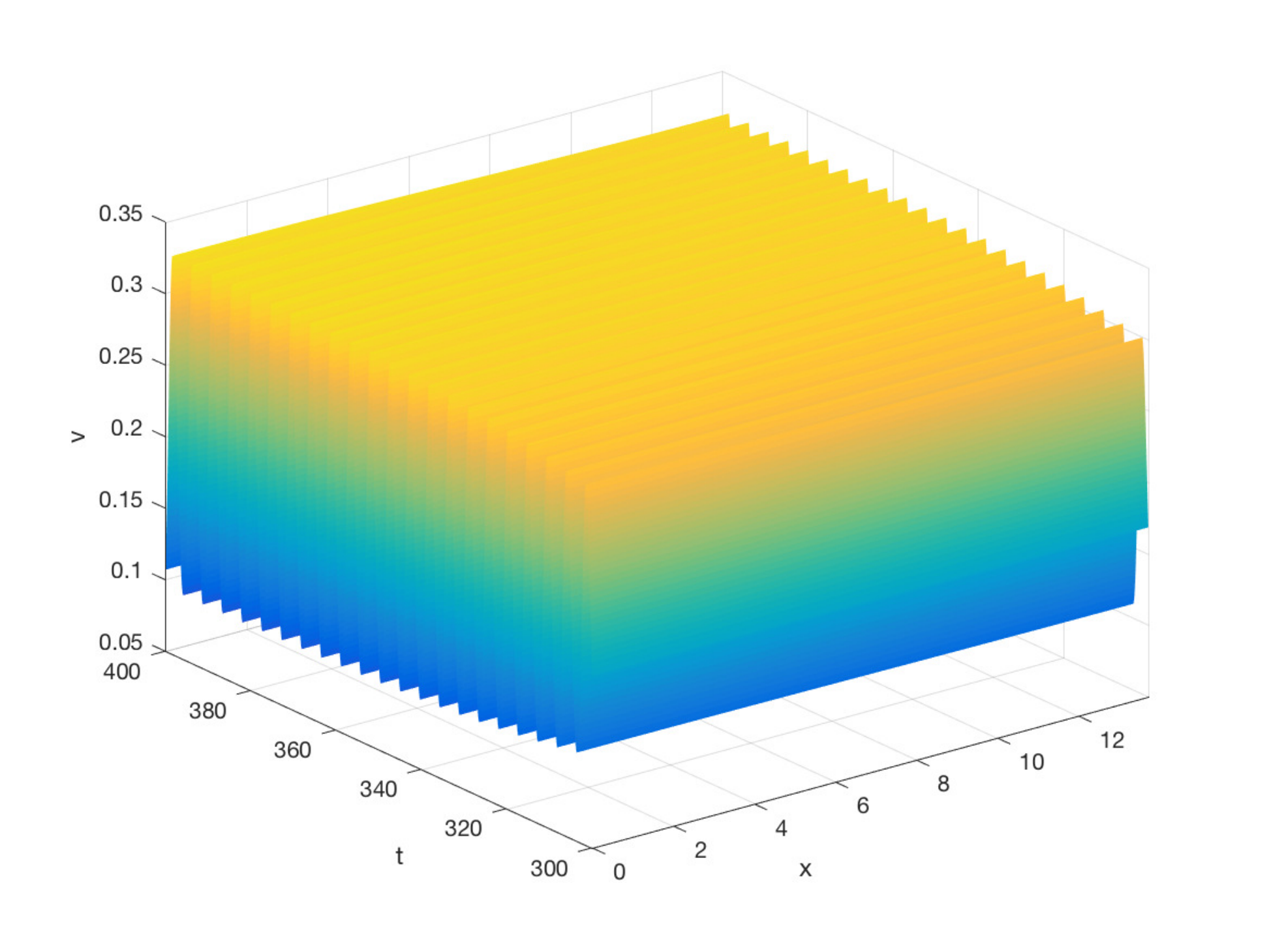}
\end{minipage}}
\subfigure[\tiny \tiny predator]{\begin{minipage}{0.16\linewidth}
		\centering\includegraphics[scale=0.13]{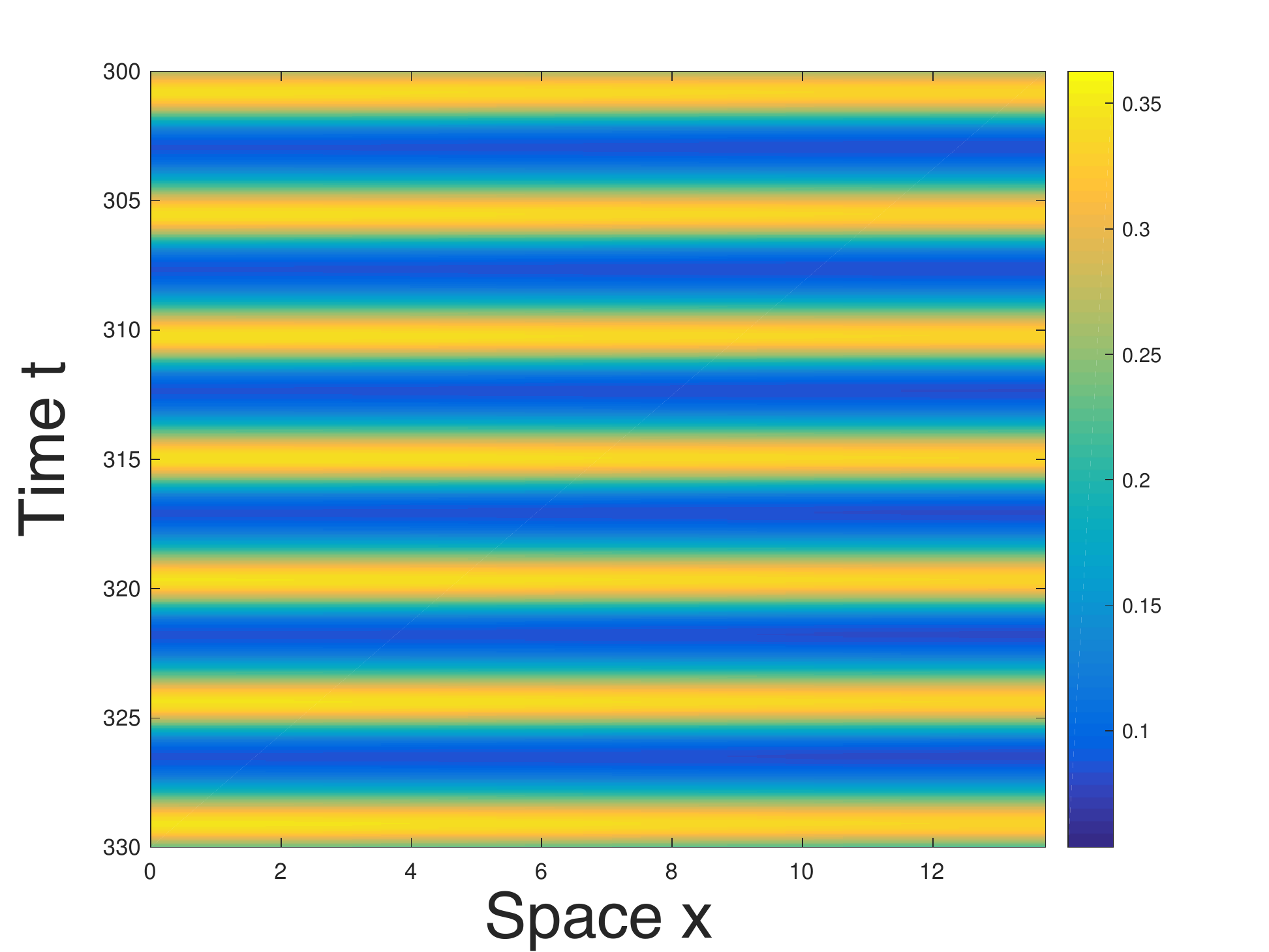}
\end{minipage}}
\subfigure[\tiny target pattern]{\begin{minipage}{0.16\linewidth}
		\centering\includegraphics[scale=0.13]{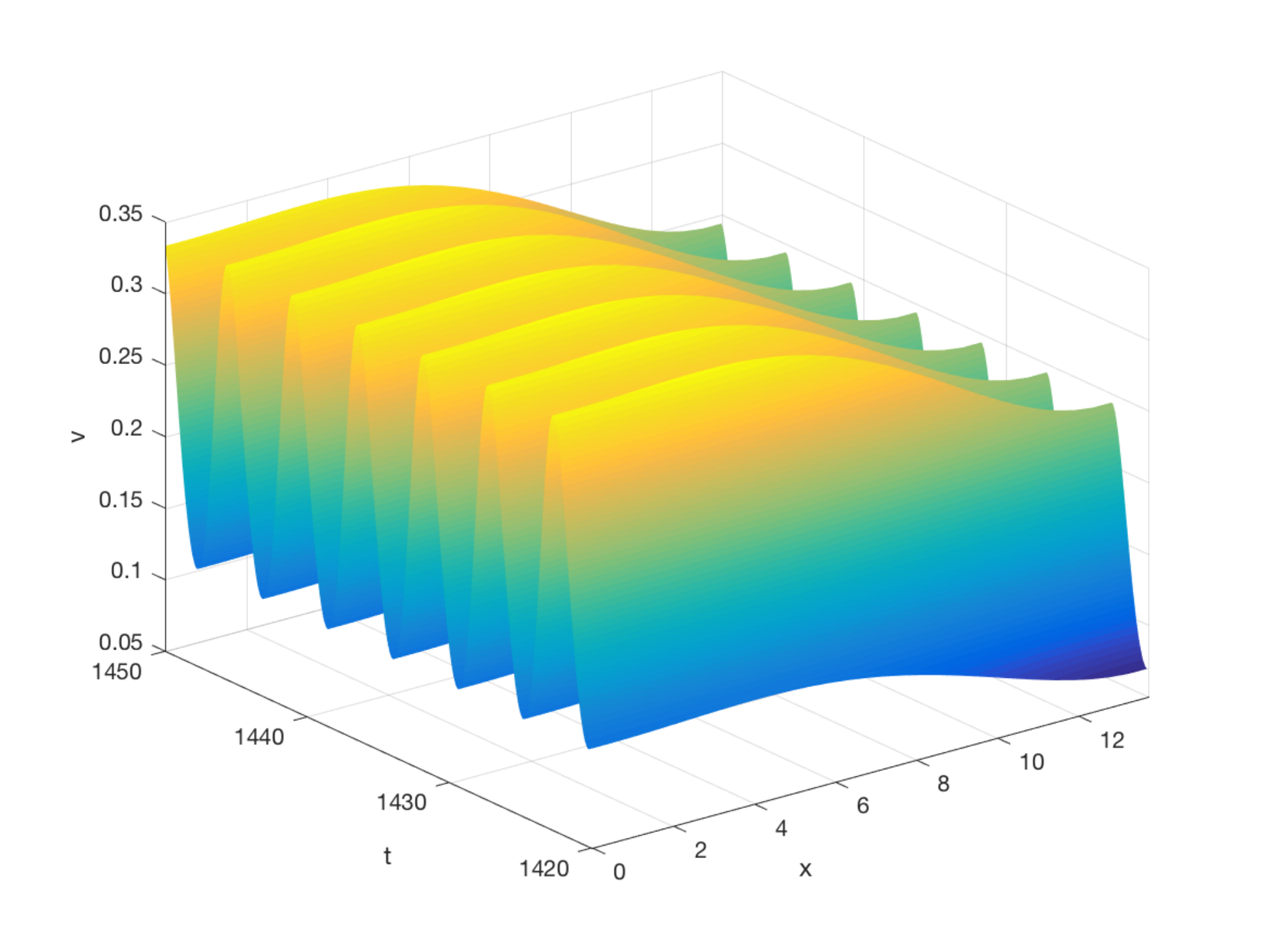}
\end{minipage}}
\subfigure[\tiny predator]{\begin{minipage}{0.16\linewidth}
		\centering\includegraphics[scale=0.13]{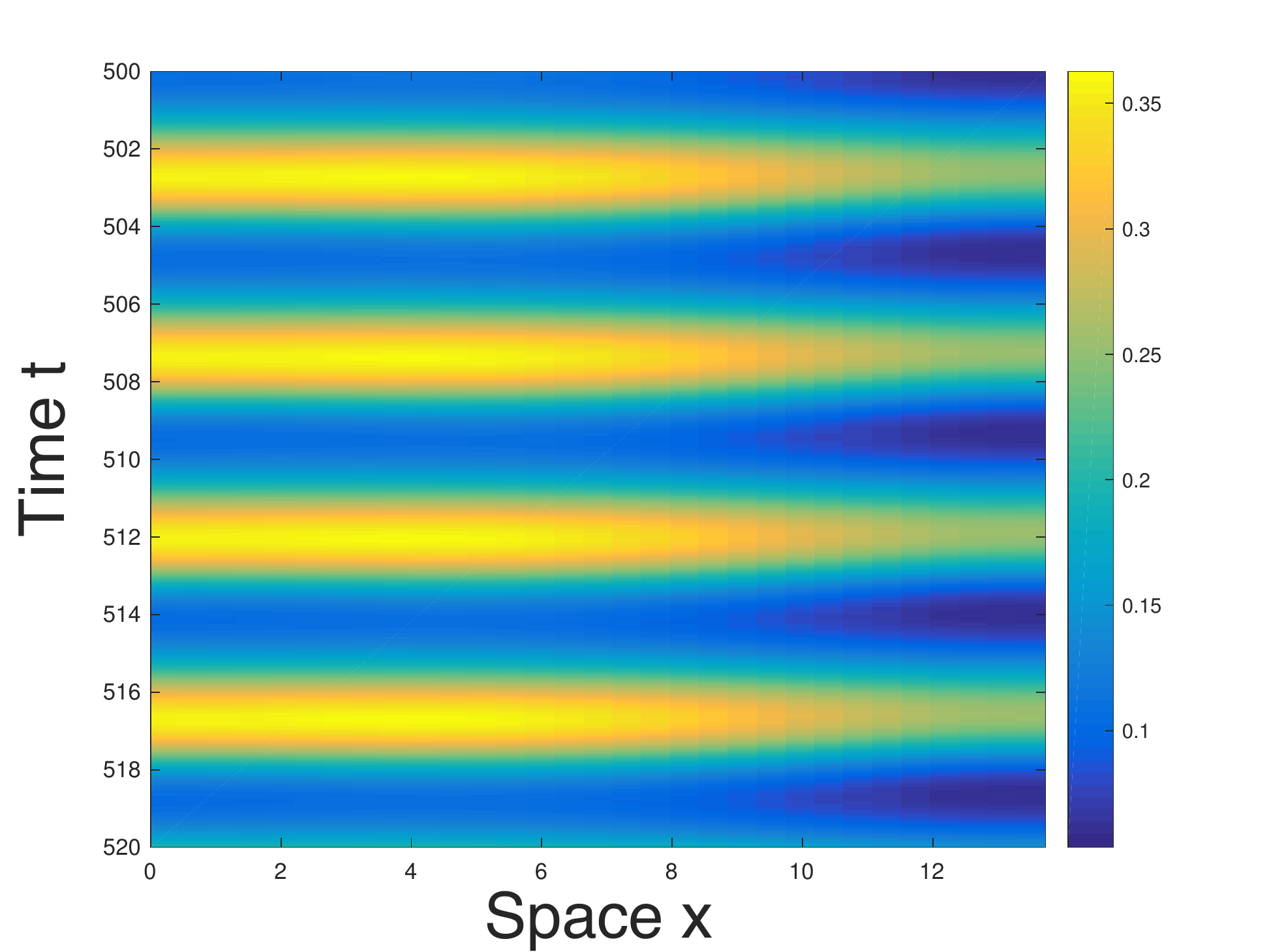}
\end{minipage}}
   \caption{Spatially non-homogeneous periodic solution in $D_5$, with $(\alpha_1,\alpha_2)=(-0.07,-0.007)$ and initial functions are $(u_0+0.01\sin 0.5x,u_0+0.01\sin 0.5x)$}\label{fig2D5_1}
\end{figure}

\begin{figure}[htb]
	\subfigure[\tiny$u(x,t)$]{\begin{minipage}{0.16\linewidth}
			\centering\includegraphics[scale=0.13]{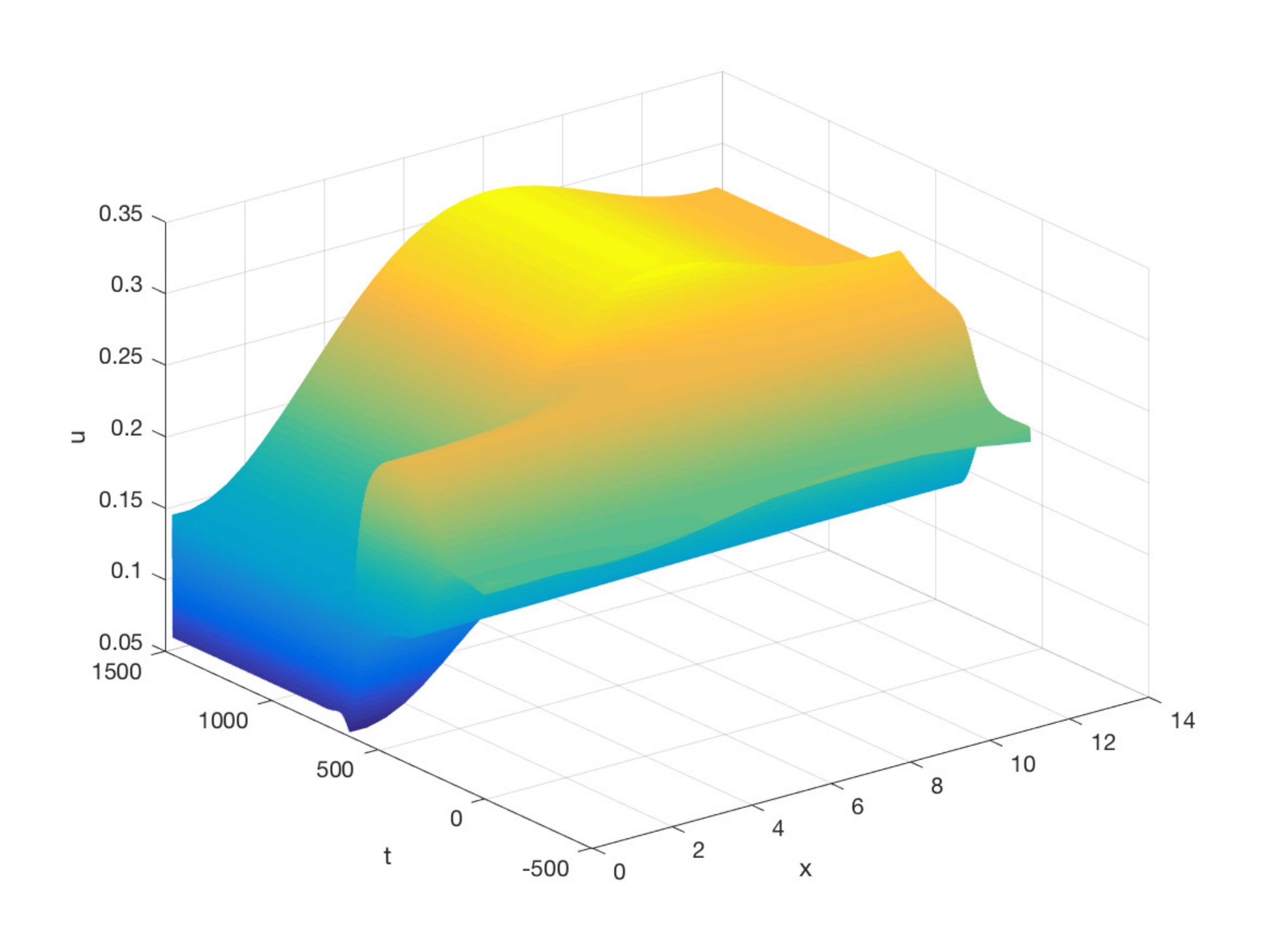}
	\end{minipage}}
	\subfigure[\tiny prey pattern]{\begin{minipage}{0.16\linewidth}
			\centering\includegraphics[scale=0.13]{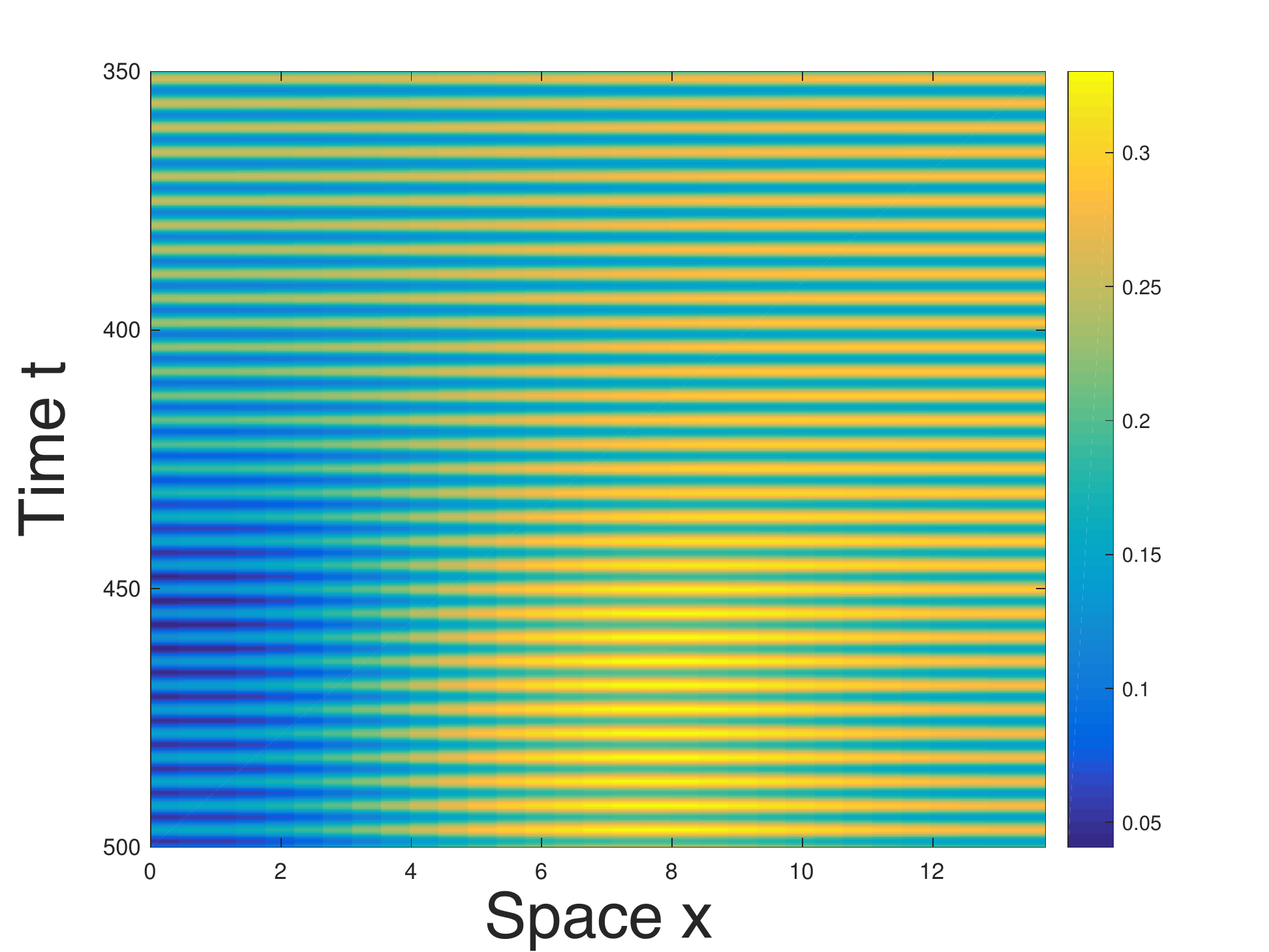}
	\end{minipage}}
	\subfigure[\tiny transition]{\begin{minipage}{0.16\linewidth}
			\centering\includegraphics[scale=0.13]{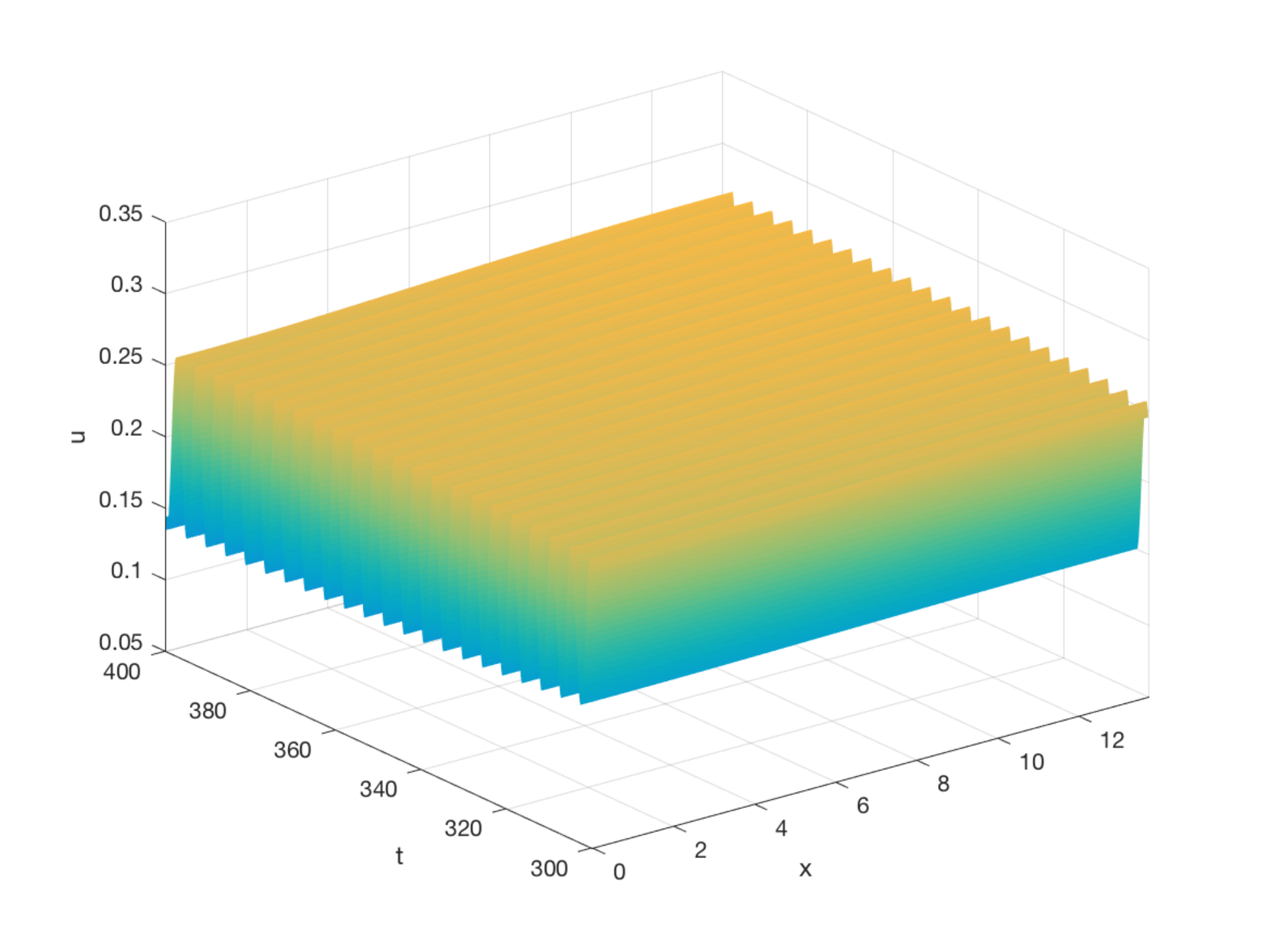}
	\end{minipage}}
	\subfigure[\tiny prey pattern]{\begin{minipage}{0.16\linewidth}
			\centering\includegraphics[scale=0.13]{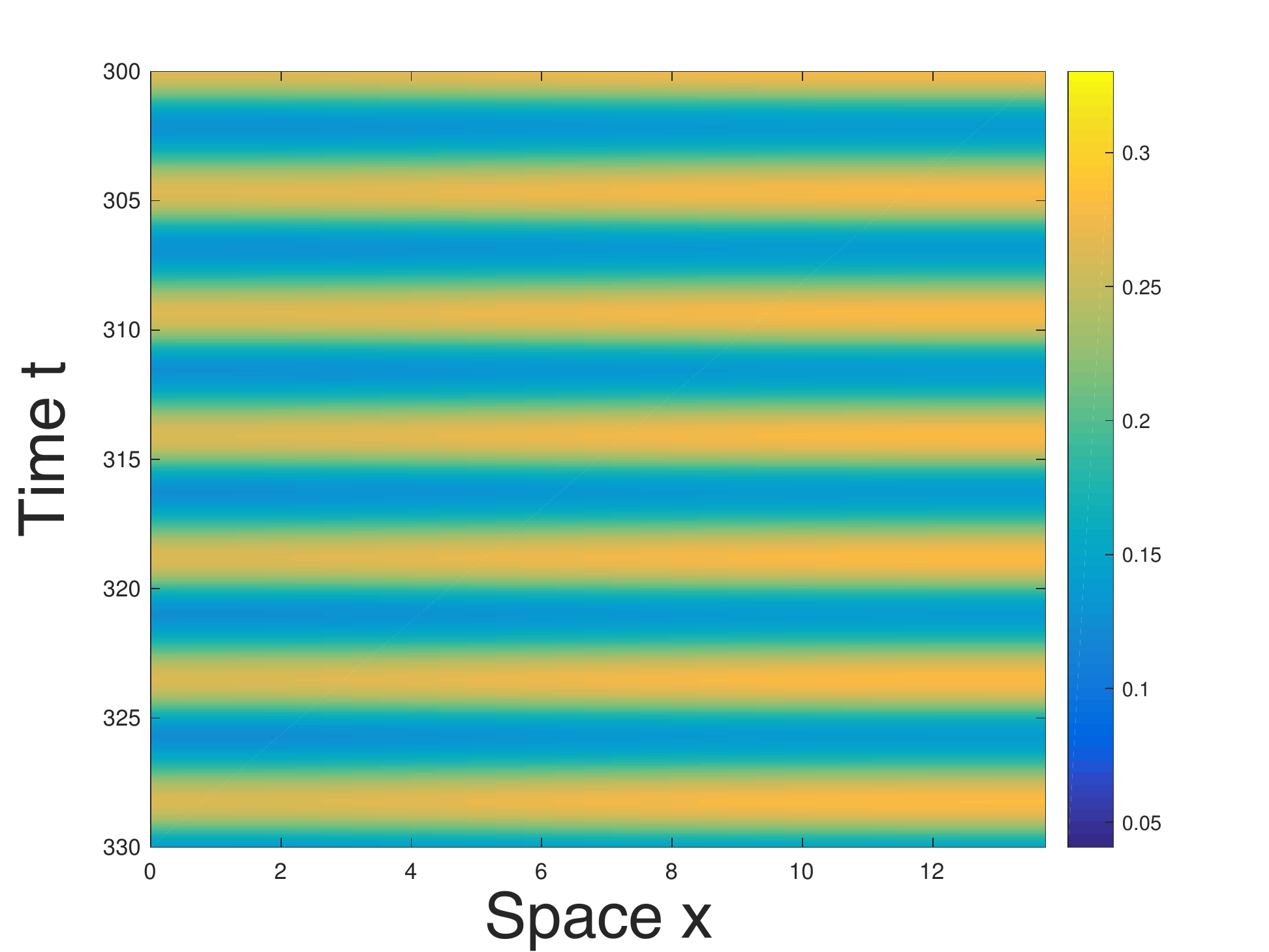}
	\end{minipage}}
	\subfigure[\tiny target pattern]{\begin{minipage}{0.16\linewidth}
			\centering\includegraphics[scale=0.13]{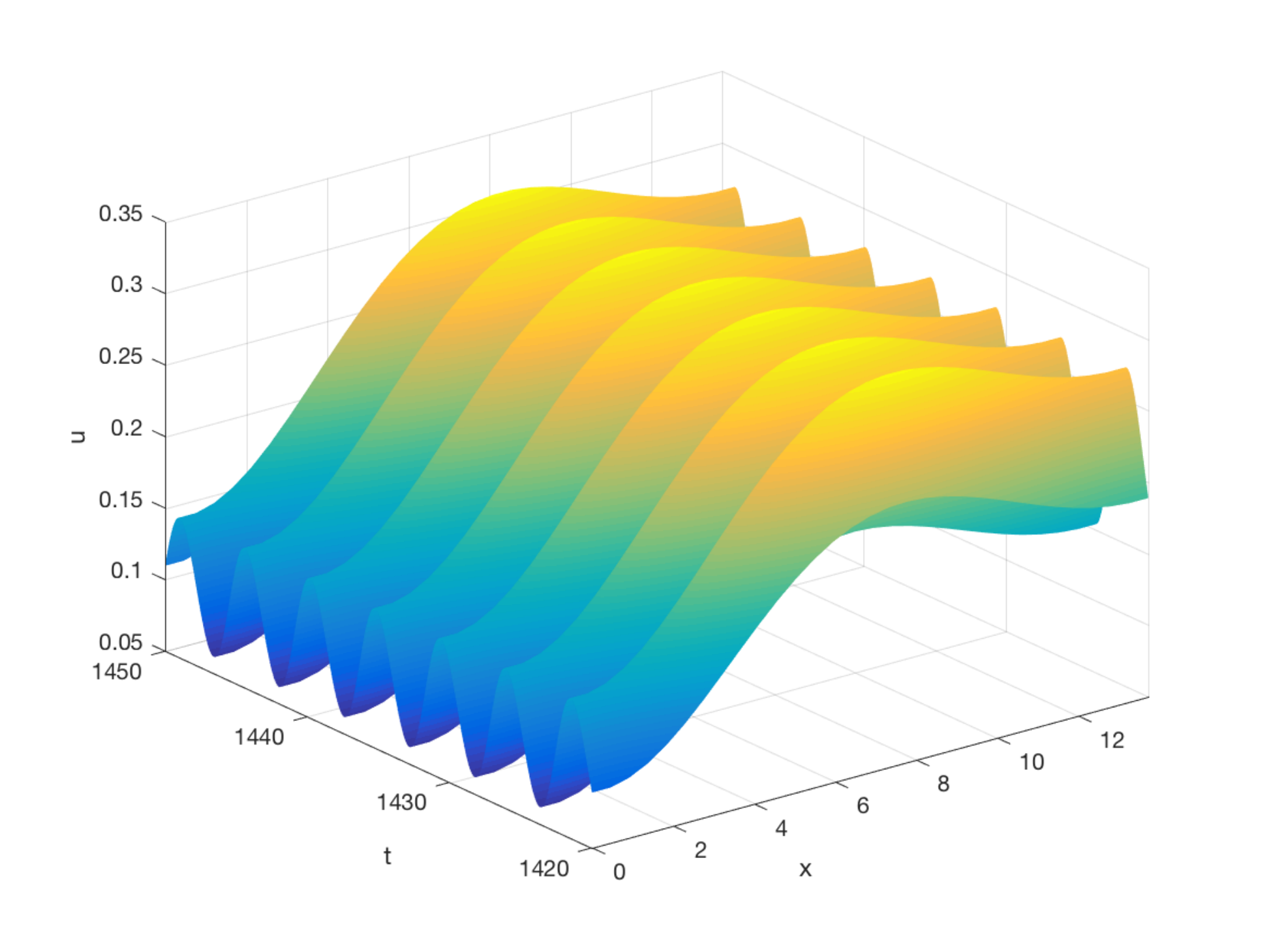}
	\end{minipage}}
	\subfigure[\tiny prey pattern]{\begin{minipage}{0.16\linewidth}
			\centering\includegraphics[scale=0.13]{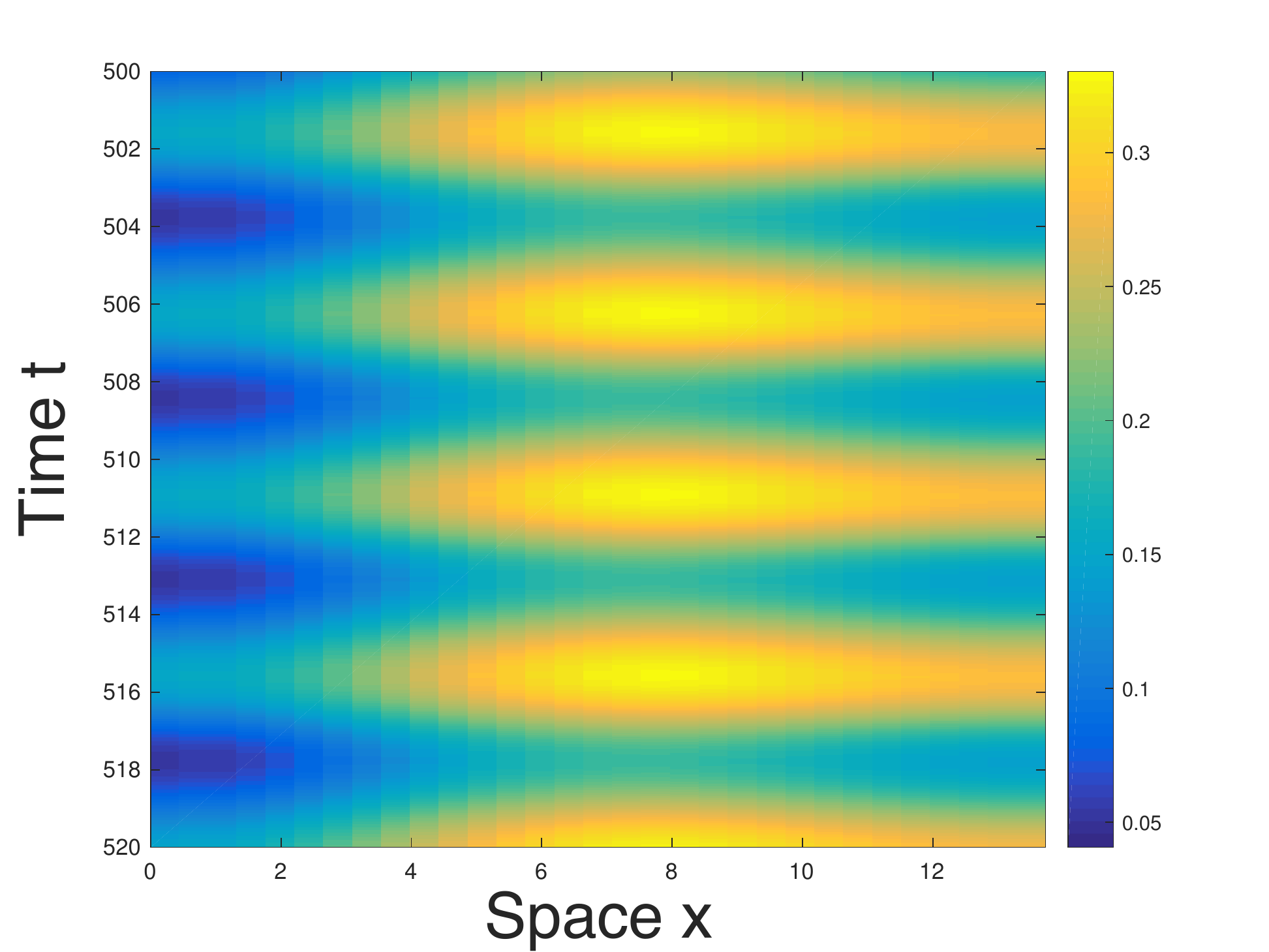}
	\end{minipage}}

	\subfigure[\tiny$v(x,t)$]{\begin{minipage}{0.16\linewidth}
		\centering\includegraphics[scale=0.13]{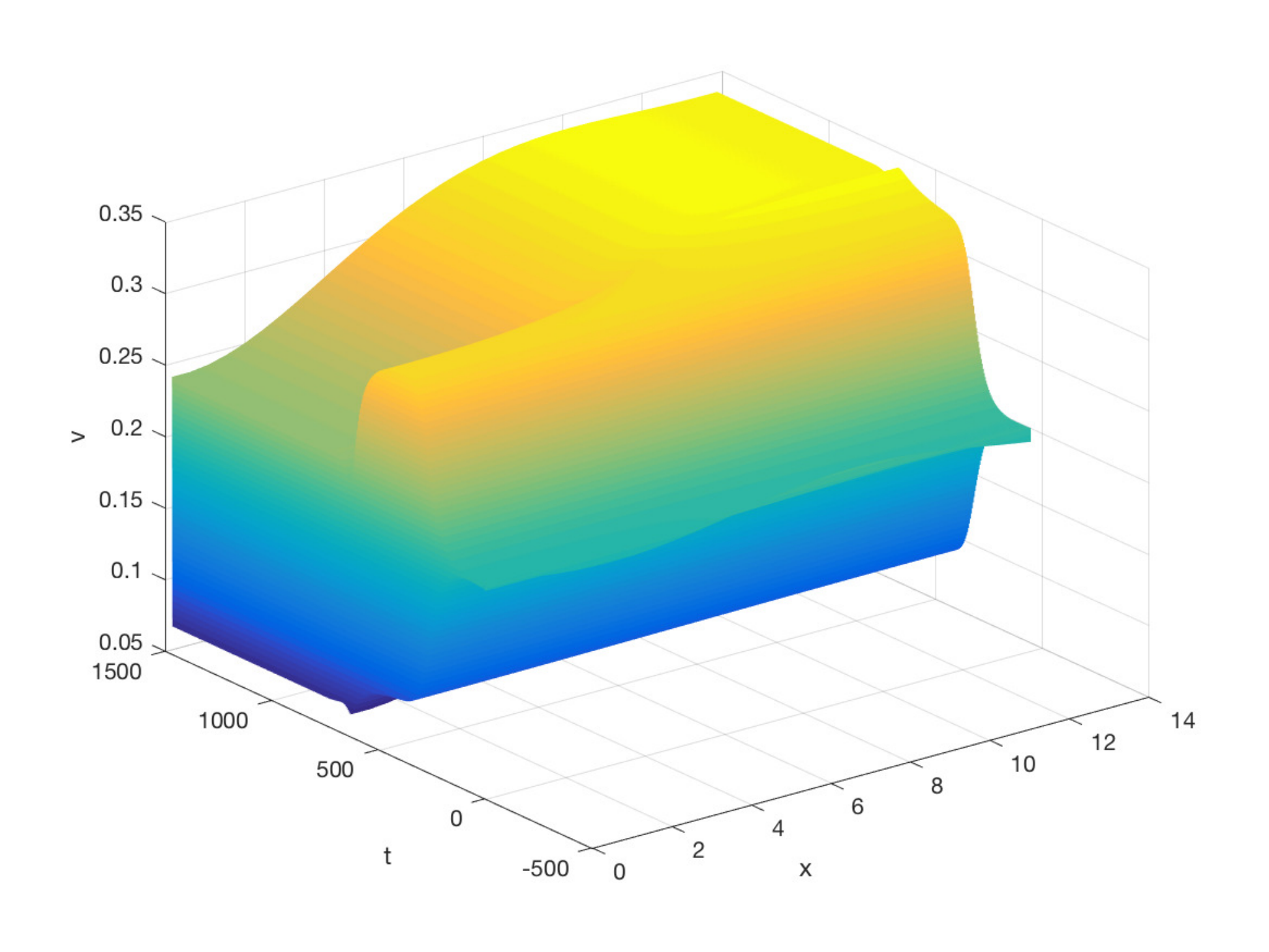}
\end{minipage}}
\subfigure[\tiny predator]{\begin{minipage}{0.16\linewidth}
		\centering\includegraphics[scale=0.13]{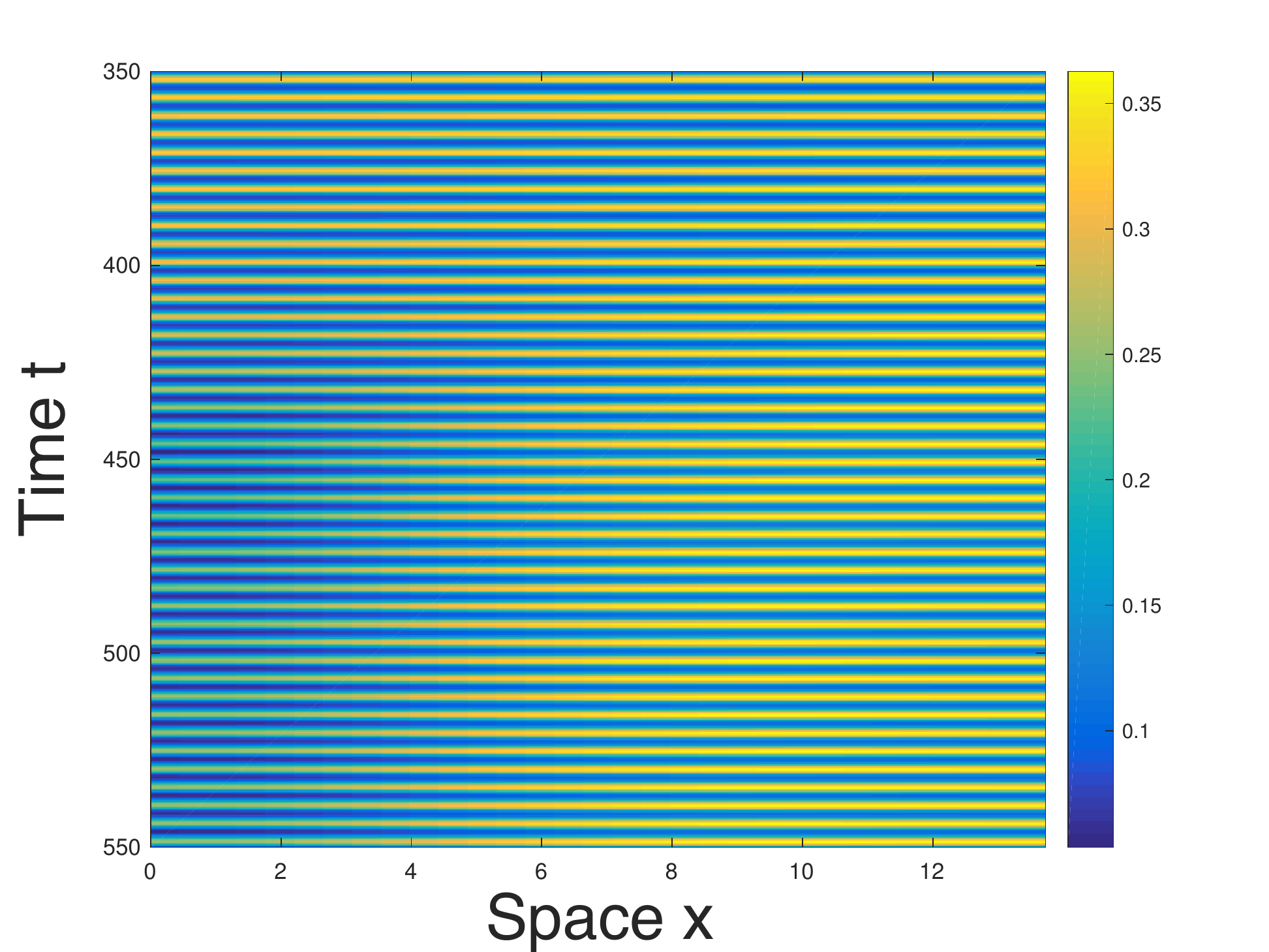}
\end{minipage}}
\subfigure[\tiny transition]{\begin{minipage}{0.16\linewidth}
		\centering\includegraphics[scale=0.13]{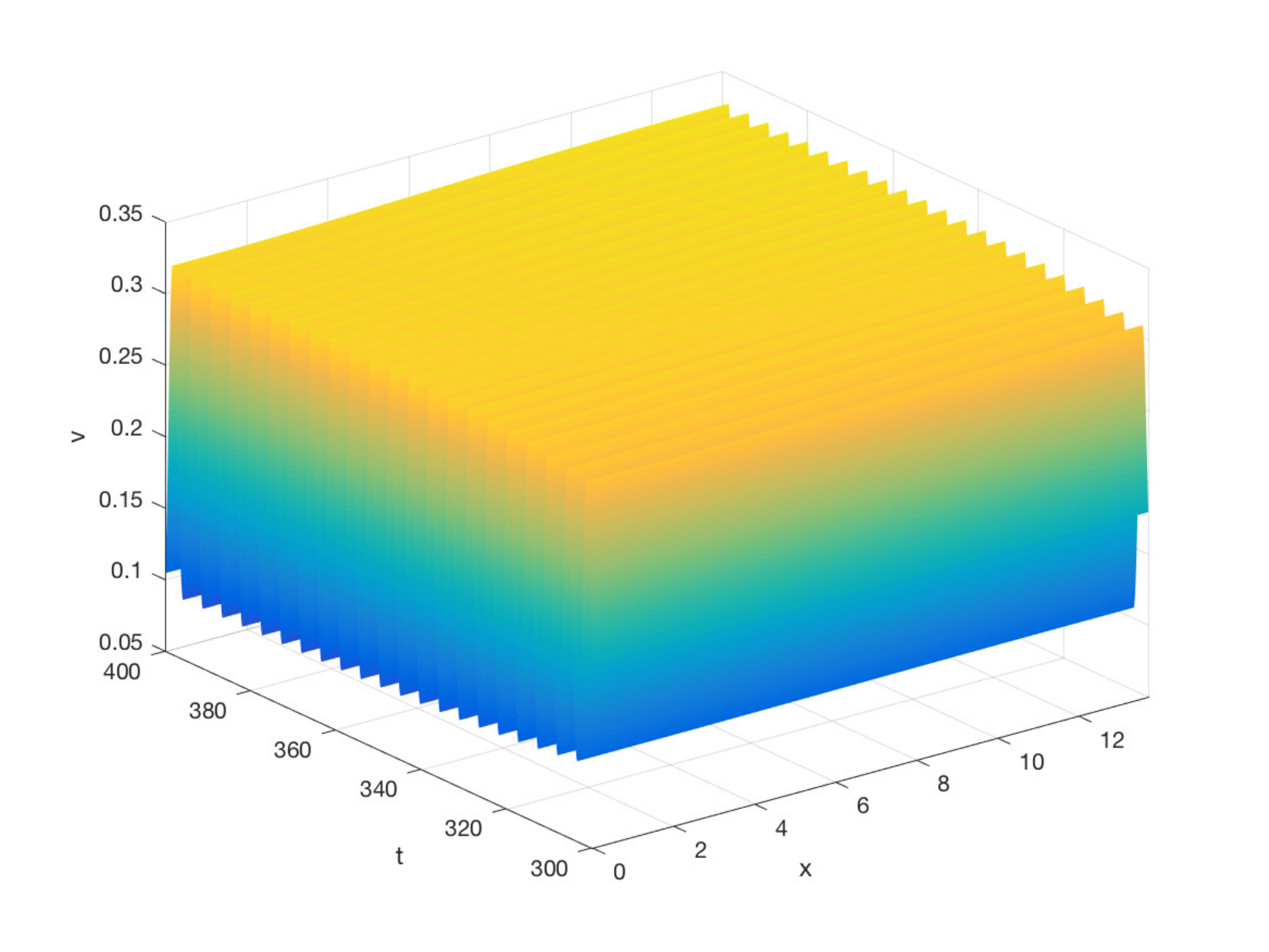}
\end{minipage}}
\subfigure[\tiny predator]{\begin{minipage}{0.16\linewidth}
		\centering\includegraphics[scale=0.13]{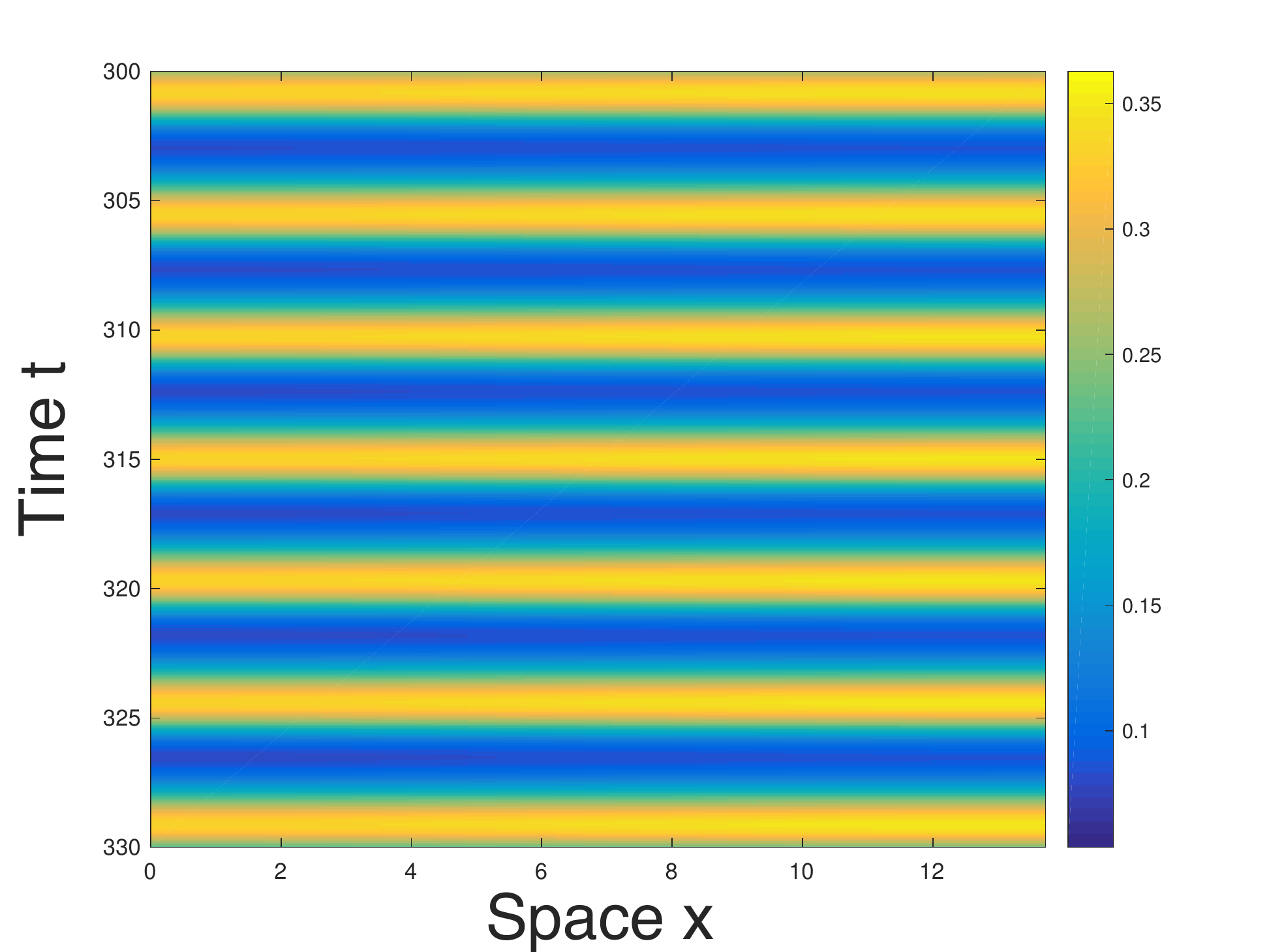}
\end{minipage}}
\subfigure[\tiny target pattern]{\begin{minipage}{0.16\linewidth}
		\centering\includegraphics[scale=0.13]{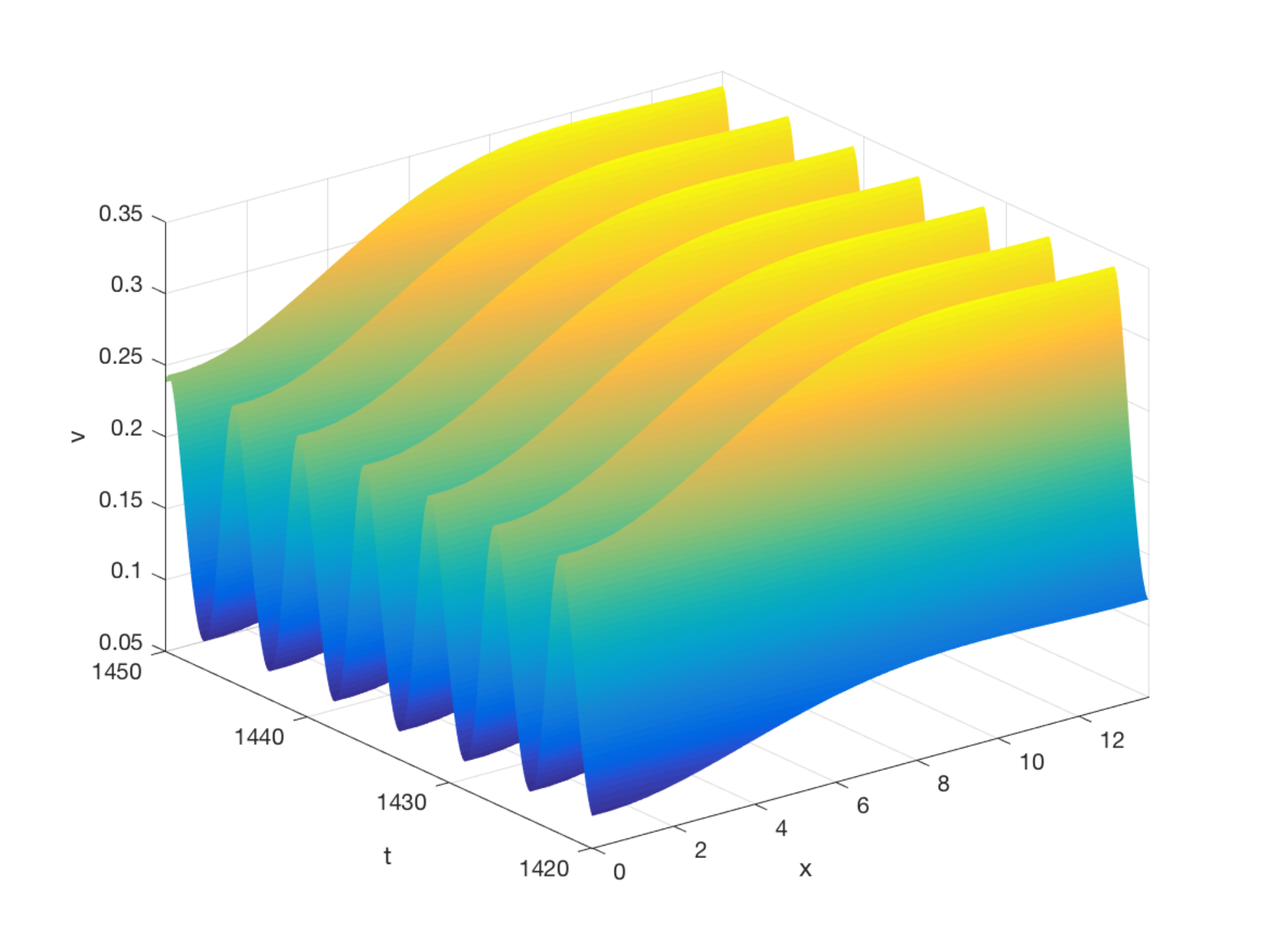}
\end{minipage}}
\subfigure[\tiny predator]{\begin{minipage}{0.16\linewidth}
		\centering\includegraphics[scale=0.13]{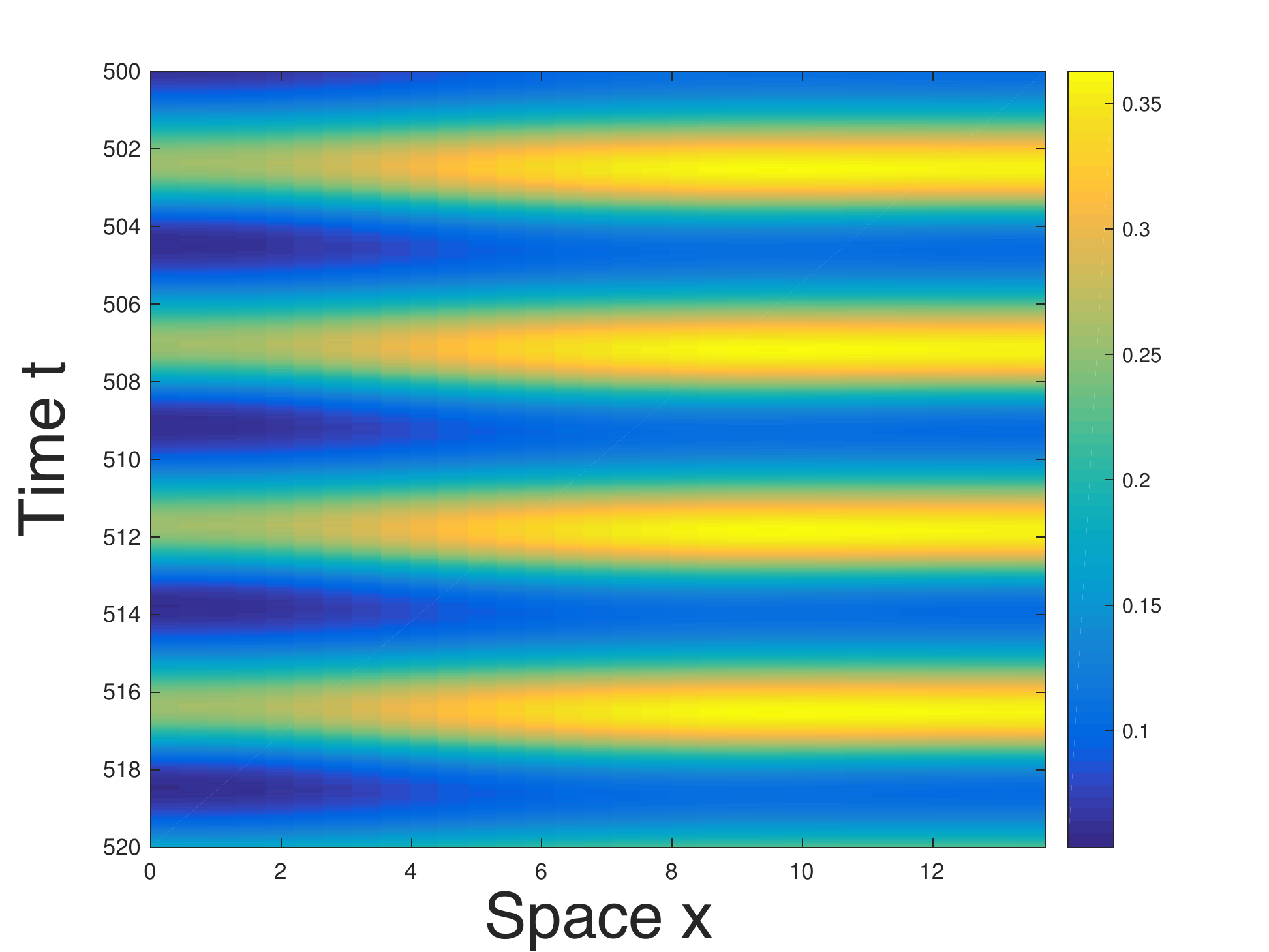}
\end{minipage}}
	\caption{Spatially non-homogeneous periodic solution in $D_5$, with $(\alpha_1,\alpha_2)=(0.07,-0.007)$ and initial functions are $(u_0-0.01\sin 0.5x,u_0-0.01\sin 0.5x)$}\label{fig2D5_2}
\end{figure}

We have done a lot of numerical experiments on the Case \uppercase\expandafter{\romannumeral4}a and observed that when the first Turing critical point (for example, $r=r_*$ in {\bf Group 2}) is relatively close to the second Turing critical point (for example, $r=r_1^T$ in {\bf Group 2}), the dynamics  of the system \eqref{eqA} are similar to those of {\bf Group 2}. That is to say, it's not an accident.
More accurately, { the second Turing bifurcation has no effect on the division of the $(r,\tau)$ parameter plane, it only contributes a superposition function $h_2\cos (\frac{n_I}{l}x)$ to the spatial distribution of the system \eqref{eqA}.}

If we properly adjust the third system parameters (for example, the space size $l$), then the Turing-Hopf bifurcation points {\bf TH1} and {\bf TH3} as shown in Figure \ref{figTH2} will collide into a { Turing-Turing-Hopf} bifurcation point. Therefore, the $(r,\tau)$ plane we have actually studied can be considered as a cross section of the three-dimensional parameter space of the Turing-Turing-Hopf bifurcation. The dynamics observed in our experiments can reflect part of the dynamics caused by Turing-Turing-Hopf bifurcation.
\section{Conclusion}
A rigorously mathematical analysis of the Turing-Hopf bifurcation of the delayed ratio-dependent diffusive Holling-Tanner system is given in this work. The spatiotemporal patterns induced by Turing-Hopf bifurcation and Turing-Turing-Hopf bifurcation are demonstrated through two groups of numerical experiments and theoretical analysis.

When the auxiliary parameters $(a,b,l,d_1,d_2)$  meet the condition $a\geq \frac{(b+1)^2}{2(1-b)}$ {(i.e., for example, that means the predators have a strong ability to consume prey or the inherent growth rate of prey is relatively small)} and $({{\mathbf{A}}}6^{''})$ { (i.e., for example, that means the predator must moves faster than prey, the carrying capacity of the prey should be large and the saturation value of predator should be small)}, the birth ratio $r$ and time delay $\tau$ are taken out as the main parameters to study the spatiotemporal patterns. We claim that the large birth ratio is beneficial to the stability of the system and small birth ratio could lead to the non-uniform distribution of the two populations in space. In addition, large time delay could make the system oscillating as common case.

The Turing-Hopf bifurcation is selected as the main object to study the synergies of the two parameters $(r,\tau)$ to the system \eqref{eqA} by the normal form method, and the complete formula of the normal forms  up to the third order is given near the Turing-Hopf singularity. In both theoretical and numerical experiments, we have proved that the Turing-Hopf bifurcation could generate a wealth of self-organized spatiotemporal patterns. Form the the plane of time and space, the patterns are actually striped and spotted.

More noticeable, we also observed the existence of the spatiotemporal patterns with the form of $\rho\left[\phi_{1}(0)e^{\mathrm{i}\omega_*\tau_* t}+\bar{\phi}_{1}(0)e^{\mathrm{-i}\omega_*\tau_* t}\right]+h_1\cos(\frac{n_T}{l}x)+h_2\cos(\frac{n_I}{l}x)$ in the Holling-Tanner system \eqref{eqA}. Turing-Turing-Hopf bifurcation could be seen as the mechanism to produce them. But these patterns only reflect partial dynamics brought by Turing-Turing-Hopf bifurcation, for a more complete structure, further research is necessary.
\bibliographystyle{plainnat}
\bibliography{mybibfile2.bib}

\begin{thebibliography}{40}
\providecommand{\natexlab}[1]{#1}
\providecommand{\url}[1]{\texttt{#1}}
\expandafter\ifx\csname urlstyle\endcsname\relax
  \providecommand{\doi}[1]{doi: #1}\else
  \providecommand{\doi}{doi: \begingroup \urlstyle{rm}\Url}\fi

\bibitem[An and Jiang(2017{\natexlab{a}})]{An2017}
Q.~An and W.~Jiang.
\newblock Spatiotemporal attractors generated by the {Turing-Hopf} bifurcation
  in a time-delayed reaction-diffusion system.
\newblock \emph{arXiv:1710.10411}, 2017{\natexlab{a}}.

\bibitem[An and Jiang(2017{\natexlab{b}})]{An2017J}
Q.~An and W.~Jiang.
\newblock {Turing-Hopf} bifurcation and spatio-temporal patterns of a
  ratio-dependent {Holling-Tanner} system with diffusion.
\newblock \emph{arXiv:1711.02787}, 2017{\natexlab{b}}.

\bibitem[Banerjee and Banerjee(2012)]{banerjee2012turing}
M.~Banerjee and S.~Banerjee.
\newblock Turing instabilities and spatio-temporal chaos in ratio-dependent
  {Holling--Tanner} model.
\newblock \emph{Math. Biosci.}, 236\penalty0 (1):\penalty0 64--76, 2012.

\bibitem[Cangelosi et~al.(2015)Cangelosi, Wollkind, Kealy-Dichone, and
  Chaiya]{Cangelosi2015Nonlinear}
R.~A. Cangelosi, D.~J. Wollkind, B.~J. Kealy-Dichone, and I~Chaiya.
\newblock Nonlinear stability analyses of {Turing} patterns for a mussel-algae
  model.
\newblock \emph{J. Math. Biol.}, 70\penalty0 (6):\penalty0 1249--1294, 2015.

\bibitem[Castets et~al.(1990)Castets, Dulos, Boissonade, and
  Kepper]{Castets1990Experimental}
V.~Castets, E.~Dulos, J.~Boissonade, and P.~D. Kepper.
\newblock Experimental evidence of a sustained standing {Turing-type}
  nonequilibrium chemical pattern.
\newblock \emph{Phys. Rev. Lett.}, 64\penalty0 (24):\penalty0 2953, 1990.

\bibitem[Chang et~al.(2015)Chang, Sun, Wang, and Jin]{Chang2015Rich}
L.~Chang, G.~Sun, Z.~Wang, and Z.~Jin.
\newblock Rich dynamics in a spatial predator-prey model with delay.
\newblock \emph{Appl. Math. Comput.}, 256\penalty0 (C):\penalty0 540--550,
  2015.

\bibitem[Chen and Yu(2018)]{Chen2017Stability}
S.~Chen and J.~Yu.
\newblock Stability and bifurcation on predator-prey systems with nonlocal prey
  competition.
\newblock \emph{Discrete Contin. Dyn. Syst.}, 38\penalty0 (1):\penalty0 43--62,
  2018.

\bibitem[Faria(2000)]{Faria2000Normal}
T.~Faria.
\newblock Normal forms and {Hopf} bifurcation for partial differential
  equations with delays.
\newblock \emph{Trans. Amer. Math. Soc.}, 352\penalty0 (5):\penalty0
  2217--2238, 2000.

\bibitem[Faria et~al.(2002)Faria, Huang, and Wu]{MR1950831}
T.~Faria, W.~Huang, and J.~Wu.
\newblock Smoothness of center manifolds for maps and formal adjoints for
  semilinear {FDE}s in general {B}anach spaces.
\newblock \emph{SIAM J. Math. Anal.}, 34\penalty0 (1):\penalty0 173--203, 2002.

\bibitem[Guckenheimer and Holmes(1983)]{Phillp1988}
J.~Guckenheimer and P.~Holmes.
\newblock \emph{Nonlinear Oscillations, Dynamical Systems, and Bifurcations of
  Vector Fields}, volume~42 of \emph{Applied Mathematical Sciences}.
\newblock Springer New York, 1983.
\newblock \doi{10.1007/978-1-4612-1140-2}.

\bibitem[Gunaratne et~al.(1994)Gunaratne, Ouyang, and
  Swinney]{gunaratne1994pattern}
G.~H. Gunaratne, Q.~Ouyang, and H.~L. Swinney.
\newblock Pattern formation in the presence of symmetries.
\newblock \emph{Phys. Rev. E}, 50\penalty0 (4):\penalty0 2802, 1994.

\bibitem[Guo and Ma(2016)]{Guo2016Stability}
S.~Guo and L.~Ma.
\newblock Stability and bifurcation in a delayed reaction-diffusion equation
  with {D}irichlet boundary condition.
\newblock \emph{J. Nonlinear Sci.}, 26\penalty0 (2):\penalty0 545--580, 2016.

\bibitem[Holling(1966)]{Holling1966The}
C.~S. Holling.
\newblock The functional response of invertebrate predators to prey density1.
\newblock \emph{Mem. Entomol. Soc. Can.}, 98\penalty0 (48):\penalty0 1--86,
  1966.

\bibitem[Jiang et~al.(2018)Jiang, An, and Shi]{JA2018}
W.~Jiang, Q.~An, and J.~Shi.
\newblock Formulation of the normal forms of {Turing-Hopf} bifurcation in
  reaction-diffusion systems with time delay.
\newblock \emph{Submitted}, 2018.

\bibitem[Lengyel and Epstein(1991)]{Lengyel1991Modeling}
I.~Lengyel and I.R. Epstein.
\newblock Modeling of turing structures in the chlorite-iodide-malonic
  acid-starch reaction system.
\newblock \emph{Science}, 251\penalty0 (4994):\penalty0 650--2, 1991.

\bibitem[Li et~al.(2013)Li, Jiang, and Shi]{li2011hopf}
X.~Li, W.~Jiang, and J.~Shi.
\newblock Hopf bifurcation and turing instability in the reaction-diffusion
  {Holling-Tanner} predator-prey model.
\newblock \emph{IMA J. Appl. Math.}, 78\penalty0 (2):\penalty0 287--306, 2013.

\bibitem[Lin et~al.(1992)Lin, So, and Wu]{Lin1992Centre}
X.~Lin, J.W.H. So, and J.~Wu.
\newblock Centre manifolds for partial differential equations with delays.
\newblock \emph{Proc. Roy. Soc. Edinburgh}, 122\penalty0 (3-4):\penalty0
  237--254, 1992.

\bibitem[Lou et~al.(2015)Lou, Ni, and Yotsutani]{Yuan2014Pattern}
Y.~Lou, W.~Ni, and S~Yotsutani.
\newblock Pattern formation in a cross-diffusion system.
\newblock \emph{Discrete Cont. Dyn. Syst.}, 35\penalty0 (4):\penalty0
  1589--1607, 2015.

\bibitem[Ma and Li(2013)]{Ma2013Bifurcation}
Z.~Ma and W.~Li.
\newblock Bifurcation analysis on a diffusive {Holling-Tanner} predator-prey
  model.
\newblock \emph{Appl. Math. Modelling}, 37\penalty0 (6):\penalty0 4371--4384,
  2013.

\bibitem[Maini et~al.(1997)Maini, Painter, and Chau]{Maini1997Spatial}
P.K. Maini, K.J. Painter, and H.N.P. Chau.
\newblock Spatial pattern formation in chemical and biological systems.
\newblock \emph{J. Chem. Soc. Faraday Trans.}, 93\penalty0 (20):\penalty0
  3601--3610, 1997.

\bibitem[May(1974)]{May1973Stability}
R.M. May.
\newblock \emph{Stability and compelxity in model ecosystems}.
\newblock Princeton University Press, 1974.

\bibitem[Meixner et~al.(1997)Meixner, Wit, Bose, and
  Scholl]{Meixner1997Generic}
M.~Meixner, A.~D. Wit, S.~Bose, and E.~Scholl.
\newblock Generic spatiotemporal dynamics near codimension-two {Turing-Hopf}
  bifurcations.
\newblock \emph{Phys. Rev. E}, 55\penalty0 (55):\penalty0 6690--6697, 1997.

\bibitem[Murray(2003)]{Murray2003II}
J.D. Murray.
\newblock \emph{Mathematical Biology II: Spatial Models and Biomedical
  Applications}.
\newblock Springer, 2003.

\bibitem[Ni and Tang(2005)]{Ni2005Turing}
W.~Ni and M.~Tang.
\newblock Turing patterns in the {Lengyel-Epstein} system for the {CIMA}
  reaction.
\newblock \emph{Trans. Amer. Math. Soc.}, 357\penalty0 (10):\penalty0
  3953--3969, 2005.

\bibitem[Ouyang and Swinney(1991)]{ouyang1991transition}
Q.~Ouyang and H.~L. Swinney.
\newblock Transition from a uniform state to hexagonal and striped {Turing}
  patterns.
\newblock \emph{Nature}, 352\penalty0 (6336):\penalty0 610--612, 1991.

\bibitem[Pearson(1993)]{pearson1993complex}
J.~E. Pearson.
\newblock Complex patterns in a simple system.
\newblock \emph{Science}, 261:\penalty0 189--189, 1993.

\bibitem[Peng et~al.(2008)Peng, Shi, and Wang]{peng2008stationary}
R.~Peng, J.~Shi, and M.~Wang.
\newblock On stationary patterns of a reaction-diffusion model with
  autocatalysis and saturation law.
\newblock \emph{Nonlinearity}, 21\penalty0 (7):\penalty0 1471--1488, 2008.

\bibitem[Peng et~al.(2013)Peng, Yi, and Zhao]{Peng2013Spatiotemporal}
R.~Peng, F.~Yi, and X.~Zhao.
\newblock Spatiotemporal patterns in a reaction-diffusion model with the
  {Degn-Harrison} reaction scheme ☆.
\newblock \emph{J. Differential Equations}, 254\penalty0 (6):\penalty0
  2465--2498, 2013.

\bibitem[Segel and Jackson(1972)]{Segel1972Dissipative}
L.A. Segel and J.L. Jackson.
\newblock Dissipative structure: an explanation and an ecological example.
\newblock \emph{J. Theoret. Biol.}, 37\penalty0 (3):\penalty0 545--59, 1972.

\bibitem[Shi and Ruan(2015)]{Shi2015Spatial}
H.~Shi and S.~Ruan.
\newblock Spatial, temporal and spatiotemporal patterns of diffusive
  predator-prey models with mutual interference.
\newblock \emph{IMA J. Appl. Math.}, 80\penalty0 (5), 2015.

\bibitem[Song and Zou(2014)]{Song2014Spatiotemporal}
Y.~Song and X.~Zou.
\newblock Spatiotemporal dynamics in a diffusive ratio-dependent
  predator–prey model near a {Hopf–Turing} bifurcation point.
\newblock \emph{Comput. Math. Appl.}, 67\penalty0 (10):\penalty0 1978--1997,
  2014.

\bibitem[Song et~al.(2009)Song, Zhang, and Tad\'e]{Song2009Stability}
Y.~Song, T.~Zhang, and M.~Tad\'e.
\newblock Stability switches, {H}opf bifurcations, and spatio-temporal patterns
  in a delayed neural model with bidirectional coupling.
\newblock \emph{J. Nonlinear Sci.}, 19\penalty0 (6):\penalty0 597--632, 2009.

\bibitem[Song et~al.(2017)Song, Jiang, Liu, and Yuan]{song2017}
Y.~Song, H.~Jiang, Q.~Liu, and Y.~Yuan.
\newblock Spatiotemporal dynamics of the diffusive {Mussel-Algae} model near
  {Turing-Hopf} bifurcation.
\newblock \emph{SIAM J. Appl. Dyn. Syst.}, 16\penalty0 (4):\penalty0
  2030–2062, 2017.

\bibitem[Tanner(1975)]{Tanner1975The}
J.~T. Tanner.
\newblock The stability and the intrinsic growth rates of prey and predator
  populations.
\newblock \emph{Ecology}, 56\penalty0 (4):\penalty0 855--867, 1975.

\bibitem[Turing(1952)]{Turing1952}
A.M. Turing.
\newblock The chemical basis of morphogenesis.
\newblock \emph{Philos. Trans. Roy. Soc. London Ser. B}, 237\penalty0
  (641):\penalty0 37--72, 1952.

\bibitem[Wollkind et~al.(1988)Wollkind, Collings, and
  Logan]{Wollkind1988Metastability}
D.~J. Wollkind, J.~B. Collings, and J.~A. Logan.
\newblock Metastability in a temperature-dependent model system for
  predator-prey mite outbreak interactions on fruit trees.
\newblock \emph{Bull. Math. Biol.}, 50\penalty0 (4):\penalty0 379--409, 1988.

\bibitem[Wu(1996)]{Wu1996Theory}
J.~Wu.
\newblock \emph{Theory and Applications of Partial Functional Differential
  Equations}.
\newblock Springer, 1996.

\bibitem[Yan and Hwang(1996)]{Yan2012Pattern}
J.~G. Yan and D.~Hwang.
\newblock Pattern formation in reaction-diffusion systems with $d_2$-symmetric
  kinetics.
\newblock \emph{Discrete Cont. Dyn. Syst.}, 2\penalty0 (2):\penalty0 255--270,
  1996.

\bibitem[Yi et~al.(2009)Yi, Wei, and Shi]{yi2009bifurcation}
F.~Yi, J.~Wei, and J.~Shi.
\newblock Bifurcation and spatiotemporal patterns in a homogeneous diffusive
  predator-prey system.
\newblock \emph{J. Differential Equations}, 246\penalty0 (5):\penalty0 1944
  --1977, 2009.

\bibitem[Zhang et~al.(2014)Zhang, Xing, Zang, and Han]{Zhang2014Spatio}
T.~Zhang, Y.~Xing, H.~Zang, and M.~Han.
\newblock Spatio-temporal dynamics of a reaction-diffusion system for a
  predator–prey model with hyperbolic mortality.
\newblock \emph{Nonlinear Dynam}, 78\penalty0 (1):\penalty0 265--277, 2014.

\end{thebibliography}

\end{document}